  \newcommand*\aligncr{&}
  \newcommand*\compressedmatrices{}
  \newcommand*\aligncr{\\}
  \newcommand*\qedhere{}
  \newcommand*\compressedmatrices{%
    \arraycolsep=0.5\arraycolsep
    \renewcommand*\arraystretch{1}}
\renewcommand*\vec\bm
\renewcommand\arraystretch{1.25}
\newcommand*\centerfloat{%
  \parindent \z@
  \leftskip \z@ \@plus 1fil \@minus \textwidth
  \rightskip\leftskip
  \parfillskip \z@skip}
\pgfplotsset{
  compat = newest,
  filter discard warning = false,
  legend cell align = left,
  every axis plot/.append style = {
    black,
    mark = none,
    line width = 1pt
  },
  cycle list name = linestyles*,
  /pgf/number format/1000 sep = {},
  ticklabel style = {font = \footnotesize},
  label style = {font = \footnotesize},
  every axis legend/.append style = {
    font = \footnotesize
  }
}
  \theoremstyle{definition} % not supported etna.cls
  \newtheorem{myalgorithm}{Algorithm}
  \newcommand*\tabwidth\leftmargini
  \newcommand*\tab[1][]{\makebox[\tabwidth][l]{#1}\ignorespaces}
  \theoremstyle{plain}
  \newtheorem{myalgorithm}[theorem]{Algorithm}
  \newcommand*\tabwidth{2em}
  \newcommand*\tab[1][]{\makebox[\tabwidth][l]{#1}\ignorespaces}
\newcommand*\bbR{\mathbb{R}}
\DeclareMathOperator\adj{adj}
\DeclareMathOperator\diag{diag} % defined by etna.cls
\DeclareMathOperator\fl{fl}
\DeclareMathOperator\hypot{hypot}
\DeclareMathOperator\rank{rank} % defined by etna.cls
\DeclareMathOperator\rot{rot}
\newcommand*\iter[2]{#1^{(#2)}}
\newcommand*\nonzero\underline
\newcommand*\minimat[4]{\left[%
  \begin{smallmatrix} #1 & #2 \\ #3 & #4 \end{smallmatrix}%
    \right]}
\newif\if@borderstar
\def\bordermatrix{\@ifnextchar*{%
\@borderstartrue\@bordermatrix@i}{\@borderstarfalse\@bordermatrix@i*}%
}
\def\@bordermatrix@i*{\@ifnextchar[{\@bordermatrix@ii}{\@bordermatrix@ii[()]}}
\def\@bordermatrix@ii[#1]#2{%
\begingroup
\m@th\@tempdima8.75\p@\setbox\z@\vbox{%
\def\cr{\crcr\noalign{\kern 2\p@\global\let\cr\endline }}%
\ialign {$##$\hfil\kern 2\p@\kern\@tempdima & \thinspace %
\hfil $##$\hfil && \quad\hfil $##$\hfil\crcr\omit\strut %
\hfil\crcr\noalign{\kern -\baselineskip}#2\crcr\omit %
\strut\cr}}%
\setbox\tw@\vbox{\unvcopy\z@\global\setbox\@ne\lastbox}%
\setbox\tw@\hbox{\unhbox\@ne\unskip\global\setbox\@ne\lastbox}%
\setbox\tw@\hbox{%
$\kern\wd\@ne\kern -\@tempdima\left\@firstoftwo#1%
\if@borderstar\kern2pt\else\kern -\wd\@ne\fi%
\global\setbox\@ne\vbox{\box\@ne\if@borderstar\else\kern 2\p@\fi}%
\vcenter{\if@borderstar\else\kern -\ht\@ne\fi%
\unvbox\z@\kern-\if@borderstar2\fi\baselineskip}%
\if@borderstar\kern-2\@tempdima\kern2\p@\else\,\fi\right\@secondoftwo#1 $%
}\null \;\vbox{\kern\ht\@ne\box\tw@}%
\endgroup
}
\title{Towards a more robust algorithm for computing the restricted
  singular value decomposition\thanks{Version: February 10, 2020.
  \funding{This work was supported in part by the Deutsche
  Forschungsgemeinschaft through the collaborative research centre
  SFB-TRR55.}}}
\author{Ian~N.~Zwaan\thanks{Faculty of Mathematics and Natural Sciences,
  Bergische Universit\"at Wuppertal, \url{ianzwaan.com}.}}
\date{}
  \shorttitle{A more robust RSVD algorithm}
\begin{document}

\maketitle

% Abstract. {{{1

\begin{abstract}
  A new algorithm to compute the restricted singular value decomposition
  of dense matrices is presented. Like Zha's method~\cite{Zha92},
  the new algorithm uses an implicit Kogbetliantz iteration, but with
  four major innovations.  The first innovation is a useful quasi-upper
  triangular generalized Schur form that just requires orthonormal
  transformations to compute.  Depending on the application, this Schur
  form can be used instead of the full decomposition.  The second
  innovation is a new preprocessing phase that requires fewer rank
  determinations than previous methods.  The third innovation is a
  numerically stable RSVD algorithm for $2\times 2$ upper-triangular
  matrices, which forms a key component of the implicit Kogbetliantz
  iteration.  The fourth innovation is an alternative scaling for the
  restricted singular triplets that results in elegant formulas for
  their computation.  Beyond these four innovations, the qualitative
  (numerical) characteristics of the algorithm are discussed
  extensively. Some numerical challenges in the (optional)
  postprocessing phase are considered too; though, their solutions
  require further research.  Numerical tests and examples confirm the
  effectiveness of the method.
\end{abstract}

% }}}1

% Key words. {{{1

\begin{keywords}
  Restricted singular value decomposition, RSVD, Kogbetliantz,
  numerically stable, rank decisions.
\end{keywords}

% }}}1

% AMS subject classification. {{{1

\begin{AMS}
  % 15A29; % Inverse problems
  % 65F10; % Iterative methods for linear systems
  65F15; % Eigenvalues (or maybe 15A18?)
  65F22; % Ill-posedness, regularization
  65F30; % Other matrix algorithms
  65F50; % ???
  65R30; % Improperly posed problems
  65R32  % Inverse problems
\end{AMS}

% }}}1

% (Section) Introduction. {{{1

\section{Introduction}\label{sec:int}

The restricted singular value decomposition (RSVD) is a generalization
of the ordinary singular value decomposition (SVD or OSVD) to matrix
triplets.  Applications of the RSVD include, for example, rank
minimization of structured perturbations, unitarily invariant norm
minimization with rank constraints, low rank approximation of
partitioned matrices, restricted total least squares, generalized
Gauss--Markov models, etc.  See, e.g., Zha~\cite{Zha91,Zha92},
De~Moor and Golub~\cite{MG91}, and their references for more
information. The problem is that computing the RSVD accurately and
robustly is challenging, and avoiding numerical pitfalls is hard.  The
goal of the ideas and algorithms presented in this work is to improve
upon existing computation methods in these areas, even though some
numerical challenges remain.

The RSVD is a little-known generalization of the OSVD. Better-known is
``the'' generalized singular value decomposition (GSVD) for matrix
pairs; see, e.g., Bai~\cite{ZB92gsvd}. For now, it suffices to think
of the RSVD as a GSVD for three matrices instead of two, with a formal
definition following in Section~\ref{sec:th}. Since the RSVD and GSVD
are just two out of infinitely many generalizations of the
OSVD~\cite{MZ91}, a more appropriate name for the GSVD is the
``quotient singular value decomposition'' (QSVD). Thus, we adopt 
the mnemonics O-Q-R-SVD as standardized nomenclature for the rest
of the text, as suggested by De~Moor and Golub~\cite{MG89gsvds}.
For more information on the relation between the SVD, QSVD, and RSVD,
see, for example, De~Moor and Golub~\cite[Sec.~2.2.4]{MG91}.

One illustration of the fact that computing the RSVD in a numerically
sound way is not straightforward is Zha's constructive
proof~\cite[Thm.~3.2]{Zha91}, which he describes as unsuitable for
computation. This is because it uses transformations with potentially
ill-conditioned matrices in intermediate steps. Zha addresses this issue
by deriving an implicit Kogbetliantz algorithm~\cite{Zha92}, but this
algorithm lacks a (numerically) stable method for computing $2\times 2$
RSVDs (cf.\ the $2\times 2$ QSVD from Bai and Demmel~\cite{BD93}).
Furthermore, the preprocessing phase of his implicit Kogbetliantz
algorithm requires a sequence of up to four rank decisions, where each
depends on the previous one.  These dependencies, and the fact that rank
determination is an ill-posed problem in floating-point arithmetic, make
the preprocessing prone to errors. For example, it would be
straightforward to construct a matrix triplet where we should have a
clear gap in the singular values for each rank decision in exact
arithmetic, but no longer have any gap (or a gap in the right place) for
the fourth, or even third, rank decision in floating-point arithmetic.
These faulty rank decisions can even show up if we use OSVD instead of,
e.g., QR with pivoting, for the rank decisions.

Chu, De~Lathauwer, and De~Moor~\cite{CLM00} present a QR based
method which does not require a numerically stable $2\times 2$ RSVD.
Still, their method requires a sequence of up to five mutually dependent
rank decisions, and it may also require nonorthonormal transformations
in the preprocessing phase.

Another algorithm to compute the restricted singular values (RSVs) is
due to Drma\v{c}~\cite{Drma00}, who uses both a Jacobi-type iteration
and nonorthonormal transformations.  Despite the latter, the algorithm
still has favorable numerical properties, such as independence of
certain types of diagonal scaling. Drma\v{c} also provides a bound on
the backward error of his method, and discusses when one can expect the
computed singular values to have high relative accuracy. A potential
downside of this method is that it does not compute the ``full'' RSVD.\@
Another issue is that the algorithm requires the assumption that one of
the input matrices is nonsingular, which needs not be true in the
general case.

Like Zha's algorithm, the algorithm in this work centers around an
implicit Kogbetliantz iteration, but with four main innovations over the
existing algorithms.  The first main innovation is a generalized
Schur-form RSVD consisting of a triplet of quasi upper-triangular
matrices that we can compute just with orthonormal transformations. In
particular, this Schur form allows us to skip the postprocessing
necessary to get the full decomposition, while still being useful for
certain applications; see Section~\ref{sec:th} for details.  The second
main innovation is a new preprocessing phase, discussed in
Section~\ref{sec:pre}, that uses fewer transformations and rank
decisions, and has fewer dependencies between the rank decisions. The
third main innovation is a numerically stable $2\times 2$ RSVD algorithm
like Bai and Demmel's backward stable $2\times 2$ QSVD
algorithm~\cite{BD93}. This $2\times 2$ algorithm is a crucial part of
the implicit Kogbetliantz iteration, and we investigate its numerical
properties in exact and floating-point arithmetic in
Sections~\ref{sec:rsvdTwoExact}, \ref{sec:rsvdTwoFloat},
and~\ref{sec:bwdA}. The fourth main innovation is primarily discussed in
Section~\ref{sec:ext} and consists of an alternative scaling of the
restricted singular value triplets. This new scaling  leads to
mathematically and numerically elegant formulas for the computation of
the triplets.

Since rank revealing decompositions do not necessarily need to use
orthonormal transformations, we can combine ideas from Drma\v{c}'s
nonorthogonal algorithm with the new preprocessing phase and the
implicit Kogbetliantz iteration from this work.
Section~\ref{sec:nonorth} contains an overview of how this hybrid
algorithm would work. While the use of nonorthogonal transformations in
the earlier phases is optional, the postprocessing phase generally
requires nonorthogonal transformations, as we will see in
Section~\ref{sec:post}.

The numerical tests in Section~\ref{sec:num} consist of three parts. The
first part is dedicated to verifying the numerical properties of the
$2\times 2$ RSVD algorithm. The second part focuses on the rate of
convergence of the implicit Kogbetliantz iteration. The third part
compares the accuracy of the new RSVD method with existing methods, and
also compares the effect of different implementations of the
preprocessing phase on the accuracy.  The results show that the new
$2\times 2$ RSVD is numerically stable, and that the implicit
Kogbetliantz iteration typically converges rapidly and can compute the
RSVs with high accuracy. In fact, for ill-conditioned matrices the
accuracy the new method can exceed that of existing methods by several
orders of magnitude.

Throughout this work we use uppercase letters for matrices, lowercase
letters for their elements and for scalars, and bold lowercase letters
for vectors. The matrix $I$ is always an identity matrix, $\vec e_j$ the
$j$th canonical basis vector, $0$ a zero matrix or scalar, and $\times$
an arbitrary matrix or scalar that can be nonzero.  These quantities
always have a size that is appropriate for the context in which they are
used. In some places we use Matlab notation when stacking block matrices
vertically; for example, $[A;\; C] = [A^T\; C^T]^T$.  As usual,
$\|\cdot\|_p$ denotes the (induced) $p$-norm for $1 \le p \le \infty$,
and we sometimes drop the index for $p=2$ when no other norms are used
in the same context. Other norms that we use are the Frobenius norm
$\|\cdot\|_F$ and the max norm $\|\cdot\|_{\max}$, where the latter
equals the largest magnitude of any element in the matrix.  Finally, the
absolute value notation $|\cdot|$ acts elementwise on matrices.

% }}}1

% (Section) Background and theory. {{{1

\section{Background and theory}\label{sec:th}

The definition of the RSVD given by the theorem below combines the ones
from Zha~\cite[Lem.~4.1]{Zha91} and De~Moor and
Golub~\cite[Thm.~1]{MG91}, but with some small changes. In particular,
some of the blocks in \eqref{eq:sigmas} are in a different position,
which helps with the computation of the decomposition, and some of the
trivial triplets are counted differently.  Furthermore, the theorem
below and the theory and algorithms in the rest of this work focus on
the real case for simplicity and clarity, although we can compute the
RSVD of a triplet of complex matrices too.

% (Theorem) RSVD. {{{

\newlength{\DIndexWidth}
\settowidth{\DIndexWidth}{\hbox{$D_\alpha$}}
\newlength{\ZeroWidth}
\settowidth{\ZeroWidth}{\hbox{$0$}}

% Restricted Singular Value Decomposition
\begin{theorem}[RSVD --- Diagonal Form]\label{thm:rsvd}
  Let $A \in \bbR^{p \times q}$, $B \in \bbR^{p \times m}$, and $C \in
  \bbR^{n \times q}$, and define $r_{A} = \rank A$, $r_{B} = \rank B$,
  $r_{C} = \rank C$, $r_{AB} = \rank \left[ A\; B \right]$, $r_{AC} =
  \rank \left[ A;\; C \right]$, and $r_{ABC} = \rank
  \minimat{A}{B}{C}{0}$. Then the triplet of matrices $(A,B,C)$ can be
  factorized as $A = X^{-T} \Sigma_\alpha Y^{-1}$, $B = X^{-T}
  \Sigma_\beta U^T$, and $C = V \Sigma_\gamma Y^{-1}$, where $X \in
  \bbR^{p\times p}$ and $Y \in \bbR^{q\times q}$ are nonsingular, and $U
  \in \bbR^{m\times m}$ and $V \in \bbR^{n\times n}$ are orthonormal.
  Furthermore, $\Sigma_\alpha$, $\Sigma_\beta$, and $\Sigma_\gamma$ are
  quasi-diagonal\footnote{A quasi-diagonal matrix, in this work, is a
  matrix that is diagonal after removing all zero rows and columns.}
  with nonnegative entries, and are such that
  % $\minimat{\Sigma_\alpha}{\Sigma_\beta}{\Sigma_\gamma}{}$
  {\scriptsize $\arraycolsep=0.25\arraycolsep
    \def\arraystretch{1}
    \left[\begin{array}{c|c}
      \Sigma_\alpha & \Sigma_\beta \\ \hline \Sigma_\gamma
    \end{array}\right]$}
  can be written as
  \begin{equation}\label{eq:sigmas}
    \begin{array}{cccccccccccc}
      &
      \multicolumn{10}{c}{\hspace{-2.0\arraycolsep}
        \begin{array}{cccccccccc}
          \text{\makebox[\ZeroWidth]{\small $q_1$}} &
          \text{\makebox[\ZeroWidth]{\small $q_2$}} &
          \text{\makebox[\DIndexWidth]{\small $q_3$}} &
          \text{\makebox[\ZeroWidth]{\small $q_4$}} &
          \text{\makebox[\ZeroWidth]{\small $q_5$}} &
          \text{\makebox[\ZeroWidth]{\small $q_6$}} &
          \text{\makebox[\DIndexWidth]{\small $m_1$}} &
          \text{\makebox[\ZeroWidth]{\small $m_2$}} &
          \text{\makebox[\ZeroWidth]{\small $m_3$}} &
          \text{\makebox[\ZeroWidth]{\small $m_4$}}
        \end{array}
      }
      \\
      \begin{array}{c}
        \text{\small $p_1$} \\
        \text{\small $p_2$} \\
        \text{\small $p_3$} \\
        \text{\small $p_4$} \\
        \text{\small $p_5$} \\
        \text{\small $p_6$} \\
        \text{\small $n_1$} \\
        \text{\small $n_2$} \\
        \text{\small $n_3$} \\
        \text{\small $n_4$}
      \end{array}
      &
      \multicolumn{10}{c}{\hspace{-2.0\arraycolsep}\left[
        \begin{array}{cc:cccc|ccc:c}
          0 & 0 & D_\alpha & 0 & 0 & 0 & D_\beta & 0 & 0 & 0 \\
          0 & 0 & 0        & I & 0 & 0 &       0 & I & 0 & 0 \\
          0 & 0 & 0        & 0 & I & 0 &       0 & 0 & 0 & 0 \\
          0 & 0 & 0        & 0 & 0 & I &       0 & 0 & 0 & 0 \\
          \hdashline
          0 & 0 & 0        & 0 & 0 & 0 &       0 & 0 & 0 & I \\
          0 & 0 & 0        & 0 & 0 & 0 &       0 & 0 & 0 & 0 \\
          \hline
          0 & I & 0        & 0 & 0 & 0 \\\cdashline{1-6}
          0 & 0 & D_\gamma & 0 & 0 & 0 \\
          0 & 0 & 0        & 0 & I & 0 \\
          0 & 0 & 0        & 0 & 0 & 0 \\
        \end{array}
      \right]}
      &\hspace{-2.0\arraycolsep}
      \begin{array}{l}
        \text{\small $p_1 = q_3 = r_{ABC} + r_{A} - r_{AB} - r_{AC}$} \\
        \text{\small $p_2 = q_4 = r_{AC} + r_{B} - r_{ABC}$} \\
        \text{\small $p_3 = q_5 = r_{AB} + r_{C} - r_{ABC}$} \\
        \text{\small $p_4 = q_6 = r_{ABC} - r_{B} - r_{C}$} \\
        \text{\small $p_5 = r_{AB} - r_{A}$, $q_2 = r_{AC} - r_{A}$} \\
        \text{\small $p_6 = p - r_{AB}$, $q_1 = q - r_{AC}$} \\
        \text{\small $n_1 = q_2$, $m_4 = p_5$} \\
        \text{\small $n_2 = m_1 = p_1 = q_3$} \\
        \text{\small $n_3 = p_3 = q_5$, $m_2 = p_2 = q_4$} \\
        \text{\small $n_4 = n - r_{C}$, $m_3 = m - r_{B}$},
      \end{array}
    \end{array}
  \end{equation}
  where $D_\alpha = \diag(\alpha_1, \dots, \alpha_{p_1})$, $D_\beta =
  \diag(\beta_1, \dots, \beta_{p_1})$, and $D_\gamma = \diag(\gamma_1,
  \dots, \gamma_{p_1})$. Moreover, $\alpha_j$, $\beta_j$, and $\gamma_j$
  are scaled such that $\alpha_j^2 + \beta_j^2 \gamma_j^2 = 1$ for $i =
  1$, \dots, $p_1$. Besides the $p_1$ triplets $(\alpha_j,
  \beta_j, \gamma_j)$, there are $p_2$ triplets $(1,1,0)$, $p_3$
  triplets $(1,0,1)$, $p_4$ triplets $(1,0,0)$, and $\min \{ p_5, q_2
  \}$ triplets $(0,1,1)$.  This leads to a total of $p_1 + p_2 + p_3 +
  p_4 + \min \{ p_5, q_2 \} = r_A + \min \{ p_5, q_2 \} = \min \{
  r_{AB}, r_{AC} \}$ regular triplets of the form $(\alpha, \beta,
  \gamma)$ with $\alpha^2 + \beta^2 \gamma^2 = 1$. Each of these
  triplets corresponds to a \emph{restricted singular value} $\sigma =
  \alpha / (\beta \gamma)$, where the result is $\infty$ by convention
  if $\alpha \neq 0$ and $\beta \gamma = 0$.
  Finally, the triplet has a right (or column) trivial block of
  dimension $q_1 = \dim(\mathcal N(A) \cap \mathcal N(C))$, and
  a left (or row) trivial block of dimension $p_6 = \dim(\mathcal N(A^T)
  \cap \mathcal N(B^T))$.
\end{theorem}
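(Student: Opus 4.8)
\emph{Proof sketch.}\quad The plan is to produce the factorization by an explicit finite sequence of transformations, using the fact that the three required identities $A=X^{-T}\Sigma_{\alpha}Y^{-1}$, $B=X^{-T}\Sigma_{\beta}U^{T}$, $C=V\Sigma_{\gamma}Y^{-1}$ are together equivalent to the single block reduction
\[
  \left[\begin{array}{cc} X^{T} & 0\\ 0 & V^{T}\end{array}\right]
  \left[\begin{array}{cc} A & B\\ C & 0\end{array}\right]
  \left[\begin{array}{cc} Y & 0\\ 0 & U\end{array}\right]
  =
  \left[\begin{array}{cc} \Sigma_{\alpha} & \Sigma_{\beta}\\ \Sigma_{\gamma} & 0\end{array}\right],
\]
in which $X\in\bbR^{p\times p}$ and $Y\in\bbR^{q\times q}$ may be chosen to be \emph{any} nonsingular matrices, $U\in\bbR^{m\times m}$ and $V\in\bbR^{n\times n}$ must be orthonormal, and the $(2,2)$ block is preserved as zero automatically. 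Hence it suffices to drive the left-hand block matrix to the pattern of \eqref{eq:sigmas} by a transformation of this shape and then to read off the resulting block sizes.

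First I would peel off the two trivial blocks. Choosing the last $q_{1}$ columns of $Y$ to be a basis of $\mathcal{N}(A)\cap\mathcal{N}(C)=\mathcal{N}([A;\; C])$ annihilates the corresponding columns of both $AY$ and $CY$, and choosing the last $p_{6}$ columns of $X$ to be a basis of $\mathcal{N}(A^{T})\cap\mathcal{N}(B^{T})=\mathcal{N}([A\; B]^{T})$ annihilates the corresponding rows of both $X^{T}A$ and $X^{T}B$; completing each choice to a full basis keeps $X$ and $Y$ nonsingular, and these dimensions are $q-r_{AC}$ and $p-r_{AB}$, as claimed. Next I would spend the orthonormal freedom: taking the first $r_{C}$ columns of $V$ to span $\mathcal{R}(C)$ yields $V^{T}C=[C_{1};\; 0]$ with $C_{1}\in\bbR^{r_{C}\times q}$ of full row rank, and taking the first $r_{B}$ columns of $U$ to span $\mathcal{R}(B^{T})$ yields $BU=[B_{1}\;\; 0]$ with $B_{1}\in\bbR^{p\times r_{B}}$ of full column rank. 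After these steps the problem has collapsed to the regular sub-triplet built from $A$, the full-column-rank $B_{1}$, and the full-row-rank $C_{1}$.

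For the regular part I would follow the classical constructive arguments of Zha~\cite[Thm.~3.2, Lem.~4.1]{Zha91} and De~Moor and Golub~\cite[Thm.~1]{MG91}: alternately apply the QSVD for matrix pairs and further orthonormal row and column compressions to the pencils formed from $A$, $B_{1}$, and $C_{1}$, so as to separate, in the codomain $\bbR^{p}$, the range of $B$ from its orthogonal complement, and in the domain $\bbR^{q}$, the row space of $C$ from its kernel, and then to diagonalize the resulting pieces of $A$. This produces the block layout of \eqref{eq:sigmas}, the four families of constant triplets $(1,1,0)$, $(1,0,1)$, $(1,0,0)$, $(0,1,1)$, and the remaining diagonal entries $\alpha_{j},\beta_{j},\gamma_{j}>0$ with restricted singular value $\sigma_{j}=\alpha_{j}/(\beta_{j}\gamma_{j})$. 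A final positive diagonal rescaling of the $j$th row channel by $\xi_{j}$ (absorbed into $X$) and the $j$th column channel by $\eta_{j}$ (absorbed into $Y$) sends $(\alpha_{j},\beta_{j},\gamma_{j})$ to $(\xi_{j}\eta_{j}\alpha_{j},\,\xi_{j}\beta_{j},\,\eta_{j}\gamma_{j})$, which fixes $\sigma_{j}$ while allowing $\xi_{j}\eta_{j}$ to be chosen so that $\alpha_{j}^{2}+\beta_{j}^{2}\gamma_{j}^{2}=1$; the signs of $\beta_{j}$, $\gamma_{j}$, and $\alpha_{j}$ are then absorbed into $U$, $V$, and $\xi_{j}$ to make all entries nonnegative.

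It remains to verify the integer identities, which I would do by counting nonzero rows and columns directly in \eqref{eq:sigmas}: this expresses each of $r_{A}$, $r_{B}$, $r_{C}$, $r_{AB}$, $r_{AC}$, $r_{ABC}$ as a nonnegative combination of the $p_{i}$ and $q_{i}$ --- for instance $r_{A}=p_{1}+p_{2}+p_{3}+p_{4}$, $r_{AB}=r_{A}+p_{5}$, $r_{AC}=r_{A}+q_{2}$, and $r_{ABC}=2p_{1}+p_{2}+p_{3}+p_{4}+p_{5}+q_{2}$ (the genuine-triplet block being counted twice) --- and inverting this consistent linear system returns precisely $p_{1}=r_{ABC}+r_{A}-r_{AB}-r_{AC}$, $p_{2}=r_{AC}+r_{B}-r_{ABC}$, and the remaining formulas, together with the couplings $n_{i}$ and $m_{i}$ listed in \eqref{eq:sigmas}. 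Because a $(0,1,1)$ triplet uses up one ``$B$-excess'' direction (there are $p_{5}=r_{AB}-r_{A}$ of these) and one ``$C$-excess'' direction (there are $q_{2}=r_{AC}-r_{A}$), exactly $\min\{p_{5},q_{2}\}$ of them occur, and the regular-triplet total is $p_{1}+p_{2}+p_{3}+p_{4}+\min\{p_{5},q_{2}\}=r_{A}+\min\{p_{5},q_{2}\}=\min\{r_{AB},r_{AC}\}$. The main obstacle is the bookkeeping: the reductions must be ordered so that no later transformation spoils structure already created, the block indices generated by the construction must be matched against the closed-form rank expressions, and a convention for placing the $|p_{5}-q_{2}|$ leftover excess directions (and, more generally, the exact block positions in \eqref{eq:sigmas}) has to be fixed --- this is precisely where the present statement departs from the earlier ones of Zha and of De~Moor and Golub, and where most of the care is required.
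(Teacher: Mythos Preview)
The paper does not supply its own proof of this theorem: it is stated as a known result, explicitly described as combining Zha~\cite[Lem.~4.1]{Zha91} and De~Moor and Golub~\cite[Thm.~1]{MG91} with cosmetic changes (block placement, triplet conventions, and the scaling $\alpha^2+\beta^2\gamma^2=1$). Your sketch is therefore not competing against a proof in the paper but against the cited literature, and it is in fact the standard route taken there: strip the trivial null-space blocks, compress $B$ and $C$ orthonormally, reduce the regular core via QSVD-type steps, and recover the block sizes by rank bookkeeping. Your rank identities and the rescaling argument for the normalization are correct, so the proposal is sound and aligned with what the paper relies on.
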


% }}}

% (Remark) {{{

\begin{remark}
  Zha~\cite[Sec.~4]{Zha91} and De~Moor and
  Golub~\cite[Sec.~2.1]{MG91} differ in the triplets that they list.
  In particular, the former does not list any of the triplets $(0,0,0)$,
  $(0,0,1)$, and $(0,1,0)$; whereas the latter do not list $(0,1,1)$,
  but instead the equivalent of $p_5$ triplets $(0,1,0)$ and $q_2$
  triplets $(0,0,1)$. Theorem~\ref{thm:rsvd} adopts Zha's definition of
  $(0,1,1)$, because of \cite[Thm.~4.2]{Zha91} and the simple example
  $(A,B,C) = (0,1,1)$, and avoids problematic definitions of trivial
  triplets by listing the left and right trivial blocks.
\end{remark}

% }}}

% (Remark) Triplet scaling. {{{

\begin{remark}
  The typical scaling of the triplets $(\alpha_i, \beta_i, \gamma_i)$ in
  literature is such that $\alpha_i^2 + \beta_i^2 + \gamma_i^2 = 1$,
  rather than $\alpha_i^2 + \beta_i^2 \gamma_i^2 = 1$ as in the theorem
  above.  But we will see in Section~\ref{sec:ext} that the latter
  scaling has theoretical and computational benefits. Besides, if
  $\alpha_i^2 + \beta_i^2 \gamma_i^2 = 1$, then we can compute (as in
  Zha~\cite[Thm.~4.1]{Zha91}) $\widetilde \alpha_i = \alpha_i^2 (1 +
  \alpha_i^2)^{-1/2}$, $\widetilde \beta_i = \beta\gamma_i$, and
  $\widetilde \gamma_i = \alpha_i (1 + \alpha_i^2)^{-1/2}$, so that
  $\widetilde \alpha_i^2 + \widetilde \beta_i^2 + \widetilde \gamma_i^2
  = 1$.
\end{remark}

% }}}

% (Corollary) RSVD. {{{

\begin{corollary}[RSVD --- Triangular Form]\label{thm:rsvdtri}
  Let $X^{-T} = PS$ and $Y^{-1} = TQ^T$, where $P \in \bbR^{p \times p}$
  and $Q \in \bbR^{q \times q}$ are ortho\-normal, and $S \in \bbR^{p
  \times p}$ and $T \in \bbR^{q \times q}$ are nonsingular and upper
  triangular. Then the triplet $(A,B,C)$ can be factorized as $A = P(S
  \Sigma_\alpha T)Q^T$, $B = P(S \Sigma_\beta) U^T$, and $C = V
  (\Sigma_\gamma T)Q^T$.
\end{corollary}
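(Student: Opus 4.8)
The plan is to obtain the triangular form by a direct substitution: take orthonormal--triangular factorizations of the two nonsingular outer factors $X^{-T}$ and $Y^{-1}$ and plug them into the three identities $A = X^{-T}\Sigma_\alpha Y^{-1}$, $B = X^{-T}\Sigma_\beta U^T$, $C = V\Sigma_\gamma Y^{-1}$ furnished by Theorem~\ref{thm:rsvd}. First I would note that the factorizations posited in the statement always exist: a QR factorization gives $X^{-T} = PS$ with $P \in \bbR^{p\times p}$ orthonormal and $S \in \bbR^{p\times p}$ upper triangular, and an RQ factorization gives $Y^{-1} = TQ^T$ with $Q \in \bbR^{q\times q}$ orthonormal and $T \in \bbR^{q\times q}$ upper triangular. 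Since $X$ and $Y$ are nonsingular by Theorem~\ref{thm:rsvd}, so are $X^{-T}$ and $Y^{-1}$; hence $\det S = \pm\det X^{-T} \neq 0$ and likewise for $T$, so both triangular factors have nonzero diagonal and are invertible, exactly as the hypotheses require.

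The second step is the substitution itself. Replacing $X^{-T}$ by $PS$ and $Y^{-1}$ by $TQ^T$ and regrouping gives $A = (PS)\Sigma_\alpha(TQ^T) = P(S\Sigma_\alpha T)Q^T$, $B = (PS)\Sigma_\beta U^T = P(S\Sigma_\beta)U^T$, and $C = V\Sigma_\gamma(TQ^T) = V(\Sigma_\gamma T)Q^T$. The products are conformable since $\Sigma_\alpha$, $\Sigma_\beta$, $\Sigma_\gamma$ have sizes $p\times q$, $p\times m$, $n\times q$ while $S$ is $p\times p$ and $T$ is $q\times q$. The factors $P$ and $Q$ are orthonormal by construction, and $U$ and $V$ are orthonormal by Theorem~\ref{thm:rsvd}, so all the requirements of the corollary are met and the argument is complete.

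Honestly, there is no real obstacle here: the corollary is an immediate consequence of Theorem~\ref{thm:rsvd} together with the textbook existence of QR and RQ factorizations, and the only points needing a little care are conformability of the triple products and the fact that $S$ and $T$ inherit nonsingularity from $X^{-T}$ and $Y^{-1}$. If a subtle point must be named, it is interpretive rather than logical: the label ``Triangular Form'' refers to $S$ and $T$ being upper triangular, not to the middle factors $S\Sigma_\alpha T$, $S\Sigma_\beta$, and $\Sigma_\gamma T$, which in general are neither quasi-diagonal nor quasi-triangular; analysing how the block pattern of~\eqref{eq:sigmas} interacts with upper-triangular $S$ and $T$---and hence why this reduction is a useful stepping stone towards the implicit Kogbetliantz iteration---is the business of the later sections and not of this corollary.
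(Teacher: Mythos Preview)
Your proposal is correct and matches the paper's intent: the paper states this corollary without proof, treating it as an immediate consequence of Theorem~\ref{thm:rsvd} obtained by exactly the QR/RQ substitution you describe. There is nothing to add.
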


% }}}

Suppose that $A$, $B$, and $C$ are nonsingular and have compatible
sizes; then the restricted singular values of the triplet $(A,B,C)$ are
the ordinary singular values of $B^{-1} A C^{-1}$. Just like Drma\v{c}'s
algorithm~\cite{Drma00}, the method described in this work is to look at
the singular values of $CA^{-1}B$ instead. The benefit for more general
matrices is that it suffices to ``extract'' a triplet with a nonsingular
$A$ during the preprocessing, rather than having to extract a triplet
with nonsingular $B$ and $C$ (cf. Zha's algorithm~\cite{Zha92}). It
turns out that this alternative extraction requires fewer
transformations, and more importantly, fewer rank decisions. To see why
we can change our perspective like this, we first need the following
definition of the regular RSVs.

% (Theorem) RSVs as rank minimization problem. {{{

\begin{theorem}[{Zha~\cite[Def.~2.1]{Zha91}}]\label{thm:rsvRank}
  The regular restricted singular values of the matrix triplet $A$, $B$,
  and $C$ can be characterized as
  \begin{equation*}
    \sigma_i = \min_{D} \{ \|D\| : \rank(A + BDC) \le i - 1 \}
    \qquad(i = 1, \dots, r_A + \min \{ p_5, q_2 \}).
  \end{equation*}
  The value $\sigma_i = \infty$ corresponds to the situation that we
  cannot find any matrix $D$ to make the rank of $A + BDC$ less than or
  equal to $i - 1$.
\end{theorem}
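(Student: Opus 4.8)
The plan is to read this characterisation off the diagonal RSVD of Theorem~\ref{thm:rsvd}, turning the rank‑constrained minimisation into an Eckart--Young problem for one small diagonal block. First I would substitute $A = X^{-T}\Sigma_\alpha Y^{-1}$, $B = X^{-T}\Sigma_\beta U^T$, and $C = V\Sigma_\gamma Y^{-1}$, which gives $A + BDC = X^{-T}(\Sigma_\alpha + \Sigma_\beta\widetilde D\Sigma_\gamma)Y^{-1}$ with $\widetilde D := U^T D V$. Since $X^{-T}$ and $Y^{-1}$ are nonsingular, $\rank(A + BDC) = \rank(\Sigma_\alpha + \Sigma_\beta\widetilde D\Sigma_\gamma)$; since $U$ and $V$ are orthonormal, $\|\widetilde D\| = \|D\|$ and $\widetilde D$ runs over all of $\bbR^{m\times n}$ as $D$ does. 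Hence it suffices to prove the statement for the quasi-diagonal triplet $(\Sigma_\alpha,\Sigma_\beta,\Sigma_\gamma)$.

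Next I would exploit the block pattern in \eqref{eq:sigmas}. Carrying out the block product shows that $\Sigma_\beta\widetilde D\Sigma_\gamma$ has nonzero blocks only in the row blocks $p_1,p_2,p_5$ and the column blocks $q_2,q_3,q_5$, so the identity blocks of $\Sigma_\alpha$ sitting in the otherwise-empty columns $q_4$ and $q_6$ and in the otherwise-empty row block $p_3$ are untouched by every $\widetilde D$. Peeling these off one at a time with the Schur-complement identities $\rank\minimat{Y}{0}{Z}{I} = \rank Y + (\text{order of }I)$ and $\rank\minimat{Y}{W}{0}{I} = \rank Y + (\text{order of }I)$ leaves $\rank(\Sigma_\alpha + \Sigma_\beta\widetilde D\Sigma_\gamma) = p_2 + p_3 + p_4 + \rank M$, where (using $n_2 = m_1 = p_1 = q_3$, $n_1 = q_2$, $m_4 = p_5$) $M$ is the $(p_1+p_5)\times(q_2+p_1)$ block matrix with blocks $D_\beta\widetilde D_{m_1,n_1}$, $D_\alpha + D_\beta\widetilde D_{m_1,n_2}D_\gamma$, $\widetilde D_{m_4,n_1}$, and $\widetilde D_{m_4,n_2}D_\gamma$.

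Then I would minimise $\|\widetilde D\|$ subject to $\rank M \le k$, where $k := i - 1 - (p_2 + p_3 + p_4)$. For the lower bound, $\|\widetilde D\| \ge \|\widetilde D_{m_1,n_2}\|$ and $\rank M \ge \rank(D_\alpha + D_\beta\widetilde D_{m_1,n_2}D_\gamma)$ since this is a submatrix of $M$; and because $D_\alpha + D_\beta N D_\gamma = D_\beta(\Sigma + N)D_\gamma$ with $\Sigma := \diag(\alpha_j/(\beta_j\gamma_j))$ and $D_\beta,D_\gamma$ nonsingular (for the finite triplets), its rank equals $\rank(\Sigma + N)$. Eckart--Young then gives $\min\{\|N\| : \rank(\Sigma + N) \le k\} = \sigma_{k+1}(\Sigma)$, the $(k+1)$-th largest of the $\alpha_j/(\beta_j\gamma_j)$ (and $0$ once $k \ge p_1$). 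The matching upper bound follows by taking $\widetilde D$ with all blocks zero except $\widetilde D_{m_1,n_2}$ set equal to the Eckart--Young minimiser, which makes $M$ itself have rank $\le k$ while $\|\widetilde D\| = \|\widetilde D_{m_1,n_2}\|$.

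Undoing the index shift, $\sigma_i$ is the $(i - p_2 - p_3 - p_4)$-th largest $\alpha_j/(\beta_j\gamma_j)$ for $p_2 + p_3 + p_4 < i \le r_A$, it equals $0$ for $r_A < i \le r_A + \min\{p_5,q_2\}$ (achieved by $D = 0$, using $\rank A = r_A$), and for $i \le p_2 + p_3 + p_4$ the constraint set is empty, so $\sigma_i = \infty$; these account respectively for the $p_1$ triplets $(\alpha_j,\beta_j,\gamma_j)$, the $\min\{p_5,q_2\}$ triplets $(0,1,1)$, and the $p_2 + p_3 + p_4$ triplets $(1,1,0)$, $(1,0,1)$, $(1,0,0)$ of Theorem~\ref{thm:rsvd}. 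I expect the block bookkeeping in the second step to be the main obstacle, together with the degenerate triplets where $\beta_j\gamma_j = 0$: there $\alpha_j = \pm 1$, that diagonal entry of $\Sigma + N$ is pinned and $D_\beta$ or $D_\gamma$ is singular, so one has to split the $p_1$ triplets into finite and infinite ones and fold the infinite ones in with the other infinite restricted singular values before invoking Eckart--Young.
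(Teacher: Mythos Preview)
The paper does not prove this statement; it is quoted as a known result from Zha~\cite[Def.~2.1]{Zha91} and used as input to Proposition~\ref{thm:invrsv}. So there is no in-paper proof to compare against.

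Your argument is the standard one and is essentially correct. One minor remark: your final caveat about degenerate triplets with $\beta_j\gamma_j=0$ among the $p_1$ ``generic'' triplets is unnecessary here, since in the block form of Theorem~\ref{thm:rsvd} the $p_2$, $p_3$, $p_4$ blocks already absorb all such degeneracies, and $D_\alpha$, $D_\beta$, $D_\gamma$ are positive diagonal (hence nonsingular). With that observation the Eckart--Young step goes through directly and the bookkeeping you outline suffices.
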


% }}}

Now we can prove the following proposition, which formalizes the idea of
working with $CA^{-1}B$ instead of $B^{-1} A C^{-1}$ for general
matrices.

% (Proposition) Inverse RSV characterization. {{{

\begin{proposition}\label{thm:invrsv}
  Suppose that the matrices $A$, $B$, and $C$ have compatible sizes and
  that the restricted singular values of the triplet $(A,B,C)$ are
  defined as in Theorem~\ref{thm:rsvRank}. Then, for the nonzero
  restricted singular values it holds that
  \begin{equation}\label{eq:invRSV}
    \sigma_i^{-1} = \min_{D} \{ \|D\|
      \mid \rank (D + CA^{\dagger}B) \le r_A - i \}
      % \le \min \{ r_{AB}, r_{AC} \} - i \}.
      \qquad(i = 1, \dots, r_A),
  \end{equation}
  where $A^\dagger$ denotes the Moore--Penrose pseudoinverse of $A$ and
  $\infty^{-1} = 0$ by convention.  The remaining $\min \{ p_5, q_2 \}$
  regular RSVs can be characterized as $0^{-1} = \infty$.
\end{proposition}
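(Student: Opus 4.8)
The plan is to reduce \eqref{eq:invRSV} to a statement about the ordinary singular values of $CA^\dagger B$, and then to read those off from the diagonal RSVD of Theorem~\ref{thm:rsvd}. By the Eckart--Young theorem, $\min_D\{\|D\| : \rank(M+D)\le k\}=\sigma_{k+1}(M)$, the $(k+1)$st largest singular value of $M$ (taken to be $0$ once $k\ge\rank M$). With $M=CA^\dagger B$ and $k=r_A-i$, the right-hand side of \eqref{eq:invRSV} is $\sigma_{r_A-i+1}(CA^\dagger B)$; moreover $\rank(CA^\dagger B)\le\rank A^\dagger=r_A$, so $CA^\dagger B$ has at most $r_A$ nonzero singular values. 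Hence the proposition is equivalent to the assertion that the multiset of the $r_A$ largest singular values of $CA^\dagger B$ (padded with zeros) equals $\{\sigma_1^{-1},\dots,\sigma_{r_A}^{-1}\}$, with $\infty^{-1}=0$; since $\sigma_1\ge\dots\ge\sigma_{r_A}$ forces $\sigma_{r_A}^{-1}\ge\dots\ge\sigma_1^{-1}$, matching the sorted lists gives exactly $\sigma_{r_A-i+1}(CA^\dagger B)=\sigma_i^{-1}$. The remaining $\min\{p_5,q_2\}$ zero restricted singular values then have no finite counterpart, consistent with the convention $0^{-1}=\infty$ because a finite matrix has no infinite singular value.

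When $A$ is nonsingular this is immediate: $CA^\dagger B=CA^{-1}B=(B^{-1}AC^{-1})^{-1}$, and, as observed just before the proposition, the restricted singular values are the ordinary singular values of $B^{-1}AC^{-1}$, whose reciprocals are precisely the singular values of its inverse. For the general triplet I would use $A=X^{-T}\Sigma_\alpha Y^{-1}$, $B=X^{-T}\Sigma_\beta U^T$, $C=V\Sigma_\gamma Y^{-1}$ from Theorem~\ref{thm:rsvd}: since $U$ and $V$ are orthonormal, $CA^\dagger B$ has the same singular values as $\Sigma_\gamma G\Sigma_\beta$, where $G:=Y^{-1}A^\dagger X^{-T}$. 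The Penrose identities $AA^\dagger A=A$ and $A^\dagger AA^\dagger=A^\dagger$ turn into $\Sigma_\alpha G\Sigma_\alpha=\Sigma_\alpha$ and $G\Sigma_\alpha G=G$, so $G$ is a reflexive generalized inverse of $\Sigma_\alpha$. Expanding $\Sigma_\alpha G\Sigma_\alpha=\Sigma_\alpha$ in the block partition of \eqref{eq:sigmas} and using that $D_\alpha$ is invertible (from $r_A=p_1+p_2+p_3+p_4$ one gets $\rank D_\alpha=p_1$, so every $\alpha_j\ne 0$) pins the ``core'' of $G$ to $\diag(D_\alpha^{-1},I,I,I)$ and forces the blocks of $G$ next to the core to vanish. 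Now perform $\Sigma_\gamma G\Sigma_\beta$ blockwise: left multiplication by $\Sigma_\gamma$ keeps only the block-rows of $G$ indexed by $q_2,q_3,q_5$ (scaling the $q_3$-row by $D_\gamma$), and right multiplication by $\Sigma_\beta$ keeps only the block-columns indexed by $p_1,p_2,p_5$ (scaling the $p_1$-column by $D_\beta$). In the setting of interest, where a triplet with nonsingular $A$ has been extracted so that $p_5=q_2=0$ (equivalently, the column space of $B$ lies in that of $A$ and the row space of $C$ lies in that of $A$), the only surviving nonzero block is the core block $G_{q_3 p_1}=D_\alpha^{-1}$, and $\Sigma_\gamma G\Sigma_\beta$ collapses to $D_\gamma D_\alpha^{-1}D_\beta=\diag(\beta_j\gamma_j/\alpha_j)_{j=1}^{p_1}$ bordered by zeros. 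Since $\sigma_j=\alpha_j/(\beta_j\gamma_j)$ its nonzero entries are the finite values $\sigma_j^{-1}$, and because the $r_A$ nonzero restricted singular values are the $p_1$ numbers $\alpha_j/(\beta_j\gamma_j)$ together with $p_2+p_3+p_4$ infinities, padding with zeros makes the $r_A$ largest singular values of $CA^\dagger B$ equal to $\{\sigma_1^{-1},\dots,\sigma_{r_A}^{-1}\}$, completing the argument.

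The step I expect to be the main obstacle is handling $A^\dagger$ under the RSVD. Because $X$ and $Y$ are merely nonsingular, $A^\dagger$ is \emph{not} $Y\Sigma_\alpha^\dagger X^T$ in general; $G$ need only be a reflexive generalized inverse of $\Sigma_\alpha$, and its off-core blocks are genuinely not fixed by the Penrose identities. What saves the computation is that exactly those undetermined blocks are the ones annihilated by the outer multiplications by $\Sigma_\gamma$ and $\Sigma_\beta$ once $p_5=q_2=0$, which is why reducing the general triplet to one with nonsingular $A$ (as the preprocessing does) is the natural hypothesis, and why for such a triplet the one-line argument above already suffices. The remaining ingredients --- Eckart--Young, the blockwise bookkeeping, and the index shift $i\mapsto r_A-i+1$ --- are routine.
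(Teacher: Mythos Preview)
Your route is the paper's route: pass to the diagonal RSVD of Theorem~\ref{thm:rsvd} and reduce the minimization, via Eckart--Young, to the singular values of $D_\gamma D_\alpha^{-1}D_\beta$. The paper compresses this into the single assertion $V^T(CA^\dagger B)U=\Sigma_\gamma\Sigma_\alpha^\dagger\Sigma_\beta$ and then reads off the answer; you make the Eckart--Young step explicit and, more importantly, examine that pseudoinverse identity rather than taking it for granted. (One aside: your ``$A$ nonsingular'' shortcut $CA^{-1}B=(B^{-1}AC^{-1})^{-1}$ silently assumes $B$ and $C$ are square and invertible too; the real work is the RSVD argument that follows.)

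Your caution about $G:=Y^{-1}A^\dagger X^{-T}$ is exactly right and is the substantive difference from the paper. Because $X,Y$ are only nonsingular, $G$ is merely a reflexive generalized inverse of $\Sigma_\alpha$; your block analysis correctly shows that the blocks of $G$ not pinned down by $\Sigma_\alpha G\Sigma_\alpha=\Sigma_\alpha$ sit in the $q_2$-rows and $p_5$-columns, and are therefore killed by $\Sigma_\gamma(\cdot)\Sigma_\beta$ precisely when $p_5=q_2=0$. That hypothesis (equivalently $\operatorname{col}(B)\subseteq\operatorname{col}(A)$ and $\operatorname{row}(C)\subseteq\operatorname{row}(A)$, in particular $A$ nonsingular) is the only case the paper needs downstream, so your restricted proof suffices for the paper's purposes. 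Without it the proposition as stated can actually fail: for $A=\left(\begin{smallmatrix}1&0\\0&0\end{smallmatrix}\right)$, $B=\left(\begin{smallmatrix}1\\1\end{smallmatrix}\right)$, $C=(1\ \ 1)$ one has $r_A=1$ and $\sigma_1=\infty$, yet $CA^\dagger B=1$, so the right-hand side of \eqref{eq:invRSV} at $i=1$ is $1\ne 0=\sigma_1^{-1}$. Thus the ``main obstacle'' you flag is not a technicality but a genuine gap in the paper's one-line argument, and your $p_5=q_2=0$ restriction is the right fix.
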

\begin{proof}
  Suppose that $A$, $B$, and $C$ are decomposed as in
  Theorem~\ref{thm:rsvd} and let $E = V^TDU$; then $V^T (CA^{\dagger}B)
  U = \Sigma_\gamma \Sigma_{\alpha}^{\dagger} \Sigma_\beta$ and $\rank(D
  + CA^{\dagger}B) = \rank(E + \Sigma_\gamma \Sigma_{\alpha}^{\dagger}
  \Sigma_\beta)$. By using the definition of the $\Sigma$s and the fact
  that $\|D\| = \|E\|$, it follows that the minimization
  in~\eqref{eq:invRSV} is equivalent to
  \begin{equation*}
    % \sigma_i^{-1} =
    \min_{E} \left\{ \|E\| \mid \rank
      \left[\begin{smallmatrix}
        E_{11} & E_{12} \\
        E_{21} + D_\gamma D_\alpha^{-1} D_\beta & E_{22} \\
        E_{31} & E_{32}
      \end{smallmatrix}\right]
      % \le \min \{ r_{AB}, r_{AC} \} - i \right\},
      \le r_A - i \right\},
  \end{equation*}
  which equals $\sigma_i^{-1}$ for the nonzero RSVs.  In particular
  $\sigma_i^{-1} = 0$ for $i = 1$, \dots, $p_2 + p_3 + p_4$, and
  $\sigma_i^{-1} > 0$ for $i = p_2 + p_3 + p_4 + 1, \dots, r_A$.
\end{proof}

% }}}

We can interpret the proposition above as a generalization of
Zha~\cite[Cor~4.1]{Zha91}, but it is also related to the analysis of
generalized Schur complements in De~Moor and
Golub~\cite[Sec.~3.2.1]{MG91}. An important observation is that we do
not need the full RSVD to compute $CA^\dagger B$. In fact, the outputs
of the new algorithm after the preprocessing phase and the implicit
Kogbetliantz iteration are the matrices $P$, $Q$, $U$, and $V$, and the
products $P^T\!AQ$, $P^T\!BU$, and $V^T\!CQ$, which are such that
\begin{equation*}
  (V^T\!CQ) (P^T\!AQ)^\dagger (P^T\!BU)
  = \Sigma_\gamma \Sigma_\alpha^\dagger \Sigma_\beta
\end{equation*}
is quasi-diagonal and easily determined.  The postprocessing is only
necessary to get the individual factors $S$ and $T$, and it depends on
the application if we need those. This suggest the following
decomposition, which can be thought of as kind of generalized Schur
decomposition like the QZ decomposition for generalized eigenvalue
problems.

% (Theorem) Generalized Schur form. {{{

\begin{theorem}[RSVD --- Generalized Schur form]\label{thm:schurform}
  Let $A \in \mathbb{R}^{p \times q}$, $B \in \mathbb{R}^{p \times m}$,
  and $C \in \mathbb{R}^{n \times q}$; then there exist orthonormal
  matrices $P$, $Q$, $U$, and $V$, such that
  % equations \eqref{eq:schurforma}--\eqref{eq:schurformc}
  \begingroup
  \allowdisplaybreaks
  \begin{align}%\label{eq:schurform}
    \compressedmatrices
      P^T\!AQ &=\;
      \bordermatrix[{[]}]{%
          & q_1 & q_2 & q_3    & q_4    & q_5    \cr
      p_1 & 0   & 0   & A_{13} & A_{14} & A_{15} \cr
      p_2 & 0   & 0   & 0      & A_{24} & A_{25} \cr
      p_3 & 0   & 0   & 0      & 0      & A_{35} \cr
      p_4 & 0   & 0   & 0      & 0      & 0      \cr
      p_5 & 0   & 0   & 0      & 0      & 0      \cr
      },\label{eq:schurforma}
      \\
      P^T\!BU &=\;
      \bordermatrix[{[]}]{%
          & m_1 & m_2    & m_3    & m_4    \cr
      p_1 & 0   & B_{12} & B_{13} & B_{14} \cr
      p_2 & 0   & 0      & B_{23} & B_{24} \cr
      p_3 & 0   & 0      & 0      & B_{34} \cr
      p_4 & 0   & 0      & 0      & B_{44} \cr
      p_5 & 0   & 0      & 0      & 0      \cr
      },\label{eq:schurformb}
      \\
      V^T\!CQ &=\;
      \bordermatrix[{[]}]{%
          & q_1 & q_2    & q_3    & q_4    & q_5    \cr
      n_1 & 0   & C_{12} & C_{13} & C_{14} & C_{15} \cr
      n_2 & 0   & 0      & 0      & C_{24} & C_{25} \cr
      n_3 & 0   & 0      & 0      & 0      & C_{35} \cr
      n_4 & 0   & 0      & 0      & 0      & 0      \cr
      },\label{eq:schurformc}
  \end{align}
  \endgroup
  where $A_{13}$, $A_{24}$, $A_{34}$, $B_{44}$, and $C_{12}$ are
  nonsingular and upper triangular; $B_{23}$ and $C_{24}$ are square and
  upper triangular; and $B_{12}$ and $C_{35}$ are upper trapezoidal with
  $p_1 \ge m_2$ and $n_3 \le q_5$, respectively. Here,
  upper trapezoidal with the given dimensions means that $B_{12}$ and
  $C_{35}$ are structured as
  \begin{equation*}
    \compressedmatrices
    % \begin{bmatrix}
    %   \times & \times \\
    %   \times & \times \\
    %   & \times
    % \end{bmatrix}
    \begin{bmatrix}
      \times & \cdots & \times \\
      \vdots &        & \vdots \\
      \times & \cdots & \times \\
            & \ddots & \vdots \\
            &        & \times
    \end{bmatrix}
    \quad\text{and}\quad
    % \begin{bmatrix}
    %   \times & \times & \times \\
    %   & \times & \times
    % \end{bmatrix},
    \begin{bmatrix}
      \times & \cdots & \times & \cdots & \times \\
            & \ddots & \vdots &        & \vdots \\
            &        & \times & \cdots & \times
    \end{bmatrix},
  \end{equation*}
  respectively.  Furthermore, the matrices $A_{24}$, $B_{23}$, and
  $C_{24}$ are such that $C_{24} A_{24}^{-1} B_{23} = \Sigma$, where
  $\Sigma$ is diagonal with nonnegative entries.
\end{theorem}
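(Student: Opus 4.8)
The plan is to derive the generalized Schur form from the diagonal form of Theorem~\ref{thm:rsvd} together with the triangular form of Corollary~\ref{thm:rsvdtri}, rather than to construct it from scratch. From Corollary~\ref{thm:rsvdtri} we obtain orthonormal $P_0$, $Q_0$ and nonsingular upper-triangular $S$, $T$ with $A = P_0(S\Sigma_\alpha T)Q_0^T$, $B = P_0(S\Sigma_\beta)U^T$, and $C = V(\Sigma_\gamma T)Q_0^T$, so the whole statement reduces to analysing the three matrices $S\Sigma_\alpha T$, $S\Sigma_\beta$, and $\Sigma_\gamma T$, and then absorbing a fixed block permutation of the block rows and columns into $P_0$, $Q_0$, $U$, $V$ to match the exact layout drawn in \eqref{eq:schurforma}--\eqref{eq:schurformc}. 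There are three things to verify: (i) the three products, after regrouping, show the asserted staircase/trapezoidal zero pattern; (ii) the blocks declared nonsingular upper triangular (and the ones declared square upper triangular or upper trapezoidal) indeed have that shape; and (iii) $C_{24}A_{24}^{-1}B_{23} = \Sigma$ with $\Sigma$ diagonal and nonnegative.

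For (i)--(ii), the key observation is that $\Sigma_\alpha$, $\Sigma_\beta$, $\Sigma_\gamma$ in \eqref{eq:sigmas} are quasi-diagonal ``staircases'' whose nonzero blocks are either identity blocks or the diagonal blocks $D_\alpha$, $D_\beta$, $D_\gamma$; multiplying such a staircase on the left and/or the right by an upper-triangular matrix yields a block upper-triangular matrix whose diagonal-position blocks are of the form (principal block of $S$)$\,\cdot\,$(identity or $D_\bullet$)$\,\cdot\,$(principal block of $T$). Such a block is upper triangular, and it is nonsingular exactly when its $D_\bullet$ factor is. The only point needing care is that $D_\alpha$ is nonsingular, i.e.\ that every $\alpha_j \neq 0$: since $\rank A = \rank \Sigma_\alpha$ equals the number of nonzero $\alpha_j$ plus $p_2 + p_3 + p_4$, and since $p_1 + p_2 + p_3 + p_4 = r_A$ by the index formulas in Theorem~\ref{thm:rsvd}, all $p_1$ of the $\alpha_j$ must be nonzero. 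Squareness of the blocks declared square is then exactly the batch of dimension identities $p_1 = q_3$, $p_2 = q_4$, $n_1 = q_2$, $p_5 = m_4$, etc.\ from \eqref{eq:sigmas}, and the upper-trapezoidal shapes of $B_{12}$ and $C_{35}$ (with $p_1 \ge m_2$ and $n_3 \le q_5$) come from an upper-triangular factor meeting a stacked ``$D_\bullet$-on-top-of-identity'' column block, the inequalities again being dimension counts in \eqref{eq:sigmas}.

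Step (iii) is where the restricted singular values appear, and it is essentially a telescoping identity. The blocks $A_{24}$ and $B_{23}$ share their \emph{left} triangular factor (a common principal block of $S$, ultimately because $A$ and $B$ share the left factor $P_0$), while $A_{24}$ and $C_{24}$ share their \emph{right} triangular factor (a common principal block of $T$, because $A$ and $C$ share the right factor $Q_0$). Writing $A_{24} = S_1 D_\alpha T_1$, $B_{23} = S_1 D_\beta$, and $C_{24} = D_\gamma T_1$, the triangular factors cancel, $C_{24}A_{24}^{-1}B_{23} = D_\gamma T_1 T_1^{-1} D_\alpha^{-1} S_1^{-1} S_1 D_\beta = D_\gamma D_\alpha^{-1} D_\beta$, which is diagonal with nonnegative entries $\beta_j\gamma_j/\alpha_j = \sigma_j^{-1}$; this is precisely the Schur-complement statement of Proposition~\ref{thm:invrsv}. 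Setting $\Sigma := D_\gamma D_\alpha^{-1} D_\beta$ completes this part.

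The step I expect to be the real obstacle is the combinatorial bookkeeping in (i): writing down an explicit regrouping that carries the ten-block layout of \eqref{eq:sigmas} onto the (at most) five-block layouts of \eqref{eq:schurforma}--\eqref{eq:schurformc}, and then checking \emph{every} asserted zero block and shape constraint. A related subtlety is that a nontrivial block permutation applied to $P_0$ and $Q_0$ will in general destroy the upper-triangularity of $S$ and $T$; one must therefore either choose the target block order so that it only rearranges blocks that do not ``see'' one another through $S$ or $T$ (so triangularity is preserved), or re-triangularize with extra orthonormal rotations while verifying that this leaves intact the zero patterns already obtained in the other two matrices. This last point is, in spirit, the same difficulty that the implicit Kogbetliantz iteration handles numerically; for the existence claim it is sidestepped by carrying out the whole argument in the exact diagonal form, where the blocks that matter are already diagonal.
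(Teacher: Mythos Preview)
Your approach differs substantially from the paper's, and there is a genuine gap. The paper does not derive the Schur form from the diagonal form at all: the block structure in \eqref{eq:schurforma}--\eqref{eq:schurformc} is obtained constructively by the preprocessing phase of Section~\ref{sec:pre} (a short sequence of compressions and QR/RQ factorizations), and the remaining claim that $C_{24}A_{24}^{-1}B_{23}$ can be made diagonal while keeping all three blocks upper triangular is handled by a \emph{limit argument}. Since $B_{23}$ is only required to be square upper triangular (possibly singular), one cannot simply compute the SVD of $C_{24}A_{24}^{-1}B_{23}$ and back-solve for orthonormal $\widetilde P,\widetilde Q$; instead the paper approximates $B_{23}$ by nonsingular $B_k$, takes SVDs, and extracts a convergent subsequence via Bolzano--Weierstrass.

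The gap in your plan is the block matching in step~(i), and it is not just bookkeeping. The $p_i,q_i,m_i,n_i$ of the Schur form are \emph{not} those of Theorem~\ref{thm:rsvd} (the paper says so explicitly just after the proof), and the central Schur block $A_{24}$ is in general larger than the diagonal-form $D_\alpha$ block: because $B_{23}$ and $C_{24}$ are allowed to be singular, $\Sigma$ may have zeros on its diagonal, so the triplets $(1,1,0)$, $(1,0,1)$, $(1,0,0)$ that the diagonal form segregates into separate blocks are \emph{merged} into the Schur form's $(A_{24},B_{23},C_{24})$. Concretely, in the triangular form the block $(\Sigma_\gamma T)_{23}=D_\gamma T_{33}$ is nonzero, yet every five-block column regrouping that makes $S\Sigma_\alpha T$ match the staircase of~\eqref{eq:schurforma} places a nonzero at the position where~\eqref{eq:schurformc} demands $C_{23}=0$ (and symmetrically for $B$). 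No block permutation repairs this without either destroying another required zero or making a declared-square block rectangular; the ``re-triangularize and check nothing else breaks'' escape you mention is precisely the hard part of the theorem, and even then you would still need something like the paper's compactness step to handle singular $B_{23}$. Your telescoping identity in~(iii) is correct for the sub-block it describes, but that sub-block is not the Schur form's $(A_{24},B_{23},C_{24})$ in general, so it proves the stronger (and here irrelevant) statement that a \emph{nonsingular} diagonal $\Sigma$ exists on a smaller block.
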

\begin{proof}
  The structure of the matrices $P^T\!AQ$, $P^T\!BU$, and $V^T\!CQ$
  follows from the preprocessing phase from Section~\ref{sec:pre}.
  % For the claim that $C_{24} A_{24}^{-1} B_{23} = \Sigma$, we can
  % invoke, e.g., Corollary~\ref{thm:rsvdtri}.
  %
  For the claim that $C_{24} A_{24}^{-1} B_{23} = \Sigma$, we can use
  the following limit argument.  Suppose that $\widehat P$, $\widehat
  Q$, $\widehat U$, and $\widehat V$ are such that $\widehat P^T\! A
  \widehat Q$, $\widehat P^T\! B \widehat U$, and $\widehat V^T\! C
  \widehat Q$ have the structure from
  \eqref{eq:schurforma}--\eqref{eq:schurformb}, but that $C_{24}
  A_{24}^{-1} B_{23}$ is not yet diagonal. Then let $\{ B_k \}$ be a
  bounded sequence of nonsingular upper-triangular matrices that
  converge to $B_{23}$. For each $k$, let $\widetilde V_k^T (C_{24}
  A_{24}^{-1} B_k) \widetilde U_k = \Sigma_k$ be an SVD of $C_{24}
  A_{24}^{-1} B_k$.  Furthermore, let $\widetilde P_k$ and $\widetilde
  Q_k$ be orthonormal matrices such that $\widetilde P_k^T B_k
  \widetilde U_k$ and $\widetilde P_k^T A_{23} \widetilde Q_k$ are upper
  triangular, respectively. Then $C_{24} = \Sigma_k (\widetilde P_k^T
  B_k \widetilde U_k)^{-1} (\widetilde P_k^T A_{24} \widetilde Q_k)$ is
  also upper triangular. Using the Bolzano--Weierstrass theorem, we know
  that the bounded sequence $\{(\widetilde P_k, \widetilde Q_k,
  \widetilde U_k, \widetilde V_k, \Sigma_k)\}$ has a converging
  subsequence
  \begin{equation*}
    \lim_{i \to \infty} (\widetilde P_{k_i}, \widetilde Q_{k_i},
      \widetilde U_{k_i}, \widetilde V_{k_i}, \Sigma_{k_i})
    = (\widetilde P, \widetilde Q, \widetilde U, \widetilde V, \Sigma).
  \end{equation*}
  It is easy to show that $\widetilde P$, $\widetilde Q$, $\widetilde
  U$, and $\widetilde V$ are orthonormal and that $\widetilde P^T\!
  A_{24} \widetilde Q$, $\widetilde P^T\! B_{23} \widetilde U$, and
  $\widetilde V^T\! C_{24} \widetilde Q$ are upper triangular, and
  satisfy $\widetilde V^T\!C_{24} A_{24}^{-1} B_{23} \widetilde U =
  \Sigma$, where $\Sigma$ is a diagonal matrix with nonnegative entries.
  Hence, the products
  \begin{equation*}
    \begin{split}
      P &= \widehat P \diag(I, \widetilde P, I, I, I), \aligncr
      U &= \widehat U \diag(I, I, \widetilde U, I), \\
      Q &= \widehat Q \diag(I, I, I, \widetilde Q, I), \aligncr
      V &= \widehat V \diag(I, \widetilde V, I, I),
    \end{split}
  \end{equation*}
  are the sought after orthonormal transformations.
  %
  % Alternatively, suppose that $\{ B_k \}$ is a sequence of nonsingular
  % upper-triangular matrices that converge to $B_{23}$, and suppose that
  % $\{ C_k \}$ is a sequence of nonsingular upper-triangular matrices
  % that converge to $C_{24}$. Then for each $k$ we can compute
  % orthonormal $U_k$ and $V_k$ such that $V_k^T C_k A_{24}^{-1} B_k U_k =
  % \Sigma_k$, using the SVD. Now use a QR decomposition to compute $Q_k$
  % such that $V_k^T C_k Q_k$ is upper triangular, and use an RQ
  % decomposition to compute $P_k$ such that $P_k^T B_k U_k$ is upper
  % triangular. Then $P_k^T A_{24} Q_k = (P_k^T B_k U_k) \Sigma^{-1}
  % (V_k^T C_k Q_k)$ must be upper triangular too.
\end{proof}

% }}}

The $P$, $Q$, $U$, $V$, $p_i$, $q_i$, $m_i$, and $n_i$ from the above
theorem are not necessarily equal to their counterparts from
Theorem~\ref{thm:rsvd} and Corollary~\ref{thm:rsvdtri}.  The main
benefit of the Schur-form RSVD is that we can compute it with only
orthonormal transformations and at most three rank decisions, while we
can still use it to compute, e.g., $CA^\dagger B$ and the RSVs. The
computation of the Schur form is the subject of later sections, but to
see how we can use it, consider the following proposition first.

% (Proposition) {{{

\begin{proposition}\label{thm:schursparse}
  Let $A$, $B$, $C$, $P$, $Q$, $U$, and $V$ be as in
  Theorem~\ref{thm:schurform}; then there exist nonsingular
  upper-triangular matrices $S$ and $T$ so that $S^{-1} (P^T\!AQ)
  T^{-1}$, $S^{-1} (P^T\!BU)$, and $(V^T\!CQ) T^{-1}$ have the form
  \begin{equation}\label{eq:schursparse}
    \compressedmatrices
    \begin{bmatrix}
      0 & 0 & I & 0      & 0 \\
      0 & 0 & 0 & A_{24} & 0 \\
      0 & 0 & 0 & 0      & I \\
      0 & 0 & 0 & 0      & 0 \\
      0 & 0 & 0 & 0      & 0
    \end{bmatrix},
    \qquad
    \begin{bmatrix}
      0 & B_{12} & B_{13} & 0 \\
      0 & 0      & B_{23} & 0 \\
      0 & 0      & 0      & 0 \\
      0 & 0      & 0      & I \\
      0 & 0      & 0      & 0
    \end{bmatrix},
    \quad\text{and}\quad
    \begin{bmatrix}
      0 & I & 0 & 0      & 0      \\
      0 & 0 & 0 & C_{24} & C_{25} \\
      0 & 0 & 0 & 0      & C_{35} \\
      0 & 0 & 0 & 0      & 0
    \end{bmatrix},
  \end{equation}
  respectively.
  % \begin{equation*}
  %   \compressedmatrices
  %   S^{-1} (P^T\!AQ) T^{-1} =\;
  %   \bordermatrix[{[]}]{%
  %       & q_1 & q_2 & q_3 & q_4    & q_5 \cr
  %   p_1 & 0   & 0   & I   & 0      & 0   \cr
  %   p_2 & 0   & 0   & 0   & A_{24} & 0   \cr
  %   p_3 & 0   & 0   & 0   & 0      & I   \cr
  %   p_4 & 0   & 0   & 0   & 0      & 0   \cr
  %   p_5 & 0   & 0   & 0   & 0      & 0   \cr
  %   },
  % \end{equation*}
  % \begin{equation*}
  %   S^{-1} (P^T\!BU) =\;
  %   \bordermatrix[{[]}]{%
  %       & m_1 & m_2    & m_3    & m_4 \cr
  %   p_1 & 0   & B_{12} & B_{13} & 0   \cr
  %   p_2 & 0   & 0      & B_{23} & 0   \cr
  %   p_3 & 0   & 0      & 0      & 0   \cr
  %   p_4 & 0   & 0      & 0      & I   \cr
  %   p_5 & 0   & 0      & 0      & 0   \cr
  %   },
  % \end{equation*}
  % \begin{equation*}
  %   (V^T\!CQ) T^{-1} =\;
  %   \bordermatrix[{[]}]{%
  %       & q_1 & q_2 & q_3 & q_4    & q_5    \cr
  %   n_1 & 0   & I   & 0   & 0      & 0      \cr
  %   n_2 & 0   & 0   & 0   & C_{24} & C_{25} \cr
  %   n_3 & 0   & 0   & 0   & 0      & C_{35} \cr
  %   n_4 & 0   & 0   & 0   & 0      & 0      \cr
  %   },
  % \end{equation*}
\end{proposition}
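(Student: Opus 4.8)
The plan is to realize $S$ and $T$ as accumulated products of elementary block row and column operations, using the observation that an upper-triangular matrix partitioned conformally with $p_1,\dots,p_5$ (resp.\ $q_1,\dots,q_5$) is precisely a block-upper-triangular matrix whose diagonal blocks are themselves upper triangular and whose strictly-upper blocks are unconstrained. Since $T^{-1}$ acts on $P^T\!AQ$ and $V^T\!CQ$ from the right while $S^{-1}$ acts on $P^T\!AQ$ and $P^T\!BU$ from the left, I would split the construction into a column phase (which fixes $T$) and a row phase (which fixes $S$) and then check that the two phases do not interfere.

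Column phase. Using the nonsingular upper-triangular pivot blocks $A_{13}$, $A_{24}$, $A_{35}$, $C_{12}$ supplied by Theorem~\ref{thm:schurform}, I would first rescale block columns $q_2$, $q_3$, $q_5$ on the right by $C_{12}^{-1}$, $A_{13}^{-1}$, $A_{35}^{-1}$; this normalizes $C_{12}$, $A_{13}$, $A_{35}$ to identity blocks, and since these inverses are again upper triangular it keeps the diagonal blocks of $T^{-1}$ (hence of $T$) upper triangular. Because block columns $q_1$ and $q_2$ of $P^T\!AQ$ vanish, this rescaling leaves those columns of $A$ alone. Next I would use the (now identity) block column $q_3$ of $A$ to annihilate $A_{14}$ and $A_{15}$ by adding suitable multiples of column $q_3$ to columns $q_4$ and $q_5$; then use column $q_4$ (which carries $A_{24}$ in row block $p_2$ and zeros in row block $p_1$) to clear the row-$p_2$ entry of column $q_5$; and finally use the identity block column $q_2$ of $C$ to clear $C_{13}$, $C_{14}$, $C_{15}$. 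Each of these is a strictly-block-upper-triangular step, so the accumulated $T$ stays upper triangular and nonsingular. A block-by-block check then shows that $(P^T\!AQ)T^{-1}$ is exactly the first matrix in \eqref{eq:schursparse}, with $A_{24}$ untouched, and that $(V^T\!CQ)T^{-1}$ has the third form in \eqref{eq:schursparse} with $C_{24}$ untouched and $C_{25}$, $C_{35}$ replaced by transformed versions.

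Row phase. Among the lower row blocks of $P^T\!BU$ the only nonsingular (upper-triangular) pivot is $B_{44}$, so I would take $S$ to scale row block $p_4$ by $B_{44}^{-1}$ and to subtract appropriate multiples of row block $p_4$ from row blocks $p_1$, $p_2$, $p_3$ in order to clear $B_{14}$, $B_{24}$, $B_{34}$, with all other blocks of $S$ trivial (identity on the diagonal, zero elsewhere). As before $S$ is block upper triangular with upper-triangular diagonal blocks, hence upper triangular and nonsingular, and $S^{-1}(P^T\!BU)$ has the middle form of \eqref{eq:schursparse} with $B_{12}$, $B_{13}$, $B_{23}$ unchanged. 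The key point is that this $S$ does not disturb $(P^T\!AQ)T^{-1}$: after the column phase row blocks $p_4$ and $p_5$ of $A$ are zero, so adding multiples of row block $p_4$ to the earlier rows, or rescaling that block, has no effect on $A$, and $S$ never touches $C$. Taking $S$ and $T$ as just constructed then yields all three identities of \eqref{eq:schursparse} at once.

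I expect the only real obstacle to be bookkeeping: confirming that the column operations chosen to simplify $A$ are compatible with those needed for $C$ (they must be produced by one and the same $T$), and that $S$ does not re-contaminate the already-reduced $A$. The structural facts that make this work are that $A$ and $C$ share the column partition while columns $q_1,q_2$ of $A$ vanish (decoupling the leftmost part of $C$), that rows $p_4,p_5$ of $A$ vanish after the column phase (decoupling the $B_{44}$ pivot from $A$), and that the pivot blocks $A_{13},A_{24},A_{35},B_{44},C_{12}$ are upper triangular and invertible, so their inverses preserve the triangular structure required of $S$ and $T$.
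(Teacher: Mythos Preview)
Your argument is correct and does produce valid upper-triangular nonsingular $S$ and $T$, but it is organized differently from the paper's proof.  The paper does not build $S,T$ step by step; it simply writes down closed-form matrices
\[
S=\begin{bmatrix} I&&\tfrac12 A_{15}&B_{14}&\\ &I&A_{25}&B_{24}&\\ &&A_{35}&B_{34}&\\ &&&B_{44}&\\ &&&&I\end{bmatrix},
\qquad
T=\begin{bmatrix} I&&&&\\ &C_{12}&C_{13}&C_{14}&C_{15}\\ &&A_{13}&A_{14}&\tfrac12 A_{15}\\ &&&I&\\ &&&&I\end{bmatrix},
\]
and leaves the three identities in \eqref{eq:schursparse} to direct block multiplication.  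Your procedural construction via elementary block row/column operations is a perfectly legitimate alternative and, if anything, explains better \emph{why} such $S$ and $T$ exist.

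One minor discrepancy worth flagging: because you normalize $A_{35}$ and clear $A_{25}$ via column operations on $q_5$, the $(3,5)$ and $(2,5)$ blocks of $(V^T CQ)T^{-1}$ come out as $C_{35}A_{35}^{-1}$ and a transformed $C_{25}$ rather than the original blocks (you already note this).  The paper's choice instead absorbs $A_{35}$ and $A_{25}$ into $S$ and splits $A_{15}$ symmetrically as $\tfrac12 A_{15}+\tfrac12 A_{15}$ between $S$ and $T$, so that \emph{all} of $B_{12},B_{13},B_{23},C_{24},C_{25},C_{35}$ appear literally unchanged in \eqref{eq:schursparse}.  If the proposition is read structurally (which is all that the subsequent computation of $V^T C A^{\dagger} B U$ needs, since you do preserve $A_{24}$, $B_{23}$, $C_{24}$), your construction already suffices; if it is read as asserting the literal original blocks, you would only need to shift the $A_{35}$/$A_{25}$ eliminations from your column phase to the row phase.
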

\begin{proof}
  Let
  \begin{equation}\label{eq:schursparseST}
    \compressedmatrices
    S =
    \begin{bmatrix}
      I & & \frac{1}{2} A_{15} & B_{14} \\
      & I & A_{25} & B_{24} \\
      & & A_{35} & B_{34} \\
      & & & B_{44} \\
      & & & & I
    \end{bmatrix}
    \quad\text{and}\quad
    T =
    \begin{bmatrix}
      I \\
      & C_{12} & C_{13} & C_{14} & C_{15} \\
      & & A_{13} & A_{14} & \frac{1}{2} A_{15} \\
      & & & I \\
      & & & & I
    \end{bmatrix};
  \end{equation}
  then direct verification concludes the proof.
\end{proof}

% }}}

Now, with the Schur-form RSVD and Proposition~\ref{thm:schursparse}, we
see that
\begin{equation*}
  \compressedmatrices
  \begin{split}
    V^T\!CA^{\dagger}BU
    % &=
    % \begin{bmatrix}
    %   0 & I & 0 & 0      & 0      \\
    %   0 & 0 & 0 & C_{24} & C_{25} \\
    %   0 & 0 & 0 & 0      & C_{35} \\
    %   0 & 0 & 0 & 0      & 0
    % \end{bmatrix}
    % \begin{bmatrix}
    %   0 & 0           & 0 & 0 & 0 \\
    %   0 & 0           & 0 & 0 & 0 \\
    %   I & 0           & 0 & 0 & 0 \\
    %   0 & A_{24}^{-1} & 0 & 0 & 0 \\
    %   0 & 0           & I & 0 & 0
    % \end{bmatrix}
    % \begin{bmatrix}
    %   0 & B_{12} & B_{13} & 0 \\
    %   0 & 0      & B_{23} & 0 \\
    %   0 & 0      & 0      & 0 \\
    %   0 & 0      & 0      & I \\
    %   0 & 0      & 0      & 0
    % \end{bmatrix}
    % \\
    % &=
    % \begin{bmatrix}
    %   0 & 0      & 0      & 0 & 0 \\
    %   0 & C_{24} A_{24}^{-1} & C_{25} & 0 & 0 \\
    %   0 & 0      & C_{35} & 0 & 0 \\
    %   0 & 0      & 0      & 0 & 0
    % \end{bmatrix}
    % \begin{bmatrix}
    %   0 & B_{12} & B_{13} & 0 \\
    %   0 & 0      & B_{23} & 0 \\
    %   0 & 0      & 0      & 0 \\
    %   0 & 0      & 0      & I \\
    %   0 & 0      & 0      & 0
    % \end{bmatrix}
    % \\
    &=
    \begin{bmatrix}
      0 & 0                         & 0 & 0 \\
      0 & C_{24} A_{24}^{-1} B_{23} & 0 & 0 \\
      0 & 0                         & 0 & 0 \\
      0 & 0                         & 0 & 0
    \end{bmatrix}
    =
    \begin{bmatrix}
      0 & 0      & 0 & 0 \\
      0 & \Sigma & 0 & 0 \\
      0 & 0      & 0 & 0 \\
      0 & 0      & 0 & 0
    \end{bmatrix}
  \end{split}
\end{equation*}
is indeed easily determined.  Likewise, if $D$ is a $4\times 4$ block
matrix of compatible dimensions, then
\begin{equation*}
  \compressedmatrices
  \begin{split}
  % \begin{bmatrix}
  %   0   & 0   & A_{13} & A_{14} & A_{15} \\
  %   0   & 0   & 0      & A_{24} & A_{25} \\
  %   0   & 0   & 0      & 0      & A_{35} \\
  %   0   & 0   & 0      & 0      & 0      \\
  %   0   & 0   & 0      & 0      & 0
  % \end{bmatrix}
  % {} +
    \rank(A + BDC) &= \rank
    \begin{bmatrix}
      0 & \times & I & \times & \times \\
      0 & B_{23} D_{31} & 0 & A_{24} + B_{23} D_{32} C_{24} & \times \\
      0 & 0 & 0 & 0 & I \\
      0 & D_{41} & 0 & D_{42} C_{24} & \times \\
      0 & 0 & 0 & 0 & 0
    \end{bmatrix}
    % \\ &= \rank A_{13} + \rank A_{35}
    \\ &= p_1 + p_3
    {} + \rank \left(
    \begin{bmatrix}
      0 & A_{24} \\
      0 & 0
    \end{bmatrix}
    {} +
    \begin{bmatrix}
      B_{23} \\ & I
    \end{bmatrix}
    \begin{bmatrix}
      D_{31} & D_{32} \\
      D_{41} & D_{42}
    \end{bmatrix}
    \begin{bmatrix}
      I \\ & C_{24}
    \end{bmatrix}
    \right).
  \end{split}
  % \begin{bmatrix}
  %   0 & B_{12} D_{21} & A_{13} & A_{14} + (B_{12} D_{22} + C_{13} D_{31}) C_{24} & \times \\
  %   0 & B_{23} D_{31} & 0 & A_{24} + B_{23} D_{32} C_{24} & A_{25} + B_{23} (D_{32} C_{25} + D_{33} C_{35}) \\
  %   0 & 0 & 0 & 0 & A_{35} \\
  %   0 & D_{41} & 0 & D_{42} C_{24} & D_{42} C_{25} + D_{43} C_{35} \\
  %   0 & 0 & 0 & 0 & 0
  % \end{bmatrix}
\end{equation*}
This yields $p_1 + p_3$ RSVs at $\infty$, $\min \{p_4, q_2\}$ at $0$,
and $p_2$ reciprocals from the diagonal elements of $C_{24} A_{24}^{-1}
B_{23} = \Sigma$ (where $0^{-1} = \infty$ by convention).
% \begin{equation*}
%   \compressedmatrices
%   \begin{bmatrix} I \\ & C_{24} \end{bmatrix}
%   \begin{bmatrix} 0 & A_{24} \\ 0 & 0 \end{bmatrix}^\dagger
%   \begin{bmatrix} B_{23} \\ & I \end{bmatrix}
%   % =
%   % \begin{bmatrix} I \\ & C_{24} \end{bmatrix}
%   % \begin{bmatrix} 0 & 0 \\ A_{24}^{-1} & 0 \end{bmatrix}
%   % \begin{bmatrix} B_{23} \\ & I \end{bmatrix}
%   =
%   \begin{bmatrix} 0 & 0 \\ C_{24} A_{24}^{-1} B_{23} & 0 \end{bmatrix}
% \end{equation*}

% }}}1

% (Section) The preprocessing phase. {{{1

\section{The preprocessing phase}\label{sec:pre}

Zha's algorithm for computing the RSVD starts with a preprocessing phase
to extract a triplet $(A', B', C')$ from a given matrix triplet $(A, B,
C)$, where $A'$ is square and upper triangular, and $B'$ and $C'$ are
nonsingular and upper triangular. The goal of the preprocessing phase
described in this section is similar to Zha's in that it extracts square
and upper triangular matrices. But unlike Zha's approach, we only need
$A'$ to be nonsingular, rather than both $B'$ and $C'$. Moreover, this
triplet should correspond to the nonzero regular RSVs of $(A, B, C)$ so
that we can (implicitly) apply the Kogbetliantz iteration to the product
$C' (A')^{-1} B'$ in the next phase.  The result is a triplet of
matrices with the structure from
\eqref{eq:schurforma}--\eqref{eq:schurformc}.

Two key procedures in Zha's preprocessing phase are so called \emph{row
compressions} and \emph{column compressions}.  For simplicity, we only
use the combined action of both compressions, which leads to the
definition below.

% (Definition) Compression. {{{

\begin{definition}
  Let $M$ be a real matrix; then we refer to the URV decomposition
  $U^T\! M V = \minimat{0}{R}{0}{0}$ as the \emph{compression} of $M$
  if $R$ is nonsingular and $U$ and $V$ are orthonormal matrices.
\end{definition}

% }}}

One way to compress a matrix is to use the SVD, although any
rank-revealing URV decomposition would work. For example, the QR
decomposition with column pivoting is a popular and fast alternative to
the SVD in this context.  See, e.g., Fierro, Hansen, and
Hansen~\cite{UTVtools} for an overview of the qualitative differences
between different URV decompositions in floating-point arithmetic.

We use compressions, along with other orthonormal transformations, in
the preprocessing phase to compute a sequence of orthonormal matrices
$\iter{P}{\ell}$, $\iter{Q}{\ell}$, $\iter{U}{\ell}$, and
$\iter{V}{\ell}$. These matrices are such that if we start with
$\iter{A}{1} = A$, $\iter{B}{1} = B$, and $\iter{C}{1} = C$, and compute
\begin{equation}\label{eq:update}
  \iter{A}{\ell+1} = \iter{P}{\ell}{}^T \iter{A}{\ell} \iter{Q}{\ell},
  \
  \iter{B}{\ell+1} = \iter{P}{\ell}{}^T \iter{B}{\ell} \iter{U}{\ell},
  % \quad\text{and}\quad
  \ \text{and}\
  \iter{C}{\ell+1} = \iter{V}{\ell}{}^T \iter{C}{\ell} \iter{Q}{\ell};
\end{equation}
we will ultimately get partitioned matrices $\iter{A}{i}$,
$\iter{B}{i}$, and $\iter{C}{i}$ from which we can take specific blocks
as the sought-after triplet. That is, the matrices $\iter{A}{i}$,
$\iter{B}{i}$, $\iter{C}{i}$ have blocks that are square and upper
triangular, with $\iter{A}{i}$ nonsingular, that correspond to the
nonzero RSVs of $(A, B, C)$.

Since partitioned matrices will be important, both in the preprocessing
phase and the postprocessing phase, the following implicit notations
help to simplify the presentation. First, all blocks have indices that
mark their respective positions in their matrix and are of no further
importance.  Second, if $M$ is a partitioned matrix and if $M_{ij}$
denotes the block at the $i$th block row and $j$th block column, then
any transformation of $M_{ij}$ into $X_{ii}^T M_{ij} Y_{jj}$ has a
corresponding transformation of $M$ into $X^T\!MY$.  In particular, $X =
\diag(I, X_{ii}, I)$ and $Y = \diag(I, Y_{jj}, I)$ so that ${(X^T\! M
Y)}_{ij} = X_{ii}^T M_{ij} Y_{jj}$, unless stated otherwise.  Third, the
latter notation is understood to work recursively, so that if
$\iter{M}{\ell}$ is a submatrix of $\iter{M}{\ell-1}$, then any
transformation applied to $\iter{M}{\ell}$, or a block of
$\iter{M}{\ell}$, has a corresponding transformation applied to
$\iter{M}{\ell-1}$.

Now for the first step of the preprocessing phase, let $\iter{P}{1}$ and
$\iter{Q}{1}$ compress $\iter{A}{1}$, and take $\iter{U}{1} = I$ and
$\iter{V}{1} = I$, then
\begin{equation*}
  \compressedmatrices
  \iter{A}{2} =
  \begin{bmatrix}
    0 & \iter{A_{12}}{2} \\
    0 & 0
  \end{bmatrix},
  \qquad
  \iter{B}{2} =
  \begin{bmatrix}
    \iter{B_{11}}{2} \\
    \iter{B_{21}}{2}
  \end{bmatrix},
  \quad\text{and}\quad
  \iter{C}{2} =
  \begin{bmatrix}
    \iter{C_{11}}{2} & \iter{C_{12}}{2}
  \end{bmatrix}.
\end{equation*}
Let $\iter{P_{22}}{2}$ and $\iter{U_{11}}{2}$ compress
$\iter{C_{11}}{2}$, and let $\iter{V_{11}}{2}$ and $\iter{Q_{11}}{2}$ to
compress $\iter{B_{21}}{2}$, so that
\begin{equation*}
  \compressedmatrices
  \iter{A}{3} =
  \begin{bmatrix}
      0 & 0 & \iter{A_{13}}{3} \\
      0 & 0 & 0 \\
      0 & 0 & 0      
  \end{bmatrix},
  \qquad
  \iter{B}{3} =
  \begin{bmatrix}
    \iter{B_{11}}{3} & \iter{B_{12}}{3} \\
    0 & \iter{B_{22}}{3} \\
    0 & 0
  \end{bmatrix},
  \quad\text{and}\quad
  \iter{C}{3} =
  \begin{bmatrix}
    0 & \iter{C_{12}}{3} & \iter{C_{13}}{3} \\
    0 & 0      & \iter{C_{23}}{3}
  \end{bmatrix}.
\end{equation*}
The compressions of $\iter{A}{1}$, $\iter{B_{21}}{2}$, and
$\iter{C_{11}}{2}$ contain the only three rank decisions necessary in
the preprocessing phase. Now, by plugging $\iter{C}{3}
\iter{A}{3}{}^\dagger \iter{B}{3}$ into Theorem~\ref{thm:rsvRank}, we
see that we may focus on the triplet $(\iter{A_{13}}{3},
\iter{B_{11}}{3}, \iter{C_{23}}{3})$, where $\iter{A_{13}}{3}$ is
nonsingular.  The matrices $\iter{B_{11}}{3}$ and $\iter{C_{23}}{3}$ are
not necessarily square or upper triangular at this point, so we are not
yet finished.  For convenience set $\iter{A}{4} = \iter{A_{13}}{3}$,
$\iter{B}{4} = \iter{B_{11}}{3}$, and $\iter{C}{4} = \iter{C_{23}}{3}$,
and suppose that $\iter{A}{4}$ is $p' \times p'$, $\iter{B}{4}$ is $p'
\times m'$, and $\iter{C}{4}$ is $n' \times p'$, and let $l = \min\{m',
n', p'\}$. Then there are three possibilities to consider (where the
choice is free when there is overlap).
\begin{enumerate}
  \item If $m', n' \ge p' = l$, then take $\iter{P}{4} = \iter{Q}{4} =
    I$ and $\iter{A}{5} = \iter{A}{4}$. Furthermore, use a QR
    decomposition to compute $\iter{V}{4}$ such that $\iter{V}{4}{}^T
    \iter{C}{4} = [\iter{C_{11}}{5};\; 0]$, where $\iter{C_{11}}{5}$ is
    upper triangular; and use an RQ decomposition\footnote{The RQ
    decomposition of a real $m \times n$ matrix $A$ is like a QR
    decomposition, but with the factors in the opposite order. That is,
    if $m > n$, then $A = RQ^T$ for some upper-trapezoidal matrix $R$
    and some $Q$ satisfying $Q^TQ = I$.  If $m \ge n$, then $A = [0\;
    R]Q^T$ or $A = [R\; 0]Q^T$, where $Q$ is as before and $R$ is upper
    triangular. See, e.g., the routine \textsf{xGERQF} in
    LAPACK~\cite[Sec.~2.4.2.5]{LAUG}.} to compute $\iter{U}{4}$ such
    that $\iter{B}{4} \iter{U}{4} = [0\; \iter{B_{12}}{5}]$, where
    $\iter{B_{12}}{5}$ is upper triangular.  Since
    \begin{equation*}
      \compressedmatrices
      \begin{bmatrix} \iter{C_{11}}{5} \\ 0 \end{bmatrix}
      (\iter{A}{5})^{-1}
      \begin{bmatrix} 0 & \iter{B_{12}}{5} \end{bmatrix}
      =
      \begin{bmatrix}
        0 & \iter{C_{11}}{5} (\iter{A}{5})^{-1} \iter{B_{12}}{5} \\
        0 & 0
      \end{bmatrix}
    \end{equation*}
    we see that we can restrict our attention to the triplet
    $(\iter{A}{5}, \iter{B_{12}}{5}, \iter{C_{11}}{5})$.

  \item If $r = \min \{n', p'\} \ge m' = l$. First use a QR
    decomposition to compute $\iter{P}{4}$ such that $\iter{P}{4}{}^T
    \iter{B}{4} = [\iter{B_{11}}{5};\; 0]$, where $\iter{B_{11}}{5}$ is
    an $m'\times m'$ upper-triangular matrix. Next, compute
    $\iter{Q}{4}$ with an RQ decomposition so that $\iter{A}{5} =
    \iter{P}{4}{}^T\!  \iter{A}{4} \iter{Q}{4}$ is upper triangular.
    Then use a QR decomposition to compute $\iter{V}{4}$ such that
    $\iter{C}{5} = \iter{V}{4}{}^T \iter{C}{4} \iter{Q}{4}$ has the form
    \begin{equation*}
      \compressedmatrices
      \bordermatrix[{[]}]{%
                           & m'               & p' - m'          \cr
        m'                 & \iter{C_{11}}{5} & \iter{C_{12}}{5} \cr
        r - m'             & 0                & \iter{C_{22}}{5} \cr
        \max \{0, n' - r\} & 0                & 0
      },
    \end{equation*}
    where $\iter{C_{11}}{5}$ is upper triangular and $\iter{C_{22}}{5}$
    is upper trapezoidal with $r - m' \le p' - m'$.  Assuming
    $\iter{A}{5}$ is partitioned conformally,
    \begin{equation*}
      \iter{C}{5} (\iter{A}{5})^{-1} \iter{B}{5} =
      % \big[
      %   \iter{C_{11}}{5} (\iter{A_{11}}{5})^{-1} \iter{B_{11}}{5};\; 0;\; 0
      % \big],
      \begin{bmatrix}
        \iter{C_{11}}{5} (\iter{A_{11}}{5})^{-1} \iter{B_{11}}{5} \\
        0 \\
        0
      \end{bmatrix},
    \end{equation*}
    which shows that we can restrict our attention to
    $(\iter{A_{11}}{5}, \iter{B_{11}}{5}, \iter{C_{11}}{5})$. We can
    save work if we just want to compute the restricted singular
    triplets, because then it suffices to only compute
    $\iter{C_{11}}{5}$ with a QR QR decomposition of the $m'$ left-most
    columns of $\iter{C}{4} \iter{Q}{4}$.

  \item If $r = \min \{m', p'\} \ge n' = l$. First use an RQ
    decomposition to compute $\iter{Q}{4}$ such that $\iter{C}{4}
    \iter{Q}{4} = [0\; \iter{C_{12}}{5}]$, where $\iter{C_{12}}{5}$ is a
    $n'\times n'$ upper-triangular matrix. Next, compute $\iter{P}{4}$
    such that $\iter{A_{14}}{5} = \iter{P}{4}{}^T\!  \iter{A}{4}
    \iter{Q}{4}$ is upper triangular. Then use an RQ decomposition to
    compute $\iter{U}{4}$ such that $\iter{B}{5} = \iter{P}{4}{}^T
    \iter{B}{4} \iter{U}{4}$ has the form
    \begin{equation*}
      \compressedmatrices
      \bordermatrix[{[]}]{%
                & \max\{0, m' - r\} & r - n' & n' \cr
        p' - n' & 0 & \iter{B_{12}}{5} & \iter{B_{13}}{5} \cr
        n'      & 0 & 0                & \iter{B_{23}}{5}
      },
    \end{equation*}
    where $\iter{B_{23}}{5}$ is upper triangular and $\iter{B_{12}}{5}$
    is upper trapezoidal with $ p' - n' \ge r - n'$.  Assuming
    $\iter{A}{5}$ is partitioned conformally,
    \begin{equation*}
      \iter{C}{5} (\iter{A}{5})^{-1} \iter{B}{5} =
      % \big[\begin{matrix}
      \begin{bmatrix}
        0 & 0 & \iter{C_{12}}{5} (\iter{A_{22}}{5})^{-1} \iter{B_{23}}{5}
      \end{bmatrix},
      % \end{matrix}\big]$, 
    \end{equation*}
    which shows that we can restrict our attention to
    $(\iter{A_{22}}{5}, \iter{B_{23}}{5}, \iter{C_{12}}{5})$. We can
    save work if we just want to compute the restricted singular
    triplets, because then it suffices to only compute
    $\iter{B_{23}}{5}$ with an RQ decomposition of the bottom $n'$ rows
    of $\iter{P}{4}{}^T \iter{B}{4}$.
\end{enumerate}

The Schur form from Theorem~\ref{thm:schurform} corresponds to the above
three cases with
\begin{enumerate}
  \item $p_1 = q_3 = 0$ and $p_3 = q_5 = 0$ and $m_1 = n_4 = 0$ (or
    $m_2 = n_3 = 0$),
  \item $p_1 = q_3 = 0$ and $m_1 = m_2 = 0$,
  \item $p_3 = q_5 = 0$ and $n_3 = n_4 = 0$,
\end{enumerate}
respectively, and $m_3 = n_2 = p_2 = q_4 = l$. In principle, we may
assume that we always have the first or second case, because we can
transform the input triplet $(A, B, C)$ to $(\Pi_r A^T \Pi_c, \Pi_r C^T
\Pi_c, \Pi_r B^T \Pi_c)$,  where $\Pi_r$ and $\Pi_c$ are the
antidiagonal permutation matrices that reverse the order of the rows and
columns.  In any case, we can compute the nonzero restricted singular
triplets of $(A, B, C)$ from specific square and upper-triangular blocks
of $\iter{A}{5}$, $\iter{B}{5}$, and $\iter{B}{5}$, where the block
coming from $\iter{A}{5}$ is nonsingular.  These three blocks correspond
to the $A_{24}$, $B_{23}$, and $C_{24}$ from
\eqref{eq:schurforma}--\eqref{eq:schurformc}, respectively, and have
exactly the form we need for the implicit Kogbetliantz iteration
described in the next section.

% }}}1

% (Section) The Kogbetliantz phase {{{1

\section{The Kogbetliantz phase}\label{sec:kog}

% (?) The implicit Kogbetliantz method. {{{2

\subsection{The implicit Kogbetliantz method}

For a given triplet of upper-tri\-an\-gu\-lar $l\times l$ matrices $A$,
$B$, and $C$, where $A$ is nonsingular, the goal of the Kogbetliantz
phase is to find orthonormal matrices $P$, $Q$, $U$, and $V$, so that
$P^T\!AQ$, $P^T\!BU$, and $V^T\!CQ$ are upper-triangular and
$V^T\!CA^{-1}BU$ is diagonal. The essence of this phase is to implicitly
apply a Kogbetliantz-type iteration to $M = CA^{-1}B$; that is, to
compute the SVD of $M$ without forming $M$ or computing $A^{-1}$. This
is different from Zha's approach~\cite{Zha91}, who implicitly applies
the iteration to the product $B^{-1} A C^{-1}$.  A description of the
new procedure follows below; for more background and details see, e.g.,
Bai and Demmel~\cite{BD93}, Charlier, Vanbegin, and
Van~Dooren~\cite{CVV88}, Forsythe and Henrici~\cite{FH60},
Hansen~\cite{Han63}, Heath et al.~\cite{HLPW86}, Paige~\cite{Pai86}, and
Zha~\cite{Zha92}, and their references.

The implicit Kogbetliantz method iterates over pairs $(i,j)$ with $i < j
\le l$, and for each pair applies rotations to the $i$th and $j$th rows
and columns of $A$, $B$, and $C$. This is done in such a way that
$a_{ij}$, $b_{ij}$, $c_{ij}$, and also $m_{ij}$ become zero, while the
corresponding $(j,i)$th elements (may) become nonzero. We refer to this
as annihilating the $(i,j)$th elements.  A sequence of iterations over
all $n(n-1)/2$ pairs $(i,j)$ is called a cycle, and a cycle can progress
through the pairs in different orderings. Some of these orderings, but
not all, are proven to lead to converging methods~\cite{Han63}. That is,
$M$ converges to a diagonal matrix after sufficiently many cycles.  A
common ordering, and the one that we will focus on, is the row-cyclic
ordering $(1,2)$, $(1,3)$, \dots, $(1,l)$, $(2, 3)$, $(2,4)$, \dots,
$(l-1,l)$. A row sweep is what we call a series of transformations that
annihilate all the off-diagonal elements in a single row. During a cycle
of sweeps in a row-cyclic ordering, the $i$th row sweep produces fill-in
in the $i$th column, so that a full cycle turns the initially
upper-triangular matrices into lower-triangular matrices.  In the
following cycle, we effectively consider the triplet $(A^T, C^T, B^T)$
as the input, which recovers the upper-triangular structure of the
matrices. This leads to a sequence of alternating odd and even cycles
that we repeat either until convergence, or until we reach a predefined
maximum number of cycles.

To see how we can implicitly work with $M$, suppose that $(i,j)$ is our
pivot and that we want to annihilate $m_{ij}$. At this point in a cycle
with row-cyclic ordering, $A$ and $A^{-1}$ have the form
\begin{equation*}
  \compressedmatrices
  A =
  \bordermatrix[{[]}]{%
          & i-1    & j-i+1 & l-j     \cr
    i-1   & A_{11} & 0      & 0      \cr
    j-i+1 & A_{21} & A_{22} & A_{23} \cr
    l-j   & A_{31} & 0      & A_{33} \cr
  }
  \quad\text{and}\quad
  A^{-1} =
  \begin{bmatrix}
    A_{11}^{-1}       & 0           & 0 \\
    \widetilde A_{21} & A_{22}^{-1} & \widetilde A_{23} \\
    \widetilde A_{31} & 0           & A_{33}^{-1}
  \end{bmatrix}
\end{equation*}
for appropriate $\widetilde A_{21}$, $\widetilde A_{31}$, $\widetilde
A_{23}$, where $A_{11}$ is lower triangular and $A_{33}$ upper
triangular. Furthermore, for some vector $\vec a$ with $\vec e_{j-i}^T
\vec a = 0$ and upper-triangular $R_A$ with $\vec e_{j-i}^T R_A = a_{jj}
\vec e_{j-i}^T$, we have that
\begin{equation*}
  \compressedmatrices
  A_{22} =
  \begin{bmatrix}
    a_{ii} & a_{ij} \vec e_{j-i}^T \\
    \vec a & R_A
  \end{bmatrix}
  \quad\text{and}\quad
  A_{22}^{-1} =
  % \begin{bmatrix}
  %   a_{ii}^{-1} & -a_{ii}^{-1} a_{ij} a_{jj}^{-1} \vec e_{j-i}^T \\
  %   -a_{ii}^{-1} R_A^{-1} \vec a & R_A^{-1} + a_{ii}^{-1}
  %   a_{ij} a_{jj}^{-1} R_A^{-1} \vec a \vec e_{j-i}^T
  % \end{bmatrix}
  % =
  \frac{1}{a_{ii} a_{jj}}
  \begin{bmatrix}
    a_{jj} & -a_{ij} \vec e_{j-i}^T \\
    -a_{jj} R_A^{-1} \vec a &
    a_{ii} a_{jj} R_A^{-1} + a_{ij} R_A^{-1} \vec a \vec e_{j-i}^T
  \end{bmatrix}.
\end{equation*}
Since $B$ and $C$ have the same structure as $A$, we can partition their
blocks identically and use a similar notation for the blocks $B_{22}$
and $C_{22}$. If we now ignore the previous subscript indices of the
matrix blocks and define $M_{ij} = \minimat{m_{ii}}{m_{ij}}{0}{m_{jj}}$
and $A_{ij}$, $B_{ij}$, and $C_{ij}$ likewise, then we can check that
\begin{equation*}
  \compressedmatrices
  \begin{split}
    M_{ij} &= \frac{1}{a_{ii} a_{jj}}
    % &\begin{bmatrix}
    %   \vec e_1^T \\
    %   \vec e_{j-i+1}^T
    % \end{bmatrix}
    % \begin{bmatrix}
    %   c_{ii} & c_{ij} \vec e_{j-i}^T \\
    %   \vec c & R_C
    % \end{bmatrix}
    % \begin{bmatrix}
    %   a_{jj} &
    %   -a_{ij} \vec e_{j-i}^T \\
    %   -a_{jj} R_A^{-1} \vec a &
    %   a_{ii} a_{jj} R_A^{-1} + a_{ij} R_A^{-1} \vec a \vec e_{j-i}^T
    % \end{bmatrix}
    % \begin{bmatrix}
    %   b_{ii} & b_{ij} \vec e_{j-i}^T \\
    %   \vec b & R_B
    % \end{bmatrix}
    % \begin{bmatrix}
    %   \vec e_1 & \vec e_{j-i+1}
    % \end{bmatrix}
    % \\ &=
    \begin{bmatrix}
      c_{ii} & c_{ij} \vec e_{j-i}^T \\
      0 & c_{jj} \vec e_{j-i}^T
    \end{bmatrix}
    \begin{bmatrix}
      a_{jj} &
      -a_{ij} \vec e_{j-i}^T \\
      -a_{jj} R_A^{-1} \vec a &
      a_{ii} a_{jj} R_A^{-1} + a_{ij} R_A^{-1} \vec a \vec e_{j-i}^T
    \end{bmatrix}
    \begin{bmatrix}
      b_{ii} & b_{ij} \\
      \vec b & R_B \vec e_{j-i}
    \end{bmatrix}
    % \\ &=
    % \begin{bmatrix}
    %   c_{ii} a_{jj} & (c_{ij} a_{ii} - c_{ii} a_{ij}) \vec e_{j-i}^T \\
    %   0 & c_{jj} a_{ii} \vec e_{j-i}^T
    % \end{bmatrix}
    % \begin{bmatrix}
    %   b_{ii} & b_{ij} \\
    %   \vec b & R_B \vec e_{j-i}
    % \end{bmatrix}
    \\ &=
    \frac{1}{a_{ii} a_{jj}}
    \begin{bmatrix}
      c_{ii} a_{jj} b_{ii} &
      c_{ii} a_{jj} b_{ij} + (c_{ij} a_{ii} - c_{ii} a_{ij}) b_{jj} \\
      0 & c_{jj} a_{ii} b_{jj}
    \end{bmatrix}
    \\ &=
    % C_{ij} \adj(A_{ij}) B_{ij}
    C_{ij} A_{ij}^{-1} B_{ij}.
  \end{split}
\end{equation*}
We can even replace the inverse $A_{ij}^{-1}$ by the adjugate matrix
$\adj(A_{ij})$, because the scaling of $M_{ij}$ does not matter when
computing the rotations. Thus, we will henceforth define
\begin{equation}\label{eq:Mij}
  \compressedmatrices
  M_{ij}
  = C_{ij} \adj (A_{ij}) B_{ij}
  =
  \biggl[\begin{matrix}
    c_{ii} & c_{ij} \\ 0 & c_{jj}
  \end{matrix}\biggr]
  \biggl[\begin{matrix}
    a_{jj} & -a_{ij} \\ 0 & \phantom{-}a_{ii}
  \end{matrix}\biggr]
  \biggl[\begin{matrix}
    b_{ii} & b_{ij} \\ 0 & b_{jj}
  \end{matrix}\biggr],
\end{equation}
while stressing that this definition is only correct when annihilating
$m_{ij}$.

Computing $M_{ij}$ is the first step to computing the rotations that
annihilate the $(i,j)$th elements.  The second step is to compute an SVD
$V^T\! M_{ij}U = \diag(\sigma_1,\sigma_2)$, where $\sigma_1$ and
$\sigma_2$ are real, and $U = \rot(\phi)$ and $V = \rot(\psi)$ for
appropriate angles $\phi$, $\psi$, and $\rot(\theta)$ denotes the
rotation matrix
$\minimat{\phantom{-}\cos\theta}{\sin\theta}{-\sin\theta}{\cos\theta}$.
This SVD may be unnormalized, which means that its singular values are
not necessarily nonnegative, nor sorted by magnitude. The next step is
to compute rotations $P$ and $Q$ such that $V^T\! C_{ij} Q$, $P^T\!
A_{ij} Q$, $P^T\! B_{ij} U$ are lower triangular. For the final step,
let $P_{ij}$, $Q_{ij}$, $U_{ij}$, and $V_{ij}$ be identity matrices with
the $(i,i)$, $(i,j)$, $(j,i)$, and $(j,j)$ elements replaced by the
$(1,1)$, $(1,2)$, $(2,1)$, and $(2,2)$ elements of $P$, $Q$, $U$, and
$V$, respectively, and compute the transformations $P_{ij}^T A Q_{ij}$,
$P_{ij}^T B U_{ij}$, and $V_{ij}^T C Q_{ij}$ as in~\eqref{eq:update}.
Accumulating the matrices $P_{ij}$, $Q_{ij}$, $U_{ij}$, and $V_{ij}$ is
optional, but necessary if we need the restricted singular vectors.  See
Algorithm~\ref{alg:kog} for a summary of the procedure.

% (Algorithm) Kogbetliantz. {{{

\begin{myalgorithm}[An implicit Kogbetliantz iteration for the RSVD]\label{alg:kog}
\strut\\*
\rm
  \textbf{Input:} Square and upper-triangular $l \times l$ matrices $A$,
    $B$, $C$, and $A$ nonsingular. \\
  \textbf{Output:} $P$, $Q$, $U$, $V$, $A'$, $B'$, and $C'$ such that
    $A' = P^T\!AQ$, $B' = P^T\!BU$, and $C' = V^T\!CQ$ are upper
    triangular and $V^T\!CA^{-1}BU$ is diagonal. \\
  % \textbf{begin} \\ % Necessary for etna.cls.
  \tab[1.] \textbf{while} \textit{\# cycles is odd} \textbf{or}
    (\textit{\# cycles} $<$ \textit{max cycles} \textbf{and}
    \textit{not converged}) \textbf{do} \\
  \tab[2.]\tab \textbf{for} $i = 1$, 2, \dots, $l-1$ \textbf{do} \\
  \tab[3.]\tab\tab \textbf{for} $j = i+1$, $i+2$, \dots, $l$ \textbf{do} \\
  \tab[4.]\tab\tab\tab Select $A_{ij}$, $B_{ij}$, and $C_{ij}$ as outlined
    in the text. \\
  \tab[5.]\tab\tab\tab In odd cycles: set $(A_{ij}, B_{ij}, C_{ij}) =
    (A_{ij}^T, C_{ij}^T, B_{ij}^T)$. \\
  \tab[6.]\tab\tab\tab Compute $P_{ij}$, $Q_{ij}$, $U_{ij}$, and $V_{ij}$
    from $A_{ij}$, $B_{ij}$, and $C_{ij}$. \\
  \tab[7.]\tab\tab\tab In odd cycles: swap $P_{ij}$ with $Q_{ij}$ and
    $U_{ij}$ with $V_{ij}$. \\
  \tab[8.]\tab\tab\tab Update $A = P_{ij}^T A Q_{ij}$, $B = P_{ij}^T B
  U_{ij}$, $C = V_{ij}^T C Q_{ij}$. \\
  \tab[9.]\tab\tab\tab Accumulate $P = PP_{ij}$, $Q = QQ_{ij}$, $U =
  UU_{ij}$, and $V = VV_{ij}$.
  % \\ \textbf{end} % Necessary for etna.cls.
\end{myalgorithm}

% }}}

Forsythe and Henrici~\cite{FH60} prove that row-cyclic sweeps lead
to (fast) convergence when a fixed closed interval within
$(-\pi/2,\pi/2)$ contains all angles $\phi$ and $\psi$. Since this
condition is impossible to guarantee while simultaneously diagonalizing
$M_{ij}$ exactly, Forsythe and Henrici also prove that a set of weaker
requirements suffice for linear convergence. The benefit of these weaker
requirements is that they are almost always satisfied in floating-point
arithmetic. In any case, Heath et al.~\cite[Sec.~3]{HLPW86} argue
for the use of an unnormalized SVD as it simplifies the algorithm and
they found it to be just as effective.  This observation relies on the
fact that Forsythe and Henrici's convergence proof only considers the
magnitude of the matrix entries.  In practical term this means that we
may work with $-U$ or $-V$ instead of $U$ and $V$, and thus, also with
half period shifts and angles in a fixed closed interval of $(\pi/2,3/2
\pi)$. In other words, the angles just need to stay away from an open
interval around $\pm\pi/2$.

Still, Brent, Luk, and Van~Loan~\cite[Sec.~4]{BLL83} conjecture
``that the smaller the rotation angles are the faster the procedure will
converge''. One way to adjust the angles is with a quarter period shift;
that is, by replacing $U$ and $V$ with $UJ$ and $VJ$, respectively,
where $J = \rot(\pi/2) = \minimat{\phantom{-}0}{1}{-1}{0}$.  For
example, the routine \textsf{xLAGS2} of the current version of
LAPACK\footnote{Version 3.8.0 at the time of writing.} compute the
upper-triangular $2\times 2$ SVDs with \textsf{xLASV2}, and ensures that
$\min \{ |\phi|, |\psi| \} \le \pi/4$ in essence by multiplying $U$ and
$V$ with $J$ if $|u_{11}| < |u_{12}|$ or $|v_{11}| < |v_{12}|$. Since
this condition appears suboptimal if, say, $|u_{11}| > |u_{12}|$ and
$|v_{11}| < |v_{12}|$, we will instead try to minimize the maximum
angle.  That is, we will replace $U$ and $V$ with $UJ$ and $VJ$,
respectively, if and only if $\max \{ |u_{11}|, |v_{11}| \} < \max \{
  |u_{12}|, |v_{12}| \}$.  This strategy ensures that we both have $\max
\{ |u_{11}|, |v_{11}| \} \ge 1/\sqrt{2}$ and $\max \{ |u_{11}|, |v_{11}|
\} \ge \max \{ |u_{12}|, |v_{12}| \}$, although a downside is that we
cannot always guarantee a particular ordering of the singular values
during the cycles. However, this can also not be guaranteed with other
conditions that stay away from the rotation angles $\pm \pi/2$.

A standard approach to check for convergence is to define $\rho_{ij} =
0$ if $M_{ij} = 0$ and $\rho_{ij} = |m_{ij}| / \|M_{ij}\|_{\max} \le 1$
otherwise, and to stop if all $\rho_{ij} < \tau$ for some tolerance
$\tau$. Another option, suggested by Demmel and Veseli\'{c}~\cite{DV92},
is to use $\rho_{ij} = |m_{ij}| (|m_{ii}| |m_{jj}|)^{-1/2} \le \infty$
(if $M_{ij} \neq 0$) instead. The problem for implicit
Kog\-bet\-li\-antz-type iterations with both of these definitions of
$\rho_{ij}$, is that $|m_{ij}|$ may not become ``small'' enough in
floating-point arithmetic for the stopping criterion to be fulfilled.
This unfortunate discrepancy between theory and practice exists, at
least in part, because the implicit Kogbetliantz method forms each
$M_{ij}$ on-the-fly. This means that the relative error in the computed
$|m_{ij}|$ can be of order $\mathcal O(\bm\epsilon \|A_{ij}\| \|B_{ij}\|
\|C_{ij}\|)$, where $\bm\epsilon$ is the unit roundoff, rather than
$\mathcal O(\bm\epsilon \|M_{ij}\|)$, even if the former is often
pessimistic. Hence, a $\tau$ picked based on the former may be too
large, and a $\tau$ picked based on the latter may be too small.  This
does not even take other sources of roundoff errors into account yet,
such as, for example, perturbations in the computed rotations and the
roundoff errors from the application of the rotations.

Bai and Demmel~\cite[Sec.~4]{BD93} use a different approach and
measure the parallelism between corresponding rows of two matrices $A$
and $B$. The theoretical justification is simple: when all corresponding
rows of $A$ and $B$ are parallel, then there must exist diagonal
matrices $C$ and $S$ and an upper-triangular matrix $R$ such that $A =
CR$ and $B = SR$.  This justification and the corresponding
implementation are appealing, but the generalization to matrix triplets
and the RSVD is not obvious.  A simplified approach without a similar
theoretical justification is to consider the angle between
two-dimensional vectors and if $m_{ij} \neq 0$ to take
\begin{equation}\label{eq:rhoij}
  \rho_{ij} = \max \left\{
    \frac{|\vec e_1^T C_{ij} \adj(A_{ij}) B_{ij} \vec e_2|}{%
      \|\vec e_1^T C_{ij}\| \|\adj(A_{ij}) B_{ij} \vec e_2\|},
    \frac{|\vec e_1^T C_{ij} \adj(A_{ij}) B_{ij} \vec e_2|}{%
      \|\vec e_1^T C_{ij} \adj(A_{ij}) \| \|B_{ij} \vec e_2\|}
  \right\}
% \qquad\text{(if $\|\vec e_1^T C_{ij}\|, \|B_{ij} \vec e_2\| \neq 0$)}
\end{equation}
for each pair of $i$ and $j$. Although the relative scaling is still not
ideal because the roundoff errors in $|m_{ij}|$ may be as big as
$\mathcal O(\bm\epsilon \|\vec e_1^T C_{ij}\| \|A_{ij}\| \|B_{ij} \vec
e_2\|)$, this $\rho_{ij}$ strikes a balance that appears to work well in
our limited testing.

Regardless of the choice of $\rho_{ij}$, we may want to stop iterating
before convergence when progress is too slow and before reaching a
predefined maximum number of cycles. To decide on this, one option is to
compute $\rho = \max_{i,j} \rho_{ij}$ during each cycle, let
$\rho_{\min}$ be the smallest $\rho$ of all previous cycles, and stop
iterating (after an even number of cycles) if (after an even number of
cycles) if $\rho_{\min} \lesssim \rho \ll 1$.  That is, stop when both
$\rho$ and the improvement between cycles are small.

% }}}2

% (Subsection) The 2-by-2 RSVD in exact arithmetic. {{{2

\subsection{The 2-by-2 RSVD in exact arithmetic}\label{sec:rsvdTwoExact}

Algorithm~\ref{alg:kog} does not tell us how to compute the RSVD of
(upper-)triangular $2$-by-$2$ matrices. But this is an easier problem to
solve than computing the RSVD of larger matrices. See, for example, the
theorem below.

% (Theorem) {{{

\begin{proposition}\label{thm:exactrsvdtwo}
  Let $A$, $B$, and $C$ be arbitrary $2 \times 2$ upper-triangular
  matrices, and define $M = C \adj(A) B$; then there exist orthonormal
  matrices $P$, $Q$, $U$, and $V$, such that $P^T\!AQ$, $P^T\!BU$, and
  $V^T\!CQ$ are lower triangular, and $V^T\!MU = \Sigma$ is diagonal.
\end{proposition}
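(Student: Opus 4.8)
The plan is to construct the four rotations in stages, first diagonalizing the $2\times 2$ product $M = C\adj(A)B$ by an ordinary $2\times 2$ SVD, and then ``chasing'' the resulting structure through $A$, $B$, and $C$ with further rotations, always preserving (lower-)triangularity. Concretely, I would first invoke the standard $2\times 2$ SVD (in unnormalized form, as discussed around \eqref{eq:Mij}) to get rotations $U$ and $V$ with $V^T\!MU = \Sigma$ diagonal. This fixes $U$ and $V$ outright; the remaining freedom is to choose $P$ and $Q$ so that all three transformed matrices become lower triangular while the identity $V^T\!MU = \Sigma$ is not disturbed.

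Next I would produce $Q$ and $P$ by a sequence of elementary reductions. Starting from $V^T\!CQ$: since $V$ is already fixed, I pick $Q$ (a single rotation) to annihilate the $(1,2)$ entry of $V^T\!C$, making $V^T\!CQ$ lower triangular. With $Q$ now fixed, choose $P$ to make $P^T\!AQ$ lower triangular — again a single rotation annihilating the $(1,2)$ entry of $P$ applied on the left of $AQ$. It then remains to check that the \emph{same} $P$ (together with the already-chosen $U$) makes $P^T\!BU$ lower triangular. This is where the computation has to be organized carefully: one writes $M = C\adj(A)B$, uses $V^T\!MU = \Sigma$ diagonal together with $V^T\!CQ$ and $P^T\!AQ$ lower triangular, and deduces that $P^T\!BU = (P^T\!AQ)(\adj(A)Q)^{-T}\!\cdots$ — more cleanly, from $\Sigma = V^T\!CQ \cdot Q^T\!\adj(A)^T\!\cdots$, one shows $P^T\!BU = (P^T\!\adj(A)^{-1}Q)^{?}$; the point is that $\adj(A)$ is $2\times 2$ so $\adj(\adj(A))$ is a scalar multiple of $A$, and $P^T\!BU = \tfrac{1}{\det A}\, P^T\!A\,\adj(A)\,BU$ can be rewritten using $\adj(A)B = \adj(C)^{?}\cdots$. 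The honest way to present it is: the two lower-triangular constraints on $V^T\!CQ$ and $P^T\!AQ$ force the $(1,2)$ entry of $C\adj(A)B$ — transformed by $V$ on the left and $U$ on the right — to equal the $(1,2)$ entry of a product of three lower-triangular matrices times $U$ on the right, and comparing with $V^T\!MU=\Sigma$ being diagonal pins down that $P^T\!BU$ has a zero in the $(1,2)$ position.

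The main obstacle, and the step I would spend the most care on, is exactly this last consistency check: showing that $P$ chosen to triangularize $P^T\!AQ$ simultaneously triangularizes $P^T\!BU$, given that $V$, $U$ were chosen from the SVD of $M$. The cleanest route is probably to argue structurally rather than by brute force: once $V^T\!CQ$, $P^T\!AQ$ are lower triangular and $V^T\!CA^{-1}BU$ must be lower triangular (being $\Sigma$, diagonal), the factorization $P^T\!BU = (P^T\!AQ)(V^T\!CQ)^{-1}(V^T\!CA^{-1}BU)$ — valid when $A$, and the triangular $C$-block are invertible — is a product of lower-triangular matrices, hence lower triangular; and the general case follows by the same Bolzano--Weierstrass / limiting argument already used in the proof of Theorem~\ref{thm:schurform}, perturbing $A$, $B$, $C$ to invertible upper-triangular matrices, extracting a convergent subsequence of the rotations, and passing to the limit. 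A remaining edge case is when $M = 0$ (e.g.\ $\det A = 0$ with the right null-structure, or $B$ or $C$ singular in a compatible way), where the SVD of $M$ leaves $U$, $V$ partly arbitrary; there one simply uses the extra freedom to fix $U$ and $V$ so that the triangularization of $B$ and $C$ still goes through, which is easy to arrange directly. I would close by remarking that this constructive argument is also the basis for the numerically stable algorithm developed in the following subsections.
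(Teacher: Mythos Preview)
Your argument is correct, but it takes a different route from the paper's.  The paper proceeds by a short case analysis on the singularity of $B$ and $C$.  When at least two of $A$, $B$, $C$ are nonsingular (say $B$ and $C$), it computes the SVD of $M$ to obtain $U$, $V$, then chooses $P$ to zero the $(1,2)$ entry of $BU$ and $Q$ to zero the $(1,2)$ entry of $V^T C$; the remaining matrix is handled algebraically via
\[
  Q^T \adj(A)\, P \;=\; (V^T\!CQ)^{-1}\,\Sigma\,(P^T\!BU)^{-1},
\]
which is lower triangular, hence so is $P^T\!AQ$.  When $B$ (resp.\ $C$) is singular, the paper does \emph{not} start from the SVD of $M$ at all: it first compresses $B$ (resp.\ $C$) with $P,U$ (resp.\ $V,Q$), then propagates rotations through $A$ and $C$ (resp.\ $A$ and $B$), and checks directly that the resulting $V^T\!MU$ is diagonal because it equals a product of three lower-triangular matrices with one having a zero row or column in the right place.

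Your version instead fixes $U,V$ from the SVD of $M$ in all cases, derives $Q$ from $V^T C$ and $P$ from $AQ$, and verifies $P^T\!BU$ is lower triangular via the factorization $P^T\!BU=(P^T\!AQ)(V^T\!CQ)^{-1}(V^T\!CA^{-1}BU)$, valid when $A$ and $C$ are invertible; the singular cases are absorbed by a Bolzano--Weierstrass limiting argument.  This is perfectly valid for existence (and indeed mirrors the compactness trick in the proof of Theorem~\ref{thm:schurform}).  The trade-off is that the paper's case split is fully constructive and yields the explicit per-case output structure that Lemma~\ref{thm:algIO} later tabulates, whereas your limiting argument is nonconstructive for the degenerate inputs and does not by itself deliver that structural information.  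A minor point: the intermediate formulas you sketch before settling on the clean factorization are garbled and should be dropped; the final factorization plus the limit is all you need.
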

\begin{proof}
  If any two of the three matrices $A$, $B$, and $C$ are nonsingular,
  then the result is straightforward. For example, if $B$ and $C$ are
  nonsingular, then we can find $U$ and $V$ by computing the SVD of $M$,
  and letting $P$ and $Q$ zero the $(1,2)$ entries of $BU$ and $V^T\!C$,
  respectively.  Then $Q^T\! \adj(A) P = (V^T\!CQ)^{-1} \Sigma
  (P^T\!BU)^{-1}$ and it follows that $P^T\!AQ$ must be lower
  triangular.  By noting that $\adj(A)$ is nonsingular if and only if
  $A$ is nonsingular, we see that similar arguments hold when $A$ and
  $B$ are nonsingular or when $A$ and $C$ are nonsingular.
  
  If $B$ is singular and $A$ and $C$ are arbitrary, we can compute $P$
  and $U$ such that $P^T\!BU$ has the form $\minimat{0}{0}{0}{\times}$,
  compute $Q$ so that $P^T\!AQ$ is lower triangular, and compute $V$ so
  that $V^T\!CQ$ is lower triangular. By using the fact that $\adj(P^T\!
  AQ)$ is a scalar multiple of $P^T \adj(A) Q$, we then see that
  $V^T\!MU$ is a scalar multiple of $(V^T\!CQ) \cdot \adj(P^T\!AQ) \cdot
  (P^T\!BU)$, which is of the form $\minimat{0}{0}{0}{\times}$.

  If $C$ is singular and $A$ and $B$ are arbitrary, we can compute $V$
  and $Q$ such that $V^T\!CQ$ is of the form
  $\minimat{\times}{0}{0}{0}$, compute $P$ so that $P^T\!AQ$ is lower
  triangular, and compute $U$ so that $P^T\!BU$ is lower triangular.
  Then $V^TMU$ is a scalar multiple of $(V^T\!CQ) \cdot \adj(P^T\!AQ)
  \cdot (P^T\!BU)$, which is of the form $\minimat{\times}{0}{0}{0}$.
\end{proof}

% }}}

The theorem above does not tell us anything about the angles of the
rotations, nor about the numerical stability of the computations.
Rather, the theorem shows that computing the $2\times 2$ RSVD is
possible for any triplet of upper-triangular matrices, even when $A$ is
singular.  Knowing what is possible, the question that remains is how to
do it in a numerically sound way.

Bojanczyk et al.~\cite{BELD91} propose a recursive algorithm for
accurately computing the SVD of a product of three upper-triangular
$2\times 2$ matrices that is close to the $2\times 2$ RSVD needed for
Algorithm~\ref{alg:kog}.  But their diagonalization is not guaranteed to
have high \emph{relative} accuracy, as demonstrated by Bai and
Demmel~\cite{BD93} for the QSVD.\@ Adams, Bojanczyk, and Luk address
this issue for the product of two matrices in~\cite{ABL94} with a
modified version of their algorithm that they call ``half-recursive'',
and which they show is related to Bai and Demmel's algorithm in exact
arithmetic. Though, they did not provide an improved version of their
algorithm for the product of three matrices.
% (specifically, from which side to compute the inner transform)

We can generalize Bai and Demmel's algorithm for the $2\times 2$ QSVD to
the $2\times 2$ RSVD, as shown below in Algorithm~\ref{alg:rsvd}.
Informally, the idea of the algorithm is to apply a modified version of
Bai and Demmel's \textsf{GSVD22} to the pairs $(C,\adj(A)B)$ and
$(C\adj(A),B)$, but some of the details require further attention. For
example, what to do when $c_{11} = b_{22} = 0$, and when to replace $U$
and $V$ by $UJ$ and $VJ$. For the latter in particular, there are
qualitative differences between postmultiplying by $J$ when $c_{\max} <
s_{\max}$ or when $c_{\max} \le s_{\max}$ if $B$ or $C$ are singular,
and the choice between the two conditions is not obvious. The condition
we ultimately use in the algorithm below ensures that
Lemma~\ref{thm:algIO} and Lemma~\ref{thm:lasvtwo} hold.

% (Algorithm) $2\times 2$ RSVD {{{

%\tab[\hfill|\hfill]
\begin{myalgorithm}[$2\times 2$ upper-triangular RSVD (\textsf{RSVD22})]\label{alg:rsvd}
\strut\\* % note \\* is an newline that preferably doesn't break across pages
\rm
  \textbf{Input:} $2\times 2$ upper-triangular matrices $A$, $B$, and $C$, with
    $A$ nonsingular. \\
  \textbf{Output:} Orthonormal matrices $P$, $Q$, $U$, and $V$, and
    lower-triangular matrices $A' = P^T\!AQ$, $B' = P^T\!BU$, and $C' =
    V^T\!CQ$, such that $C' \adj(A') B'$ is diagonal. \\
  % \textbf{begin} \\ % Necessary for etna.cls.
  \tab[\phantom01.] \textbf{if} $c_{11} = 0$ and $b_{22} = 0$ \textbf{then} \\
  \tab[\phantom02.]\tab Compute $V$ such that $(V^T\!C)_{22} = 0$ and let $Q = J$. \\
  \tab[\phantom03.]\tab Compute $U$ such that $(BU)_{11} = 0$ and let $P = J$. \\
  \tab[\phantom04.]\tab Let $A' = P^T\!AQ$, $B' = P^T\!BU$, $C' = V^T\!CQ$,
    and $b_{21}' = c_{21}' = 0$. \\
  % NOTE: the zeroing of the (2,1) entries is strictly speaking not
  % necessary? $a'_{12} = b'_{12} = c'_{12} = 0$ by construction?
  \tab[\phantom05.]\tab \textbf{return} \\
  \tab[\phantom06.] \textbf{endif} \\
  \tab[\phantom07.] Use \textsf{xLASV2} to compute $M = V\Sigma U^T$,
    where $M = C\adj(A)B$. \\ %, $U = \rot(\phi)$, and $V = \rot(\psi)$. \\
  \tab[\phantom08.] Define $c_{\max} = \max \{ |u_{11}|, |v_{11}| \}$ and
    $s_{\max} = \max \{ |u_{12}|, |v_{12}| \}$. \\
  \tab[\phantom09.] \textbf{if} $c_{11} \neq 0$ and $c_{22} \neq 0$ and
  $b_{11} \neq 0$ and $b_{22} \neq 0$ and $c_{\max} < s_{\max}$ \textbf{then} \\
  \tab[10.]\tab Let $U = UJ$ and $V = VJ$. \\
  \tab[11.] \textbf{endif} \\
  \tab[12.] Let $G = V^T\! C$ and (optionally; see text) set $g_{22}$ to
    zero if $c_{11} = 0$. \\
  \tab[13.] Let $L = BU$ and (optionally; see text) set $l_{12}$ to zero
    if $b_{22} = 0$. \\
  \tab[14.] Let $\widehat G = |V|^T |C|$, $H = \adj(A)L$, and $\widehat
    H = |\adj(A)|\, |B|\, |U|$. \\
  \tab[15.] Let $K = G\adj(A)$, $\widehat K = |V|^T |C|\, |\adj(A)|$,
    and $\widehat L = |B|\, |U|$. \\
  \tab[16.] Let $\eta_g = (\widehat g_{11} + \widehat g_{12}) /
    (|g_{11}| + |g_{12}|)$ and $\eta_h = (\widehat h_{12} + \widehat
    h_{22}) / (|h_{12}| + |h_{22}|)$. \\
  \tab[17.] Let $\eta_k = (\widehat k_{11} + \widehat k_{12}) /
    (|k_{11}| + |k_{12}|)$ and $\eta_l = (\widehat l_{12} + \widehat
    l_{22}) / (|l_{12}| + |l_{22}|)$. \\
  \tab[18.] \textbf{if} $|h_{12}| + |h_{22}| = 0$ or ( $|g_{11}| +
    |g_{12}| \neq 0$ and $\eta_g \le \eta_h$) \textbf{then} \\
  \tab[19.]\tab Use \textsf{xLARTG} to compute $Q$ such that $GQ$ is
    lower triangular. \\
  \tab[20.] \textbf{else} \\
  \tab[21.]\tab Use \textsf{xLARTG} to compute $Q$ such that $Q^TH$ is
    lower triangular. \\
  \tab[22.] \textbf{endif} \\
  \tab[23.] \textbf{if} $|k_{11}| + |k_{12}| = 0$ or ($|l_{12}| +
    |l_{22}| \neq 0$ and $\eta_l \le \eta_k$) \textbf{then} \\
  \tab[24.]\tab Use \textsf{xLARTG} to compute $P$ such that $P^T L$ is
    lower triangular. \\
  \tab[25.] \textbf{else} \\
  \tab[26.]\tab Use \textsf{xLARTG} to compute $P$ such that $KP$ is
    lower triangular. \\
  \tab[27.] \textbf{endif} \\
  \tab[28.] Let $A' = P^T\!AQ$, $B' = P^TL$, $C' = GQ$, and
    $a'_{12} = b'_{12} = c'_{12} = 0$.
  % \\ \textbf{end} % Necessary for etna.cls.
\end{myalgorithm}

% }}}

Since $B$ and $C$ can be singular, there may be zeros on their
diagonals. If this is the case, and if the factors $X$ and $Y$ from
Theorem~\ref{thm:rsvd} or the factors $S$ and $T$ from
Corollary~\ref{thm:rsvdtri} are desired, then we need to know the
nonzero structure of $B$ and $C$ after convergence.
Paige~\cite[Sec.~5]{Pai86} describes the nonzero structure for the QSVD
in a similar case, and has a proof which is, in his own words, ``hard
going''. The proof for the RSVD is tedious also, and is split into two
parts.  The first part is a lemma that gives the output of
\textsf{RSVD22} for a given input, and the second part is a proposition
that uses the lemma to prove what kind of nonzero structure we get for
the RSVD after a series of cycles.

In principle, we have to consider a total of 25 different cases when
investigating the nonzero structure of the outputs of
Algorithm~\ref{alg:rsvd}.  For $B$ alone, for instance, we must already
consider the following five cases:
\begin{equation*}
  \compressedmatrices
  \begin{bmatrix} \underline{b_{11}} & b_{12} \\ 0 & \underline{b_{22}} \end{bmatrix},
  \qquad
  \begin{bmatrix} \underline{b_{11}} & b_{12} \\ 0 & 0 \end{bmatrix},
  \qquad
  \begin{bmatrix} 0 & b_{12} \\ 0 & \underline{b_{22}} \end{bmatrix},
  \qquad
  \begin{bmatrix} 0 & \underline{b_{12}} \\ 0 & 0 \end{bmatrix},
  \quad\text{and}\quad
  \begin{bmatrix} 0 & 0 \\ 0 & 0 \end{bmatrix},
\end{equation*}
where the underlined entries are nonzero. Fortunately, we can treat some
of the 25 cases simultaneously and reduce them to 13 cases.

% (Lemma) {{{

\begin{lemma}\label{thm:algIO}
  Let $A$, $B$, and $C$ be upper-triangular $2\times 2$ matrices, and
  suppose that $A$ is nonsingular. If the SVD in
  Algorithm~\ref{alg:rsvd} computes $U = V = I$ whenever $M = 0$, and is
  such that $|\sigma_1| \ge |\sigma_2|$; then the cases given below
  describe the output of Algorithm~\ref{alg:rsvd}.  Each case shows (in
  sequence) the structure of the input matrices $B$ and $C$ ($A$ is
  always upper triangular with nonzero diagonal entries), the matrix $M
  = C \adj(A) B$, and the output matrices $B'$ and $C'$ ($A'$ is always
  lower triangular with nonzero diagonal entries). Each case also shows
  $P$, $Q$, $U$, and $V$ when they take specific values.  Furthermore,
  underlined matrix entries are nonzero.
  \begin{enumerate}
    \setlength\itemsep{0.00ex}
    % Item 1.
    \item\label{it:CtriBtri}
      $\minimat{ \nonzero{c_{11}} }{c_{12}}{0}{ \nonzero{c_{22}} }$,
      $\minimat{ \nonzero{b_{11}} }{b_{12}}{0}{ \nonzero{b_{22}} }$,
      $\minimat{ \nonzero{m_{11}} }{m_{12}}{0}{ \nonzero{m_{22}} }$,
      $\minimat{ \nonzero{c_{11}'} }{0}{c_{21}'}{ \nonzero{c_{22}'} }$,
      $\minimat{ \nonzero{b_{11}'} }{0}{b_{21}'}{ \nonzero{b_{22}'} }$.
      % nonzeros:
      % $c_{11}$, $c_{22}$, $b_{11}$, $b_{22}$, $c_{11}'$, $c_{22}'$,
      % $b_{11}'$, $b_{22}'$.

    % Item 2.
    \item\label{it:CcolBrow}\label{it:CzeroBrow}\label{it:CcolBzero}\label{it:CzeroBzero}
      $\minimat{0}{c_{12}}{0}{c_{22}}$,
      $\minimat{b_{11}}{b_{12}}{0}{0}$,
      $0$,
      $\minimat{c_{11}'}{0}{0}{0}$,
      $\minimat{0}{0}{0}{b_{22}'}$,
      $P = Q = J$.

    % Item 3.
    \item\label{it:CrowBzero}
      $\minimat{ \nonzero{c_{11}} }{c_{12}}{0}{0}$,
      $\minimat{0}{0}{0}{0}$,
      $0$,
      $\minimat{ \nonzero{c_{11}'} }{0}{0}{0}$,
      $\minimat{0}{0}{0}{0}$,
      $U = V = I$.
      % nonzeros:
      % $c_{11}$, $c_{11}'$.

    % Item 4.
    \item\label{it:CtriBzero}
      $\minimat{ \nonzero{c_{11}} }{c_{12}}{0}{ \nonzero{c_{22}} }$,
      $\minimat{0}{0}{0}{0}$,
      $0$,
      $\minimat{ \nonzero{c_{11}'} }{0}{c_{21}'}{ \nonzero{c_{22}'} }$,
      $\minimat{0}{0}{0}{0}$,
      $U = V = I$.
      % nonzeros:
      % $c_{11}$, $c_{22}$, $c_{11}'$, $c_{22}'$.

    % Item 5.
    \item\label{it:CrowBrow}
      $\minimat{ \nonzero{c_{11}} }{c_{12}}{0}{0}$,
      $\minimat{b_{11}}{b_{12}}{0}{0} \neq 0$,
      $c_{11} a_{22} B \neq 0$,
      $\minimat{ \nonzero{c_{11}'} }{0}{0}{0}$,
      $\minimat{ \nonzero{b_{11}'} }{0}{b_{21}'}{0}$,
      % $B \neq 0$
      and $|V| = I$.
      % nonzeros:
      % $c_{11}$, $B$, $c_{11}'$, $b_{11}'$.

    % Item 6.
    \item\label{it:CtriBrow}
      $\minimat{ \nonzero{c_{11}} }{c_{12}}{0}{ \nonzero{c_{22}} }$,
      $\minimat{b_{11}}{b_{12}}{0}{0} \neq 0$,
      $c_{11} a_{22} B \neq 0$,
      $\minimat{ \nonzero{c_{11}'} }{0}{c_{21}'}{ \nonzero{c_{22}}' }$,
      $\minimat{ \nonzero{b_{11}'} }{0}{b_{21}'}{0}$,
      % $B \neq 0$
      and $|V| = I$.
      % nonzeros:
      % $c_{11}$, $c_{22}$, $B$, $c_{11}'$, $c_{22}'$, $b_{11}'$.

    % Item 7.
    \item\label{it:CrowBtri}
      $\minimat{ \nonzero{c_{11}} }{c_{12}}{0}{0}$,
      $\minimat{ \nonzero{b_{11}} }{b_{12}}{0}{ \nonzero{b_{22}} }$,
      $\minimat{ \nonzero{m_{11}} }{m_{12}}{0}{0}$,
      $\minimat{ \nonzero{c_{11}'} }{0}{0}{0}$,
      $\minimat{ \nonzero{b_{11}'} }{0}{b_{21}'}{ \nonzero{b_{22}'} }$,
      $|U| \neq |J|$ and $|V| = I$.
      % nonzeros:
      % $c_{11}$, $b_{11}$, $b_{22}$, $c_{11}'$, $b_{11}'$, $b_{22}'$.

    % Item 8.
    \item\label{it:CzeroBcol}
      $\minimat{0}{0}{0}{0}$,
      $\minimat{0}{b_{12}}{0}{ \nonzero{b_{22}} }$,
      $0$,
      $\minimat{0}{0}{0}{0}$,
      $\minimat{0}{0}{0}{ \nonzero{b_{22}'} }$,
      $U = V = I$.
      % nonzeros:
      % $b_{22}$, $b_{22}'$.

    % Item 9.
    \item\label{it:CzeroBtri}
      $\minimat{0}{0}{0}{0}$,
      $\minimat{ \nonzero{b_{11}} }{b_{12}}{0}{ \nonzero{b_{22}} }$,
      $0$,
      $\minimat{0}{0}{0}{0}$,
      $\minimat{ \nonzero{b_{11}'} }{0}{b_{21}'}{ \nonzero{b_{22}'} }$,
      $U = V = I$.
      % nonzeros:
      % $b_{11}$, $b_{22}$, $b_{11}'$, $b_{22}'$.

    % Item 10.
    \item\label{it:CcolBcol}
      $\minimat{0}{c_{12}}{0}{c_{22}} \neq 0$,
      $\minimat{0}{b_{12}}{0}{ \nonzero{b_{22}} }$,
      $a_{11} b_{22} C \neq 0$,
      $\minimat{ \nonzero{c_{11}'} }{0}{0}{0}$,
      $\minimat{ \nonzero{b_{11}'} }{0}{b_{21}'}{0}$,
      and $|P| = |Q| = |U| = |J|$.

    % Item 11.
    \item\label{it:CcolBtri}
      $\minimat{0}{c_{12}}{0}{c_{22}} \neq 0$,
      $\minimat{ \nonzero{b_{11}} }{b_{12}}{0}{ \nonzero{b_{22}} }$,
      $a_{11} b_{22} C \neq 0$,
      $\minimat{\nonzero{c_{11}'}}{0}{0}{0}$,
      $\minimat{ \nonzero{b_{11}'} }{0}{b_{21}'}{ \nonzero{b_{22}'} }$,
      and $|P| = |Q| = |U| = |J|$.

    % Item 12.
    \item\label{it:CtriBcol}
      $\minimat{ \nonzero{c_{11}} }{c_{12}}{0}{ \nonzero{c_{22}} }$,
      $\minimat{0}{b_{12}}{0}{ \nonzero{b_{22}} }$,
      $\minimat{0}{m_{12}}{0}{ \nonzero{m_{22}} }$,
      $\minimat{ \nonzero{c_{11}'} }{0}{c_{21}'}{ \nonzero{c_{22}'} }$,
      $\minimat{ \nonzero{b_{11}'} }{0}{b_{21}'}{0}$,
      $|U| = |J|$ and $|V| \neq I$.

    \item\label{it:CrowBcol} % 13
      $\minimat{ \nonzero{c_{11}} }{c_{12}}{0}{0}$,
      $\minimat{0}{b_{12}}{0}{ \nonzero{b_{22}} }$,
      $\minimat{0}{m_{12}}{0}{0}$; if $m_{12} = 0$, then $C' =
      \minimat{\nonzero{c_{11}'}}{0}{0}{0}$ and $B' =
      \minimat{0}{0}{0}{\nonzero{b_{22}'}}$ with $U = V = I$.  If
      $m_{12} \neq 0$, then $C' = \minimat{ \nonzero{c_{11}'}
      }{0}{0}{0}$ and $B' = \minimat{ \nonzero{b_{11}'} }{0}{b_{21}}{0}$
      with $|U| = |J|$ and $|V| = I$.
  \end{enumerate}
\end{lemma}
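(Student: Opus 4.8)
The plan is to verify the thirteen cases one at a time by tracing through Algorithm~\ref{alg:rsvd}, exploiting the block structure of the inputs and the fact that $\adj(A)$ is upper triangular with nonzero diagonal entries whenever $A$ is nonsingular upper triangular. Throughout, I would record what $M = C\adj(A)B$ looks like (its zero pattern is forced by the zero patterns of $B$ and $C$, since $\adj(A)$ has no structural zeros beyond being upper triangular), then follow which branch of the algorithm is taken in lines 1--6 versus lines 7--28, and finally compute $C' = GQ$ and $B' = P^TL$.

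First I would handle the degenerate branch. When $c_{11} = 0$ and $b_{22} = 0$ (which is exactly case~\ref{it:CcolBrow}), lines 2--5 apply: the rotation $V$ annihilates $(V^TC)_{22}$, so $C' = V^TCJ$ becomes $\minimat{c_{11}'}{0}{0}{0}$ with $c_{11}'\ne 0$ iff $C\ne 0$, and similarly $B' = JBU$ has the form $\minimat{0}{0}{0}{b_{22}'}$; here $M = 0$ because the $(1,1)$ entry of $C\adj(A)$ is zero and the $(2,2)$ entry of $\adj(A)B$ is zero. For all remaining cases at least one of $c_{11}, b_{22}$ is nonzero, so control passes to line 7. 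Here the key observation is that \textsf{xLASV2} applied to the upper-triangular $2\times 2$ matrix $M$ returns $U = V = I$ when $M = 0$ (the hypothesis of the lemma), and otherwise returns rotations reflecting the actual zero structure of $M$; in particular if $M$ is itself already (lower or upper) triangular with the right zero, the SVD is essentially trivial. I would then split on whether $M = 0$ (cases~\ref{it:CrowBzero}, \ref{it:CtriBzero}, \ref{it:CzeroBcol}, \ref{it:CzeroBtri}, and the $m_{12}=0$ subcase of~\ref{it:CrowBcol}): there $U = V = I$, the $J$-swap in lines 9--11 is not triggered (since $M=0$ forces one of the diagonal-nonzero conditions to fail), so $G = C$, $L = B$, and lines 18--27 simply choose $Q$ and $P$ to restore lower-triangularity of $C$ and $B$ respectively. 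In each of those cases the claimed output structure follows from a short QR/RQ bookkeeping argument: e.g. in case~\ref{it:CtriBzero}, $Q$ lower-triangularizes the upper-triangular $C$, giving the full-rank lower-triangular $C'$, while $B = B' = 0$ is untouched.

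For the cases with $M \ne 0$, the main work is controlling lines 12--27. I would argue as follows. Lines 12--13 optionally zero out $g_{22}$ (when $c_{11}=0$) and $l_{12}$ (when $b_{22}=0$), which is exactly what pins down the "column" or "row" shape of $C'$ and $B'$. The heart of the argument is that the two rotations $Q$ and $P$ are each chosen from one of two candidates — lower-triangularizing $G$ vs.\ $H = \adj(A)L$ for $Q$, and lower-triangularizing $L$ vs.\ $K = G\adj(A)$ for $P$ — and in exact arithmetic \emph{both} candidates produce the same final zero pattern in $M' = C'\adj(A')B'$, because the product $C\adj(A)B$ is being diagonalized by $U,V$ regardless. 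So the $\eta$-comparisons in lines 16--18 and 23 only affect numerical accuracy, not the structural outcome; for the purpose of this lemma I may therefore pick whichever candidate is analytically convenient and check that the resulting $C'$, $B'$ have the stated pattern and that the nonzero diagonal entries are genuinely nonzero (using nonsingularity of $A$ and the non-vanishing of the relevant input entries). The $J$-postmultiplication decision in lines 9--11 determines whether $|U|,|V|$ equal $|I|$ or $|J|$; I would check case by case that the condition "$c_{11},c_{22},b_{11},b_{22}$ all nonzero and $c_{\max} < s_{\max}$" reproduces exactly the $|U|$, $|V|$ claims (e.g. in case~\ref{it:CtriBtri} it may or may not fire, consistent with no claim being made there; in cases like~\ref{it:CrowBtri}, \ref{it:CtriBcol} one of the four entries is zero so the swap never fires on that account, forcing the stated $|V| = I$ or $|U| = |J|$ via the remaining logic).

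The main obstacle will be case~\ref{it:CrowBcol}, and more generally the bookkeeping around when the SVD rotations degenerate to $\pm J$ versus $\pm I$. When $C$ is row-shaped and $B$ is column-shaped, $M = C\adj(A)B$ has the form $\minimat{0}{m_{12}}{0}{0}$, and whether $m_{12}$ vanishes depends on a genuine algebraic cancellation, namely $m_{12} = c_{11}(a_{22}b_{12} - a_{12}b_{22})$ up to scaling — this is nonzero precisely when $c_{11}\ne 0$ and $\minimat{a_{12}}{a_{22}}{}{}$ is not parallel to $\minimat{b_{12}}{b_{22}}{}{}$ in the appropriate sense. I would need to carefully verify that \textsf{xLASV2} on such an off-diagonal-only $M$ returns $U,V$ with $|U| = |J|$, $|V| = I$ (so that $C'$ stays row-shaped while $B$ gets a $(2,1)$ fill-in), and conversely that $m_{12} = 0$ sends us back to the $M=0$ regime. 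A secondary subtlety, flagged in the text before the lemma, is the boundary condition $c_{\max} \le s_{\max}$ vs.\ $c_{\max} < s_{\max}$ in line 9 when $B$ or $C$ is singular; I would need to confirm that the strict inequality together with the four "nonzero" guards is what makes cases~\ref{it:CcolBcol}--\ref{it:CrowBcol} come out with the stated absolute-value patterns rather than their $J$-shifted counterparts, which is exactly the compatibility with Lemma~\ref{thm:lasvtwo} alluded to in the paragraph preceding Algorithm~\ref{alg:rsvd}.
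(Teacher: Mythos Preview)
Your approach is correct and is essentially the paper's own: a case-by-case trace through Algorithm~\ref{alg:rsvd}, first disposing of the $c_{11}=b_{22}=0$ branch, then handling $M=0$ with $U=V=I$, and finally the rank-one and full-rank cases by reading off what \textsf{xLASV2} returns and which of $G,H,K,L$ drives the choice of $P,Q$.

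Two points to tighten. First, your formula for $m_{12}$ in case~\ref{it:CrowBcol} drops a term: with $C=\minimat{c_{11}}{c_{12}}{0}{0}$ and $B=\minimat{0}{b_{12}}{0}{b_{22}}$ one gets $m_{12}=c_{11}(a_{22}b_{12}-a_{12}b_{22})+c_{12}a_{11}b_{22}$, so the vanishing criterion is not the parallelism you describe. This does not change the case split (zero vs.\ nonzero $m_{12}$), but the stated condition is wrong. Second, your shortcut ``both candidates for $Q$ (resp.\ $P$) produce the same zero pattern because $M'$ is diagonal regardless'' is true but your justification only controls $M'=C'\adj(A')B'$, not $C'$, $A'$, $B'$ individually, nor the specific claims $|P|=|Q|=|J|$ in cases~\ref{it:CcolBcol}--\ref{it:CcolBtri}. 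What actually makes the shortcut valid is that $GH=V^T M U$ is diagonal in exact arithmetic, so $\vec e_1^T G$ is orthogonal to $H\vec e_2$; hence the rotation $Q$ annihilating $(GQ)_{12}$ and the rotation annihilating $(Q^TH)_{12}$ coincide up to sign whenever both first-row-of-$G$ and second-column-of-$H$ are nonzero. The paper does not state this as a lemma but effectively uses it (e.g.\ in case~\ref{it:CrowBtri}, where it checks both branches for $P$ explicitly). You should record this orthogonality once and then invoke it, rather than appeal only to the diagonality of $M'$.
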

\begin{proof}
  Following Algorithm~\ref{alg:rsvd} step-by-step for each case yields
  the desired results; see Appendix~\ref{sec:proof} for details.
\end{proof}

% }}}

Cases~\ref{it:CcolBcol}, \ref{it:CcolBtri}, and~\ref{it:CtriBcol} may
violate the min-max--angle condition; however, these cases can no longer
occur in later iterations. This is a consequence of the following
proposition, which describes the nonzero structure of $A$, $B$, and $C$
after an odd and an even cycle. Furthermore, the positions of the zeros
and nonzeros in the output of Lemma~\ref{thm:algIO} are only guaranteed
in floating-point arithmetic with the optional zeroing of $g_{22}$ if
$c_{11} = 0$ and $l_{12}$ if $b_{22} = 0$ in Algorithm~\ref{alg:rsvd}.
Still, the theoretical results from later sections hold with or without
this explicit zeroing; see Section~\ref{sec:numchallenges} for further
discussion.

% (Proposition) {{{

\begin{proposition}\label{thm:nzStructure}
  Suppose that $A$, $B$, and $C$ are square and upper-triangular $l
  \times l$ matrices, $A$ is nonsingular, and $M = CA^{-1}B =
  \minimat{M_{11}}{M_{12}}{0}{0}$, where $M_{11}$ is nonsingular and
  upper triangular. Then a pair of an odd and even cycle of
  Algorithm~\ref{alg:kog}, with Algorithm~\ref{alg:rsvd} for the
  $2\times 2$ RSVDs, transforms the structure of $A$, $B$, and $C$ into
  \begin{equation}\label{eq:nzStructureUp}
    \compressedmatrices
    \begin{bmatrix}
      A_{11} & A_{12} & A_{13} & A_{14} \\
      & A_{22} & A_{23} & A_{24} \\
      && A_{33} & A_{34} \\
      &&& A_{44} \\
    \end{bmatrix},
    \ifx\siamartcls\undefined
      \qquad
    \else
      \quad
    \fi
    \begin{bmatrix}
      B_{11} & B_{12} & B_{13} & B_{14} \\
      & B_{22} & B_{23} & B_{24} \\
      && 0 & 0 \\
      &&& 0
    \end{bmatrix},
    \ifx\siamartcls\undefined
      \quad\text{and}\quad
    \else
      \;\text{and}\;
    \fi
    \begin{bmatrix}
      C_{11} & C_{12} & C_{13} & C_{14} \\
      & 0 & 0 & C_{24} \\
      && 0 & C_{34} \\
      &&& C_{44}
    \end{bmatrix},
  \end{equation}
  where all nonzero diagonal blocks are nonsingular and upper
  triangular.
\end{proposition}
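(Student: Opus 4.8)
The plan is to prove the claim by tracking the nonzero pattern of $A$, $B$, and $C$ through the two cycles, using Lemma~\ref{thm:algIO} as the transformation rule for every individual $2\times 2$ pivot and the fact that $\rank B$, $\rank C$, and $\rank M$ are preserved by the orthonormal updates; the last point follows from $B' = P^T\!BU$, $C' = V^T\!CQ$, and $M' = C'(A')^{-1}B' = V^T\!MU$. First I would partition $\{1,\dots,l\}$ into the four classes determined by the diagonal pattern $(b_{ii}\neq 0,\ c_{ii}\neq 0)$, $(b_{ii}\neq 0,\ c_{ii}=0)$, $(b_{ii}=0,\ c_{ii}=0)$, $(b_{ii}=0,\ c_{ii}\neq 0)$. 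These are exactly the four diagonal blocks $A_{11},\dots,A_{44}$ of \eqref{eq:nzStructureUp}, in that order, because there $B_{11},B_{22}$ and $C_{11},C_{44}$ are nonsingular (hence triangular with nonzero diagonal) while the remaining diagonal blocks of $B$ and $C$ vanish. Since $m_{ii}=c_{ii}b_{ii}/a_{ii}$, the hypothesis $M=\minimat{M_{11}}{M_{12}}{0}{0}$ with $M_{11}$ nonsingular forces the first class to equal $\{1,\dots,k\}$ with $k=\rank M$, and invariance of $\rank B$ and $\rank C$ then pins down the sizes of the other three blocks once the target form is reached.

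The core is a single-cycle structure lemma, proved by induction over the row index: after the $t$th row sweep of a cycle --- the pivots $(t,t+1),\dots,(t,l)$ processed in order --- the leading $t$ rows and columns are already in the sorted triangular form dictated by the class ordering, while rows $t+1,\dots,l$ keep a controlled pattern with a short transition zone governed by whichever of the thirteen cases of Lemma~\ref{thm:algIO} fires at each pivot. Composing the $l-1$ sweeps gives the structure produced by one cycle; doing this for the odd cycle (which runs Algorithm~\ref{alg:rsvd} on the transposed pivot blocks $(A_{ij}^T,C_{ij}^T,B_{ij}^T)$, interchanging the roles of $B$ and $C$) and then, after accounting for that transposition of the data, for the even cycle yields \eqref{eq:nzStructureUp}. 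Along the way one verifies that every diagonal block stays nonsingular and --- after the transposition between cycles --- upper triangular; this is immediate from the ``nonzero diagonal entries'' and ``lower triangular'' clauses in each case of Lemma~\ref{thm:algIO}, together with the alternation of cycles that turns the fill-in created in one cycle back into triangular form in the next.

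The step I expect to be the main obstacle is the bookkeeping for the mixed cases of Lemma~\ref{thm:algIO} --- in particular cases~\ref{it:CrowBrow}, \ref{it:CtriBrow}, \ref{it:CrowBtri}, and~\ref{it:CrowBcol} --- in which the diagonal pattern of an index genuinely changes, because a nonzero off-diagonal $b_{ij}$ or $c_{ij}$ lying in an otherwise rank-deficient region can turn a zero diagonal entry into a nonzero one. Hence the four classes are not merely permuted by a pivot and one cannot run the induction on the classes alone; the remedy is to phrase the inductive hypothesis in terms of the invariant ranks, i.e.\ as an explicit profile listing exactly which blocks of the partially processed $A$, $B$, and $C$ may be nonzero. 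With the right profile every class-changing event is forced to be compensated by another later in the same sweep so that $\rank B$ and $\rank C$ are respected, rather than having to be matched by hand. A much easier but still necessary point is that the configurations of cases~\ref{it:CcolBcol}, \ref{it:CcolBtri}, and~\ref{it:CtriBcol} --- the ones that may violate the min--max angle selection in Algorithm~\ref{alg:rsvd} --- cannot occur once the class ordering is in place, since each needs a pivot $(i,j)$ with $i<j$ whose diagonal pattern at $i$ sorts strictly after the one at $j$, which is impossible in \eqref{eq:nzStructureUp}; this yields the ``cannot recur'' statement used just before the proposition.
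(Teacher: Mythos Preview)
Your plan is in the same family as the paper's proof---both reduce everything to the case table of Lemma~\ref{thm:algIO} and track structure through the two cycles---but you are missing the simplification that makes the bookkeeping tractable, and you have put your finger on the wrong ``hard'' cases.

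The paper does not attempt to follow all four diagonal classes at once. It first shows, by itself, that a row sweep preserves the block shape of $M$: if the trailing principal submatrix of $M$ has the form $\minimat{M_{11}}{*}{0}{0}$ with $M_{11}$ nonsingular, then after sweeping the top row it still has that form. Once this is established, the analysis of $B$ (in the first cycle) and of $C$ (in the second) is carried out \emph{only for rows with $m_{ii}=0$}, because the structure of $M$ forces $M_{ij}=0$ there. This immediately rules out Cases~\ref{it:CtriBtri}, \ref{it:CrowBrow}, \ref{it:CtriBrow}, \ref{it:CrowBtri}, \ref{it:CcolBcol}, \ref{it:CcolBtri}, \ref{it:CtriBcol}, and forces Case~\ref{it:CrowBcol} to occur only with $m_{ij}=0$. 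So precisely the cases you flagged as the main obstacle (\ref{it:CrowBrow}, \ref{it:CtriBrow}, \ref{it:CrowBtri}, and the nontrivial branch of~\ref{it:CrowBcol}) never arise in the $B$/$C$ analysis; they are used only in the easier argument for $M$ itself.

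The mechanism you did not name is Case~\ref{it:CcolBrow}: its output has $P=Q=J$, so hitting a pivot $(i,j)$ with $c_{ii}=0$ and $b_{jj}=0$ swaps rows $i,j$ of $B$ and columns $i,j$ of $C$. This is what moves a surviving nonzero $b_{ii}$ down past the first zero $b_{jj}$ (making $b_{ii}=0$ and $b_{jj}\neq 0$), and in the second cycle moves a nonzero $c_{kk}$ up to position $c_{ii}$. The paper's induction is exactly this bubble-sort argument, together with the observation that the other surviving cases (\ref{it:CrowBzero}, \ref{it:CtriBzero}, \ref{it:CzeroBcol}, \ref{it:CzeroBtri}, and the $m_{ij}=0$ branch of~\ref{it:CrowBcol}) leave the relevant diagonal pattern and the already-created zeros untouched. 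Your ``invariant ranks'' idea is not wrong, but it is too coarse to pin down \emph{where} the zeros sit; without the $M_{ij}=0$ restriction and without isolating Case~\ref{it:CcolBrow} as the swap, I do not see how your four-class induction closes.

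Your final remark, that Cases~\ref{it:CcolBcol}--\ref{it:CtriBcol} cannot recur once the block ordering of~\eqref{eq:nzStructureUp} is in place, is correct and matches the paper's observation preceding the proposition.
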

\begin{proof}
  Any triplet of $1\times 1$ matrices $A$, $B$, and $C$ satisfies
  \eqref{eq:nzStructureUp} when $A$ is nonsingular.  For larger
  matrices, the upper triangularity of the matrices is a result of the
  row-cyclic cycles. Now, let $\operatorname{kog}(A,B,C)$ denote the
  output of a single odd cycle of Algorithm~\ref{alg:kog}, and let
  $\iter{A}{0} = A$, $\iter{B}{0} = B$, and $\iter{C}{0} = C$; then we
  can write the desired pair of cycles as
  \begin{equation*}
    \begin{split}
      (\iter{A}{0.5},\iter{B}{0.5},\iter{C}{0.5})
      &= \operatorname{kog}(\iter{A}{0},\iter{B}{0},\iter{C}{0})
      \aligncr % Extra linebreaks necessary for etna.cls.
      (\iter{A}{1},\iter{B}{1},\iter{C}{1})
      &= (\iter{A}{0.5}{}^T,\iter{C}{0.5}{}^T,\iter{B}{0.5}{}^T) \\
      (\iter{A}{1.5},\iter{B}{1.5},\iter{C}{1.5})
      &= \operatorname{kog}(\iter{A}{1},\iter{B}{1},\iter{C}{1})
      \aligncr % Extra linebreaks necessary for etna.cls.
      (\iter{A}{2},\iter{B}{2},\iter{C}{2})
      &= (\iter{A}{1.5}{}^T,\iter{C}{1.5}{}^T,\iter{B}{1.5}{}^T).
    \end{split}
  \end{equation*}
  % In the rest of the proof we
  Define $\iter{M}{\ell} = \iter{C}{\ell} (\iter{A}{\ell})^{-1}
  \iter{B}{\ell}$ for $\ell = 0, 0.5$, $1$, \dots, $2$, and for any $M$,
  $j > i$, and just before annihilating $m_{ij}$, define $M_{ij} =
  \minimat{m_{ii}}{m_{ij}}{0}{m_{jj}}$ as in \eqref{eq:Mij}. Moreover,
  we assume for the rest of the proof that the indices $i$, $j$, and $k$
  are always such that $1 \le i, j, k \le l$.

  We start by proving that sweeping the first row of $M = \iter{M}{0}$
  transforms
  \begin{equation*}
    \compressedmatrices
    M =
    \begin{bmatrix}
      m_{11} & \times & \times \\
      0 & M_{22} & \times \\
      0 & 0 & 0
    \end{bmatrix}
    \quad\text{into}\quad
    \widetilde M =
    \begin{bmatrix}
      \widetilde m_{11} & 0 & 0 \\
      \times & \widetilde M_{22} & \times \\
      \times & 0 & 0
    \end{bmatrix},
  \end{equation*}
  where $m_{11} \neq 0$ and $\widetilde m_{11} \neq 0$, and $M_{22}$ and
  $\widetilde M_{22}$ are nonsingular and upper triangular and not to be
  confused with $M_{ij}$.  Now suppose that we are about to
  annihilate $m_{1j}$ for some $j > 1$; then we have the following.
  \begin{enumerate}
    \item If $m_{11} \neq 0$ and $m_{jj} \neq 0$, then both remain
      nonzero after annihilating $m_{1j}$. This follows from
      Case~\ref{it:CtriBtri} of Lemma~\ref{thm:algIO}. Moreover, the
      rotations only transform the nonsingular diagonal block of $M$,
      which preserves the desired block structure.
    \item If $m_{11} \neq 0$ and $m_{jj} = 0$, then $m_{11}$ stays
      nonzero and we get $|V| = I$. This follows from
      Cases~\ref{it:CrowBrow}, \ref{it:CtriBrow}, and \ref{it:CrowBtri}
      of Lemma~\ref{thm:algIO}. Since $|V| = I$, the fact that $m_{jk} =
      0$ for every $k \ge j$ remains true after annihilating
      $m_{1j}$; that is, the transformations do not introduce nonzeros
      in row $j$.
    \item If $m_{11} = m_{jj} = 0$, then $M = 0$ and $M$ is still the
      zero matrix after annihilating $m_{1j}$.
  \end{enumerate}
  An induction argument shows that after sweeping the $i$th row of $M$,
  the unswept trailing submatrix starting at the $(i+1,i+1)$th element
  has a block structure similar to $\iter{M}{0}$. Moreover, since
  $\iter{M}{1} = \iter{M}{0.5}{}^T$ we see that $\iter{M}{1}$ has
  the same block structure as $\iter{M}{0}$, and that the same is true
  for the structure of $\iter{M}{2}$.

  Next we will prove that after the first cycle
  \begin{equation*}
    \compressedmatrices
    \iter{B}{0.5} =
    \begin{bmatrix}
      \iter{B_{11}}{0.5} \\
      \iter{B_{21}}{0.5} & 0 \\
      \iter{B_{31}}{0.5} & 0 & 0 \\
      \iter{B_{41}}{0.5} & \iter{B_{42}}{0.5} &
        \iter{B_{43}}{0.5} & \iter{B_{44}}{0.5}
    \end{bmatrix},
  \end{equation*}
  where $\iter{B_{11}}{0.5}$ and $\iter{B_{44}}{0.5}$ are nonsingular.
  That is, if $\iter{b_{ii}}{0.5} = \iter{b_{kk}}{0.5} = 0$, then
  $\iter{b_{jj}}{0.5} = \iter{b_{ji}}{0.5} = 0$ for all $i \le j \le k$.
  Now, let us drop the superscript indices as we consider the row-cyclic
  sweeps that transform the input $\iter{A}{0}$, $\iter{B}{0}$,
  $\iter{C}{0}$, and their corresponding $\iter{M}{0}$ to the output
  $\iter{A}{0.5}$, $\iter{B}{0.5}$, $\iter{C}{0.5}$, and their
  corresponding $\iter{M}{0.5}$. Furthermore, suppose that we have swept
  $i-1$ rows, that $i$ is such that $m_{ii} = 0$, and that we are about
  to annihilate $m_{ij}$ for some $j > i$.
  \begin{enumerate}
    \item As a result of the structure of $M$, we have that $M_{ij} = 0$
      and we cannot have Cases~\ref{it:CtriBtri}, \ref{it:CrowBrow},
      \ref{it:CtriBrow}, \ref{it:CrowBtri}, \ref{it:CcolBcol},
      \ref{it:CcolBtri}, and \ref{it:CtriBcol} of Lemma~\ref{thm:algIO}.
      Moreover, Case~\ref{it:CrowBcol} can only occur with $m_{ij} = 0$.
    \item If $b_{ii} = 0$ or becomes zero after annihilating $m_{ij}$,
      then $b_{ii} = b_{ji} = 0$.  This follows from
      Cases~\ref{it:CcolBrow}, \ref{it:CrowBzero}, \ref{it:CtriBzero},
      \ref{it:CzeroBcol}, and~\ref{it:CrowBcol} of
      Lemma~\ref{thm:algIO}.
    \item If $b_{jj} \neq 0$, then $b_{jj}$ stays nonzero; that is,
      $b_{jj}$ stays nonzero at least until the $j$th row sweep. This
      follows from Cases~\ref{it:CzeroBcol} and~\ref{it:CzeroBtri} of
      Lemma~\ref{thm:algIO}.
    \item Suppose that $b_{ii} \neq 0$ at the start of the $i$th row
      sweep, and that $j$ is the first $j > i$ such that $b_{jj} = 0$.
      Then it follows from Case~\ref{it:CcolBrow} of
      Lemma~\ref{thm:algIO} that $b_{ii}$ becomes zero and $b_{jj}$
      nonzero after annihilating $m_{ij}$.  When this happens, $b_{ii}$
      remains zero for the rest of the row sweep, and thus for the rest
      of the cycle, and $b_{jj}$ remains nonzero at least until the
      $j$th row sweep. We have two possibilities before we annihilate
      $m_{ij}$ that we must consider.  Either $b_{i_0 i_0} \neq 0$ for
      every $1 \le i_0 < i$, in which case we are done since $b_{ii}$
      becomes the first zero on the diagonal of $B$, or there exists
      some $1 \le i_0 < i$ such that $b_{i_0 i_0} = 0$.  In the latter
      case, $b_{i_0 i_0}$ must have been zero at the start of the
      current cycle, or must have become zero before annihilating
      $b_{i_0 j}$.  This follows from the previous two points, which
      imply that the algorithm would otherwise have made $b_{jj}$
      nonzero when annihilating $b_{i_0 j}$ during the $i_0$th row
      sweep, and that $b_{jj}$ would have stayed nonzero at least until
      the $j$th row sweep.  Hence, we can conclude that $b_{ji_0} = 0$
      before annihilating $m_{ij}$, and since Case~\ref{it:CcolBrow}
      gives $|P| = |J|$, that $b_{ii_0} = 0$ after annihilating
      $m_{ij}$.
    \item As a result of the previous point, if $b_{ii} \neq 0$ after
      sweeping the $i$th row, then $b_{jj} \neq 0$ for all $j > i$.
  \end{enumerate}

  For the second cycle we need to prove that
  \begin{equation*}
    \compressedmatrices
    \iter{C}{1.5} =
    \begin{bmatrix}
      \iter{C_{11}}{1.5} \\
      \iter{C_{21}}{1.5} & \iter{C_{22}}{1.5} \\
      \iter{C_{31}}{1.5} & \iter{C_{32}}{1.5} & 0 \\
      \iter{C_{41}}{1.5} & \iter{C_{42}}{1.5} & 0 & 0
    \end{bmatrix},
  \end{equation*}
  where $\iter{C_{11}}{1.5}$ and $\iter{C_{22}}{1.5}$ are nonsingular.
  That is, if $\iter{c_{ii}}{1.5} = 0$, then $\iter{c_{jj}}{1.5} =
  \iter{c_{ji}}{1.5} = 0$ for all $i \le j \le l$.  Due to the previous
  sweeps we may assume that if $\iter{c_{ii}}{1} = \iter{c_{kk}}{1} =
  0$, then $\iter{c_{jj}}{1} = \iter{c_{ij}}{1} = 0$ for all $i \le j
  \le k$. Now, let us again drop the superscript indices as we consider
  the matrices during the row-cyclic sweeps, and suppose that we have
  swept $i-1$ rows, that $i$ is such that $m_{ii} = 0$, and that we are
  about to annihilate $m_{ij}$ for some $j > i$. 
  \begin{enumerate}
    \item As in the previous cycle, we have that $M_{ij} = 0$ and we
      cannot have Cases~\ref{it:CtriBtri}, \ref{it:CrowBrow},
      \ref{it:CtriBrow}, \ref{it:CrowBtri}, \ref{it:CcolBcol},
      \ref{it:CcolBtri}, and \ref{it:CtriBcol} of Lemma~\ref{thm:algIO}.
      Moreover, Case~\ref{it:CrowBcol} can only occur with $m_{ij} = 0$.
    \item If $c_{ii} \neq 0$ or becomes nonzero when sweeping row $i$,
      then it stays nonzero for the rest of the sweep.  This follows
      from Cases~\ref{it:CrowBzero}, \ref{it:CtriBzero},
      and~\ref{it:CrowBcol} of Lemma~\ref{thm:algIO}.
    \item If $c_{jj} = 0$ before annihilating $m_{ij}$, then it stays
      zero after. Furthermore, no nonzeros are introduced in the zero
      blocks of $C$.  This follows from Cases~\ref{it:CcolBrow},
      \ref{it:CrowBzero}, \ref{it:CzeroBcol}, \ref{it:CzeroBtri},
      and~\ref{it:CrowBcol} of Lemma~\ref{thm:algIO}, while noting that
      we get $|V| = I$ in each case.
      % when $C_{ij} = \minimat{c_{ii}}{c_{ij}}{0}{0}$.
    \item If $c_{ii} = 0$, then it becomes nonzero for the smallest
      integer $k > i$ such that $c_{ik} \neq 0$ or $c_{kk} \neq 0$. This
      follows from Case~\ref{it:CcolBrow} of Lemma~\ref{thm:algIO}.
      Since it holds that $c_{jj} = c_{ij} = 0$ for all $i \le j < k$
      before annihilating $m_{ik}$, and because we get $|Q| = |J|$, we
      must have $c_{ii} \neq 0$ and $c_{jj} = c_{ij} = 0$ for all $i < j
      \le k$ afterwards (note that the inequality symbols are swapped).
    \item As a result of the previous point, if $c_{ii} = 0$ after
      sweeping the $i$th row, then $c_{ij} = c_{jj} = 0$ for all $j >
      i$ and the remaining row sweeps do not introduce nonzeros in
      column $i$ below $c_{ii}$.
      \qedhere
      % \endproofhere % for etna.cls
  \end{enumerate}
\end{proof}

% }}}

The block structure of $A = S \Sigma_\alpha T$, $B = S \Sigma_\beta$,
and $C = \Sigma_\gamma T$, for some upper-triangular $S$ and $T$ and
block-diagonal $\Sigma_\alpha = \diag(D_\alpha, I, I, I)$, $\Sigma_\beta
= \diag(D_\beta, I, 0, 0)$, and $\Sigma_\gamma = \diag(D_\gamma, 0, 0,
I)$ is
\begin{equation}\label{eq:betterNzStructure}
  \compressedmatrices
  \begin{bmatrix}
    A_{11} & A_{12} & A_{13} & A_{14} \\
    & A_{22} & A_{23} & A_{24} \\
    && A_{33} & A_{34} \\
    &&& A_{44} \\
  \end{bmatrix},
  \qquad
  \begin{bmatrix}
    B_{11} & B_{12} & 0 & 0 \\
    & B_{22} & 0 & 0 \\
    && 0 & 0 \\
    &&& 0
  \end{bmatrix},
  \quad\text{and}\quad
  \begin{bmatrix}
    C_{11} & C_{12} & C_{13} & C_{14} \\
    & 0 & 0 & 0 \\
    && 0 & 0 \\
    % & 0 & 0 & C_{24} \\
    % && 0 & C_{34} \\
    &&& C_{44}
  \end{bmatrix},
\end{equation}
where the nonzero diagonal blocks are nonsingular. Hence, if we want to
compute the factors $S$ and $T$, then we must extend the Kogbetliantz
phase to turn \eqref{eq:nzStructureUp} into
\eqref{eq:betterNzStructure}. The constructive proof of the proposition
below shows how we can do so.

% (Proposition) {{{

\begin{proposition}\label{thm:betterNzStructure}
  Let $A$, $B$, and $C$ be structured as in \eqref{eq:nzStructureUp},
  and suppose that $CA^{-1}B$ equals $\minimat{M_{11}}{0}{0}{0}$ for
  some nonsingular $M_{11}$.  Then there exists orthonormal matrices $U$
  and $V$ such that
  \begin{equation*}
    \compressedmatrices
    BU =
    \begin{bmatrix}
      B_{11} & B_{12} B_{22}^{-1} R_B & 0 & 0 \\
      & R_B & 0 & 0 \\
      && 0 & 0 \\
      &&& 0
    \end{bmatrix}
    \quad\text{and}\quad
    V^T\!C =
    \begin{bmatrix}
      C_{11} & C_{12} & C_{13} & C_{14} \\
      & 0 & 0 & 0 \\
      && 0 & 0 \\
      % & 0 & 0 & C_{24} \\
      % && 0 & C_{34} \\
      &&& R_C
    \end{bmatrix}.
  \end{equation*}
  where $R_B$ and $R_C$ are nonsingular and upper triangular.
\end{proposition}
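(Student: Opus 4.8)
The plan is to exploit the fact that the hypothesis $CA^{-1}B = \minimat{M_{11}}{0}{0}{0}$ with $M_{11}$ nonsingular pins down the nonzero structure of $B$ and $C$ much more tightly than the generic upper-triangular assumption, and then to build $U$ and $V$ out of a QR-type sweep that compresses the trailing columns of $B$ and the trailing rows of $C$ while keeping everything else compatible. First I would observe that, under the partitioning of \eqref{eq:nzStructureUp}, the equation $CA^{-1}B = \minimat{M_{11}}{0}{0}{0}$ together with the nonsingularity of $A$ forces $[C_{24};\,C_{34};\,C_{44}]$ (the last block column of $C$) and $[B_{13}\;B_{14};\,B_{23}\;B_{24}]$ (the last two block columns of $B$) to interact in a way that makes $C_{24}=C_{34}=0$ impossible unless $B$ in those rows vanishes — more precisely, since $A$ is block upper triangular and invertible, $A^{-1}$ is block upper triangular, and the $(2,2)$, $(2,3)$, and $(3,\cdot)$ blocks of $CA^{-1}B$ being zero while the relevant diagonal blocks of $A$ are invertible yields $C_{24}A_{24}^{-1}[B_{13}\;B_{14}] $ and similar products being zero. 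This is the structural bookkeeping that identifies which blocks of $B$ and $C$ are already zero in \eqref{eq:nzStructureUp}, matching the target \eqref{eq:betterNzStructure} in all but the third block column of $B$ and the second/third block columns of $C$.

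Next I would construct $U$. Since $B$ has the form shown in \eqref{eq:nzStructureUp} with the third and fourth block rows zero, only the first two block rows matter. I would take $U$ to act only on the block columns corresponding to the $q_3$-part (i.e.\ the "$B_{13}, B_{23}$" columns) and merge them into the second block column: concretely, use an RQ (or QR) decomposition of $[B_{22}\;B_{23}]$ — or rather of the relevant trailing portion — to produce an orthonormal $U$ such that the combined block becomes $[\,B_{12}B_{22}^{-1}R_B\;\;R_B\,]$ is not quite right; instead I would directly compress $[B_{13};B_{23}]$-type columns against the invertible block. The cleanest route: let $U$ be the identity on the first block column of $B$ and on the last block columns, and on the middle block use an orthonormal transformation that triangularizes $[B_{22}\ B_{23}]$ into $[\,R_B\ 0\,]$ form with $R_B$ nonsingular upper triangular; then the first block row picks up $B_{12}\mapsto$ (something), and a short computation identifies that "something" as $B_{12}B_{22}^{-1}R_B$ by comparing $B_{22}U_{\text{mid}} = [R_B\ 0]$ and $B_{12}U_{\text{mid}} = [B_{12}B_{22}^{-1}R_B\ 0]$, using that $B_{22}$ is invertible. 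The zeroing of the third and fourth block columns of $BU$ follows because those columns were already zero in $B$ and $U$ does not touch them. Symmetrically, I would construct $V$ from a QR decomposition of the last block column $[C_{24};C_{34};C_{44}]$ of $C$, compressing it to $[0;0;R_C]$ with $R_C$ nonsingular upper triangular; the first block row of $V^T C$ is unchanged in the columns $C_{11},\dots,C_{14}$, and the zero blocks $C_{22},C_{23}$ remain zero because $V$ acts only on the block rows $n_2,n_3,n_4$ and those rows of those columns are already zero.

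The two constructions must be checked for compatibility with the constraint $V^T C A^{-1} (BU)$ — but this is automatic: $V^T(CA^{-1}B)U = V^T\minimat{M_{11}}{0}{0}{0}U$, and since $M_{11}$ sits in the $(1,1)$ block while $U$ and $V$ act trivially on the first block row/column (for $U$) and first block column (for $V$)... here I need to be careful, because $V$ does act on the first block column of $C$ in general. The resolution is that $V$ acts only on block rows $n_2,n_3,n_4$, which corresponds to rows outside the "$M_{11}$" block, so $V^T M_{11}$-block is unchanged, and $U$ acts only on block columns $q_3$-ish, also outside the image relevant to $M_{11}$ — so the product stays $\minimat{M_{11}}{0}{0}{0}$ up to relabeling. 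The main obstacle I anticipate is precisely this indexing: matching the abstract block labels in \eqref{eq:nzStructureUp}/\eqref{eq:betterNzStructure} against the actual row/column ranges on which the QR/RQ factorizations act, and verifying that the off-diagonal blocks $A_{ij}$ are carried along without being disturbed (they are not touched because $U,V$ act only on columns/rows where $A$ is already block-triangular in a compatible way, or rather $A$ is not transformed at all here — only $B$ and $C$ are, since the proposition produces $BU$ and $V^TC$ with no change to $A$). So the verification reduces to: (i) the already-zero blocks stay zero, immediate from the support of $U,V$; (ii) the compressed blocks have the claimed triangular form, immediate from QR/RQ; (iii) the "$B_{12}B_{22}^{-1}R_B$" formula, a one-line computation using invertibility of $B_{22}$. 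I would therefore present the two factorizations explicitly, note their supports, and close with these three short verifications.
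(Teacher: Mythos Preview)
Your construction of $V$ is correct and matches the paper's: a QR decomposition of $[C_{24};\,C_{34};\,C_{44}]$ does the job, and the zero blocks in columns~2 and~3 are preserved because rows~2--4 are already zero there. The gap is in your construction of $U$. You propose an RQ decomposition of the second block row $[B_{22}\;B_{23}\;B_{24}]$ to obtain $[R_B\;0\;0]$, and then claim that the first block row $[B_{12}\;B_{13}\;B_{14}]$ automatically becomes $[B_{12}B_{22}^{-1}R_B\;0\;0]$ by ``a one-line computation using invertibility of $B_{22}$.'' But invertibility of $B_{22}$ alone gives no such thing: for an arbitrary first row, applying the same $\widetilde U$ yields $[B_{12}\;B_{13}\;B_{14}]\widetilde U$, which equals $B_{12}B_{22}^{-1}[R_B\;0\;0]$ \emph{only if} $[B_{12}\;B_{13}\;B_{14}] = B_{12}B_{22}^{-1}[B_{22}\;B_{23}\;B_{24}]$, i.e.\ the two block rows are left-multiples of each other.

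This row-parallelism is precisely what the hypothesis $CA^{-1}B = \minimat{M_{11}}{0}{0}{0}$ supplies, and it is the crux of the paper's proof. Taking $A = I$ for clarity, the $(1,j)$ block of $CB$ is $C_{11}B_{1j} + C_{12}B_{2j}$; setting this to zero for $j=2,3,4$ and using that $C_{11}$ is nonsingular gives $B_{1j} = -C_{11}^{-1}C_{12}B_{2j}$. Specializing to $j=2$ identifies $-C_{11}^{-1}C_{12} = B_{12}B_{22}^{-1}$, hence $B_{1j} = B_{12}B_{22}^{-1}B_{2j}$ for $j=3,4$ as well. Your first paragraph gestures at using the hypothesis (you mention products like $C_{24}A_{24}^{-1}[B_{13}\;B_{14}]$), but those are not the relevant blocks; it is the vanishing of the $(1,j)$ blocks for $j\ge 2$, combined with the nonsingularity of $C_{11}$ and $B_{22}$, that forces the parallelism you need.
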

\begin{proof}
  Suppose $A = I$, then
  \begin{equation*}
    \compressedmatrices
    % \begin{bmatrix}
    %   C_{11} & C_{12} & C_{13} & C_{14} \\
    %   & 0 & 0 & C_{24} \\
    %   && 0 & C_{34} \\
    %   &&& C_{44}
    % \end{bmatrix}
    % \begin{bmatrix}
    %   B_{11} & B_{12} & B_{13} & B_{14} \\
    %   & B_{22} & B_{23} & B_{24} \\
    %   && 0 & 0 \\
    %   &&& 0
    % \end{bmatrix}
    % =
    CA^{-1}B =
    \begin{bmatrix}
      C_{11} B_{11} &
      C_{11} B_{12} + C_{12} B_{22} &
      C_{11} B_{13} + C_{12} B_{23} &
      C_{11} B_{14} + C_{12} B_{24} \\
      & 0 & 0 & 0 \\
      && 0 & 0 \\
      &&& 0
    \end{bmatrix}
    % =
    % \begin{bmatrix}
    %   C_{11} B_{11} & 0 & 0 & 0 \\
    %   & 0 & 0 & 0 \\
    %   && 0 & 0 \\
    %   &&& 0
    % \end{bmatrix}
  \end{equation*}
  is block diagonal by the assumption on the structure of $CA^{-1}B$.
  It follows that $C_{12} = -C_{11} B_{12} B_{22}^{-1}$ and that $B_{1j}
  = -C_{11}^{-1} C_{12} B_{2j} = B_{12} B_{22}^{-1} B_{2j}$ for $j = 2$,
  $3$, and $4$. In other words,
  \begin{equation*}
    \compressedmatrices
    B =
    \begin{bmatrix}
      B_{11} & B_{12} B_{22}^{-1} B_{22} & B_{12} B_{22}^{-1} B_{23} &
      B_{12} B_{22}^{-1} B_{24} \\
      & B_{22} & B_{23} & B_{24} \\
      && 0 & 0 \\
      &&& 0
    \end{bmatrix}.
  \end{equation*}
  Hence, if $\widetilde U$ is such that $[B_{22}\; B_{23}\; B_{24}]
  \widetilde U = [R_B\; 0\; 0]$, then $U = \minimat{I}{0}{0}{\widetilde
  U}$ is the desired $U$.
  When $A \neq I$, the product $CA^{-1}$ has the same block structure as
  $C$ and the proof is similar. For $C$ we can use a QR decomposition to
  compute a $\widetilde V$ such that $\widetilde V^T [C_{24};\;
  C_{34};\; C_{44}]$ has the form $[0;\; 0;\; R_C]$, so that $V =
  \minimat{I}{0}{0}{\widetilde V}$ is the sought after $V$.
\end{proof}

% }}}

The assumption in Proposition~\ref{thm:nzStructure} that $M =
\minimat{M_{11}}{M_{12}}{0}{0}$ for some nonsingular $M_{11}$ is not
necessarily satisfied directly after the preprocessing phase from
Section~\ref{sec:pre}. If $M$ does have this form, then $M_{12}$
converges to zero if the implicit Kogbetliantz iteration converges, so
that Proposition~\ref{thm:betterNzStructure} applies.  We suspect that a
finite number of cycles from Algorithm~\ref{alg:kog} with
Algorithm~\ref{alg:rsvd} for the $2\times 2$ RSVDs will bring $M$ into
with the desired form; however, we could not come up with a proof yet.
The reason for this suspicion is that Algorithm~\ref{alg:rsvd} computes
rotations that move nonzero entries of $M_{ij}$ to the upper-left corner
if $M_{ij}$ is singular.

In any case, we can ensure that $M$ has the desired structure with the
transformations that follow; though, this approach is only of
theoretical interest when we want the factors $S$ and $T$, and requires
(at least) two more and unwanted rank decisions in floating-point
arithmetic.  We start with compressing $B$ by comping $\iter{P}{1}$ and
$\iter{U}{1}$ such that $\iter{B}{2} = P^T\!BU =
\minimat{\iter{B_{11}}{2}}{0}{0}{0}$, where $\iter{B_{11}}{2}$ is
nonsingular. Next, we compute $\iter{Q}{1}$ such that $\iter{A}{2} =
\iter{P}{1}{}^T \!A \iter{Q}{1}$
% $= \minimat{\iter{A_{11}}{2}}{\iter{A_{12}}{2}}{0}{\iter{A_{22}}{2}}$
is upper triangular, $\iter{V}{1}$ such that
$\iter{C}{2} = \iter{V}{1}{}^T\! C \iter{Q}{1}$
% $= \minimat{\iter{C_{11}}{2}}{\iter{C_{12}}{2}}{0}{\iter{C_{22}}{2}}$
is upper triangular, and partition both matrices into blocks with block
sizes matching the blocks of $\iter{B}{2}$.  Then, we compress
$\iter{C_{11}}{2}$ by computing $\iter{V_{11}}{2}$ and
$\iter{Q_{11}}{2}$ such that $\iter{V_{11}}{2}{}^T\! \iter{C_{11}}{2}
\iter{Q_{11}}{2} = \minimat{\iter{C_{11}}{3}}{0}{0}{0}$.  Finally, we
compute $\iter{P_{11}}{2}$ such that $\iter{P_{11}}{2}{}^T\!
\iter{A_{11}}{2} \iter{Q_{11}}{2}$ is upper triangular, and compute
$\iter{U_{11}}{2}$ such that $\iter{P_{11}}{2}{}^T\! \iter{B_{11}}{2}
\iter{U_{11}}{2}$ is upper triangular.  We can now partition the
resulting $\iter{A}{3}$, $\iter{B}{3}$, and $\iter{C}{3}$ as
\begin{equation*}
  \compressedmatrices
  \begin{bmatrix}
    \iter{A_{11}}{3} & \iter{A_{12}}{3} & \iter{A_{13}}{3} \\
                     & \iter{A_{22}}{3} & \iter{A_{23}}{3} \\
                     &                  & \iter{A_{33}}{3}
  \end{bmatrix},
  \qquad
  \begin{bmatrix}
    \iter{B_{11}}{3} & \iter{B_{12}}{3} & 0 \\
                     & \iter{B_{22}}{3} & 0 \\
                     &                  & 0
  \end{bmatrix},
  \quad\text{and}\quad
  \begin{bmatrix}
    \iter{C_{11}}{3} & 0 & \iter{C_{13}}{3} \\
           &           0 & \iter{C_{23}}{3} \\
           &             & \iter{C_{33}}{3}
  \end{bmatrix},
\end{equation*}
respectively, from which we can see that $\iter{M}{3}$ has the desired
structure. If desired, we can even get the structure from
\eqref{eq:betterNzStructure} without the Kogbetliantz iteration by
computing
% compressing $[\iter{C_{23}}{3};\; \iter{C_{33}}{3}]$ and computing
\begin{equation*}
  \begin{bmatrix}
    \iter{V_{22}}{3} & \iter{V_{23}}{3} \\
    \iter{V_{32}}{3} & \iter{V_{33}}{3}
  \end{bmatrix}^T
  \begin{bmatrix}
    \iter{C_{23}}{3} \\
    \iter{C_{33}}{3}
  \end{bmatrix}
  \iter{Q_{33}}{3}
  =
  \begin{bmatrix}
    0 & 0 \\
    0 & 0 \\
    0 & \iter{C_{44}}{4}
  \end{bmatrix},
\end{equation*}
and by computing $\iter{P_{33}}{3}$ such that $\iter{P_{33}}{3}{}^T\!
\iter{A_{33}}{3} \iter{Q_{33}}{3}$
% $= \minimat{\iter{A_{33}}{4}}{\iter{A_{34}}{4}}{0}{\iter{A_{44}}{4}}$
is upper triangular.

% }}}2

% (Subsection) The $2\times 2$ RSVD in floating point arithmetic. {{{2

\subsection{The 2-by-2 RSVD in floating point
arithmetic}\label{sec:rsvdTwoFloat}

Thus far, we have only considered the $2\times 2$ RSVD in exact
arithmetic.  The goal of this section is to show that
Algorithm~\ref{alg:rsvd} computes a numerically stable result in
floating-point arithmetic under the assumptions of the standard model
from, e.g., Higham~\cite[Ch.~2]{High02} or the \textit{LAPACK Users'
Guide}~\cite[Sec.~4.1.1]{LAUG}.  That is, given two floating-point
numbers $a$ and $b$, and some operation $\circ \in \{ +, -, \cdot, /
\}$, we assume that $\fl(a \circ b) = (a \circ b) (1 + \epsilon)$, where
$|\epsilon| \le \bm\epsilon$ and $\bm\epsilon$ is the unit roundoff
($2^{-53}$ in case of IEEE~754 double precision arithmetic). We
additionally assume that taking the absolute value of a floating-point
number is exact, as well as multiplying by zero or $\pm 1$. We ignore
overflow, underflow, and higher-order terms, as usual, unless stated
otherwise.  For convenience, different occurrences of $\epsilon$ and
error matrices do not need to have the same value unless they have
subscript indices.  Another convention is that overlined quantities
denote the ``computed'' version of quantities; for example, if $c = a
\circ b$, then $\overline c = \fl(a \circ b)$.

To prove the main results from this section, we first need the bounds
from the following two lemmas. The first lemma bounds a sum of elements
from the product of two particular nonnegative matrices.  The second
lemma bounds the norms of the backward perturbations in the computed
product $\fl(C\adj(A)B)$, where $A$, $B$, and $C$ are upper-triangular
$2\times 2$ matrices.

% (Lemma) {{{

\begin{lemma}\label{thm:sqrtthree}
  Given a $2\times 2$ upper-triangular matrix $R$ and an orthonormal
  matrix $Q$, let $Z = |Q^T|\,|R|$; then $z_{11} + z_{12} \le \sqrt{3}
  \|R\|$.  Likewise, if $Z = |R| |Q|$, then $z_{12} + z_{22} \le
  \sqrt{3} \|R\|$.
\end{lemma}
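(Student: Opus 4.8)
The plan is to reduce both inequalities to the structure of a $2\times 2$ orthonormal matrix together with two applications of the Cauchy--Schwarz inequality, so that essentially no calculation is needed. Since $R$ is $2\times 2$, the matrix $Q$ must be $2\times 2$ for $|Q^T|\,|R|$ and $|R|\,|Q|$ to be defined, and a square matrix with orthonormal columns is orthogonal; hence the rows of $Q$ are orthonormal unit vectors in the plane, which forces $|q_{11}| = |q_{22}| =: c$ and $|q_{12}| = |q_{21}| =: s$ with $c, s \ge 0$ and $c^2 + s^2 = 1$. Therefore $|Q^T| = |Q| = \minimat{c}{s}{s}{c}$.

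The only facts about $R$ that I would use are the elementary estimates $|r_{ij}| = |\vec e_i^T R \vec e_j| \le \|R\|$, $\|\vec e_i^T R\| \le \|R\|$, and $\|R \vec e_j\| \le \|R\|$ (with $\|R\| = \|R\|_2$), together with the consequences $|r_{11}| + |r_{12}| \le \sqrt 2\,\|(r_{11},r_{12})\| = \sqrt 2\,\|\vec e_1^T R\| \le \sqrt 2\,\|R\|$ for the first row of $R$ and, symmetrically, $|r_{12}| + |r_{22}| \le \sqrt 2\,\|R\vec e_2\| \le \sqrt 2\,\|R\|$ for the second column of $R$.

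For the first statement I would read off the first row of $Z = |Q^T|\,|R| = \minimat{c}{s}{s}{c}\minimat{|r_{11}|}{|r_{12}|}{0}{|r_{22}|}$, namely $z_{11} = c\,|r_{11}|$ and $z_{12} = c\,|r_{12}| + s\,|r_{22}|$, so that $z_{11} + z_{12} = c\,(|r_{11}| + |r_{12}|) + s\,|r_{22}| \le \bigl(\sqrt 2\, c + s\bigr)\|R\|$. A final Cauchy--Schwarz step, $\sqrt 2\, c + s = \langle (c,s),(\sqrt 2,1)\rangle \le \sqrt{c^2+s^2}\,\sqrt{2+1} = \sqrt 3$, gives $z_{11} + z_{12} \le \sqrt 3\,\|R\|$. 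The second statement is the mirror image: from $Z = |R|\,|Q| = \minimat{|r_{11}|}{|r_{12}|}{0}{|r_{22}|}\minimat{c}{s}{s}{c}$ one reads $z_{12} = s\,|r_{11}| + c\,|r_{12}|$ and $z_{22} = c\,|r_{22}|$, hence $z_{12} + z_{22} = s\,|r_{11}| + c\,(|r_{12}| + |r_{22}|) \le \bigl(s + \sqrt 2\, c\bigr)\|R\| \le \sqrt 3\,\|R\|$ by the same bound.

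There is no real obstacle here; the whole argument is a bookkeeping exercise. The one point worth a remark is that the two crude estimates $|r_{11}| + |r_{12}| \le \sqrt 2\,\|R\|$ and $|r_{22}| \le \|R\|$ cannot be tight for the same $R$, so the constant $\sqrt 3 = \sqrt{(\sqrt 2)^2 + 1^2}$ obtained this way is not sharp (a short optimization over $R$ and $Q$ shows the best constant is $\sqrt 2$, attained e.g.\ at $R = I$); but $\sqrt 3$ is all that is needed later, and the two-term Cauchy--Schwarz packaging is what makes the bound convenient to reuse in the $\eta$-estimates of Algorithm~\ref{alg:rsvd}.
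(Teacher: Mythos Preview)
Your proof is correct and follows essentially the same approach as the paper: both expand $z_{11}+z_{12}=c(|r_{11}|+|r_{12}|)+s|r_{22}|$, bound the row sum by $\sqrt{2}\,\|R\|_2$ and $|r_{22}|$ by $\|R\|_2$, and then maximize $\sqrt{2}\,c+s$ over the unit circle. The only cosmetic difference is that the paper finds the maximizer $c=\sqrt{2/3}$ by calculus where you invoke Cauchy--Schwarz.
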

\begin{proof}
  For the first result, we have for some $\alpha$ and $\beta$ satisfying
  $\alpha^2 + \beta^2 = 1$ that
  \begin{equation*}
    \compressedmatrices
    Z
    =
    \begin{bmatrix}
      |\alpha| & |\beta| \\ |\beta| & |\alpha|
    \end{bmatrix}
    \begin{bmatrix}
      |r_{11}| & |r_{12}| \\ & |r_{22}|
    \end{bmatrix}
    =
    \begin{bmatrix}
      |\alpha| |r_{11}| & |\alpha||r_{12}| + |\beta| |r_{12}| \\
      |\beta| |r_{11}| & |\beta||r_{12}| + |\alpha| |r_{12}|
    \end{bmatrix}.
  \end{equation*}
  It follows that
  \begin{multline*}
    z_{11} + z_{12} = |\alpha| (|r_{11}| + |r_{12}|) + \beta |r_{12}| \\
    \le |\alpha| \|R\|_\infty + |\beta| \|R\|_2
    \le (|\alpha| \sqrt{2} + \sqrt{1 - \alpha^2}) \|R\|_2
    \le \sqrt{3} \|R\|_2,
  \end{multline*}
  where we used the fact that the bound reaches its maximum for $\alpha
  = \pm\sqrt{2/3}$. The proof of the second result is similar.
\end{proof}

% }}}

% (Lemma) {{{

\begin{lemma}\label{thm:bwdM}
  Suppose $\overline M = \fl(C\adj(A)B)$ is computed as
  \begin{equation*}
    \compressedmatrices
    \overline M =
    \begin{bmatrix}
      \fl(\fl(c_{11} a_{22}) b_{11}) & m_{12} \\
      & \fl(c_{22} \fl(a_{11} b_{22}))
    \end{bmatrix},
  \end{equation*}
  where
    $m_{12} = \fl(\fl(\fl(\fl(c_{11} a_{22}) b_{12})
    {} + \fl(c_{12} \fl(a_{11} b_{22})))
    {} - \fl(\fl(c_{11} a_{12}) b_{22}))$.
  Then there exist small relative perturbations $\delta A_0$, $\delta
  B_0$, and $\delta C_0$ of $A$, $B$, and $C$, respectively, such that
  \begin{equation}\label{eq:rsvdTwoPer}
    \overline M = (C + \delta C_0) \adj(A + \delta A_0) (B + \delta B_0).
  \end{equation}
  Specifically, $\delta A_0$, $\delta B_0$, and $\delta C_0$ satisfy
  $\|\delta A_0\| \le 3.5 \bm\epsilon \|A\|$, $\|\delta B_0\| \le 3
  \bm\epsilon \|B\|$, and $\|\delta C_0\| \le 3 \bm\epsilon \|C\|$.
\end{lemma}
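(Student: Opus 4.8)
The plan is to track the rounding errors through each individual floating-point operation in the formula for $\overline M$ and then reinterpret them as backward perturbations on the \emph{entries} of $A$, $B$, and $C$, exploiting the fact that $\adj(A_{ij})$ has entries $a_{22}$, $-a_{12}$, $a_{11}$ — so that $M = C\adj(A)B$ is, entrywise, a sum of products of single entries of $C$, $A$, and $B$. The key observation is that each rounding factor $(1+\epsilon)$ can be ``assigned'' to exactly one of the three matrices. First I would write out the three diagonal-type products in detail: $\overline m_{11} = \fl(\fl(c_{11}a_{22})b_{11}) = c_{11}a_{22}b_{11}(1+\epsilon_1)(1+\epsilon_2)$, and similarly $\overline m_{22} = c_{22}a_{11}b_{22}(1+\epsilon_3)(1+\epsilon_4)$. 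For $\overline m_{11}$ I would put one $(1+\epsilon)$ on $c_{11}$ and one on $a_{22}$; for $\overline m_{22}$ one on $b_{22}$ and one on $a_{11}$. That already accounts for at most two relative errors on each of $a_{11}$, $a_{22}$, and one each on $c_{11}$, $b_{22}$, with nothing yet on $a_{12}$, $b_{11}$, $c_{22}$, $b_{12}$, $c_{12}$.

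Next I would handle $\overline m_{12}$, which is the hard part because it is a sum of three subproducts, and the two additions/subtractions introduce errors that cannot be pushed onto a single matrix factor cleanly. Writing
\begin{equation*}
  \overline m_{12} = \bigl( c_{11}a_{22}b_{12}(1+\mu_1) + c_{12}a_{11}b_{22}(1+\mu_2) \bigr)(1+\mu_3) - c_{11}a_{12}b_{22}(1+\mu_4)(1+\mu_5),
\end{equation*}
where each $\mu_k$ collects the $O(\bm\epsilon)$ contributions of the multiplications and the two $\pm$ operations (so $|\mu_1|,|\mu_2| \le 3\bm\epsilon$ roughly, $|\mu_4|,|\mu_5|\le 2\bm\epsilon$, and $\mu_3$ absorbs the outer $\pm$), the plan is to distribute: put the $b_{12}$-error onto $b_{12}$, the $c_{12}$-error onto $c_{12}$, the $a_{12}$-error onto $a_{12}$, and — crucially — note that $c_{11}$ and $b_{22}$ already appear in the diagonal entries, so any residual relative error attached to the $c_{11}$, $a_{22}$, $a_{11}$, $b_{22}$ slots in $m_{12}$ must be made \emph{consistent} with the perturbations chosen for $m_{11}$ and $m_{22}$. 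This forces a more careful bookkeeping: I would introduce perturbation matrices $\delta A_0 = \mathrm{diag}$-plus-superdiagonal with entries $\delta a_{11}, \delta a_{22}$ (each a sum of up to $\sim 3$ roundoff units, hence $\le 3.5\bm\epsilon\,|a_{ii}|$ after allowing for cross-terms) and $\delta a_{12}$ ($\le 2\bm\epsilon\,|a_{12}|$), and check that a \emph{single} choice of $\delta a_{11},\delta a_{22}$ simultaneously explains $\overline m_{11}$, $\overline m_{22}$, and the relevant terms of $\overline m_{12}$. The same for $\delta C_0$ (entries $\delta c_{11},\delta c_{12},\delta c_{22}$) and $\delta B_0$ ($\delta b_{11},\delta b_{12},\delta b_{22}$).

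Once the assignment is fixed, the remaining work is to verify that the identity \eqref{eq:rsvdTwoPer} holds to first order and that the stated norm bounds follow. For the norm bounds I would use that $\|\delta A_0\| \le \sqrt{|\delta a_{11}|^2 + |\delta a_{12}|^2 + |\delta a_{22}|^2}$ is dominated by $\max$ of the entrywise relative errors times $\|A\|$ — actually I would more carefully bound $\|\delta A_0\|$ by observing each entry $|\delta a_{ij}| \le 3.5\bm\epsilon\,|a_{ij}| \le 3.5\bm\epsilon\,\|A\|$ is too crude for a factor-$3.5$ constant on the matrix norm; instead I would note that $\delta A_0$ is itself upper triangular $2\times2$ and use $\|\delta A_0\|_2 \le \|\,|\delta A_0|\,\|_2$ with $|\delta A_0| \le 3.5\bm\epsilon\,|A|$ entrywise, together with the sub-multiplicativity of the $2$-norm under entrywise domination for matrices of fixed small size, giving $\|\delta A_0\|_2 \le 3.5\bm\epsilon\,\|\,|A|\,\|_2 = 3.5\bm\epsilon\,\|A\|_2$ (since $\|\,|A|\,\|_2 = \|A\|_2$ fails in general, but for checking one can fall back on $\|\,|A|\,\|_2 \le \sqrt{2}\|A\|_2$ and absorb the constant, or verify the sharper bound directly for $2\times2$). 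The same argument gives $\|\delta B_0\| \le 3\bm\epsilon\|B\|$ and $\|\delta C_0\| \le 3\bm\epsilon\|C\|$, because $B$ and $C$ each appear with at most two rounding factors per entry whereas $A$ picks up the extra half-unit from $\mu_3$ and the cross-terms in the $m_{12}$ computation.

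The main obstacle I anticipate is the \emph{consistency} step: ensuring that the perturbations of the shared entries $c_{11}$, $a_{11}$, $a_{22}$, $b_{22}$ needed to explain the off-diagonal entry $\overline m_{12}$ can be taken equal to those already used for the diagonal entries, rather than conflicting with them. This is what drives the asymmetry in the constants ($3.5$ for $A$ versus $3$ for $B$ and $C$) and is the only genuinely delicate part; the rest is routine first-order error propagation. A clean way to organize it is to note that $a_{22}$ and $a_{11}$ each appear in exactly two of the monomials of $M$ (namely $m_{11}, m_{12}$ and $m_{22}, m_{12}$ respectively), $a_{12}$ in one ($m_{12}$ only), and each of $b_{11}, b_{12}, b_{22}, c_{11}, c_{12}, c_{22}$ in at most two — and then assign to each entry of each matrix the \emph{average}, or more simply a common chosen value, of the relative perturbations demanded by the monomials in which it appears, checking that the spread between those demands is itself $O(\bm\epsilon)$ and hence absorbable into the stated constants.
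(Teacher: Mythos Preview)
Your overall strategy matches the paper's: track each rounding factor and push it back onto one entry of $A$, $B$, or $C$. But your concrete assignment creates exactly the consistency problem you then try to patch, and your norm argument is more tangled than necessary.

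The consistency obstacle is real, but the paper sidesteps it rather than averaging through it. The point of the specific evaluation order in the hypothesis is that the intermediate $\fl(c_{11}a_{22})$ is computed once (error $\epsilon_1$) and reused in both $\overline m_{11}$ and the first summand of $\overline m_{12}$; likewise $\fl(a_{11}b_{22})$ (error $\epsilon_5$) is shared between $\overline m_{22}$ and the second summand of $\overline m_{12}$. The paper therefore takes $\delta a_{22}/a_{22}=\epsilon_1$, $\delta a_{11}/a_{11}=\epsilon_5$, and---this is the step you are missing---sets $\delta c_{11}=\delta b_{22}=0$. Those two entries are the only ones besides $a_{11},a_{22}$ that occur in more than one monomial of $M$ (indeed $c_{11}$ and $b_{22}$ each appear in \emph{two} of the three summands of $m_{12}$), so zeroing their perturbations eliminates every remaining consistency constraint. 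The leftover roundoffs in each summand of $\overline m_{12}$ then go onto the unique entry appearing nowhere else: $b_{12}$, $c_{12}$, $a_{12}$, each picking up three units. Your initial choice to load errors onto $c_{11}$ and $b_{22}$ is precisely the wrong one; it forces the conflict you then try to ``average'' away, and that averaging sketch does not obviously yield the stated constants.

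The norm bounds are also simpler than you suggest. With the paper's assignment $\delta A_0$ has entrywise relative errors $(1,3,1)$, so $\|\delta A_0\|_2\le\|\delta A_0\|_F\le\sqrt{1+9+1}\,\bm\epsilon\max_{ij}|a_{ij}|\le\sqrt{11}\,\bm\epsilon\|A\|_2<3.5\,\bm\epsilon\|A\|_2$; the $3.5$ is just $\sqrt{11}$ rounded up, not an ``extra half-unit'' from cross-terms. Because $\delta b_{22}=0$ and $\delta c_{11}=0$, the perturbations $\delta B_0$ and $\delta C_0$ are rank one (a single nonzero row, respectively column), so their $2$-norms equal the Euclidean length of that row/column, giving $\|\delta B_0\|\le 3\bm\epsilon(b_{11}^2+b_{12}^2)^{1/2}\le 3\bm\epsilon\|B\|$ and similarly for $C$. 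No detour through $\|\,|A|\,\|_2$ is needed.
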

\begin{proof}
  Ignoring second order terms, we have that
  \begin{align*}
    \compressedmatrices
    \overline M &=
    \begin{bmatrix}
      (c_{11} a_{22}) b_{11} (1 + \epsilon_1 + \epsilon_2)
      & \overline m_{12} \\
      & c_{22} (a_{11} b_{22}) (1 + \epsilon_4 + \epsilon_5)
    \end{bmatrix}, \\
    \intertext{where}
    \overline m_{12} &= (c_{11} a_{22}) b_{12} (1 + \epsilon_1 + \epsilon_3 + \epsilon_9 + \epsilon_{10})
    \\
    &\qquad\qquad
    {} + c_{12} (a_{11} b_{22}) (1 + \epsilon_5 + \epsilon_6 + \epsilon_9 + \epsilon_{10})
    {} - c_{11} a_{12} b_{22} (1 + \epsilon_7 + \epsilon_8 + \epsilon_{10}),
  \end{align*}
  and $|\epsilon_i| \le \bm\epsilon$ for $i = 1$, \dots, 10. We can get
  the same $\overline M$ in exact arithmetic with the following relative
  perturbations:
  % Alternative ``symmetric'' version without bwd error on $b_{22}$ and $c_{11}$:
  \begin{align*}
    \delta a_{11} / a_{11} &= \epsilon_5, &
    \delta a_{12} / a_{12} &= (\epsilon_7 + \epsilon_8 + \epsilon_{10}), &
    \delta a_{22} / a_{22} &= \epsilon_1,
    \\
    \delta b_{11} / b_{11} &= \epsilon_2, &
    \delta b_{12} / b_{12} &= (\epsilon_3 + \epsilon_9 + \epsilon_{10}), &
    \delta b_{22} / b_{22} &= 0,
    \\
    \delta c_{11} / c_{11} &= 0, &
    \delta c_{12} / c_{12} &= (\epsilon_6 + \epsilon_9 + \epsilon_{10}), &
    \delta c_{22} / c_{22} &= \epsilon_4,
  \end{align*}
  and $\delta a_{21} = \delta b_{21} = \delta c_{21} = 0$,
  proving~\eqref{eq:rsvdTwoPer}.  Using the equivalence of norms and the
  definition of the Frobenius norm, we get the bound:
  \begin{equation*}
    \|\delta A\|_2
    \le \|\delta A\|_F
    % \le \epsilon \|\minimat{1}{3}{}{1}\|_F \|A\|_{\max}
    \le \bm\epsilon \sqrt{11} \max \{ |a_{11}|, |a_{12}|, |a_{22}| \}
    \le \bm\epsilon \sqrt{11} \|A\|_2
    < 3.5 \bm\epsilon \|A\|_2.
  \end{equation*}
  The perturbations $\delta B$ and $\delta C$ are of rank one and
  satisfy
  \begin{equation*}
    \|\delta B\| \le 3 \bm\epsilon
      % \sqrt{b_{11}^2 + b_{12}^2} \le 3 \epsilon \|B\|
      (b_{11}^2 + b_{12}^2)^{1/2} \le 3 \bm\epsilon \|B\|_2
    \quad\text{and}\quad
    \|\delta C\| \le 3 \bm\epsilon
      % \sqrt{c_{12}^2 + c_{22}^2} \le 3 \epsilon \|C\|.
      (c_{12}^2 + c_{22}^2)^{1/2} \le 3 \bm\epsilon \|C\|_2,
  \end{equation*}
  which concludes the proof.
\end{proof}

% }}}

% (Remark) {{{

\begin{remark}
  We can compute the product $\fl(C\adj(A)B)$ and the perturbations
  $\delta A$, $\delta B$, and $\delta C$ in different ways. Furthermore,
  the bounds in the above lemma are not the tightest possible.  Instead,
  the above perturbations and their bounds are such that we can invoke
  the lemma for the transposed and permuted triplet $(\Pi_r A^T \Pi_c,
  \Pi_r C^T \Pi_c, \Pi_r B^T \Pi_c)$ from the end of
  Section~\ref{sec:pre}, rather than for the original triplet $(A,B,C)$,
  without getting qualitative differences in the perturbations of $A$,
  $B$, and $C$.
\end{remark}

% }}}

We are now ready for the main result of this section: the numerical
stability of Algorithm~\ref{alg:rsvd} in floating-point arithmetic.

% (Theorem) {{{

\begin{theorem}\label{thm:stablersvd}
  Suppose that $\overline A'$, $\overline B'$, $\overline C'$,
  $\overline H$, $\overline K$, $\overline P$, $\overline Q$, $\overline
  U$, and $\overline V$ are computed by Algorithm~\ref{alg:rsvd} in
  floating-point arithmetic, with $\overline M$ computed as in
  Lemma~\ref{thm:bwdM}. Furthermore, define $\overline H'$ and
  $\overline K'$ as $\fl(\overline Q^T \overline H)$ and $\fl(\overline
  K \overline P)$, respectively, with their $(1,2)$ elements zeroed.
  Then the following assertions are true.
  \begin{enumerate}
    \item The matrices $\overline A'$, $\overline B'$, and $\overline
      C'$ are lower triangular.
    \item The product $\overline V^T\! \overline M \overline U$ is
      within $132 \bm\epsilon \|\overline M\|$ of being diagonal.
    \item The rows of $\overline C'$ and $\adj(\overline H')$ are within
      $86 \bm\epsilon \|C\|$ and $93.5 \bm\epsilon \|A\| \|B\|$,
      respectively, of being parallel.  Likewise, the columns of
      $\adj(\overline K')$ and $\overline B'$ are within $93.5
      \bm\epsilon \|A\| \|C\|$ and $86 \bm\epsilon \|B\|$, respectively,
      of being parallel.
    \item The matrices $\overline B'$, $\overline C'$, $\overline H'$,
      and $\overline K'$ are computed stably in the following sense.
      There exist $\delta B$, $\delta C$, $\delta H$, and $\delta K$,
      and orthonormal matrices $\mathcal P$, $\mathcal Q$, $U$, and $V$,
      such that $U^T \overline M V$ is an exact (unnormalized) SVD of
      $\overline M$, and
      \begin{equation*}
        \begin{split}
          \overline B'
          &= \mathcal P^T (B + \delta B) U,
          \aligncr
          \overline H'
          &= \mathcal Q^T (\adj(A)B + \delta H) U,
          \\
          \overline C'
          &= V^T (C + \delta C) \mathcal Q,
          \aligncr
          \overline K'
          &= V^T (C \adj(A) + \delta K) \mathcal P,
        \end{split}
      \end{equation*}
      where $\|\delta B\| \le 493 \bm\epsilon \|B\|$, $\|\delta C\| \le
      493 \bm\epsilon \|C\|$, $\|\delta H\| \le 486 \bm\epsilon \|C\|
      \|A\|$, and $\|\delta K\| \le 486 \bm\epsilon \|A\| \|B\|$.
  \end{enumerate}
\end{theorem}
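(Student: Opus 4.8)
I would split the proof along the four assertions, treating the degenerate branch (steps~1--6 of Algorithm~\ref{alg:rsvd}, the case $c_{11}=b_{22}=0$) first, since there everything is exact: $P=Q=J$ and the rotations in lines~2--3 eliminate entries exactly, so $J^T\!AJ$, $J^T(BU)$ and $(V^T\!C)J$ are lower triangular with no rounding at all, $\overline M=0$, and all four backward perturbations may be taken to be $0$. For the generic branch (lines~7--28) I would fix notation once and for all: let $\overline M=\fl(C\adj(A)B)$ be computed as in Lemma~\ref{thm:bwdM}; let $\overline U,\overline V$ be the rotations returned by \textsf{xLASV2}, \emph{after} the optional (exact) $J$-shift of lines~9--11; let $\overline P,\overline Q$ be the \textsf{xLARTG} rotations of lines~19--27; let $\mathcal P,\mathcal Q$ be the exact rotations carrying the true normalized cosine--sine pairs of $\overline P,\overline Q$ (so $\|\mathcal P-\overline P\|,\|\mathcal Q-\overline Q\|\le c\bm\epsilon$); and, using the backward stability of \textsf{xLASV2} (Lemma~\ref{thm:lasvtwo}), pick exactly orthonormal $U,V$ with $U^T\overline M V$ exactly diagonal and $\|U-\overline U\|,\|V-\overline V\|\le c\bm\epsilon$ (in the near-equal-singular-value case, where $\overline M$ is within $O(\bm\epsilon)\|\overline M\|$ of a multiple of an orthonormal matrix, one uses the freedom in the exact SVD to keep this closeness).

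\textbf{Assertions~1 and~2.} Assertion~1 is immediate: the super-diagonal entries of $\overline A'$, $\overline B'$, $\overline C'$ are \emph{set} to exactly zero in lines~4 and~28, and in the degenerate branch the three matrices are exactly lower triangular as noted above. For assertion~2 I would just expand $\overline V^T\overline M\overline U=\diag+E$ and invoke Lemma~\ref{thm:lasvtwo} for the bound $\|E\|\le c_1\bm\epsilon\|\overline M\|$, noting that the $J$-shift merely permutes and negates rows/columns and so does not enlarge $\|E\|$; the explicit constant $132$ is Lemma~\ref{thm:lasvtwo}'s bound plus the small number of rounding errors incurred in the product.

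\textbf{Assertion~4 (the core).} I would trace each of the four computed outputs and push every rounding error onto a single backward perturbation of the appropriate input or input product, then replace each computed rotation by its exact counterpart, absorbing the $O(\bm\epsilon)$ discrepancy. For $\overline C'=\fl(\fl(\overline V^T C)\overline Q)$ with its $(1,2)$-entry zeroed: column-wise backward stability of rotation-times-vector gives $\fl(\overline V^T C)=\overline V^T(C+E_1)$, $\|E_1\|\le c\bm\epsilon\|C\|$; right-multiplication by $\overline Q$ adds another rounding term; replacing $\overline V\to V$ and $\overline Q\to\mathcal Q$ costs $O(\bm\epsilon)\|C\|$; and zeroing the $(1,2)$-entry is a perturbation of size $\le c\bm\epsilon\|C\|$ because either $\overline Q$ was built from $\overline G$ (so \textsf{xLARTG} already makes that entry negligible) or $\overline Q$ was built from $\overline H$, in which case the near-diagonality $\overline G\,\overline H\approx\widehat\Sigma$ (assertion~2 propagated through $G,H$) together with $\overline Q^T\overline H$ lower triangular forces $(\overline G\overline Q)_{12}=O(\bm\epsilon)\|C\|$; the $\eta_g,\eta_h$ tests of lines~16--22, bounded via Lemma~\ref{thm:sqrtthree}, are exactly what guarantees the branch chosen keeps this amplification within the stated constant. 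Collecting terms gives $\overline C'=V^T(C+\delta C)\mathcal Q$ with $\|\delta C\|\le493\bm\epsilon\|C\|$. The identical bookkeeping gives $\overline B'=\mathcal P^T(B+\delta B)U$; for $\overline H'=\fl(\overline Q^T\fl(\adj(A)\fl(B\overline U)))$ one uses that $\adj(A)$ is formed exactly (a rearrangement with sign flips), so the roundings in $\fl(B\overline U)$ and in the $\adj(A)\cdot$ product may both be merged into a single perturbation $\delta H$ of the \emph{product} $\adj(A)B$ (with $A$ untouched), yielding $\overline H'=\mathcal Q^T(\adj(A)B+\delta H)U$, $\|\delta H\|\le486\bm\epsilon\|A\|\|B\|$; and $\overline K'=V^T(C\adj(A)+\delta K)\mathcal P$ symmetrically. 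Consistency of the shared factors is automatic: $U$ is literally the same matrix in the $\overline B'$ and $\overline H'$ identities, $\mathcal Q$ the same in $\overline H'$ and $\overline C'$, $V$ in $\overline C'$ and $\overline K'$, $\mathcal P$ in $\overline K'$ and $\overline B'$.

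\textbf{Assertion~3, and the main obstacle.} Assertion~3 I would prove directly rather than deduce from assertion~4, since its constants are sharper: in exact arithmetic $V^T C\,\adj(A)B\,U=\widehat\Sigma$ is diagonal, hence the rows of $V^T C$ and of $U^T\adj(B)A$ are pairwise parallel (each is $\widehat\Sigma$ times $(\adj(A)B)^{-1}$ up to a common scalar), and right-multiplication by the single rotation $\mathcal Q$ preserves parallelism; therefore the parallelism defect of the rows of $\overline C'$ and $\adj(\overline H')$ is just the rounding accumulated in forming $\overline C'$ from $V^T C\mathcal Q$ and $\overline H'$ from $\mathcal Q^T\adj(A)B\,U$, which Lemmas~\ref{thm:bwdM} and~\ref{thm:sqrtthree} together with the $\eta$-branch bounds control by $86\bm\epsilon\|C\|$ and $93.5\bm\epsilon\|A\|\|B\|$; the column statement for $\adj(\overline K')$, $\overline B'$ is the transposed argument. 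The hard part throughout is the family of degenerate branches --- $B$ or $C$ singular, and/or $\overline Q,\overline P$ computed from the ``wrong'' matrix --- where a diagonal entry of $\overline G,\overline H,\overline L,$ or $\overline K$ is (nearly) zero, the off-diagonal one wishes to annihilate is not directly eliminated by the chosen rotation, and one must fall back on the approximate SVD relation plus the $\eta$-tests to certify the resulting perturbation is still $O(\bm\epsilon)$; in parallel one must check, case by case against Lemma~\ref{thm:algIO}, that the version with the optional zeroing of $g_{22}$ (when $c_{11}=0$) and $l_{12}$ (when $b_{22}=0$) is covered, which is the bookkeeping flagged in the text as holding ``with or without'' that zeroing.
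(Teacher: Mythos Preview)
Your outline tracks the paper's proof closely: the degenerate branch $c_{11}=b_{22}=0$ is disposed of first, assertions~1 and~2 follow from construction and the accuracy of \textsf{xLASV2}, and assertion~4 is the core. You have also correctly identified that the $\eta$-tests and Lemma~\ref{thm:sqrtthree} are the essential ingredients there.

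But your stated mechanism for assertion~4 has a gap. You argue that when $\overline Q$ is built from $\overline H$, ``the near-diagonality $\overline G\,\overline H\approx\widehat\Sigma$ together with $\overline Q^T\overline H$ lower triangular forces $(\overline G\overline Q)_{12}=O(\bm\epsilon)\|C\|$.'' From $(\overline G\overline Q)(\overline Q^T\overline H)=\overline G\,\overline H$ one only obtains $(\overline G\overline Q)_{12}\cdot(\overline Q^T\overline H)_{22}=O(\bm\epsilon)\|C\|\|A\|\|B\|$, and nothing prevents $(\overline Q^T\overline H)_{22}$ from being tiny compared with $\|A\|\|B\|$; you cannot divide by it.

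The paper avoids this division entirely. Working in the branch $\overline\eta_g\le\overline\eta_h$ (so $\overline Q$ comes from $\overline G$; the other branch is symmetric), it lets $Q$ be the \emph{exact} rotation annihilating the $(1,2)$ entry of $V^T(C+\delta C_0)$. Because $V^T(C+\delta C_0)\cdot\adj(A+\delta A_0)(B+\delta B_0)U=V^T\overline M\,U$ is \emph{exactly} diagonal (Fact~1), this same $Q$ also exactly annihilates $\bigl(Q^T\adj(A+\delta A_0)(B+\delta B_0)U\bigr)_{12}$. The whole $(1,2)$ entry of $\fl(\overline Q^T\overline H)$ is therefore pure error, and the paper bounds it by first bounding $\|\overline Q-Q\|\le 155\,\bm\epsilon\max\{1,\overline\eta_g\}$ (Fact~2 applied to the componentwise perturbations of size $\le 51.5\,\bm\epsilon\,\overline{\widehat g}_{1j}$ in $\overline G$), then multiplying by $|\overline h_{12}|+|\overline h_{22}|$, and finally invoking Lemma~\ref{thm:sqrtthree} in the form $|\overline h_{12}|+|\overline h_{22}|\le\sqrt{3}\,\|A\|\|B\|/\overline\eta_h$. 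The branch condition $\overline\eta_g\le\overline\eta_h$ makes the ratio $\overline\eta_g/\overline\eta_h\le 1$ cancel, yielding the $O(\bm\epsilon)\|A\|\|B\|$ bound with no small denominator anywhere. So the $\eta$-tests do not ``keep the amplification within the stated constant'' in your near-diagonality argument; they \emph{replace} that argument with a direct error-propagation estimate.
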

\begin{proof}
  The proofs of first three assertions of the theorem follow the proof
  of Bai and Demmel for the QSVD~\cite[Thm.~3.1]{BD93},
  \textit{mutatis mutandis}. The proof of the fourth assertion deviates
  in the choice of the $\eta$s defined below, which is a difference that
  will be useful for later propositions and bounds. Due to this
  similarity, we also use the following facts from Bai and Demmel's
  proof.

  \begin{description}
    \item[Fact 1] The computed $\overline U$ and $\overline V$ from
      \textsf{xLASV2} satisfy $\overline U = U + \delta U$, $\overline V
      = V + \delta V$, where $V^T \overline M U$ is an exact
      (unnormalized) SVD of $\overline M$, and $\delta U$ and $\delta V$
      are small componentwise relative perturbations of $U$ and $V$,
      respectively, bounded by $46.5 \bm\epsilon$ in each component.
      This also implies $\|\delta U\|\le \sqrt{2} \cdot 46.5 \bm\epsilon
      < 66 \bm\epsilon$ and $\|\delta V\| < 66 \bm\epsilon$.
    \item[Fact 2] Using simple geometry, one can show that changing $f$
      to $f + \delta f$ and $g$ to $g + \delta g$ changes $c = f /
      \sqrt{f^2 + g^2}$ and $s = g / \sqrt{f^2 + g^2}$ to $c + \delta c$
      and $s + \delta s$, respectively, where $\sqrt{\delta c^2 + \delta
      s^2} \le 2((\delta f^2 + \delta g^2) / (f^2 + g^2))^{1/2}$.
    \item[Fact 3] Subroutine \textsf{xLARTG} computes $c = f / \sqrt{f^2
      + g^2}$ and $s = g / \sqrt{f^2 + g^2}$ with relative errors
      bounded by $6 \bm\epsilon$. This means that the $2\times 2$ matrix
      $\rot(c,s)$ has an error bounded in norm by $\sqrt{2} \cdot 6
      \bm\epsilon < 9 \bm\epsilon$.
    \item[Fact 4] If $X$ and $Y$ are $2\times 2$ matrices, then
      $\|\fl(XY) - XY\| \le 4 \bm\epsilon \|X\| \|Y\|$.
  \end{description}

  To prove the assertions of the theorem, first suppose that $c_{11} =
  0$ and $b_{22} = 0$. Then the first assertion holds by construction,
  and the second assertion follows from the nonzero structure of the
  matrices. The third and fourth assertions hold since $\overline P =
  \overline Q = J$ are exact, and $\overline U$ and $\overline V$ are
  computed from $B$ and $C$ with high relative accuracy by Fact~3.
  Now assume for the rest of the proof that $c_{11} \neq 0$ or $b_{22}
  \neq 0$.

  % \begin{enumerate}
    % \item
      The lower-triangularity of $\overline A'$, $\overline B'$, and
      $\overline C'$ hold by construction.
    % \item
      The near diagonality of $\overline V^T\! \overline M \overline
      U$ holds by the high accuracy of $\overline U$ and $\overline V$.
      Specifically, it follows from Fact~1 that
      \begin{equation*}
        \overline V^T\! \overline M \overline U
        = (V + \delta V)^T \overline M (U + \delta U)
        \approx V^T \overline M U
        {} + \delta V^T \overline MU
        {} + V^T \overline M \delta U,
      \end{equation*}
      where $V^T \overline M U$ is an exact unnormalized SVD of
      $\overline M$, and
      \begin{equation*}
        \|\delta V^T \overline MU\| + \|V^T \overline M \delta U\|
        \le (66 + 66) \bm\epsilon \|\overline M\|
        = 132 \bm\epsilon \|\overline M\|.
      \end{equation*}
      % NOTE: roundoff isn't relevant here since we aren't looking at
      % fl(\bar U^T \bar M \bar V).

    % \item
      We prove the third assertion only for $\overline C'$ and
      $\adj(\overline H')$, as the and the proof for $\adj(\overline
      K')$ and $\overline B'$ is similar. We also only have to consider
      the bottom rows, since the explicitly zeroed $(1,2)$ entries make
      the top rows parallel by construction. Now, the bottom rows of
      $\overline C'$ and $\adj(\overline H')$ are identical to the
      bottom rows of $\fl(\overline G \overline Q)$ and
      $\adj(\fl(\overline Q^T \overline H))$, respectively, and the
      bottom rows of $V^T (C + \delta C_0) \mathcal Q$ and $\adj( U^T (B
      + \delta B_0) \adj(A + \delta A_0) \mathcal Q )$ are parallel by
      construction for any orthonormal matrix $\mathcal Q$.  Hence, it
      suffices to bound the distance between the former two pairs of
      matrices for a suitable choice of $\mathcal Q$, which we can do as
      follows.
      % Because GH = Diag, the zeros allow us to write H in terms of G,
      % taking the adjugate makes the rows parallel.
      From Lemma~\ref{thm:bwdM} and Fact~4 it follows that for some
      error term $F_1$ with $\|F_1\| \le 4 \bm\epsilon \|C\|$, we have
      that
      \begin{equation*}
          \overline G
          = \fl(\overline V^T\! C)
          = \overline V^T\! C + F_1
          = V^T(C + \delta C_0) \underbrace{{} - V^T \delta C_0 + \delta
          V^T\! C + F_1}_{F_2};
      \end{equation*}
      thus, the error in $\overline G$ is bounded by
      \begin{equation*}
        \|F_2\| \le \|V^T \delta C_0\| + \|\delta V^T\! C\| + \|F_1\|
        % + 4\epsilon \|\overline U\| \|C\|
        \le (3 + 66 + 4) \bm\epsilon \|C\|
        = 73 \bm\epsilon \|C\|.
      \end{equation*}
      Using Fact~3, we see that for any $\overline Q = \mathcal Q +
      \delta \mathcal Q$ computed with \textsf{xLARTG}, we have that
      \begin{equation*}
        \begin{split}
          \fl(\overline G \overline Q)
          = \overline G \overline Q + F_2
          &= (V^T(C + \delta C_0) + F_2)(\mathcal Q + \delta \mathcal Q) + F_3 \\
          &= V^T(C + \delta C_0)\mathcal Q + \underbrace{F_2\mathcal Q +
          V^T\!C\delta \mathcal Q + F_3}_{F_4},
        \end{split}
      \end{equation*}
      with the error term bounded by
      \begin{equation*}
        \|F_4\| \le \|F_2\mathcal Q\| + \|V^T\!C\delta \mathcal Q\| + \|F_3\|
        \le (73 + 9 + 4) \bm\epsilon \|C\|
        = 86 \bm\epsilon \|C\|.
      \end{equation*}
      For some $F_5$ with $\|F_5\| \le 2\cdot 4 \bm\epsilon \|A\|
      \|B\|$, we have that
      \begin{equation*}
        \begin{split}
          \overline H
          % = \fl(\adj(A)B\overline U)
          &= \adj(A)B\overline U + F_5 \\
          % &= \adj(A)BU + \adj(A)B \delta U + F \\
          &= \adj(A + \delta A_0 - \delta A_0)
            (B + \delta B_0 - \delta B_0)U + \adj(A)B \delta U + F_5 \\
          &= \adj(A + \delta A_0) (B + \delta B_0) U \underbrace{
            {} - \adj(\delta A_0) B U - \adj(A)\delta B_0 U
            {} + \adj(A)B \delta U + F_5}_{F_6},
        \end{split}
      \end{equation*}
      so that the error in $\overline H$ is bounded by
      \begin{equation*}
        \begin{split}
          \|F_6\| &\le \|\adj(\delta A_0)BU\| + \|\adj(A)\delta B_0 U\|
          {} + \|\adj(A)B \delta U\| + \|F_5\| \\
          &\le (3.5 + 3 + 66 + 2\cdot 4) \bm\epsilon \|A\| \|B\|
          = 80.5 \bm\epsilon \|A\| \|B\|.
        \end{split}
      \end{equation*}
      Hence, for some $F_6$ and $F_7$ with $\|F_6\| \le 80.5 \bm\epsilon
      \|A\| \|B\|$ and $\|F_7\| \le 4 \bm\epsilon \|A\| \|B\|$, and any
      rotation $\overline Q = \mathcal Q + \delta \mathcal Q$ computed
      with \textsf{xLARTG}, we have that
      \begin{equation}\label{eq:Herr}
        \begin{split}
          \fl(\overline Q^T \overline H)
          & = \overline Q^T \overline H + F_7 \\
          &= (\mathcal Q + \delta \mathcal Q)^T (\adj(A + \delta A_0) (B +
          \delta B_0) V + F_6) + F_7 \\
          &= \mathcal Q^T\adj(A + \delta A_0) (B + \delta B_0) V +
          \underbrace{\mathcal Q^TF_6 + \delta \mathcal Q^T\adj(A)BV +
          F_7}_{F_8},
        \end{split}
      \end{equation}
      so that the error term is bounded by $(80.5 + 9 + 4) \bm\epsilon
      \|A\|\|B\| = 93.5 \bm\epsilon \|A\| \|B\|$.

    % \item
      For the fourth and final assertion, we only prove the bounds for
      $\|\delta C\|$ and $\|\delta H\|$, because bounding $\|\delta B\|$
      and $\|\delta K\|$ is similar.  The main challenge now is to
      quantify the effect of zeroing the $(1,2)$ entries at the end of
      the algorithm. Suppose first that $|\overline h_{12}| + |\overline
      h_{22}| = 0$, then the algorithm computes $Q$ from $\overline G$,
      and $\overline Q$ zeros the $(1,2)$ entry of $\overline G$ with
      high relative accuracy as a result of Fact~3.  Furthermore, in
      this case it holds for any $\overline Q$ that ${\fl(\overline Q^T
      \overline H)}_{12} = 0$. Otherwise, if $|\overline h_{12}| +
      |\overline h_{22}| \neq 0$ but $|\overline g_{11}| + |\overline
      g_{12}| = 0$, then the algorithm computes $\overline Q$ to
      accurately zero out the $(1,2)$ entry of $\overline H$, and
      ${\fl(\overline G \overline Q)}_{12} = 0$.  Now we may assume that
      $|\overline h_{12}| + |\overline h_{22}| \neq 0$ and $|\overline
      g_{11}| + |\overline g_{12}| \neq 0$ for the rest of the proof,
      and that $\overline \eta_g \leq \overline \eta_h$ so that the
      algorithm computes $Q$ from $\overline G$.  The proof is similar
      when $\overline \eta_g > \overline \eta_h$ and the algorithm
      computes $Q$ from $\overline H$, but leads to different bounds
      that we summarize at the end of the proof.

      It follows from Fact~3 that the algorithm computes $\overline Q$
      in such a way that the $(1,2)$ entry of $\overline G$ is zeroed
      with high relative precision. Bounding the effect of zeroing the
      $(1,2)$ entry of $\fl(\overline Q^T \overline H)$ to get
      $\overline H'$ is more involved.  Let $\overline Q = Q + \delta
      Q$, where $Q$ denotes the exact rotation obtained from $V^T(C +
      \delta C_0)$ in exact arithmetic (which can be bigger than just
      the error from \textsf{xLARTG} due to the errors in $\overline
      V$); then
      \begin{equation*}
        \begin{split}
          |{\fl(\overline Q^T \overline H)}_{12}|
          &= |\overline h_{12} (1 + 2\epsilon) (q_{11} + \delta q_{11})
          {} + \overline h_{22} (1 + 2\epsilon) (q_{21} + \delta q_{21})| \\
          &= |(h_{12} + 82.5 \epsilon \|A\| \|B\|) q_{11}
          {} + (1 + 2\epsilon) \overline h_{12} \delta q_{11} \\
          {} &+ (h_{22} + 82.5 \epsilon \|A\| \|B\|) q_{21}
          {} + (1 + 2\epsilon) \overline h_{22} \delta q_{21}| \\
          &\le (1 + 2\bm\epsilon) (|\overline h_{12}| |\delta q_{11}|
          {} + |\overline h_{22}| |\delta q_{21}|) + \sqrt{2} \cdot 82.5
          \bm\epsilon \|A\| \|B\|.
        \end{split}
      \end{equation*}
      Before proceeding, recall that $\overline{\widehat G} =
      \fl(|\overline V|^T |C|)$ and $\overline{\widehat H} =
      \fl(|\adj(A)| \fl(|B| |\overline U|))$, so that
      \begin{equation*}
        \bm\epsilon \overline \eta_g
        % = \epsilon \frac{\overline{\widehat g}_{11} + \overline{\widehat g}_{12}}%
        %   {|\overline g_{11}| + |\overline g_{12}|}
        = \bm\epsilon (\overline{\widehat g}_{11} + \overline{\widehat g}_{12})
          / (|\overline g_{11}| + |\overline g_{12}|)
        \quad\text{and}\quad
        \bm\epsilon \overline \eta_h
        % = \epsilon \frac{\overline{\widehat h}_{12} + \overline{\widehat h}_{22}}%
        %   {|\overline h_{12}| + |\overline h_{22}|}.
        = \bm\epsilon (\overline{\widehat h}_{12} + \overline{\widehat h}_{22})
          / (|\overline h_{12}| + |\overline h_{22}|).
      \end{equation*}
      Furthermore, it can be verified that the entries of $\overline G$
      may have a perturbation of up to $51.5 \bm\epsilon
      \overline{\widehat g}_{ij}$, where $46.5 \bm\epsilon$ comes from
      the perturbations in $\delta U$, and $2 \bm\epsilon$ from the
      roundoff errors in the matrix-matrix multiplication, and
      $3 \bm\epsilon$ from $\delta C_0$.  Hence, using
      \begin{equation*}
        (51.5 \bm\epsilon \overline{\widehat g}_{11})^2
        {} + (51.5 \bm\epsilon \overline{\widehat g}_{12})^2
        \le (51.5 \bm\epsilon)^2 (|\overline{\widehat g}_{11}| +
        |\overline{\widehat g}_{12}|)^2
        = (51.5 \bm\epsilon \overline \eta_g)^2
        (|\overline{g}_{11}| + |\overline{g}_{12}|)^2
      \end{equation*}
      and Facts~2 and 3, we can bound $(|\delta q_{11}|^2 + |\delta
      q_{21}|^2)^{1/2}$ by
      \begin{equation*}
        9 \bm\epsilon + 2\cdot 51.5 \bm\epsilon \overline \eta_g
        \frac{|\overline{g}_{11}| + |\overline{g}_{12}|}{%
          % (\overline{g}_{11}^2 + \overline{g}_{12}^2)^{1/2}}
          \sqrt{\overline{g}_{11}^2 + \overline{g}_{12}^2}}
        \le 9 \bm\epsilon + 2\sqrt{2} \cdot 51.5 \bm\epsilon \overline \eta_g
        \le 155 \bm\epsilon
        \max\{ 1, \overline \eta_g \}.
      \end{equation*}
      If $\overline \eta_g \le 1$, then $|{\fl(\overline Q^T \overline
      H)}_{12}| \le \sqrt{2} (155 + 82.5) \bm\epsilon \|A\|\|B\| \le 336
      \bm\epsilon \|A\|\|B\|$; otherwise, we can use
      Lemma~\ref{thm:sqrtthree} to show that
      \begin{equation*}
        (|\overline h_{11}| + |\overline h_{12}|) \bm\epsilon
        % = \frac{1}{\overline \eta_h} 
        = (\overline{\widehat h}_{12} + \overline{\widehat h}_{22})
        \bm\epsilon \overline \eta_h^{-1}
        % \le \frac{\sqrt{3}}{\overline \eta_h} \|A\| \|B\|.
        \le \sqrt{3} \|A\| \|B\| \bm\epsilon \overline \eta_h^{-1},
        % \le \frac{\epsilon}{\overline \eta_h} (\alpha \||\adj(A)| |B|
      \end{equation*}
      which in turn implies the bound
      \begin{equation*}
        \begin{split}
          |\fl({(\overline Q^T \overline H)}_{12})|
          &\le (1 + 2\bm\epsilon) (|\overline h_{11}|
          {} + |\overline h_{12}|) 155 \bm\epsilon \overline \eta_g
          {} + 117 \bm\epsilon \|A\| \|B\| \\
          & \le (\sqrt{3} \cdot 155 \frac{\overline \eta_g}{\overline \eta_h} + 117)
            \bm\epsilon \|A\| \|B\|.
        \end{split}
      \end{equation*}
      Since $\overline \eta_g \le \overline \eta_h$ by assumption, it
      follows that $|{\fl(\overline Q^T \overline H)}_{12}| \le 386
      \bm\epsilon \|A\|\|B\|$. By writing the error term
      in~\eqref{eq:Herr} as $F_8$ and the explicit zeroing of the
      $(1,2)$ entry as $F_9 = -{\fl(\overline Q^T \overline H)}_{12}
      \vec e_1 \vec e_2^T$, everything can be put together to yield
      \begin{equation*}
        \begin{split}
          \overline H'
          &= \mathcal Q^T \adj(A + \delta A_0) (B + \delta B_0) U + F_8 + F_9 \\
          &= \mathcal Q^T (\adj(A)B + \adj(\delta A_0)B
          {} + \adj(A) \delta B_0 + QF_8U^T + QF_9U^T) U \\
          &= \mathcal Q^T ( \adj(A)B + \delta H) U,
        \end{split}
      \end{equation*}
      where $\|\delta H\| \le (3.5 + 3 + 93.5 + 386) \bm\epsilon \|A\|
      \|B\| = 486 \bm\epsilon \|A\| \|B\|$.

      The proof is similar when $\overline \eta_g > \overline \eta_h$
      and $\overline Q$ is computed from $\overline H$, except for the
      following differences. We get $\sqrt{2} \cdot (73 + 2) \leq 107$
      instead of $\sqrt{2} \cdot (80.5 + 2) \leq 117$, elements in
      $\overline H$ may be perturbed by up to $(46.5 + 3.5 + 3 + 2\cdot
      2) \bm\epsilon \overline{\widehat h}_{ij} = 57 \bm\epsilon
      \overline{\widehat h}_{ij}$, the quantity $(|\delta q_{11}|^2 +
      |\delta q_{21}|^2)^{1/2}$ is bounded by $9 \bm\epsilon + 2\sqrt{2}
      \cdot 57 \bm\epsilon \overline \eta_h \leq 171 \bm\epsilon \max \{
      1, \overline \eta_h \}$, the perturbation $\delta A_0$ is not part
      of $\delta C$, and $93.5 \bm\epsilon$ should be replaced by
      $\|F_4\| \le 86 \bm\epsilon$ in the final bound. Hence, the factor
      in the resulting bound is $(3 + 86 + 107 + \sqrt{3} \cdot 171)
      \bm\epsilon \le 493 \bm\epsilon$.
\end{proof}

% }}}

Although the theorem above shows that Algorithm~\ref{alg:rsvd} has
favorable numerical properties, it lacks a bound on the backward error
of $\overline A'$. Moreover, we have to content ourselves with
$\overline H'$ and $\overline K'$ instead of $\adj(\overline A')
\overline B'$ and $\overline C' \adj(\overline A')$.  However, the proof
shows that we can bound the errors in $P$ and $Q$ in terms of the
$\eta$s, which in turn allows us to express the error in $\overline A'$
in terms of the $\eta$s. We can then try to ensure that the $\eta$s
remain small, so that the error in $\overline A'$ is small. These things
are the focus of the next section.

% }}}2

% }}}1

% (Section) The backward error of the computed A'. {{{1

\section{The backward error of the computed
\texorpdfstring{$\mathbf{A'}$}{A}}\label{sec:bwdA}

Although the numerical results in Section~\ref{sec:num} suggest that the
relative magnitude of $\fl(\overline P^T\! A \overline Q)_{12}$ is
always small in practice, it is unclear if we can prove that $\|\delta
A\|$ is $\mathcal O(\bm\epsilon \|A\|)$ in the worst case. An
alternative is to bound the backward error of $\overline A'$ in terms of
the $\eta$s, and then to analyze the behavior of the $\eta$s. We can
simplify this analysis with the following two definitions.

% (Definition) \eta. {{{

\begin{definition}
  Define $\eta_g$ from Algorithm~\ref{alg:rsvd} as
  \begin{equation*}
    \eta_g = \begin{cases}
      (\widehat g_{11} + \widehat g_{12}) / (|g_{11}| + |g_{12}|) &
      \text{if $|g_{11}| + |g_{12}| \neq 0$} \\
      \infty & \text{if $|g_{11}| + |g_{12}| = 0$,}
    \end{cases}
  \end{equation*}
  and define the remaining $\eta$s and $\overline \eta$s analogously.
\end{definition}

% }}}

% (Definition) \eta_{\max}. {{{

\begin{definition}\label{def:etamax}
  Define $\eta_{\max}$ as $\eta_{\max} = \max \{ 1, \min \{ \eta_g,
  \eta_h \}, \min \{ \eta_k, \eta_l \} \}$ and define $\overline
  \eta_{\max}$ analogously.
\end{definition}

% }}}

We will later see that $\eta_{\max}, \overline \eta_{\max} < \infty$,
which is important for two reasons. First, it allows us to simplify the
conditions in Algorithm~\ref{alg:rsvd} that determine whether to compute
$Q$ and $P$ from $G$ or $H$ and $L$ or $K$, respectively, by dropping
the zero checks and keeping just $\eta_g \le \eta_h$ and $\eta_l \le
\eta_k$.  Second, we can now bound the backward error of $\overline A'$
in terms of $\overline \eta_{\max}$ instead of having to consider
separate cases with separate $\overline \eta$s.

% (Proposition) {{{

\begin{theorem}\label{thm:errA}
  Suppose $\overline P$ is obtained in a similar way as $\overline Q$,
  and $171 \bm\epsilon \overline \eta_{\max} \ll 1$; then there exists
  $\delta A$, $\mathcal P$, and $\mathcal Q$ such that $\overline A' =
  \mathcal P (A + \delta A) \mathcal Q$ and $\|\delta A\| \le (44.5 +
  342 \overline\eta_{\max}) \bm\epsilon \|A\|$.
\end{theorem}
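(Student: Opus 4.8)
The plan is to mimic the fourth-assertion argument of Theorem~\ref{thm:stablersvd}, but now tracking the rotation $\overline P$ that is applied on the \emph{left} of $A$, together with the rotation $\overline Q$ on the right, and bounding the resulting $(1,2)$ entry of $\fl(\overline P^T A \overline Q)$ uniformly in $\overline\eta_{\max}$. First I would write $\overline A' = \fl(\overline P^T \fl(A \overline Q))$ and expand $\overline P = \mathcal P + \delta\mathcal P$, $\overline Q = \mathcal Q + \delta\mathcal Q$, where $\mathcal P$ and $\mathcal Q$ are the exact rotations that the algorithm is trying to compute (obtained, via the $\eta$ tests, from exact versions of $G$, $H$, $K$, or $L$). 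Collecting the benign terms — the $\sqrt{2}\cdot 4\bm\epsilon\|A\|$ from each of the two matrix multiplications and a $\sqrt{11}\,\bm\epsilon\|A\|$-type term if one wants $\delta A_0$ folded in, giving the $44.5\bm\epsilon\|A\|$ constant — reduces the problem to bounding $|\overline a'_{12}|$, the off-diagonal entry that gets explicitly zeroed in step~28 of Algorithm~\ref{alg:rsvd}.

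The heart of the matter is that $\mathcal P$ and $\mathcal Q$ are each chosen, by design, so that $P^T A Q$ is lower triangular in exact arithmetic — this is exactly the structural claim underlying Proposition~\ref{thm:exactrsvdtwo} and the constructions in Lemma~\ref{thm:algIO}: whichever branch of the four $\eta$ tests fires, the pair $(\mathcal P,\mathcal Q)$ simultaneously triangularizes $B$ (or $CA^{-1}$) and $A$, or $C$ (or $A^{-1}B$) and $A$. So $\mathcal P^T A \mathcal Q$ has a zero $(1,2)$ entry exactly, and $|\overline a'_{12}|$ is entirely due to (i) roundoff in the two products, bounded by $O(\bm\epsilon\|A\|)$ via Fact~4, and (ii) the perturbations $\delta\mathcal P$, $\delta\mathcal Q$ in the computed rotations. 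For (ii) I would reuse the estimate already derived inside the proof of Theorem~\ref{thm:stablersvd}: the computed rotation differs from the exact one by at most $9\bm\epsilon + 2\sqrt 2\cdot 51.5\,\bm\epsilon\,\overline\eta_g \le 155\bm\epsilon\max\{1,\overline\eta_g\}$ when $Q$ comes from $\overline G$ (and the analogous $171\bm\epsilon\max\{1,\overline\eta_h\}$ bound when it comes from $\overline H$, and likewise for $P$ from $\overline K$ or $\overline L$). Multiplying such a rotation error into $A$ contributes a term of size $\|A\|$ times that bound; doing this for both $\overline P$ and $\overline Q$ and invoking Definition~\ref{def:etamax} to replace $\min\{\overline\eta_g,\overline\eta_h\}$ and $\min\{\overline\eta_k,\overline\eta_l\}$ (whichever the algorithm actually used) by $\overline\eta_{\max}$ gives a combined contribution of $2\cdot 171\,\bm\epsilon\,\overline\eta_{\max}\|A\| = 342\,\bm\epsilon\,\overline\eta_{\max}\|A\|$. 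Adding the benign $44.5\bm\epsilon\|A\|$ yields the stated bound $\|\delta A\| \le (44.5 + 342\,\overline\eta_{\max})\bm\epsilon\|A\|$, and the hypothesis $171\bm\epsilon\,\overline\eta_{\max}\ll 1$ is what lets us drop second-order terms and guarantees the perturbed rotations are still close to orthonormal so that $\mathcal P$, $\mathcal Q$ can be taken genuinely orthonormal.

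A subtlety I would be careful about is the degenerate branch $c_{11}=0$ and $b_{22}=0$ (steps~1--6), where $\overline P = \overline Q = J$ are exact: there $|\overline a'_{12}|$ is pure multiplication roundoff, comfortably inside the $44.5\bm\epsilon$ term, so that case is immediate. I would also need to check that in each of the thirteen cases of Lemma~\ref{thm:algIO} the chosen $(\mathcal P,\mathcal Q)$ really does annihilate the $(1,2)$ entry of $A$ exactly — this is where I rely on the fact that $\adj(\mathcal P^T A \mathcal Q)$ is a scalar multiple of $\mathcal P^T\adj(A)\mathcal Q$, so that $A$ lower triangular is equivalent to $\adj(A)$ lower triangular, letting the triangularization of $H = \adj(A)L$ or $K = G\adj(A)$ transfer to $A$ itself. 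The main obstacle is bookkeeping: making sure the same constant $171$ that appears in the $\overline\eta_h$ and $\overline\eta_l$ branches (rather than the smaller $155$ of the $\overline\eta_g$ branch) is the one that propagates, so that the bound is valid regardless of which branch the algorithm takes, and confirming that the errors in $\overline P$ and $\overline Q$ genuinely decouple and simply add rather than interact at first order. Once the per-rotation error bounds from Theorem~\ref{thm:stablersvd}'s proof are in hand, the rest is routine triangle-inequality assembly.
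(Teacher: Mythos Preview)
Your overall plan is right, but there is a structural gap in the accounting that prevents you from reaching the constant $342$. You use a single decomposition $\overline P = \mathcal P + \delta\mathcal P$ with $\mathcal P$ equal to the exact rotation obtained from exact $G$, $H$, $K$, or $L$, and bound $\|\delta\mathcal P\| \le 171\bm\epsilon\,\overline\eta_{\max}$. But then, writing $\overline A' = \mathcal P^T(A+\delta A)\mathcal Q$, the backward error $\delta A$ must absorb the cross terms $\delta\mathcal P^{\,T} A \mathcal Q + \mathcal P^T A\,\delta\mathcal Q$, and these already contribute $2\cdot 171\,\overline\eta_{\max}\bm\epsilon\|A\|$. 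They are \emph{not} part of your ``benign'' $44.5\bm\epsilon$ budget (matrix-multiply roundoff plus $\delta A_0$ only gives about $15\bm\epsilon$, not $44.5$). On top of that you still have to bound the zeroed $(1,2)$ entry, which costs another $2\cdot 171\,\overline\eta_{\max}\bm\epsilon\|A\|$ via the same cross terms. The net result of your approach is $684\,\overline\eta_{\max}$, not $342$.

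The paper avoids this double counting by using \emph{two different} orthonormal approximants to $\overline P$ simultaneously. For the backward-error factorization $\overline A' = \mathcal P^T(A+\delta A)\mathcal Q$ it takes $\mathcal P$ to be the exact rotation that \textsf{xLARTG} would return on its (already rounded) floating-point input, so $\|\overline P - \mathcal P\| \le 9\bm\epsilon$ by Fact~3 alone; this keeps the cross terms $\delta\mathcal P^{\,T}A\mathcal Q + \mathcal P^T A\,\delta\mathcal Q$ inside the $29.5\bm\epsilon$ constant. A \emph{separate} orthonormal $P$---the rotation from the exact perturbed data $V^T(C+\delta C_0)\adj(A+\delta A_0)$ or $(B+\delta B_0)U$---satisfies $\|\overline P - P\| \le 171\bm\epsilon\,\overline\eta_{\max}$ and has $(P^T(A+\delta A_0)Q)_{12}=0$ exactly; this second decomposition is used \emph{only} to bound the $(1,2)$ entry that gets explicitly zeroed, producing a single copy of $342\,\overline\eta_{\max}$. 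The two pieces then add to $(44.5 + 342\,\overline\eta_{\max})\bm\epsilon\|A\|$. Without this two-reference-rotation device you cannot match the stated constant.
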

\begin{proof}
  Since $\overline P$ is computed in a similar way as $\overline Q$, it
  follows from the proof of Theorem~\ref{thm:stablersvd} that $\overline
  P$ can be decomposed as both $\overline P = \mathcal P + \delta
  \mathcal P$ and $\overline P = P + \delta P$.  Here, $\mathcal P$ and
  $P$ are both exactly orthonormal matrices, and $\|\delta \mathcal P\|
  \le 9 \bm\epsilon$ is the error incurred by computing any $\overline
  P$ with \textsf{xLARTG} in floating-point arithmetic, and $\|\delta
  P\| \le 171 \bm\epsilon \eta_{\max}$ is the error incurred by
  computing the rotation from an approximation of $V^T (C + C_0) \adj(A
  + A_0)$ or $(B + B_0) U$ in floating-point arithmetic.  It follows
  that for some $F_1$ with $\|F_1\| \le 2 \cdot 4 \bm\epsilon$,
  % Order of evaluation is irrelevant?
  \begin{equation*}
    \begin{split}
      % \overline A'
      \fl(\overline P^T\! A \overline Q)
      % = \overline P^T\! A \overline Q + F
      &= (\mathcal P + \delta \mathcal P)^T\! A (\mathcal Q + \delta \mathcal Q) + F_1 \\
      &= \mathcal P^T (A + \delta A_0) \mathcal Q \underbrace{
      {} - \mathcal P^T \delta A_0 \mathcal Q
      {} + \delta \mathcal P^T\! A \mathcal Q
      {} + \mathcal P^T\! A \delta \mathcal Q 
      {} + \delta \mathcal P^T\! A \delta \mathcal Q + F_1}_{F_2},
      % &= P^T(A + \delta A_0)Q + F
    \end{split}
  \end{equation*}
  where the error is bounded by
  \begin{equation*}
    \|F_2\| \le \|\mathcal P^T \delta A_0 \mathcal Q\|
    {} + \|\delta \mathcal P^T\! A \mathcal Q\|
    {} + \|\mathcal P^T\! A \delta \mathcal Q\| + \|F\|
    \le (3.5 + 9 + 9 + 2\cdot 4) \bm\epsilon \|A\|
    = 29.5 \bm\epsilon \|A\|.
  \end{equation*}
  Furthermore, for some $f_3$ with
  \begin{equation*}
    |f_3| \le 8 \bm\epsilon \|\vec e_1^T(P + \delta P)\| \|A\| \|(Q + \delta Q)\vec e_2\|
    \le 8 \bm\epsilon (1 + 171\bm\epsilon\overline\eta_{\max})^2 \|A\|
    \approx 8 \bm\epsilon \|A\|
  \end{equation*}
  we have that
  \begin{equation*}
    \begin{split}
      |\fl(\overline P^T\! A \overline Q)_{12}|
      &\le |(P + \delta P)^T A (Q + \delta Q)|_{12} + |f_3| \\
      &= |P^T (A + \delta A_0) Q - P^T \delta A_0 Q + \delta P^T\! A Q +
      P^T\! A \delta Q + \delta P^T\! A \delta Q|_{12} + |f_3| \\
      &\le |P^T \delta A_0 Q|_{12}
      {} + |\delta P^T\!  A Q|_{12}
      {} + |P^T\!A\delta Q|_{12}
      {} + |\delta P^T\! A \delta Q|_{12} + |f_{3}| \\
      &\le (3.5 \bm\epsilon + 2\cdot 171 \bm\epsilon \overline\eta_{\max}
      {} + 8 \bm\epsilon) \|A\|.
    \end{split}
  \end{equation*}
  Here, we used that $P^T(A + \delta A_0)Q = 0$, and that the assumption
  $171\bm\epsilon\overline\eta_{\max} \ll 1$ makes
  $\bm\epsilon^2\overline\eta_{\max}$ and
  $(\bm\epsilon\overline\eta_{\max})^2$ higher-order terms.  Now the
  explicit zeroing of the $(1,2)$ entry of $A'$ is the same as adding
  the error term $F_4 = -\fl(\overline P^T\! A \overline Q)_{12} \vec
  e_1 \vec e_2^T$, so that
  \begin{equation*}
    A' = \mathcal P^T (A + \delta A_0) \mathcal Q + F_2 + F_4
    = \mathcal P^T (A + \delta A_0 + \mathcal P (F_2 + F_4) \mathcal Q^T) \mathcal Q
    = \mathcal P^T (A + \delta A) \mathcal Q.
  \end{equation*}
  Thus, by combining the relevant error terms and their bounds, we get
  \begin{equation*}
    \|\delta A\|
    \le (3.5 + 29.5 + 11.5 + 342 \overline \eta_{\max}) \bm\epsilon \|A\|
    = (44.5 + 342 \overline \eta_{\max}) \bm\epsilon \|A\|,
  \end{equation*}
  which is the desired result.
\end{proof}

% }}}

In essence, if $\overline \eta_{\max}$ is sufficiently small, then the
errors $\delta P$ and $\delta Q$ stay small, and
Algorithm~\ref{alg:rsvd} computes $\overline A'$ stably. Hence, the goal
is now to bound $\overline \eta_{\max}$. We start by showing that
$\overline \eta_{\max}$ is always finite, but before we can start with
the proof, we need the following properties of Algorithm~\ref{alg:rsvd}
and the routine \textsf{xLASV2}.

% (Proposition) {{{

\begin{lemma}\label{thm:lasvtwo}
  Consider Algorithm~\ref{alg:rsvd} and assume the following: $c_{11}
  \neq 0$ or $b_{22} \neq 0$, the SVD of $\overline M$ is computed with
  \textsf{xLASV2} as given in Bai and Demmel~\cite[App.]{BD93}, and
  the columns of $\overline U$ and $\overline V$ are postmultiplied by
  $J$ if the relevant conditions in the algorithm are met. Then
  $\overline U = \overline V = I$ if $\overline M = 0$, $|\overline U| =
  |J|$ and $|\overline V| = I$ if $\overline m_{11} = \overline m_{22} =
  0$ and $\overline m_{12} \neq 0$, $|\overline V| \approx I$ if
  $b_{22} = 0$, and $|\overline U| \approx |J|$ if $c_{11} = 0$ but $C
  \neq 0$, where the zeros are exact even for the latter $\overline U$
  and $\overline V$.
  % due to the high relative accuracy of \textsf{xLASV2}.
\end{lemma}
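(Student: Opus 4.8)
The plan is to trace the behavior of \textsf{RSVD22} and \textsf{xLASV2} case by case, relying on the explicit formulas for $\overline M = \fl(C\adj(A)B)$ from Lemma~\ref{thm:bwdM} together with the rule for postmultiplying $\overline U$, $\overline V$ by $J$ in steps 9--11. The key observation is that the structure of $\overline M$ is completely determined by which diagonal entries of $B$ and $C$ vanish: since $A$ is nonsingular, $\overline m_{11} = \fl(\fl(c_{11}a_{22})b_{11})$ and $\overline m_{22} = \fl(c_{22}\fl(a_{11}b_{22}))$, so $\overline m_{11} = 0$ exactly iff $c_{11} = 0$ or $b_{11} = 0$, and $\overline m_{22} = 0$ exactly iff $c_{22} = 0$ or $b_{22} = 0$ (multiplication by zero is exact by assumption).

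First I would dispose of the case $\overline M = 0$: here \textsf{xLASV2} is assumed to return $\overline U = \overline V = I$, and one checks from step~9 that the guard $c_{11}\neq 0$ and $c_{22}\neq 0$ and $b_{11}\neq 0$ and $b_{22}\neq 0$ cannot all hold (otherwise $\overline M$ would be nonsingular, not zero), so no $J$-multiplication occurs and $\overline U = \overline V = I$ survives. Next, the antidiagonal case $\overline m_{11} = \overline m_{22} = 0$, $\overline m_{12} \neq 0$: \textsf{xLASV2} diagonalizes $\minimat{0}{\overline m_{12}}{0}{0}$ by, in exact arithmetic, a rotation that swaps the columns/rows, giving $|U| = |J|$ and $|V| = I$ (or the reverse, depending on the convention in Bai and Demmel's appendix — I would pin down which one matches the $|\sigma_1|\ge|\sigma_2|$ normalization used elsewhere). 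The guard in step~9 again fails in this sub-case because $\overline m_{11} = \overline m_{22} = 0$ forces one of $c_{11}, b_{11}$ and one of $c_{22}, b_{22}$ to vanish; so the computed $|\overline U| = |J|$, $|\overline V| = I$ is not altered, and the zero pattern of $\overline U, \overline V$ is exact since \textsf{xLASV2} produces exact zeros in this degenerate triangular SVD.

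Then I would handle the two remaining claims, $|\overline V|\approx I$ when $b_{22} = 0$ and $|\overline U|\approx|J|$ when $c_{11} = 0$ but $C\neq 0$. If $b_{22} = 0$ (and we are past the $c_{11} = 0 \wedge b_{22} = 0$ early-return, so $c_{11}\neq 0$), then $\overline m_{22} = 0$ and $\overline m_{12} = \fl(\fl(\fl(c_{11}a_{22})b_{12}))$ (the other two summands of $m_{12}$ carry the factor $b_{22} = 0$ and vanish exactly), so $\overline M = \minimat{\overline m_{11}}{\overline m_{12}}{0}{0}$ with $\overline m_{11}\neq 0$; an upper-triangular matrix with zero bottom row. \textsf{xLASV2} on such a matrix returns $\overline V$ within $\mathcal O(\bm\epsilon)$ of $I$ (the left rotation angle is $\mathcal O(\bm\epsilon)$, and the $(2,1)$ and $(2,2)$ structure forces the computed sine to be exactly representable as the ratio producing a zero there — I would cite Fact~1 / \cite[App.]{BD93} for the exact-zero claim), and the step~9 guard fails since $b_{22} = 0$, so no $J$-swap occurs: $|\overline V|\approx I$ with exact zeros. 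Symmetrically, if $c_{11} = 0$ but $C\neq 0$ then $c_{22}\neq 0$, $\overline m_{11} = 0$, $\overline m_{12} = \fl(\ldots c_{12}\ldots)$ only (the terms with factor $c_{11}$ vanish), giving $\overline M = \minimat{0}{\overline m_{12}}{0}{\overline m_{22}}$ with $\overline m_{22}\neq 0$; \textsf{xLASV2} returns $\overline U$ within $\mathcal O(\bm\epsilon)$ of $J$ and the guard in step~9 fails because $c_{11} = 0$, leaving $|\overline U|\approx|J|$. I would then remark that the normalization hypothesis $|\sigma_1|\ge|\sigma_2|$ fixes which of $I$/$J$ appears, consistent with the hypothesis that $\overline U = \overline V = I$ when $\overline M = 0$.

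The main obstacle, I expect, will be the bookkeeping around \textsf{xLASV2}: verifying that its output has \emph{exactly} zero off-structure entries in the degenerate triangular cases (as opposed to merely $\mathcal O(\bm\epsilon)$), and confirming that the sign/ordering conventions of the Bai--Demmel appendix routine line up with the $|\sigma_1|\ge|\sigma_2|$ assumption and with the $J$-postmultiplication test $c_{\max} < s_{\max}$ used in step~9. These are not deep but are fiddly, and getting the exact-zero statements right is what makes the lemma usable in Proposition~\ref{thm:nzStructure}; the rest is a routine enumeration of which guard conditions can or cannot fire given the vanishing pattern of the diagonal entries of $B$ and $C$.
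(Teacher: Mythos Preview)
Your proposal is correct and follows the same approach as the paper's own proof, which is a one-line appeal to ``the implementation and high relative accuracy of \textsf{xLASV2}, combined with the postmultiplication of $\overline U$ and $\overline V$ by $J$''; you have simply carried out that case analysis in detail. One small slip: from $c_{11} = 0$ and $C \neq 0$ you cannot conclude $c_{22} \neq 0$ (it could be that only $c_{12} \neq 0$), but that subcase lands in your antidiagonal case $\overline m_{11} = \overline m_{22} = 0$, $\overline m_{12} \neq 0$, so your coverage is still complete.
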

\begin{proof}
  The desired results follow from the implementation and high relative
  accuracy of \textsf{xLASV2}, combined with the postmultiplication
  of $\overline U$ and $\overline V$ by $J$ when the conditions on
  Line~9 of Algorithm~\ref{alg:rsvd} are met.
\end{proof}

% }}}

The preceding lemma implies that the $\overline U$ and $\overline V$
computed with Algorithm~\ref{alg:rsvd} in floating-point arithmetic
correspond to the exact $U$ and $V$ from Lemma~\ref{thm:algIO} with high
relative accuracy, at least for those cases of Lemma~\ref{thm:algIO}
that correspond to the assumption of the lemma above. With this result,
we can now prove the following proposition and corollary, which show
that $\overline \eta_{\max}$ is finite.

% (Proposition) {{{

\begin{proposition}\label{thm:cneqZbneqZ}
  If $c_{11} \neq 0$ or $b_{22} \neq 0$, then $|\overline g_{11}| +
  |\overline g_{12}| = 0$ and $|\overline h_{12}| + |\overline h_{22}| =
  0$ cannot hold simultaneously. Likewise and under the same
  assumptions, neither $|\overline k_{11}| + |\overline k_{12}| = 0$ and
  $|\overline l_{12}| + |\overline l_{22}| = 0$, nor $|\overline g_{11}|
  + |\overline g_{12}| = 0$ and $|\overline l_{12}| + |\overline l_{22}|
  = 0$ can hold simultaneously.
\end{proposition}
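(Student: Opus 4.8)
The plan is to collapse all three assertions into one and then derive a contradiction from the structure forced on $B$, $C$, $\overline U$, and $\overline V$.

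First I would reduce. Since $A$ is nonsingular and upper triangular, $a_{11}$ and $a_{22}$ are nonzero, so $\adj(A) = \minimat{a_{22}}{-a_{12}}{0}{a_{11}}$ is nonsingular and upper triangular. Because $\overline K = \fl(\overline G \adj(A))$, the first row of $\overline K$ is determined by the first row of $\overline G$ alone; using only that multiplication by $0$ and adding an exact $0$ are exact, and that $\fl(xy) = 0$ with $y \neq 0$ forces $x = 0$, one obtains $|\overline k_{11}| + |\overline k_{12}| = 0 \iff |\overline g_{11}| + |\overline g_{12}| = 0$. The mirror argument for $\overline H = \fl(\adj(A)\overline L)$, whose second column depends only on the second column of $\overline L$, gives $|\overline h_{12}| + |\overline h_{22}| = 0 \iff |\overline l_{12}| + |\overline l_{22}| = 0$. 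So all three statements reduce to a single claim: if $c_{11} \neq 0$ or $b_{22} \neq 0$, then $|\overline g_{11}| + |\overline g_{12}| = 0$ and $|\overline l_{12}| + |\overline l_{22}| = 0$ cannot both hold.

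For this I would argue by contradiction, assuming both sums vanish, and split on $b_{22}$. If $b_{22} = 0$ then $c_{11} \neq 0$, and Lemma~\ref{thm:lasvtwo} gives that $\overline V$ is diagonal with $|\overline v_{11}|$ near $1$, so $\overline g_{11} = \fl(\overline v_{11} c_{11}) \neq 0$, a contradiction. If $b_{22} \neq 0$, then $\overline l_{22} = \fl(b_{22} \overline u_{22}) = 0$ forces $\overline u_{22} = 0$; since the computed $\overline U$ (or its $J$-postmultiple) has $\pm \vec e_2$ for its first column once one of its entries is an exact zero, we get $\overline u_{12} = \pm 1$, whence $\overline l_{12} = \fl(b_{11} \overline u_{12}) = 0$ forces $b_{11} = 0$, hence $\overline m_{11} = \fl(\fl(c_{11} a_{22}) b_{11}) = 0$. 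Thus $\overline M = \minimat{0}{\overline m_{12}}{0}{\overline m_{22}}$, which cannot be the zero matrix (else Lemma~\ref{thm:lasvtwo} gives $\overline U = I$, contradicting $\overline u_{22} = 0$); therefore $\overline M \vec e_2 = (\overline m_{12}, \overline m_{22})^{T} \neq 0$ is the image under $\overline M$ of a right singular vector, so the first column $\overline v_1$ of $\overline V$ equals $\pm \overline M \vec e_2 / \|\overline M \vec e_2\|$. Now $|\overline g_{11}| + |\overline g_{12}| = 0$ says $\overline v_1^{\,T} C = 0$, i.e.\ $\overline m_{12} c_{11} \approx 0$ and $\overline m_{12} c_{12} + \overline m_{22} c_{22} \approx 0$. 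When $c_{11} = 0$, Lemma~\ref{thm:bwdM}'s formula collapses to $\overline m_{12} = \fl(c_{12}\fl(a_{11} b_{22}))$ and $\overline m_{22} = \fl(c_{22}\fl(a_{11} b_{22}))$, so $\overline m_{12} c_{12} + \overline m_{22} c_{22}$ has the sign of $c_{12}^2 + c_{22}^2 > 0$ (positivity from $\overline M \neq 0$), a contradiction; when $c_{11} \neq 0$ we get $\overline m_{12} = 0$, hence $\overline m_{22} \neq 0$ and $\overline v_1 = \pm \vec e_2$, and the second equation then forces $c_{22} = 0$, whereupon $\overline m_{22} = \fl(c_{22}\fl(a_{11} b_{22})) = 0$, contradicting $\overline m_{22} \neq 0$.

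The main obstacle will be the floating-point bookkeeping in the case $b_{22} \neq 0$: one must verify that the structural zeros of $\overline M$ coming from multiplication by vanishing entries of $B$ and $C$ are genuinely exact, that the computed $\overline g_{12}$ (and not merely the exact value of $\overline v_1^{\,T} C$) is nonzero, that no cancellation in $\overline m_{12}$ can mimic the sign pattern imposed by $\overline v_1^{\,T} C = 0$, and that the optional zeroing of $g_{22}$ and $l_{12}$ in Algorithm~\ref{alg:rsvd} and a possible postmultiplication of $\overline U, \overline V$ by $J$ on Line~10 do not break the alignment of $\overline v_1$ with a column of $\overline M$; the postmultiplication in fact never triggers in this scenario because its guard requires $b_{11} \neq 0$, whereas the argument produces $b_{11} = 0$. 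Collapsing everything to the one statement about $\overline g$ and $\overline l$ at the outset is what keeps all this tracking feasible, with the statements for $\overline h$ and $\overline k$ then following for free.
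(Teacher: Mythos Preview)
Your proposal is correct and tracks the paper's argument closely: split on $b_{22}$ and $c_{11}$, use the componentwise relative accuracy of \textsf{xLASV2} (Fact~1) together with Lemma~\ref{thm:lasvtwo} to pin down $\overline U$, $\overline V$ and the zero pattern of $\overline M$, then contradict. The chief difference is organizational. You collapse all three assertions to the $g$--$l$ one via the two-sided equivalences coming from the upper-triangularity of $\adj(A)$; the paper instead proves the $g$--$h$ assertion directly, says the $k$--$l$ assertion is analogous, and uses only the one-sided implications ($|\overline g|=0\Rightarrow|\overline k|=0$ and $|\overline l|=0\Rightarrow|\overline h|=0$) to obtain the third from the first two. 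In the case $c_{11}\neq 0$, $b_{22}\neq 0$ both arguments derive $b_{11}=c_{22}=0$ and hence $\overline m_{11}=\overline m_{22}=0$; the paper then simply cites Lemma~\ref{thm:lasvtwo} to conclude $|\overline V|\neq|J|$, while you unpack the rank-one SVD to get $v_1\propto(\overline m_{12},\overline m_{22})$ and close via $\overline m_{22}=0$. Your upfront reduction is a tidy economy---you only argue once---at the cost of needing both directions of the equivalence (which, under the paper's no-underflow convention, is harmless).

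One point to tighten: your claim that the Line~10 swap ``never triggers because its guard requires $b_{11}\neq 0$, whereas the argument produces $b_{11}=0$'' is circular as written, since you derive $b_{11}=0$ from $\overline l_{12}=\overline l_{22}=0$ using the \emph{post-swap} $\overline U$. The fix is immediate: if the swap did fire then $b_{11}\neq 0$ (it is part of the guard), and then $\overline l_{12}=\overline l_{22}=0$ with $b_{11},b_{22}\neq 0$ forces $\overline u_{11}=\overline u_{21}=0$ for the pre-swap $\overline U$, which by Fact~1 makes the first column of the exact $U$ vanish---impossible for a rotation. So the swap cannot have fired, and your argument proceeds as stated.
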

\begin{proof}
  If $b_{22} = 0$, then by Lemma~\ref{thm:lasvtwo} $|\overline V|
  \approx I$, so that $g_{11} = \overline v_{11} c_{11} (1 + \epsilon)
  \neq 0$.  Conversely, if $C = 0$, then $\overline U = I$ and
  $\overline h_{22} = a_{11} b_{22} (1 + \epsilon) \neq 0$. If $c_{11} =
  0$ but $C \neq 0$, then
  \begin{equation*}
    \compressedmatrices
    \overline M = \begin{bmatrix}
      0 & c_{12} (a_{11} b_{22}) (1 + \epsilon_1) (1 + \epsilon_2) \\
      0 & c_{22} (a_{11} b_{22}) (1 + \epsilon_1) (1 + \epsilon_3) \\
    \end{bmatrix}
  \end{equation*}
  for some $|\epsilon_1|, |\epsilon_2|, |\epsilon_3| \le \bm\epsilon$.
  Using the entries of $\overline M$ and Facts~1 and~2 from the proof of
  Theorem~\ref{thm:stablersvd}, we see that $|\overline g_{12}|$ equals
  \begin{equation*}
    |v_{11} c_{12} (1 + 48.5\epsilon) + v_{21} c_{22} (1 + 48.5\epsilon)|
    % \left|
    %   \frac{c_v c_{12} (1 + 48.5\epsilon) - s_v c_{22} (1 + 48.5\epsilon)}{%
    %     (c_{12}^2 + c_{22}^2)^{-1/2}}
    % \right|
    = \left|
      % \frac{c_{12}^2 (1 + 50.5\epsilon)}{c_{12}^2 + c_{22}^2}
      % + \frac{c_{22}^2 (1 + 50.5\epsilon)}{c_{12}^2 + c_{22}^2}
      \frac{c_{12}^2 (1 + 50.5\epsilon)}{(c_{12}^2 + c_{22}^2)^{-1/2}}
      + \frac{c_{22}^2 (1 + 50.5\epsilon)}{(c_{12}^2 + c_{22}^2)^{-1/2}}
      % \frac{c_{12}^2 (1 + 50.5\epsilon)}{\sqrt{c_{12}^2 + c_{22}^2}}
      % + \frac{c_{22}^2 (1 + 50.5\epsilon)}{\sqrt{c_{12}^2 + c_{22}^2}}
      % \frac{c_{12}^2 (1 + 50.5\epsilon)}{\hypot(c_{12}, c_{22})}
      % + \frac{c_{22}^2 (1 + 50.5\epsilon)}{\hypot(c_{12}, c_{22})}
    \right|,
  \end{equation*}
  which is within $50.5 \|C\| \bm\epsilon$ of $\|C\|$. Hence, we
  conclude that $|\overline g_{12}|$ is nonzero.

  If both $c_{11} \neq 0$ and $b_{22} \neq 0$
  but $|\overline g_{11}| + |\overline g_{12}| = 0$ and $|\overline
  h_{12}| + |\overline h_{22}| = 0$, then
  \begin{equation*}
    \begin{split}
      0
      % = |\overline g_{11}| + |\overline g_{12}|
      &= |v_{11} c_{11} (1 + 47.5\epsilon)|
      {} + |v_{11} c_{12} (1 + 48.5\epsilon)
      {}     + v_{21} c_{22} (1 + 48.5\epsilon)|, \\
      0
      % = |\overline h_{12}| + |\overline h_{22}|
      &= |u_{22} b_{22} a_{11} (1 + 48.5\epsilon)|
      \\ &\qquad
      {} + |u_{22} (a_{22} b_{12} (1 + 50.5\epsilon)
      {} - a_{12} b_{22} (1 + 50.5\epsilon))
      {} + u_{12} a_{22} b_{11} (1 + 50.5\epsilon)|.
      %
      % {} + |a_{22}(b_{12} u_{22} (1 + 50.5\epsilon) \\
      % &\omit ${}\hfill - b_{11} s_u (1 + 50.5\epsilon))
      % {} - a_{12} b_{22} u_{22} (1 + 49.5\epsilon)|.$\ignorespaces
    \end{split}
  \end{equation*}
  The former implies that $v_{11} = 0$ and thus also that $c_{22} = 0$,
  and the latter implies that $u_{22} = 0$ and thus also that $b_{11} =
  0$.  Yet, by Lemma~\ref{thm:lasvtwo} we cannot simultaneously have
  $|\overline U| = |J|$ and $|\overline V| = |J|$ when $c_{22} = b_{11}
  = 0$ so that we have a contradiction.

  The proof for the second claim in the proposition is similar, and the
  third claim holds because $|\overline k_{11}| + |\overline k_{12}| =
  0$ if $|\overline g_{11}| + |\overline g_{12}| = 0$ and $|\overline
  h_{12}| + |\overline h_{22}| = 0$ if $|\overline l_{12}| + |\overline
  l_{22}| = 0$.
\end{proof}

% }}}

% (Corollary) {{{

\begin{corollary}\label{thm:finiteEta}
  Since neither $\overline \eta_g$ and $\overline \eta_h$, nor
  $\overline \eta_k$ and $\overline \eta_l$ are infinite simultaneously,
  $\overline \eta_{\max}$ is finite.
\end{corollary}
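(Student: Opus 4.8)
The plan is to read the statement off directly from Proposition~\ref{thm:cneqZbneqZ} together with the definitions of the $\overline\eta$s and of $\overline\eta_{\max}$. First I would recall that each $\overline\eta$ is finite precisely when the associated denominator is nonzero, i.e.\ $\overline\eta_g < \infty$ if and only if $|\overline g_{11}| + |\overline g_{12}| \neq 0$, and analogously for $\overline\eta_h$, $\overline\eta_k$, and $\overline\eta_l$. I would also note that these quantities are only formed on Lines~16--17 of Algorithm~\ref{alg:rsvd}, which are reached only if the early return on Line~5 is skipped, so we are always in the situation $c_{11} \neq 0$ or $b_{22} \neq 0$ assumed by Proposition~\ref{thm:cneqZbneqZ} and Lemma~\ref{thm:lasvtwo}.

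Next I would apply the first two claims of Proposition~\ref{thm:cneqZbneqZ}: under $c_{11} \neq 0$ or $b_{22} \neq 0$, the denominators $|\overline g_{11}| + |\overline g_{12}|$ and $|\overline h_{12}| + |\overline h_{22}|$ cannot both vanish, so at least one of $\overline\eta_g$, $\overline\eta_h$ is finite and hence $\min\{\overline\eta_g, \overline\eta_h\} < \infty$; similarly $\min\{\overline\eta_k, \overline\eta_l\} < \infty$. By Definition~\ref{def:etamax}, $\overline\eta_{\max}$ is then the maximum of the number $1$ and these two finite quantities, so $\overline\eta_{\max} < \infty$. The same reasoning, carried out in exact arithmetic with the $\eta$s in place of the $\overline\eta$s and with Lemma~\ref{thm:algIO} playing the role of Lemma~\ref{thm:lasvtwo}, yields $\eta_{\max} < \infty$ as well, which is the form needed for Theorem~\ref{thm:errA}.

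There is essentially no obstacle here: the entire quantitative content has already been discharged in Proposition~\ref{thm:cneqZbneqZ}, and all that remains for the corollary is the trivial observation that a maximum of finitely many finite quantities is finite. The only point that deserves an explicit sentence is the reduction to the case $c_{11} \neq 0$ or $b_{22} \neq 0$, since in the complementary case $c_{11} = b_{22} = 0$ the $\overline\eta$s are never computed and the claim is vacuous.
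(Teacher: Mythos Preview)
Your proposal is correct and matches the paper's approach exactly: the corollary is stated without proof because it is an immediate consequence of Proposition~\ref{thm:cneqZbneqZ} and Definition~\ref{def:etamax}, and you have simply spelled out that implication. The only minor remark is that Theorem~\ref{thm:errA} actually uses $\overline\eta_{\max}$, not $\eta_{\max}$, so your final sentence about the exact-arithmetic version being ``the form needed for Theorem~\ref{thm:errA}'' is not quite right; but this is peripheral to the corollary itself.
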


% }}}

In exact arithmetic, we can prove an even stronger result, namely that
$\eta_g = \infty$ and $\eta_l = \infty$ if and only if $c_{11} = 0$ and
$b_{22} = 0$, respectively.  This is not the case in floating-point
arithmetic, and the $\overline \eta$s may be finite or infinite in
unexpected situations. For example, if $B =
\minimat{b_{11}}{b_{12}}{0}{0}$, then the exact $U$ should be such that
$L = BU = \minimat{l_{11}}{0}{0}{0}$ so that $\eta_l = \infty$; but the
computed $\overline U$ is typically such that $\overline L =
\fl(B\overline U) = \minimat{\overline l_{11}}{\mathcal O(\epsilon
\|B\|)}{0}{0}$ so that $\overline \eta_l < \infty$. This can be a
problem when, for example, $C = \minimat{1}{1}{0}{0}$ and $\adj(A) =
\minimat{\mu}{1 - \mu}{0}{-1}$ for some $0 < \mu \ll 1$, so that
$|\overline V| \approx I$ and $\overline \eta_k \approx (1 + |1 - \mu|)
/ |2\mu|$. Hence, if $\mu \to 0$ and we do not explicitly set $\overline
l_{12}$ to zero, then $\overline \eta_k$ can become larger than
$\overline \eta_l$ and Algorithm~\ref{alg:rsvd} will compute $P$ from
$BU$ rather than from $V^T\! C \adj(A)$.  The results from
Theorem~\ref{thm:stablersvd} still hold if this happens, but
$\overline\eta_{\max}$ will be large and we can no longer expect
$|{\fl(\overline P^T\!  A \overline Q)}_{12}|$ to be small. In this
example, the condition number of $A$ is of order $\mathcal O(\mu^{-1})$
too, and we will later see that $\kappa(A)$ plays an important role in
bounding $\eta_{\max}$.

The next proposition implies that $\overline \eta_g < \infty$ if $\eta_g
< \infty$ and that $\overline \eta_l < \infty$ if $\eta_l < \infty$,
which means that the discrepancy between the $\eta$s and the $\overline
\eta$s for $\infty$ exists only in one direction.  Furthermore, the
proposition makes it easier to compute the bounds that we want, because
we have formulae for the exact $\eta$s while the computed $\overline
\eta$s are perturbed by unknown roundoff errors.

% (Proposition) {{{

\begin{proposition}\label{thm:etarelation}
  Suppose $\eta_g, \overline \eta_g < \infty$ and $\eta_h, \overline
  \eta_h < \infty$ are small enough; then we have the first-order
  approximations
  \begin{equation}\label{eq:etarelation}
    \overline \eta_g
    = \eta_g \frac{1 + 50.5\epsilon_1}{1 + 49.5\epsilon_2 \eta_g}
    \quad\text{and}\quad
    \overline \eta_h
    = \eta_h \frac{1 + 52.5 \epsilon_3}{1 + 51.5 \epsilon_4 \eta_h},
  \end{equation}
  respectively, where $|\epsilon_i| \le \bm\epsilon$ for $i = 1$, \dots
  $4$. A similar statement holds for $\eta_k, \overline \eta_k$ and
  $\eta_l, \overline \eta_l$.
\end{proposition}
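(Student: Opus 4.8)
First I would reduce the problem: it suffices to establish the formula for $\overline\eta_g$, since $\overline\eta_l$ is obtained by the identical argument after replacing $G=V^T\!C$ and $\widehat G=|V|^T|C|$ by $L=BU$ and $\widehat L=|B|\,|U|$, while $\overline\eta_h$ and $\overline\eta_k$ are obtained by repeating the argument with one extra matrix multiplication (using $H=\adj(A)BU$, $\widehat H=|\adj(A)|\,|B|\,|U|$, and $K=V^T\!C\adj(A)$, $\widehat K=|V|^T|C|\,|\adj(A)|$, respectively), which is what promotes the constants $50.5$ and $49.5$ to $52.5$ and $51.5$. Throughout I would use Facts~1 and~4 from the proof of Theorem~\ref{thm:stablersvd}: the columns of $\overline V$ (and $\overline U$) are componentwise relative perturbations of the exact $V$ (and $U$) bounded by $46.5\bm\epsilon$, and a $2\times2$ matrix product is formed with the usual length-two dot-product roundoff; postmultiplying $\overline U,\overline V$ by $J$ is exact and hence harmless. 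I would also use the elementary inequalities $|g_{ij}|\le\widehat g_{ij}$ — with equality, e.g. $|g_{11}|=\widehat g_{11}=|v_{11}||c_{11}|$, whenever the underlying dot product has a single nonzero term — and hence $\eta_g\ge1$ whenever $\eta_g<\infty$; the hypotheses $\eta_g,\overline\eta_g<\infty$ guarantee $|g_{11}|+|g_{12}|\neq0$, and the standing ``small enough'' assumption lets me discard the $\bm\epsilon^2$, $\bm\epsilon^2\eta_g$, and $(\bm\epsilon\eta_g)^2$ terms.

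Next I would treat the numerator $\overline{\widehat g}_{11}+\overline{\widehat g}_{12}$. Because $\widehat g_{11}+\widehat g_{12}=|v_{11}||c_{11}|+|v_{11}||c_{12}|+|v_{21}||c_{22}|$ is a sum of nonnegative products, no cancellation can occur; accumulating the $46.5\bm\epsilon$ perturbations in $|\overline V|$, the roundoff in forming $\overline{\widehat g}_{11}$ and $\overline{\widehat g}_{12}$, and the final floating-point addition then yields a genuine relative perturbation $\fl(\overline{\widehat g}_{11}+\overline{\widehat g}_{12})=(\widehat g_{11}+\widehat g_{12})(1+49.5\epsilon')$ with $|\epsilon'|\le\bm\epsilon$, and absorbing the error of the subsequent division into this term replaces $49.5$ by $50.5$.

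The denominator is the delicate part. Because $g_{12}=v_{11}c_{12}+v_{21}c_{22}$ can cancel, the error in $\overline g_{1j}$ cannot be bounded relative to $|g_{1j}|$; the natural bound is absolute and proportional to the hat quantity, $|\overline g_{1j}-g_{1j}|\le 48.5\bm\epsilon\,\widehat g_{1j}$, coming from $\delta V$ together with the matrix-product roundoff. Summing over $j$, using $|g_{11}|=\widehat g_{11}$, and adding the one remaining floating-point addition gives $\bigl|\,|\overline g_{11}|+|\overline g_{12}|-(|g_{11}|+|g_{12}|)\,\bigr|\le 48.5\bm\epsilon(\widehat g_{11}+\widehat g_{12})+\bm\epsilon(|g_{11}|+|g_{12}|)$. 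Substituting $\widehat g_{11}+\widehat g_{12}=\eta_g(|g_{11}|+|g_{12}|)$ and using $\eta_g\ge1$ to fold the isolated $\bm\epsilon$ into the $\eta_g$ term, I obtain $\fl(|\overline g_{11}|+|\overline g_{12}|)=(|g_{11}|+|g_{12}|)(1+49.5\epsilon_2\eta_g)$. Dividing the numerator expression by this (the division error already accounted for above) produces, to first order, $\overline\eta_g=\eta_g(1+50.5\epsilon_1)/(1+49.5\epsilon_2\eta_g)$, which is \eqref{eq:etarelation}.

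The main obstacle is precisely this denominator step: one must recognize that the rounding error there is intrinsically measured against $\widehat g_{11}+\widehat g_{12}$ rather than $|g_{11}|+|g_{12}|$ (owing to cancellation in $V^T\!C$), and that trading one for the other is exactly what manufactures the factor $\eta_g$ multiplying $\epsilon_2$ in \eqref{eq:etarelation}. The rest is careful bookkeeping: confirming that each of the $\bm\epsilon$ and $\bm\epsilon\eta_g$ contributions is genuinely first order under the ``small enough'' hypothesis, checking the equality cases of $|g_{ij}|\le\widehat g_{ij}$ across the sign patterns of an upper-triangular $2\times2$ matrix, and verifying that the same count with one additional matrix multiplication delivers the stated $\eta_h$ and $\eta_k$ constants.
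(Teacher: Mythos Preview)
Your proposal is correct and follows essentially the same approach as the paper: both track the $46.5\bm\epsilon$ componentwise perturbation in $\overline V$ through the products and additions, bound the numerator $\overline{\widehat g}_{11}+\overline{\widehat g}_{12}$ by a genuine relative perturbation of $\widehat g_{11}+\widehat g_{12}$ (no cancellation), and bound the denominator perturbation in terms of the hat quantities so that the factor $\eta_g$ appears when converting back to $|g_{11}|+|g_{12}|$. The only cosmetic difference is that the paper distributes the outer-addition roundoff into the individual term perturbations directly, whereas you keep it separate and then fold it in via $\eta_g\ge1$; the constants and conclusions are identical.
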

\begin{proof}
  Since $\overline v_{11} = v_{11} (1 + 46.5\epsilon)$ and $\overline
  v_{21} = v_{21} (1 + 46.5\epsilon)$, it follows that
  \begin{equation*}
    \begin{split}
      \fl(\overline{\widehat g}_{11} + \overline{\widehat g}_{12})
      &= (|\fl(\overline v_{11} c_{11})|
      {}+ \fl(|\fl(\overline v_{11} c_{12})|
      {}+     |\fl(\overline v_{21} c_{22})|)) (1 + \epsilon) \\
      &= |v_{11} c_{11}| (1 + 48.5\epsilon)
      {}+ |v_{11} c_{12}| (1 + 49.5\epsilon)
      {}+ |v_{21} c_{22}| (1 + 49.5\epsilon) \\
      &= (\widehat g_{11} + \widehat g_{12})(1 + 49.5\epsilon)
    \end{split}
  \end{equation*}
  and
  \begin{equation*}
    \begin{split}
      \fl(|\overline g_{11}| + |\overline g_{12}|)
      &= (|\fl(\overline v_{11} c_{11})|
      {}+ |\fl(\fl(\overline v_{11} c_{12})
      {}+ \fl(\overline v_{21} c_{22}))|) (1 + \epsilon) \\
      &= |v_{11} c_{11} (1 + 48.5\epsilon)|
      {}+ |v_{11} c_{12} (1 + 49.5\epsilon)
      {}+  v_{21} c_{22} (1 + 49.5\epsilon)| \\
      &= |g_{11} + 48.5\epsilon \widehat g_{11}|
      {}+ |g_{12} + 49.5\epsilon \widehat g_{12}| \\
      &= (|g_{11}| + |g_{12}|)(1 + 49.5\epsilon \eta_g),
    \end{split}
  \end{equation*}
  so that
  \begin{equation*}
    \overline \eta_g = \fl\left( \frac{%
      (\widehat g_{11} + \widehat g_{12})(1 + 49.5\epsilon)}{%
      (|g_{11}| + |g_{12}|)(1 + 49.5\epsilon \eta_g)} \right)
    = \eta_g \frac{1 + 50.5\epsilon}{1 + 49.5\epsilon \eta_g}.
  \end{equation*}
  The derivation of the relation between $\overline \eta_h$ and $\eta_h$
  is analogous. The proof for $\eta_k, \overline \eta_k$ and $\eta_l,
  \overline \eta_l$ is similar.
\end{proof}

% }}}

% (Corollary) {{{

\begin{corollary}\label{thm:etarelation2}
  Under the same assumptions as in Proposition~\ref{thm:etarelation},
  solving \eqref{eq:etarelation} for $\eta_g$ and $\eta_h$ yields
  \begin{equation*}
    \eta_g = \overline \eta_g
      (1 + 50.5 \epsilon_1 - 49.5\epsilon_2 \overline \eta_g)^{-1}
    \quad\text{and}\quad
    \eta_h = \overline \eta_h
      (1 + 52.5 \epsilon_3 - 51.5\epsilon_4 \overline \eta_h)^{-1}.
  \end{equation*}
\end{corollary}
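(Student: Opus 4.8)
The plan is to read \eqref{eq:etarelation} as a scalar relation between $\eta_g$ and $\overline\eta_g$ (and between $\eta_h$ and $\overline\eta_h$) and simply invert it, keeping only first-order terms in $\bm\epsilon$, exactly in the spirit of the proof of Proposition~\ref{thm:etarelation}. First I would make the hypothesis precise: ``small enough'' means that $\bm\epsilon\eta_g$, $\bm\epsilon\overline\eta_g$ (and the analogous quantities for $h$, $k$, $l$) are treated as first-order, so that anything carrying two factors of $\bm\epsilon$ — in particular $\bm\epsilon^2\eta_g^2$ and $\bm\epsilon^2\eta_g\overline\eta_g$ — is negligible. With this convention in force the whole computation is a one-line manipulation.

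Starting from $\overline\eta_g = \eta_g(1 + 50.5\epsilon_1)/(1 + 49.5\epsilon_2\eta_g)$, I would clear the denominator to obtain $\overline\eta_g + 49.5\epsilon_2\,\eta_g\overline\eta_g = \eta_g(1 + 50.5\epsilon_1)$. The one step that deserves a comment is the mixed term $\eta_g\overline\eta_g$: since \eqref{eq:etarelation} already gives $\eta_g = \overline\eta_g + O(\bm\epsilon)$, replacing $\eta_g$ by $\overline\eta_g$ inside $49.5\epsilon_2\,\eta_g\overline\eta_g$ changes it by $O(\bm\epsilon^2)$, which we drop. This leaves $\overline\eta_g(1 + 49.5\epsilon_2\overline\eta_g) = \eta_g(1 + 50.5\epsilon_1)$, hence $\eta_g = \overline\eta_g(1 + 49.5\epsilon_2\overline\eta_g)(1 + 50.5\epsilon_1)^{-1}$. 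Expanding the product to first order gives $\eta_g = \overline\eta_g\bigl(1 - (50.5\epsilon_1 - 49.5\epsilon_2\overline\eta_g)\bigr)$, and since $1 - x = (1 + x)^{-1} + O(x^2)$, this is exactly $\eta_g = \overline\eta_g(1 + 50.5\epsilon_1 - 49.5\epsilon_2\overline\eta_g)^{-1}$ up to higher-order terms (any sign flips being absorbed into fresh $\epsilon_i$, which is harmless since only $|\epsilon_i|\le\bm\epsilon$ is used).

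The $\eta_h$ identity then follows verbatim, reading off the second relation in \eqref{eq:etarelation}: replace $50.5,\,49.5$ by $52.5,\,51.5$ and $\epsilon_1,\epsilon_2$ by $\epsilon_3,\epsilon_4$. The corresponding statements for $\eta_k,\overline\eta_k$ and $\eta_l,\overline\eta_l$ are immediate as well, because Proposition~\ref{thm:etarelation} already supplies first-order relations of exactly the same shape for that pair, so the same inversion applies.

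I do not expect a genuine obstacle — the content is bookkeeping. The only place where care is actually needed is justifying the two ``dropping'' steps (passing from $\eta_g\overline\eta_g$ to $\overline\eta_g^2$, and inverting the first-order factor $1 + 50.5\epsilon_1 - 49.5\epsilon_2\overline\eta_g$), both of which are legitimate precisely under the smallness convention stated at the outset; and it is worth flagging that, like \eqref{eq:etarelation} itself, the conclusion is valid only to first order in $\bm\epsilon$.
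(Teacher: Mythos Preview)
Your proposal arrives at the right answer, but you have made the algebra harder than it is. After clearing the denominator you have
\[
\overline\eta_g + 49.5\epsilon_2\,\eta_g\overline\eta_g = \eta_g(1 + 50.5\epsilon_1).
\]
There is no need to approximate $\eta_g\overline\eta_g$ by $\overline\eta_g^2$: simply move the mixed term to the right and factor out $\eta_g$,
\[
\overline\eta_g = \eta_g\bigl(1 + 50.5\epsilon_1 - 49.5\epsilon_2\overline\eta_g\bigr),
\]
which is exactly the claimed relation upon dividing. No further first-order expansions, sign absorptions, or $(1-x)\approx(1+x)^{-1}$ steps are required; the inversion is pure algebra once \eqref{eq:etarelation} is granted. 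This is why the paper states the corollary without proof. Your chain of approximations is valid under the stated smallness hypotheses, but each of the three ``dropping'' steps you flag is in fact unnecessary.
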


% }}}

Proposition~\ref{thm:etarelation} shows that the computed
$\overline\eta$s approximate their exact counterparts if $\bm\epsilon
\eta_{\max} \ll 1$.  Although we generally do not know the exact $\eta$s
in practice, we still expect this result to hold if $\bm\epsilon
\overline \eta_{\max} \ll 1$.

Now that we know the relation between the $\eta$s and $\overline\eta$s,
we can use bounds for the former to inform us of the behavior of the
latter.  The next two propositions and the corollary show that bounding
the $\eta$s from below and in terms of each other is straightforward.

% (Proposition) {{{

\begin{proposition}
  It holds that $\eta_g, \eta_h, \eta_k, \eta_l \ge 1$.
\end{proposition}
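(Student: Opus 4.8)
The plan is to reduce the whole statement to the elementary fact that for any two conformable real matrices $X$ and $Y$ one has $|XY| \le |X|\,|Y|$ entrywise, which is just the triangle inequality applied to each entry $\sum_k x_{ik}y_{kj}$. Each of $G$, $H$, $K$, and $L$ in Algorithm~\ref{alg:rsvd} is a product of matrices whose entrywise absolute values are precisely the factors that build $\widehat G$, $\widehat H$, $\widehat K$, and $\widehat L$, so this fact immediately yields $|g_{ij}| \le \widehat g_{ij}$, $|h_{ij}| \le \widehat h_{ij}$, $|k_{ij}| \le \widehat k_{ij}$, and $|l_{ij}| \le \widehat l_{ij}$ for all indices occurring in the definitions of the $\eta$s.

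Concretely, first I would note that $G = V^T C$ gives $|G| \le |V^T|\,|C| = |V|^T|C| = \widehat G$, hence $|g_{11}| + |g_{12}| \le \widehat g_{11} + \widehat g_{12}$; if the left-hand side is nonzero this is exactly $\eta_g \ge 1$, and if it vanishes then $\eta_g = \infty \ge 1$ by the convention in the definition. The identical argument applies to $\eta_l$ via $L = BU$ and $\widehat L = |B|\,|U|$; to $\eta_h$ via $H = \adj(A)L = \adj(A)BU$ together with $\widehat H = |\adj(A)|\,|B|\,|U| \ge |\adj(A)|\,|L| \ge |H|$; and to $\eta_k$ via $K = G\adj(A) = V^T\!C\adj(A)$ together with $\widehat K = |V|^T|C|\,|\adj(A)| \ge |G|\,|\adj(A)| \ge |K|$. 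In each case the sum of the two relevant $\widehat{(\cdot)}$ entries dominates the sum of the two corresponding absolute entries, so the ratio is $\ge 1$ (or $\infty$).

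There is essentially no obstacle here; the only points deserving a sentence — and the closest thing to a subtlety — are the convention that an $\eta$ equals $\infty$ when its denominator vanishes (covered above) and the optional zeroing of $g_{22}$ when $c_{11}=0$ and of $l_{12}$ when $b_{22}=0$ on Lines~12--13 of Algorithm~\ref{alg:rsvd}. The entry $g_{22}$ does not appear in $\eta_g$ at all, while setting $l_{12}$ to zero only decreases the denominator $|l_{12}| + |l_{22}|$, leaving the numerator $\widehat l_{12} + \widehat l_{22}$ (formed from $\widehat L$, which is not zeroed, and which still dominates the new value of $|l_{12}| + |l_{22}| = |l_{22}|$) unchanged; hence $\eta_l$ can only grow, and the bound $\eta_l \ge 1$ is preserved. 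This finishes the plan.
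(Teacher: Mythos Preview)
Your proof is correct and uses essentially the same approach as the paper: both apply the triangle inequality to show that the numerator of each $\eta$ dominates its denominator, with the paper writing out the entries of $\eta_g$ explicitly while you phrase it via the entrywise inequality $|XY|\le |X|\,|Y|$. Your extra remarks on the $\infty$ convention and the optional zeroing of $g_{22}$ and $l_{12}$ go slightly beyond what the paper states in its proof but are accurate and harmless.
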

\begin{proof}
  Using the triangle inequality we see that
  \begin{equation*}
    \eta_g = \frac{|v_{11}||c_{11}| + |v_{11}||c_{12}| + |v_{21}||c_{22}|}{%
      |v_{11}||c_{11}| + |v_{11} c_{12} + v_{21} c_{22}|}
    \ge \frac{|v_{11}||c_{11}| + |v_{11}||c_{12}| + |v_{21}||c_{22}|}{%
      |v_{11}||c_{11}| + |v_{11}||c_{12}| + |v_{21}||c_{22}|}
    = 1.
  \end{equation*}
  The proof for the remaining $\eta$s is similar.
\end{proof}

% }}}

% (Lemma) {{{

\begin{lemma}
  For any $2\times 2$ upper-triangular matrix $A$, the singular values
  of $A$ equal the singular values of $|A|$.
\end{lemma}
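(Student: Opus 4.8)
The plan is to reduce everything to two scalar invariants. For any $2\times 2$ matrix $M$ with singular values $\sigma_1,\sigma_2 \ge 0$, the eigenvalues of $M^T M$ are $\sigma_1^2$ and $\sigma_2^2$, so they are the two roots of the characteristic polynomial $t^2 - \operatorname{tr}(M^T M)\,t + \det(M^T M) = t^2 - \|M\|_F^2\,t + (\det M)^2$. Since both roots are nonnegative, this shows that the unordered pair $\{\sigma_1^2,\sigma_2^2\}$, and hence $\{\sigma_1,\sigma_2\}$, is completely determined by the two numbers $\|M\|_F$ and $|\det M|$. So it suffices to check that $A$ and $|A|$ agree on these two invariants.

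Writing $A = \minimat{a_{11}}{a_{12}}{0}{a_{22}}$, so that $|A| = \minimat{|a_{11}|}{|a_{12}|}{0}{|a_{22}|}$, I would first observe that $\bigl\||A|\bigr\|_F^2 = |a_{11}|^2 + |a_{12}|^2 + |a_{22}|^2 = \|A\|_F^2$, using $|x|^2 = x^2$ for real $x$; and second that, since both matrices are triangular, $\det(|A|) = |a_{11}|\,|a_{22}| = |a_{11} a_{22}| = |\det A|$. By the previous paragraph this forces $A$ and $|A|$ to have the same singular values, which is the claim.

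An alternative, perhaps more transparent, route is to exhibit diagonal sign matrices $D_1 = \diag(s_1,s_2)$ and $D_2 = \diag(t_1,t_2)$ with $s_i,t_i \in \{-1,+1\}$ such that $|A| = D_1 A D_2$; since such $D_1,D_2$ are orthonormal, left- and right-multiplication by them does not change the singular values. One can take $s_1 = 1$, then $t_1 = \operatorname{sign}(a_{11})$ when $a_{11}\neq 0$ (and $t_1 = 1$ otherwise), $t_2 = \operatorname{sign}(a_{12})$ when $a_{12}\neq 0$ (and $t_2 = 1$ otherwise), and finally $s_2 = \operatorname{sign}(a_{22} t_2)$ when $a_{22}\neq 0$ (and $s_2 = 1$ otherwise); checking the three entry positions confirms $D_1 A D_2 = |A|$: at $(2,2)$ the entry is $s_2 t_2 a_{22} = \operatorname{sign}(a_{22}t_2)\cdot(a_{22}t_2) = |a_{22}t_2| = |a_{22}|$, and the other positions are immediate.

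There is essentially no obstacle here. The only care needed is bookkeeping of zero entries in the sign-matrix argument — whenever an entry vanishes the corresponding sign constraint is vacuous and the free sign can be set to $+1$ — and, in the invariant argument, the one-line remark that nonnegativity of the roots is what makes the pair $\{\sigma_1^2,\sigma_2^2\}$ uniquely recoverable from $\|M\|_F$ and $|\det M|$. I would present the invariant-based proof as the main argument, since it is the shortest and requires no case distinction.
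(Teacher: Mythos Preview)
Your proposal is correct, and your invariant-based argument is exactly the paper's approach spelled out in detail: the paper's one-line proof says ``Compare the eigenvalues of $A^T\!A$ and $|A|^T|A|$,'' which amounts to checking that trace and determinant of these Gram matrices agree --- precisely your $\|M\|_F^2$ and $(\det M)^2$. The sign-matrix alternative you add is a nice bonus and works as well, though the paper does not use it.
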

\begin{proof}
  Compare the eigenvalues of $A^T\!A$ and $|A|^T |A|$.
\end{proof}

% }}}

% (Proposition) {{{

\begin{proposition}\label{thm:etabounds}
  If $\eta_l, \eta_h < \infty$ and $\eta_g, \eta_k < \infty$, then
  \begin{equation*}
    \frac{1}{2} \kappa(A)^{-1} \eta_l \le \eta_h \le 2\kappa(A) \eta_l
    \quad\text{and}\quad
    \frac{1}{2} \kappa(A)^{-1} \eta_g \le \eta_k \le 2\kappa(A) \eta_g,
  \end{equation*}
  respectively.
\end{proposition}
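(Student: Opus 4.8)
The plan is to reduce both inequalities to one fact about how multiplying a nonzero $2$-vector by $\adj(A)$ (or by $|\adj(A)|$) distorts its $\ell_1$-norm: the distortion ratio lies between $\tfrac1{\sqrt2}\sigma_{\min}(\adj(A))$ and $\sqrt2\,\sigma_{\max}(\adj(A))$. First I would unwind the definitions in Algorithm~\ref{alg:rsvd}. Put $\vec u_2 = U\vec e_2$, so that the second column of $L = BU$ is $\vec\ell := B\vec u_2$ and the second column of $\widehat L = |B|\,|U|$ is $\widehat{\vec\ell} := |B|\,|\vec u_2|$; note $|\vec\ell| \le \widehat{\vec\ell}$ componentwise. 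Since $H = \adj(A)L$ and, in exact arithmetic, $\widehat H = |\adj(A)|\,\widehat L$ by associativity, the second columns of $H$ and $\widehat H$ are $\adj(A)\vec\ell$ and $|\adj(A)|\,\widehat{\vec\ell}$. Hence $\eta_l = \|\widehat{\vec\ell}\|_1 / \|\vec\ell\|_1$ and $\eta_h = \|\,|\adj(A)|\,\widehat{\vec\ell}\,\|_1 / \|\adj(A)\vec\ell\|_1$, so that
\[
  \frac{\eta_h}{\eta_l}
  = \frac{\|\,|\adj(A)|\,\widehat{\vec\ell}\,\|_1}{\|\widehat{\vec\ell}\|_1}
    \cdot \frac{\|\vec\ell\|_1}{\|\adj(A)\vec\ell\|_1}.
\]
The hypotheses $\eta_l,\eta_h<\infty$ and the nonsingularity of $A$ (hence of $\adj(A)$ and $|\adj(A)|$) ensure all four vectors here are nonzero, so the quotients are meaningful.

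Next I would bound the two factors separately with an elementary estimate: for any nonzero $\vec v\in\bbR^2$ and any nonsingular $2\times2$ matrix $M$, combining $\|\cdot\|_2\le\|\cdot\|_1\le\sqrt2\,\|\cdot\|_2$ on $\bbR^2$ with $\sigma_{\min}(M)\le\|M\vec v\|_2/\|\vec v\|_2\le\sigma_{\max}(M)$ gives
\[
  \tfrac1{\sqrt2}\,\sigma_{\min}(M)
  \le \frac{\|M\vec v\|_1}{\|\vec v\|_1}
  \le \sqrt2\,\sigma_{\max}(M).
\]
Applying this with $M=|\adj(A)|$ to the first factor and with $M=\adj(A)$ to the reciprocal of the second factor yields $\eta_h/\eta_l \le 2\,\sigma_{\max}(|\adj(A)|)/\sigma_{\min}(\adj(A))$ and $\eta_h/\eta_l \ge \tfrac12\,\sigma_{\min}(|\adj(A)|)/\sigma_{\max}(\adj(A))$. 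Since $\adj(A)$ is upper triangular whenever $A$ is, the preceding lemma gives $\sigma_i(|\adj(A)|) = \sigma_i(\adj(A))$; and from $\adj(A) = \det(A)\,A^{-1}$ we get $\kappa(\adj(A)) = \kappa(A^{-1}) = \kappa(A)$. Substituting turns the two bounds into $\tfrac12\kappa(A)^{-1}\eta_l \le \eta_h \le 2\kappa(A)\,\eta_l$.

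The bound for $\eta_g,\eta_k$ is the row-vector mirror image. With $\vec v_1 = V\vec e_1$, the first row of $G = V^T\!C$ is $\vec g^T := \vec v_1^T C$ and the first row of $\widehat G = |V|^T|C|$ is $\widehat{\vec g}^T := |\vec v_1|^T|C|$, with $|\vec g| \le \widehat{\vec g}$ componentwise; then $K = G\adj(A)$ and $\widehat K = \widehat G\,|\adj(A)|$ give $\eta_g = \|\widehat{\vec g}\|_1/\|\vec g\|_1$ and $\eta_k = \|\widehat{\vec g}^T|\adj(A)|\|_1/\|\vec g^T\adj(A)\|_1$. Repeating the argument with right-multiplication of row vectors (using $\sigma_{\min}(M)\|\vec x\|_2 \le \|\vec x^T\!M\|_2 \le \sigma_{\max}(M)\|\vec x\|_2$) produces $\tfrac12\kappa(A)^{-1}\eta_g \le \eta_k \le 2\kappa(A)\,\eta_g$, completing the proof.

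I expect the only delicate points to be bookkeeping: verifying that $\widehat H = |\adj(A)|\,\widehat L$ holds \emph{exactly} (so the relevant column of $\widehat H$ is literally $|\adj(A)|\,\widehat{\vec\ell}$, not merely bounded by it), and checking that the finiteness hypotheses together with nonsingularity of $A$ eliminate every zero denominator. The analytic content is just the two-sided singular-value estimate above together with the unavoidable factor-$\sqrt2$ loss from passing between $\ell_1$ and $\ell_2$ norms, which is exactly what produces the leading constant $2$.
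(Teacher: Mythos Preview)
Your proof is correct and follows essentially the same approach as the paper: both express $\eta_h$ and $\eta_l$ as $\ell_1$-norm ratios of the second columns of $\widehat H=|\adj(A)|\widehat L$, $H=\adj(A)L$, $\widehat L$, and $L$, then pass to $\ell_2$ via the $\sqrt2$ norm equivalence on $\bbR^2$, apply singular-value bounds for $\adj(A)$ and $|\adj(A)|$, and invoke the preceding lemma to identify $\kappa(|\adj(A)|)=\kappa(\adj(A))=\kappa(A)$. Your presentation differs only cosmetically (you isolate the ratio $\eta_h/\eta_l$ and bound two factors, whereas the paper writes a single chain of inequalities), and your explicit check that $\widehat H=|\adj(A)|\widehat L$ holds exactly is a point the paper uses silently.
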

\begin{proof}
  From $\eta_h = \|\widehat H\vec e_2\|_1 / \|H\vec e_2\|_1$ and $\eta_l
  = \|\widehat L\vec e_2\|_1 / \|L\vec e_2\|_1$ we get
  \begin{equation*}
    % \frac{1}{\sqrt{2}} \frac{\|\widehat H\vec e_2\|_2}{\|H\vec e_2\|_2}
    \eta_h
    \le \sqrt{2} \frac{\|\widehat H\vec e_2\|_2}{\|H\vec e_2\|_2}
    \le \sqrt{2} \frac{\sigma_{\max}(|\adj(A)|)}{\sigma_{\min}(\adj(A))}
    \frac{\|\widehat L\vec e_2\|_2}{\|L\vec e_2\|_2}
    \le 2\kappa(A) \eta_l
  \end{equation*}
  and
  \begin{equation*}
    \eta_h
    \ge \frac{1}{\sqrt{2}} \frac{\|\widehat H\vec e_2\|_2}{\|H\vec e_2\|_2}
    \ge \frac{1}{\sqrt{2}} \frac{\sigma_{\min}(|\adj(A)|)}{\sigma_{\max}(\adj(A))}
    \frac{\|\widehat L\vec e_2\|_2}{\|L\vec e_2\|_2}
    \ge \frac{1}{2} \kappa(A)^{-1} \eta_l.
  \end{equation*}
  The proof for the bounds with $\eta_g$ and $\eta_k$ is analogous.
\end{proof}

% }}}

% (Corollary) {{{

\begin{corollary}\label{thm:etaboundscor}
  It follows from the above proposition that
  \begin{equation*}
    \frac{1}{2} \kappa(A)^{-1} \eta_h \le \eta_l \le 2\kappa(A) \eta_h
    \quad\text{and}\quad
    \frac{1}{2} \kappa(A)^{-1} \eta_k \le \eta_g \le 2\kappa(A) \eta_k.
  \end{equation*}
\end{corollary}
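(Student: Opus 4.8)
The plan is to obtain the corollary directly from Proposition~\ref{thm:etabounds} by elementary rearrangement, with no new analysis required. Proposition~\ref{thm:etabounds} supplies the two-sided bound
\begin{equation*}
  \tfrac{1}{2} \kappa(A)^{-1} \eta_l \le \eta_h \le 2\kappa(A) \eta_l,
\end{equation*}
valid whenever $\eta_l, \eta_h < \infty$. First I would split this into its two one-sided halves. From the upper half $\eta_h \le 2\kappa(A)\eta_l$, dividing by the positive quantity $2\kappa(A)$ gives $\eta_l \ge \tfrac{1}{2}\kappa(A)^{-1}\eta_h$. From the lower half $\tfrac{1}{2}\kappa(A)^{-1}\eta_l \le \eta_h$, multiplying by $2\kappa(A) > 0$ gives $\eta_l \le 2\kappa(A)\eta_h$. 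Chaining these two yields exactly $\tfrac{1}{2}\kappa(A)^{-1}\eta_h \le \eta_l \le 2\kappa(A)\eta_h$, which is the first claimed inequality. (It is harmless, though not needed, to note that $\kappa(A) \ge 1$, so the enclosing interval is genuinely a widening of the single point $\eta_h$; no sign issues arise because all of $\eta_g, \eta_h, \eta_k, \eta_l$ and $\kappa(A)$ are strictly positive.)

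The second pair of inequalities for $\eta_g$ and $\eta_k$ follows by the identical manipulation applied to the companion bound $\tfrac{1}{2}\kappa(A)^{-1}\eta_g \le \eta_k \le 2\kappa(A)\eta_g$ from Proposition~\ref{thm:etabounds}: the upper half rearranges to $\eta_g \le 2\kappa(A)\eta_k$ and the lower half to $\eta_g \ge \tfrac{1}{2}\kappa(A)^{-1}\eta_k$. One could also phrase the whole argument symmetrically by observing that the relation ``$x$ and $y$ differ by at most a factor $2\kappa(A)$ in either direction'' is itself symmetric in $x$ and $y$, so swapping the roles of $\eta_h \leftrightarrow \eta_l$ and $\eta_g \leftrightarrow \eta_k$ in Proposition~\ref{thm:etabounds} is all that is happening; I would likely present it in the divide-and-multiply form above since it is the most transparent.

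There is essentially no obstacle here — the statement is a cosmetic reformulation of the preceding proposition, and the ``hard part'' is merely to carry the finiteness hypotheses forward correctly, stating the bounds for $\eta_l$ under $\eta_l, \eta_h < \infty$ and those for $\eta_g$ under $\eta_g, \eta_k < \infty$, consistent with the hypotheses already imposed in Proposition~\ref{thm:etabounds}. Accordingly the written proof will be a single short paragraph that says ``rearrange the inequalities of Proposition~\ref{thm:etabounds}'' and displays the two resulting chains.
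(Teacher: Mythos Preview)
Your proposal is correct and matches the paper's approach: the paper gives no separate proof for this corollary, treating it as an immediate rearrangement of the inequalities in Proposition~\ref{thm:etabounds}, which is exactly what you do.
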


% }}}

Despite the above bounds, we have no upper bound for $\eta_{\max}$ yet.
For example, consider $A = I$, $B =
\minimat{1}{\phantom{-}\mu}{0}{-\mu^{5}}$, and $C =
\minimat{1}{\phantom{-}\mu^{-2}}{0}{-\mu^{-4}}$ for some $0 < \mu \ll
1$; then $\eta_g \approx \mu^{-2}$, $\eta_h \approx \mu^{-4}$, $\eta_k
\approx \mu^{-2}$, $\eta_l \approx \mu^{-4}$, and $\eta_{\max} \approx
\mu^{-2}$. The key to bounding $\eta_{\max}$, is to permute the columns
of $U$ and the columns of $V$ if necessary.

% (Lemma) {{{

\begin{lemma}\label{thm:etaBigEtaSmall}
  If $\mu^{-1} \le \eta_g < \infty$ for some $0 < \mu < 1/2$, then
  $\|\vec e_2^T \widehat G\|_1 / \|\vec e_2^T G\|_1 \le 2 / (1 - 2\mu)$.
  A similar statement holds for $\eta_l$.
\end{lemma}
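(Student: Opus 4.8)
The plan is to reduce the claim to explicit $2\times 2$ formulas and then exploit the fact that the first column of $G = V^T\!C$ cannot suffer cancellation. Write the orthonormal matrix $V$ so that $a := |v_{11}| = |v_{22}|$ and $b := |v_{12}| = |v_{21}|$ with $a^2 + b^2 = 1$, and expand $G = V^T\!C$ and $\widehat G = |V|^T|C|$ entrywise. First I would record the three ingredients that drive the proof: (i) since the first column of $C$ is a multiple of $\vec e_1$, there is no cancellation there, so $|g_{11}| = \widehat g_{11} = a|c_{11}|$ and $|g_{21}| = \widehat g_{21} = b|c_{11}|$; (ii) the triangle inequality gives $|g_{12}| \le \widehat g_{12} = a|c_{12}| + b|c_{22}|$ and $|g_{22}| \le \widehat g_{22} = b|c_{12}| + a|c_{22}|$, while Cauchy--Schwarz gives $\widehat g_{12}, \widehat g_{22} \le s := (c_{12}^2 + c_{22}^2)^{1/2}$; (iii) the second column of $G$ equals $V^T$ applied to $(c_{12}, c_{22})^T$, so the Pythagorean identity $g_{12}^2 + g_{22}^2 = s^2$ holds.

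Next I would turn the hypothesis into a bound on $|g_{12}|$. Because $\mu^{-1} \le \eta_g < \infty$, the denominator $|g_{11}| + |g_{12}|$ is nonzero (it is precisely the finiteness of $\eta_g$, not just $\eta_g \ge \mu^{-1}$, that rules out a vanishing denominator), so from $\eta_g = (a|c_{11}| + \widehat g_{12}) / (a|c_{11}| + |g_{12}|) \ge \mu^{-1}$ I would clear denominators to obtain $|g_{12}| \le \mu\widehat g_{12} - (1-\mu)a|c_{11}| \le \mu\widehat g_{12} \le \mu s$. Since $\mu < 1/2$, this forces $|g_{12}| < s$, so ingredient (iii) and the elementary inequality $(s^2 - t^2)^{1/2} \ge s - t$ for $0 \le t \le s$ give $|g_{22}| = (s^2 - g_{12}^2)^{1/2} \ge s - |g_{12}| \ge (1-\mu)s$.

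Then I would assemble the conclusion. Using (i) and (ii),
\begin{equation*}
  \frac{\|\vec e_2^T \widehat G\|_1}{\|\vec e_2^T G\|_1}
  = \frac{b|c_{11}| + \widehat g_{22}}{b|c_{11}| + |g_{22}|}
  \le 1 + \frac{\widehat g_{22}}{|g_{22}|}
  \le 1 + \frac{s}{(1-\mu)s}
  = \frac{2-\mu}{1-\mu},
\end{equation*}
where the first inequality uses only that $b|c_{11}| \ge 0$ occurs with coefficient $1$ in both numerator and denominator. The proof then closes with the elementary estimate $\frac{2-\mu}{1-\mu} \le \frac{2}{1-2\mu}$ for $0 < \mu < 1/2$, which, after clearing the two positive denominators, is just $\mu(3 - 2\mu) \ge 0$. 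The statement for $\eta_l$ follows from the mirror-image argument applied to $L = BU$ and $\widehat L = |B||U|$: right-multiplication by the orthonormal $U$ makes the second row of $L$ cancellation-free (so $|l_{21}| = \widehat l_{21}$) and preserves the Euclidean norm of the first row (so $l_{11}^2 + l_{12}^2 = b_{11}^2 + b_{12}^2$), and one arrives at $\|\widehat L\vec e_1\|_1 / \|L\vec e_1\|_1 \le 2/(1-2\mu)$ in exactly the same way.

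The only genuinely delicate point is ingredient (iii) together with its use in the final display: one has to notice that the Pythagorean identity on the second column of $G$ converts ``$|g_{12}|$ is small'' into ``$|g_{22}|$ is not too small'' with a constant sharp enough that the a priori unbounded quantity $\widehat g_{22}/|g_{22}|$ appearing in the conclusion is controlled by $1/(1-\mu)$. Everything else is bookkeeping about which $2\times 2$ entries are cancellation-free; in particular no finer information about $V$ (rotation versus reflection, or its relation to the SVD of $\overline M$) is needed.
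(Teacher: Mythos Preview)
Your proof is correct, but the paper argues differently and more coarsely. Rather than exploiting the triangular shape of $C$ entrywise, the paper observes that $C = VG$ gives $\widehat G = |V^T|\,|C| \le |V^T|\,|V|\,|G|$ entrywise, so the total entry sum of $\widehat G$ is at most $(1+2|v_{11}v_{12}|)\,\|\operatorname{vec}(G)\|_1 \le 2\,\|\operatorname{vec}(G)\|_1$, where $\|\operatorname{vec}(G)\|_1 := \sum_{i,j}|g_{ij}|$. The hypothesis then yields $\|\vec e_1^T G\|_1 \le \mu\,\|\vec e_1^T\widehat G\|_1 \le 2\mu\,\|\operatorname{vec}(G)\|_1$, hence $\|\vec e_2^T G\|_1 = \|\operatorname{vec}(G)\|_1 - \|\vec e_1^T G\|_1 \ge (1-2\mu)\,\|\operatorname{vec}(G)\|_1$, and the bound follows from $\|\vec e_2^T\widehat G\|_1 \le 2\,\|\operatorname{vec}(G)\|_1$. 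The paper's route is shorter and never uses that $C$ is upper triangular, so it would work for arbitrary $C$; your route, by contrast, uses the triangular structure essentially (cancellation-free first column, Pythagoras on the second), but in return delivers the sharper intermediate estimate $(2-\mu)/(1-\mu)$, which stays bounded (by $3$) as $\mu \to 1/2$, whereas the paper's $2/(1-2\mu)$ blows up there.
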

\begin{proof}
  Define $\|\operatorname{vec}(G)\|_1 = |g_{11}| + |g_{12}| + |g_{21}| + |g_{22}|$,
  then
  \begin{equation*}
    \|\widehat G\|_1
    = \||V^T| V G\|
    \le \||V^T|\|_1 \|V\|_1 \|G\|_1
    \le 2 \|\operatorname{vec}(G)\|_1.
  \end{equation*}
  Now the bound $\eta_g = \|\vec e_1^T \widehat G\|_1 / \|\vec e_1^T
  G\|_1 \ge \mu^{-1}$ implies that
  \begin{equation*}
    \|\vec e_1^T G\|_1 \le \mu \|\vec e_1^T \widehat G\|_1
    \le 2\mu \|\operatorname{vec}(G)\|_1.
  \end{equation*}
  Hence,
  \begin{equation*}
    \|\vec e_2^T G\|_1
    = \|\operatorname{vec}(G)\|_1 - \|\vec e_1^T G\|_1
    \ge (1 - 2\mu) \|\operatorname{vec}(G)\|_1
  \end{equation*}
  so that $\|\vec e_2^T \widehat G\|_1 / \|\vec e_2^T G\|_1 \le
  2\|\operatorname{vec}(G)\|_1 / \|\vec e_2^T G\|_1 = 2 / (1 - 2\mu)$.
\end{proof}

% }}}

The result of the lemma above implies that working with $UJ$ and $VJ$
instead of $U$ and $V$, respectively, decreases the value of $\eta_g$
($\eta_l$) when $\eta_g > 4$ ($\eta_l > 4$).  However, this
postmultiplication with $J$ may interfere with our attempt to minimize
the angles of the rotations as described in Section~\ref{sec:kog}, and
may thus lead to slower convergence of the implicit Kogbetliantz
iteration.  Hence, we should not try to minimize $\eta_{\max}$
thoughtlessly. A possible solution is to check whether $\eta_{\max}$ is
larger than some tolerance $\tau_{\eta} \ge 1$, and whether working with
$UJ$ and $VJ$ reduces $\eta_{\max}$. Otherwise, we should keep the
original $U$ and $V$.  This idea leads to the following algorithm.

% (Algorithm) {{{

\begin{myalgorithm}[$2\times 2$ upper-triangular RSVD
  (\textsf{RSVD22}-$\tau_\eta$)]\label{alg:rsvdeta}
\strut\\* % note \\* is an newline that preferably doesn't break across pages
\rm
  \textbf{Input:} $2\times 2$ upper-triangular matrices $A$, $B$, and $C$, with
    $A$ nonsingular, and tolerance $\tau_\eta \ge 1$. \\
  \textbf{Output:} Orthonormal matrices $P$, $Q$, $U$, and $V$, such
    that $P^T\!AQ$, $P^T\!BU$, and $V^T\!CQ$ are lower triangular, and
    $(V^T\!CQ) \adj(P^T\!AQ) (P^T\!BU) = \Sigma$ is diagonal. \\
  % \textbf{begin} \\ % Necessary for etna.cls.
  \tab[\phantom01.] Follow Lines~1 through~15 of
    Algorithm~\ref{alg:rsvd}. \\
  \tab[\phantom02.] Define $\iter{\eta_g}{1} = (\widehat g_{11} +
    \widehat g_{12}) / (|g_{11}| + |g_{12}|)$ and $\iter{\eta_h}{1} =
    (\widehat h_{12} + \widehat h_{22}) / (|h_{12}| + |h_{22}|)$. \\
  \tab[\phantom03.] Define $\iter{\eta_k}{1} = (\widehat k_{11} +
    \widehat k_{12}) / (|k_{11}| + |k_{12}|)$ and $\iter{\eta_l}{1} =
    (\widehat l_{12} + \widehat l_{22}) / (|l_{12}| + |l_{22}|)$. \\
  \tab[\phantom04.] Define $\iter{\eta_g}{2} = (\widehat g_{21} +
    \widehat g_{22}) / (|g_{21}| + |g_{22}|)$ and $\iter{\eta_h}{2} =
    (\widehat h_{11} + \widehat h_{21}) / (|h_{11}| + |h_{21}|)$. \\
  \tab[\phantom05.] Define $\iter{\eta_k}{2} = (\widehat k_{21} +
    \widehat k_{22}) / (|k_{21}| + |k_{22}|)$ and $\iter{\eta_l}{2} =
    (\widehat l_{11} + \widehat l_{21}) / (|l_{11}| + |l_{21}|)$. \\
  \tab[\phantom06.] Define $\iter{\eta_{\max}}{i} = \max \{
    \iter{\eta_g}{i}, \iter{\eta_h}{i} \}$ for $i = 1,2$. \\
  \tab[\phantom07.] \textbf{if} $\iter{\eta_{\max}}{1} \le \tau_\eta$
    \textbf{and} $\iter{\eta_{\max}}{1} \le \iter{\eta_{\max}}{2}$
    \textbf{then} \\
  \tab[\phantom08.]\tab Let $\eta_g = \iter{\eta_g}{1}$, $\eta_h =
    \iter{\eta_h}{1}$, $\eta_k = \iter{\eta_k}{1}$, and $\eta_l =
    \iter{\eta_l}{1}$. \\
  \tab[\phantom09.] \textbf{else} \\
  \tab[10.]\tab Let $\eta_g = \iter{\eta_g}{2}$, $\eta_h =
    \iter{\eta_h}{2}$, $\eta_k = \iter{\eta_k}{2}$, and $\eta_l =
    \iter{\eta_l}{2}$. \\
  \tab[11.]\tab Set $U = UJ$, $V = VJ$, $G = J^T G$, $H = HJ$, $K = J^T
    K$, and $L = LJ$. \\
  \tab[12.] \textbf{endif} \\
  \tab[13.] Follow Lines~18 through~28 of Algorithm~\ref{alg:rsvd}.
  % \\ \textbf{end} % Necessary for etna.cls.
\end{myalgorithm}

% }}}

Numerical tests in Section~\ref{sec:num} show the trade-off between
accuracy and performance for different values of $\tau_\eta$. For now,
the following upper bound on the smallest $\eta_{\max}$ that we get is
more important.

% (Proposition) {{{

\begin{proposition}\label{thm:etaBound}
  % Suppose $c_{11} \neq 0$ or $b_{22} \neq 0$
  Define $\iter{\eta_g}{i}$, $\iter{\eta_h}{i}$, $\iter{\eta_k}{i}$, and
  $\iter{\eta_l}{i}$ as in Algorithm~\ref{alg:rsvdeta} for $i = 1,2$,
  and define the corresponding $\iter{\eta_{\max}}{i}$ as in
  Definition~\ref{def:etamax}. Then
  \begin{equation*}
    \eta_{\max}^{\min}
    = \min \{ \iter{\eta_{\max}}{1}, \iter{\eta_{\max}}{2} \} \le
    \begin{cases}
      4\kappa(A) + 2
      &\text{if $\max \{ \iter{\eta_g}{1}, \iter{\eta_l}{1} \} < \infty$,} \\
      8\kappa(A)
      &\text{otherwise.}
    \end{cases}
  \end{equation*}
\end{proposition}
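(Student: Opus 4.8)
The plan is to bound $\eta_{\max}^{(1)}$ and $\eta_{\max}^{(2)}$ separately and to show that, whatever the configuration of the four $\eta$'s in version~1, at least one of the two versions already meets the claimed bound. Two earlier results carry the argument. First, Proposition~\ref{thm:etabounds} — applied to $(A,B,C)$, and for the version-2 quantities to the triplet obtained by replacing $U,V$ with $UJ,VJ$, so that $\eta_g^{(2)},\eta_h^{(2)},\eta_k^{(2)},\eta_l^{(2)}$ are exactly the ordinary version-1 $\eta$'s of that rotated triplet — gives $\eta_h^{(i)}\le 2\kappa(A)\,\eta_l^{(i)}$ and $\eta_k^{(i)}\le 2\kappa(A)\,\eta_g^{(i)}$ whenever the right-hand side is finite (its proof only needs finiteness of $\eta_l^{(i)}$, resp.\ $\eta_g^{(i)}$). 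Hence, if $\eta_g^{(i)}\le\beta$ then $\min\{\eta_g^{(i)},\eta_h^{(i)}\}\le\beta$ and $\min\{\eta_k^{(i)},\eta_l^{(i)}\}\le 2\kappa(A)\beta$, and symmetrically starting from $\eta_l^{(i)}\le\beta$. Second, Lemma~\ref{thm:etaBigEtaSmall} says precisely that a large version-1 $\eta_g$ (resp.\ $\eta_l$) forces a small version-2 one: $\eta_g^{(1)}\ge\mu^{-1}$ with $0<\mu<1/2$ implies $\eta_g^{(2)}\le 2/(1-2\mu)$, and likewise for $\eta_l$.

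For the first case I would assume $\eta_g^{(1)},\eta_l^{(1)}<\infty$ and split on whether both are $\le 4\kappa(A)+2$. If they are, version~1 works directly: $\min\{\eta_g^{(1)},\eta_h^{(1)}\}\le\eta_g^{(1)}\le 4\kappa(A)+2$ and $\min\{\eta_k^{(1)},\eta_l^{(1)}\}\le\eta_l^{(1)}\le 4\kappa(A)+2$, so $\eta_{\max}^{(1)}\le 4\kappa(A)+2$ (using $4\kappa(A)+2\ge 6>1$). Otherwise, by the transpose-and-permute reduction at the end of Section~\ref{sec:pre} (which leaves $\kappa(A)$, $\eta_{\max}^{(1)}$, $\eta_{\max}^{(2)}$ unchanged while swapping the roles $g\leftrightarrow l$, $h\leftrightarrow k$), I may assume $\eta_g^{(1)}>4\kappa(A)+2$; then Lemma~\ref{thm:etaBigEtaSmall} with $\mu=1/(4\kappa(A)+2)\in(0,1/2)$ yields
\[
  \eta_g^{(2)}\le\frac{2}{1-2\mu}=\frac{2(4\kappa(A)+2)}{4\kappa(A)}=2+\frac{1}{\kappa(A)}\le 3,
\]
which is finite, so the consequences of Proposition~\ref{thm:etabounds} above apply and give $\min\{\eta_g^{(2)},\eta_h^{(2)}\}\le\eta_g^{(2)}\le 3$ and $\min\{\eta_k^{(2)},\eta_l^{(2)}\}\le 2\kappa(A)\,\eta_g^{(2)}\le 2\kappa(A)\bigl(2+1/\kappa(A)\bigr)=4\kappa(A)+2$, whence $\eta_{\max}^{(2)}\le 4\kappa(A)+2$. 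Either way $\eta_{\max}^{\min}\le 4\kappa(A)+2$.

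For the second case at least one of $\eta_g^{(1)},\eta_l^{(1)}$ equals $\infty$. Since Algorithm~\ref{alg:rsvdeta} only forms the $\eta$'s when $c_{11}\ne 0$ or $b_{22}\ne 0$, and since in exact arithmetic $\eta_g^{(1)}=\infty$ iff $c_{11}=0$ and $\eta_l^{(1)}=\infty$ iff $b_{22}=0$ (the sharper statement recorded after Corollary~\ref{thm:finiteEta}), exactly one of them is infinite; using symmetry I may take $\eta_g^{(1)}=\infty$ and $\eta_l^{(1)}<\infty$. If $\eta_l^{(1)}\le 4$, then $\min\{\eta_g^{(1)},\eta_h^{(1)}\}=\eta_h^{(1)}\le 2\kappa(A)\eta_l^{(1)}\le 8\kappa(A)$ and $\min\{\eta_k^{(1)},\eta_l^{(1)}\}\le\eta_l^{(1)}\le 4$, so $\eta_{\max}^{(1)}\le 8\kappa(A)$. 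If instead $\eta_l^{(1)}>4$, then Lemma~\ref{thm:etaBigEtaSmall} in its $\eta_l$ form with $\mu=1/4$ gives $\eta_l^{(2)}\le 2/(1-1/2)=4$, hence $\min\{\eta_k^{(2)},\eta_l^{(2)}\}\le 4$ and $\min\{\eta_g^{(2)},\eta_h^{(2)}\}\le\eta_h^{(2)}\le 2\kappa(A)\eta_l^{(2)}\le 8\kappa(A)$, so $\eta_{\max}^{(2)}\le 8\kappa(A)$. Either way $\eta_{\max}^{\min}\le 8\kappa(A)$.

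The hard part will not be any single estimate but the bookkeeping around it: matching the version-2 $\eta$'s with the version-1 $\eta$'s of the $J$-rotated triplet so that Lemma~\ref{thm:etaBigEtaSmall} and Proposition~\ref{thm:etabounds} apply verbatim; checking the transpose-and-permute reduction is legitimate branch by branch; and — most delicately — verifying that every finiteness hypothesis actually holds where it is used, for instance that $\eta_g^{(2)}$, and therefore $\eta_k^{(2)}$, are finite in the branch $\eta_g^{(1)}>4\kappa(A)+2$ (the second row of $G$ cannot vanish once $\eta_g^{(2)}<\infty$), and that $\eta_h^{(1)}<\infty$ whenever $\eta_l^{(1)}<\infty$. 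A minor but essential point is the constant chase: the choice $\mu=1/(4\kappa(A)+2)$ is exactly what makes $2\kappa(A)\bigl(2+1/\kappa(A)\bigr)$ collapse to $4\kappa(A)+2$ rather than a looser multiple of $\kappa(A)$.
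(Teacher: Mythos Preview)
Your proof is correct and follows essentially the same route as the paper's: both cases hinge on Lemma~\ref{thm:etaBigEtaSmall} to make one of $\eta_g^{(2)},\eta_l^{(2)}$ small, then Proposition~\ref{thm:etabounds} to control the paired quantity $\eta_k^{(2)}$ or $\eta_h^{(2)}$. Your Case~2 split on $\eta_l^{(1)}\lessgtr 4$ is in fact slightly more direct than the paper's, which first assumes $\eta_{\max}^{(1)}>8\kappa(A)$, deduces $\eta_h^{(1)}>8\kappa(A)$ or $\eta_l^{(1)}>8\kappa(A)$, and only then invokes Corollary~\ref{thm:etaboundscor} to reach $\eta_l^{(1)}>4$; you skip that detour. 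The finiteness checks you flag (e.g.\ $\eta_l^{(1)}<\infty\Rightarrow\eta_h^{(1)}<\infty$ via $H=\adj(A)L$ with $A$ nonsingular) are exactly what the paper uses implicitly, so nothing is missing.
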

\begin{proof}
  Suppose that both $\iter{\eta_g}{1}$ and $\iter{\eta_l}{1}$ are finite
  and that $\iter{\eta_{\max}}{1} > 4\kappa(A) + 2$. Then
  $\iter{\eta_g}{1} > 4\kappa(A) + 2$ or $\iter{\eta_l}{1} > 4\kappa(A)
  + 2$, and we can assume without loss of generality that the first of
  the two bounds holds. By applying Lemma~\ref{thm:etaBigEtaSmall} we
  get the bound
  \begin{equation*}
    \iter{\eta_g}{2} \le 2 (1 - 2(4\kappa(A) + 2)^{-1})^{-1}
    = \frac{4\kappa(A) + 2}{2\kappa(A)},
  \end{equation*}
  and then from Proposition~\ref{thm:etabounds} the bound
  $\iter{\eta_k}{2} \le 4\kappa(A) + 2$. Hence, we can conclude that
  $\iter{\eta_{\max}}{2} \le 4\kappa(A) + 2$.

  Now suppose $\iter{\eta_g}{1} = \infty$ or $\iter{\eta_l}{1} = \infty$
  and $\iter{\eta_{\max}}{1} > 8\kappa(A)$, and assume without loss of
  generality that $\iter{\eta_g}{1} = \infty$; then by
  Proposition~\ref{thm:cneqZbneqZ} we have that $\iter{\eta_h}{1} <
  \infty$ and $\iter{\eta_l}{1} < \infty$. Hence, by
  Definition~\ref{def:etamax} we must have $\iter{\eta_h}{1} >
  8\kappa(A)$ or $\iter{\eta_l}{1} > 8\kappa(A)$, so that it follows
  from Corollary~\ref{thm:etaboundscor} and
  Proposition~\ref{thm:etabounds}, respectively, that $\iter{\eta_l}{1}
  > 4$. The result is that we can invoke Lemma~\ref{thm:etaBigEtaSmall}
  to see that $\iter{\eta_l}{2} \le 4$, followed by
  Proposition~\ref{thm:etabounds} to see that $\iter{\eta_h}{2} \le
  8\kappa(A)$. Thus, we can conclude that $\iter{\eta_{\max}}{2} \le
  8\kappa(A)$.
\end{proof}

% }}}

By combining Theorem~\ref{thm:errA} with the proposition above, we
get the following result.

% (Theorem) {{{

\begin{theorem}\label{thm:Astable}
  Suppose that we compute all floating-point operations in
  Algorithm~\ref{alg:rsvdeta} with a precision of at least $\mathcal
  O(\bm\epsilon\kappa(A)^{-1})$ and use the tolerance $\tau_\eta =
  8\kappa(A)$. Then the algorithm computes $\overline A'$ stably with
  respect to the precision $\bm\epsilon$.
\end{theorem}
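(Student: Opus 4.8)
The plan is to chain Theorem~\ref{thm:errA} with the upper bound on $\eta_{\max}$ supplied by Proposition~\ref{thm:etaBound}, carrying everything out at the finer unit roundoff $\bm\epsilon' = \mathcal O(\bm\epsilon\,\kappa(A)^{-1})$ at which the floating-point operations are performed. Every backward-error estimate and every first-order identity in Sections~\ref{sec:rsvdTwoFloat} and~\ref{sec:bwdA} was derived under the standard model with unit roundoff, so all of them---in particular Theorem~\ref{thm:errA}, Proposition~\ref{thm:etarelation}, Corollary~\ref{thm:etarelation2}, and Proposition~\ref{thm:etaBound}---remain valid with $\bm\epsilon$ replaced by $\bm\epsilon'$. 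Thus, once we know that $171\,\bm\epsilon'\,\overline\eta_{\max} \ll 1$ and that $\overline P$ is produced in the same fashion as $\overline Q$ (as it is in Algorithm~\ref{alg:rsvdeta}), Theorem~\ref{thm:errA} gives orthonormal $\mathcal P$, $\mathcal Q$ and a perturbation $\delta A$ with $\overline A' = \mathcal P(A + \delta A)\mathcal Q$ and $\|\delta A\| \le (44.5 + 342\,\overline\eta_{\max})\,\bm\epsilon'\,\|A\|$.

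First I would pin down $\overline\eta_{\max}$. The key point is that Algorithm~\ref{alg:rsvdeta} with $\tau_\eta = 8\kappa(A)$ returns $\min\{\iter{\eta_{\max}}{1}, \iter{\eta_{\max}}{2}\}$ whenever that minimum does not exceed $\tau_\eta$, and otherwise still returns a value that is $\mathcal O(\kappa(A))$. Proposition~\ref{thm:etaBound} shows that the \emph{exact} quantity $\min\{\iter{\eta_{\max}}{1}, \iter{\eta_{\max}}{2}\}$ is at most $8\kappa(A)$ (and at most $4\kappa(A)+2$, comfortably below $\tau_\eta$ for $\kappa(A)\ge 1$, in the finite case). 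Since $\bm\epsilon'\kappa(A) = \mathcal O(\bm\epsilon)$, Proposition~\ref{thm:etarelation} and Corollary~\ref{thm:etarelation2} show that every finite $\eta$ of magnitude $\mathcal O(\kappa(A))$ is matched by its computed counterpart up to a relative factor $1 + \mathcal O(\bm\epsilon)$, while Corollary~\ref{thm:finiteEta} and Proposition~\ref{thm:cneqZbneqZ} prevent the two members of each pair $(\overline\eta_g,\overline\eta_h)$, $(\overline\eta_k,\overline\eta_l)$ from being spuriously infinite together. Re-running the case analysis in the proof of Proposition~\ref{thm:etaBound}, but at the level of the $\overline\eta$s and the threshold $\tau_\eta$, one checks that the branch selected by Algorithm~\ref{alg:rsvdeta} has $\overline\eta_{\max} \le 8\kappa(A)\,(1 + \mathcal O(\bm\epsilon))$: if one branch is forced out by $\iter{\overline\eta_{\max}}{i} > \tau_\eta$, then Lemma~\ref{thm:etaBigEtaSmall} together with Proposition~\ref{thm:etabounds} and Corollary~\ref{thm:etaboundscor} push the other branch comfortably below $\tau_\eta$, and in the remaining borderline situation both candidates are already $\mathcal O(\kappa(A))$.

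With $\overline\eta_{\max} = \mathcal O(\kappa(A))$ in hand, the hypothesis $171\,\bm\epsilon'\,\overline\eta_{\max} = \mathcal O(\bm\epsilon) \ll 1$ of Theorem~\ref{thm:errA} is satisfied---the constant hidden in $\bm\epsilon' = \mathcal O(\bm\epsilon\kappa(A)^{-1})$ may be taken small enough to absorb the $\mathcal O(\bm\epsilon)$ slack above---and the bound becomes
\begin{equation*}
  \|\delta A\| \le \bigl(44.5 + 342\cdot 8\kappa(A)\bigr)\,\bm\epsilon'\,\|A\|
  = \mathcal O(\bm\epsilon\,\|A\|).
\end{equation*}
Hence $\overline A' = \mathcal P(A + \delta A)\mathcal Q$ with $\mathcal P$, $\mathcal Q$ orthonormal and $\|\delta A\| = \mathcal O(\bm\epsilon\,\|A\|)$, which is precisely the asserted backward stability of $\overline A'$ with respect to the precision $\bm\epsilon$.

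The main obstacle is the middle step. Proposition~\ref{thm:etaBound} bounds the \emph{exact} $\eta_{\max}$, whereas Algorithm~\ref{alg:rsvdeta} branches on the \emph{computed} $\overline\eta_{\max}$, which can be finite when the exact value is infinite and slightly inflated otherwise; one must verify that these discrepancies are only $\mathcal O(\bm\epsilon)$-sized and cannot steer the algorithm onto a branch with an uncontrolled $\overline\eta_{\max}$. Doing this cleanly means re-tracing the finite/infinite subcase distinctions of Proposition~\ref{thm:etaBound}'s proof---using Propositions~\ref{thm:etabounds} and~\ref{thm:cneqZbneqZ}, Lemma~\ref{thm:etaBigEtaSmall}, and the relations of Proposition~\ref{thm:etarelation}---with the $\overline\eta$s in place of the $\eta$s, and confirming that the slack $4\kappa(A)+2 < \tau_\eta = 8\kappa(A)$ leaves room for the roundoff perturbations.
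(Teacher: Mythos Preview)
Your proposal is correct and follows essentially the same route as the paper: the paper presents Theorem~\ref{thm:Astable} as an immediate consequence of combining Theorem~\ref{thm:errA} with Proposition~\ref{thm:etaBound} at the finer working precision $\bm\epsilon' = \mathcal O(\bm\epsilon\,\kappa(A)^{-1})$, so that $\overline\eta_{\max}\,\bm\epsilon' = \mathcal O(\bm\epsilon)$. You are in fact more careful than the paper about the distinction between the exact $\eta$s bounded by Proposition~\ref{thm:etaBound} and the computed $\overline\eta$s on which Algorithm~\ref{alg:rsvdeta} actually branches; the paper simply leans on Proposition~\ref{thm:etarelation} and the slack between $4\kappa(A)+2$ and $\tau_\eta = 8\kappa(A)$ implicitly, whereas you spell out why that gap cannot derail the branch selection.
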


% }}}

Theorem~\ref{thm:Astable} shows how to pick the working precision to
guarantee an accurate result. But tying the working precision of the
algorithm to the condition number of $A$ is impractical and
mathematically inelegant. An alternative without a strong a priori
guarantee, is to pick a fixed working precision independent of
$\kappa(A)$, with two obvious choices. The first choice is to double the
precision, which we can motivate as follows.  If $\iter{A}{1}$ is the
$p\times q$ input matrix that we have at the beginning of the
preprocessing phase; then, with typical bounds, the first compression
sets all singular values smaller than $\bm\epsilon \max \{p,q\}
\sigma_{\max}(\iter{A}{1})$ to zero. Hence, the resulting
$\iter{A_{12}}{2}$ has a condition number bounded by $(\max \{p,q\}
\bm\epsilon)^{-1}$. Note, however, that this does not guarantee that the
condition number of the upper-triangular $2\times 2$ matrices $A_{ij}$
from the Kogbetliantz phase have the same bound. The second choice is to
not increase the working precision. As we will see in
Section~\ref{sec:num}, a large $\eta_{\max}$ is rare, even for highly
ill-conditioned $A$, and a large $\eta_{\max}^{\min}$ even rarer.
Furthermore, the numerical results show that the bound from
Theorem~\ref{thm:errA} is pessimistic, and that the relative errors do
not scale in proportion to $\eta_{\max}$.  In any case, if we fix the
working precision, then we can cheaply estimate a posteriori whether the
computed $\overline A'$ is accurate in two ways. Either by checking if
$\eta_{\max}$ or $\max\{ \tau_\eta, \eta_{\max}^{\min} \}$ is
sufficiently small, or by checking if $|\fl(\overline P^T\! A \overline
Q)_{12}|$ is sufficiently small.

This section ends with the following remarkable result for a final bit
of insight into the behavior of the $\eta$s.  Although the proof is not
obvious, it requires only elementary arithmetic and is omitted for
brevity.

% (Proposition) {{{

\begin{proposition}\label{thm:GorL}
  In exact arithmetic $\eta_g > \eta_h$ and $\eta_l > \eta_k$ cannot
  hold simultaneously.  Furthermore, if $g_{11} \neq 0$ and $l_{22} \neq
  0$, then $\eta_g = \eta_h$ and $\eta_l = \eta_k$ can only hold at the
  same time if $a_{12} = 0$.
\end{proposition}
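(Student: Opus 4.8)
The plan is to reduce all four quantities $\eta_g,\eta_h,\eta_k,\eta_l$ to a common normalized form and then use the diagonality of $M = C\adj(A)B$ twice.  Write $G = V^T\!C$, $L = BU$, $H = \adj(A)L$, and $K = G\adj(A)$ as in Algorithm~\ref{alg:rsvd}, so that in exact arithmetic $GH = KL = \Sigma$ with $\Sigma$ diagonal.  First I would collect the structural identities coming from $A$, $B$, $C$ being upper triangular with $A$ nonsingular: each of $g_{11}$, $l_{22}$, $k_{11} = g_{11}a_{22}$, $h_{22} = a_{11}l_{22}$ is a single product of matrix entries, hence $\widehat g_{11} = |g_{11}|$, $\widehat l_{22} = |l_{22}|$, $\widehat k_{11} = |k_{11}|$, $\widehat h_{22} = |h_{22}|$; and since $\widehat H = |\adj(A)|\,\widehat L$ and $\widehat K = \widehat G\,|\adj(A)|$, one gets $\widehat h_{12} = |a_{22}|\,\widehat l_{12} + |a_{12}|\,|l_{22}|$ and $\widehat k_{12} = |a_{12}|\,|g_{11}| + |a_{11}|\,\widehat g_{12}$.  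Note also $h_{22}\neq 0 \iff l_{22}\neq 0$ and $k_{11}\neq 0 \iff g_{11}\neq 0$.

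Assume first $g_{11}\neq 0$ and $l_{22}\neq 0$, so all four leading entries above are nonzero.  The $(1,2)$ entry of $GH = \Sigma$ reads $g_{11}h_{12} + g_{12}h_{22} = 0$, whence $|h_{12}| + |h_{22}| = (|h_{22}|/|g_{11}|)\,(|g_{11}| + |g_{12}|)$, and similarly $|k_{11}| + |k_{12}| = (|k_{11}|/|l_{22}|)\,(|l_{22}| + |l_{12}|)$ from $KL = \Sigma$.  Introducing $G_2 = \widehat g_{12}/|g_{11}|$, $H_2 = \widehat h_{12}/|h_{22}|$, $L_2 = \widehat l_{12}/|l_{22}|$, $K_2 = \widehat k_{12}/|k_{11}|$, these proportionalities collapse the four $\eta$'s to $\eta_g = (1+G_2)\,|g_{11}|/(|g_{11}|+|g_{12}|)$, $\eta_h = (1+H_2)\,|g_{11}|/(|g_{11}|+|g_{12}|)$, and analogously $\eta_l,\eta_k$ in terms of $L_2,K_2$ and the common factor $|l_{22}|/(|l_{22}|+|l_{12}|)$; in each pair the $\eta$-difference is a strictly positive multiple of $G_2-H_2$ (resp.\ $L_2-K_2$).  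Finally, the identities for $\widehat h_{12}$ and $\widehat k_{12}$ become, with $\alpha = |a_{11}|$, $\beta = |a_{22}|$, $\gamma = |a_{12}|$, the affine relations $\alpha H_2 = \beta L_2 + \gamma$ and $\beta K_2 = \alpha G_2 + \gamma$.  Hence $\eta_g > \eta_h$ is equivalent to $\alpha G_2 - \beta L_2 > \gamma \ge 0$, and $\eta_l > \eta_k$ is equivalent to $\beta L_2 - \alpha G_2 > \gamma \ge 0$; the two left-hand sides are negatives of each other, so they cannot both be positive, which is the first claim.  For the ``furthermore'' part, $\eta_g = \eta_h$ and $\eta_l = \eta_k$ give $\alpha G_2 = \beta L_2 + \gamma$ and $\beta L_2 = \alpha G_2 + \gamma$; adding these forces $\gamma = 0$, i.e.\ $a_{12} = 0$.

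It remains to treat the degenerate cases $g_{11} = 0$ or $l_{22} = 0$, which matter only for the first claim.  If $\vec e_1^T\!G = 0$ then $\vec e_1^T\!K = (\vec e_1^T\!G)\adj(A) = 0$, so $\eta_k = \infty$ and $\eta_l > \eta_k$ is impossible; symmetrically $L\vec e_2 = 0$ forces $\eta_h = \infty$, ruling out $\eta_g > \eta_h$.  Otherwise $g_{11} = 0$ with $g_{12}\neq 0$ forces, via $k_{11}l_{12}+k_{12}l_{22} = 0$ and $k_{12} = a_{11}g_{12}\neq 0$, that $l_{22} = 0$ (and symmetrically $l_{22} = 0$ with $l_{12}\neq 0$ forces $g_{11} = 0$), and a one-line computation then gives $\eta_g = \widehat g_{12}/|g_{12}| = \eta_k$ and $\eta_l = \widehat l_{12}/|l_{12}| = \eta_h$, so $\eta_g > \eta_h$ together with $\eta_l > \eta_k$ would say $\eta_g > \eta_h$ and $\eta_h > \eta_g$.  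I expect the only real difficulty to be spotting the normalization used in the second paragraph — that the diagonality of $GH$ makes the denominators of $\eta_g$ and $\eta_h$ proportional, and that $H_2$, $K_2$ are affine in $L_2$, $G_2$ with the \emph{same} slack term $\gamma = |a_{12}|$ — after which the first claim is a one-line sign contradiction; verifying the $\widehat{\cdot}$ identities and checking the degenerate cases is routine bookkeeping.
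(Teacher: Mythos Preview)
The paper omits the proof of this proposition entirely, stating only that ``the proof is not obvious, [but] it requires only elementary arithmetic and is omitted for brevity.''  Your argument is correct and is presumably along the lines of what the author had in mind: the reduction to the affine relations $\alpha H_2 = \beta L_2 + \gamma$ and $\beta K_2 = \alpha G_2 + \gamma$ via the diagonality constraints $(GH)_{12} = (KL)_{12} = 0$ is precisely the kind of elementary manipulation the paper alludes to, and the sign contradiction and the degenerate-case bookkeeping are handled cleanly.
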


The consequence of the proposition above is that (in exact arithmetic)
Algorithm~\ref{alg:rsvd} computes either $P$ from $L$ or $Q$ from $G$.
In other words, $P$ and $Q$ are never computed from $K = G\adj(A)$ and
$H = \adj(A)L$ at the same time.

% }}}1

% (Section) The extraction phase. {{{1

\section{The extraction phase}
\label{sec:ext}

In this section we consider the problem of extracting the singular
triplets $(\alpha, \beta, \gamma)$ from the upper-triangular matrices
$(A, B, C) = (S D_\alpha T, S D_\beta, D_\gamma T)$.  Without loss of
generality, we can focus on the diagonal entries and drop the indices,
and consider $a = st\alpha$, $b = s\beta$, $c = t\gamma$, and $\sigma =
a / (bc) = \alpha / (\beta\gamma)$ for unknown $s$, $t$, $\alpha$,
$\beta$, and $\gamma$. Typical treatment of the RSVD imposes the
condition $\alpha^2 + \beta^2 + \gamma^2 = 1$, but this condition alone
generally does not define the singular triplet uniquely.  For example,
for nonzero $b$ and $c$ we can swap the values of $\beta$ and $\gamma$
and adjust $s$ and $t$ accordingly.  Another example is when $a = b = c
= 1$; then we can pick any $\gamma^2 \in (0,1)$ and let
\begin{equation*}
  \alpha^2 = \gamma^2 \frac{1 - \gamma^2}{1 + \gamma^2},
  \qquad
  \beta^2 = \frac{1 - \gamma^2}{1 + \gamma^2},
  \qquad
  s = \beta^{-1},
  \quad\text{and}\quad
  t = \gamma^{-1}.
\end{equation*}
Which further conditions we should impose to make the triplet $(\alpha,
\beta, \gamma)$ well defined, are unclear. It is also unclear how to
compute the triplets in a numerically sound way.

As an alternative, we propose to impose the condition
\begin{equation}\label{eq:abccond}
  \alpha^2 + \beta^2\gamma^2 = 1
\end{equation}
for the normalization of the triplets for the following reasons.
\begin{itemize}
  \item This condition is the correct homogeneous formulation
    corresponding to the fraction $\sigma = \alpha/(\beta\gamma)$, and
    uniquely defines the pair $(\alpha, \beta\gamma)$. 
  \item We know that the RSVs correspond to the nonnegative
    eigenvalues of the pencil
    % $\left( \minimat{0}{A}{A^*}{0}, \minimat{BB^*}{0}{0}{C^*C} \right)$;
    $\minimat{0}{A}{A^*}{0} - \lambda \minimat{BB^*}{0}{0}{C^*C}$;
    see, e.g., \cite[p.~193]{Zha91}.  Solving this generalized
    eigenvalue problem yields the eigenpairs $(\alpha, \pm \beta
    \gamma)$.
    % , along with $[\gamma_i \vec x_i;\; \pm\beta_i \vec y_i]$
    % as the eigenvectors.
    % $[\beta_i^{-1} \vec x_i;\; \pm\gamma_i^{-1} \vec y_i]$
  \item This condition allows us to express the generalized singular
    pairs of a QSVD (i.e., an RSVD with $B = I$) in terms of restricted
    singular triplets with $\beta = 1$ (and $s = 1$).
  \item Triplets corresponding to zero and infinite singular values
    can be written as $(0,1,1)$, $(1,0,0)$, $(1,0,1)$, and $(1,1,0)$,
    and all satisfy \eqref{eq:abccond}.
  \item As shown below, we can impose a simple condition to make
    computing $(\alpha, \beta, \gamma)$, $s$, and $t$ with
    \eqref{eq:abccond} elegant and straightforward.
  \item With \eqref{eq:abccond}, the pair $(\alpha, \beta\gamma)$ is
    invariant under the scaling $(\lambda A, \lambda^p B, \lambda^q C)$
    of the matrix triplet $(A, B, C)$, where $\lambda > 0$ and $p+q=1$.
\end{itemize}

Some flexibility is still left when it comes to computing $\beta$,
$\gamma$, $s$, and $t$. One option is to take $|s| = |t|$,
% ($\beta = \gamma$ is a bit silly if $b$ or $c$ is zero, also doesn't
% work out well in other cases?)
or more generally $|s|^q = |t|^p$ with $p+q=1$, so that
\begin{equation*}
  a^2 + b^2c^2
  = (\alpha^2 + \beta^2\gamma^2) (st)^2
  = |s|^{2/p}
  = |t|^{2/q},
\end{equation*}
and
\begin{equation*}
  \alpha = |a| (a^2 + b^2 c^2)^{-1/2},
  % \alpha = \frac{|a|}{(a^2 + b^2 c^2)^{1/2}},
  \qquad
  \beta = |b| (a^2 + b^2 c^2)^{-p/2},
  % \beta = \frac{|b|}{(a^2 + b^2 c^2)^{p/2}},
  \quad\text{and}\quad
  \gamma = |c| (a^2 + b^2 c^2)^{-q/2}.
  % \gamma = \frac{|c|}{(a^2 + b^2 c^2)^{q/2}}.
\end{equation*}
Although we have some flexibility when picking $p$ and $q$, the choice
$p = q = 1/2$ is the most natural in absence of an application specific
preference.  This choice also allows us to reliably compute the triplets
$(\alpha, \beta, \gamma)$ in floating-point arithmetic for a wide range
of triplets $(a,b,c)$ with the algorithm below.
A key part of the algorithm is the function $\hypot(x,y)$, which
computes $(x^2 + y^2)^{-1/2}$ without unnecessary overflow or underflow
for $x, y \in \mathbb{R}$. The problem that the algorithm addresses, is
that we cannot use $\hypot(|a|, |b| |c|)$ directly if the product $|b|
|c|$ overflows or underflows.  Hence, the algorithm only applies
$\hypot$ to $|a|$ and $|b| |c|$ directly if the latter product is finite
and nonzero in floating-point arithmetic.  Otherwise, the algorithm
first rescales the input triplet by exploiting the scaling invariance.

% (Algorithm) {{{

\begin{myalgorithm}[Extracting restricted singular
  triplets.]\label{alg:extract}
\strut\\*
\rm
  \textbf{Input:} A triplet $(a,b,c)$, where $\max \{ |a|, |b|, |c| \} <
  \infty$ and $a \neq 0$. \\
  \textbf{Output:} A triplet $(\alpha, \beta, \gamma)$ satisfying
  $\beta \gamma / \alpha = bc/a$ and $\alpha^2 + \beta^2 \gamma^2 = 1$. \\
  % \textbf{begin} \\ % Necessary for etna.cls.
  \tab[\phantom01.] \textbf{if} $0 < \fl(|b| |c|) < \infty$ \textbf{then} \\
  \tab[\phantom02.]\tab Let $|st| = \hypot(|a|, |b| |c|)$. \\
  \tab[\phantom03.]\tab Let $\alpha = |a| / |st|$, $\beta = |b| / |st|^{1/2}$,
    and $\gamma = |c| / |st|^{1/2}$. \\
  \tab[\phantom04.] \textbf{else if} $|a|^{1/2} \ge \max \{ |b|, |c| \}$ \textbf{then} \\
  \tab[\phantom05.] \tab Let $b' = |b| / |a|^{1/2}$, $c' = |c| / |a|^{1/2}$, and
    $|st| = (1 + (b'c')^2)^{1/2}$. \\
  \tab[\phantom06.] \tab Let $\alpha = 1/|st|$, $\beta = b' / |st|^{1/2}$,
    and $\gamma = c' / |st|^{1/2}$. \\
  % NOTE: (but in this case $a' \ge b' c'$ so that we don't really need the $\hypot$), \\
  \tab[\phantom07.] \textbf{else if} $|b| \ge \max \{ |a|^{1/2}, |c| \}$ \textbf{then} \\
  \tab[\phantom08.]\tab Let $a' = (|a| / |b|) / |b|$, $c' = |c| / |b|$, and
    $|st| = \hypot(a', c')$. \\
  \tab[\phantom09.]\tab Let $\alpha = a' / |st|$, $\beta = 1/|st|^{1/2}$, and
    $\gamma = c' / |st|^{1/2}$. \\
  \tab[10.] \textbf{else if} $|c| \ge \max \{ |a|^{1/2}, |b| \}$ \textbf{then} \\
  \tab[11.]\tab Let $a' = (|a| / |c|) / |c|$, $b' = |b| / |c|$, and
    $|st| = \hypot(a', b')$. \\
  \tab[12.]\tab Let $\alpha = a' / |st|$, $\beta = b' / |st|^{1/2}$, and
    $\gamma = 1 / |st|^{1/2}$. \\
  \tab[13.] \textbf{end}
  % \\ \textbf{end} % Necessary for etna.cls.
\end{myalgorithm}

% }}}

% }}}1

% (Section) The postprocessing phase. {{{1

\section{The postprocessing phase}\label{sec:post}

If the implicit Kogbetliantz iteration from Section~\ref{sec:kog}
converges, then we get the Schur-form RSVD from
Theorem~\ref{thm:schurform}. Combined with the extraction from
Section~\ref{sec:ext}, this form is already useful in its own right, as
explained in Section~\ref{sec:th}.  However, if we want the full
decomposition from Theorem~\ref{thm:rsvd} or
Corollary~\ref{thm:rsvdtri}, or any of the individual factors
$\Sigma_\alpha$, $\Sigma_\beta$, $\Sigma_\gamma$, $X$, $Y$, $S$, or $T$,
then further postprocessing is necessary. This necessary postprocessing
is nontrivial in the most general case, and requires that the output of
the implicit Kogbetliantz iteration is of the form described by
Proposition~\ref{thm:betterNzStructure}.  Moreover, some of the
postprocessing steps are troublesome in floating-point arithmetic, e.g.,
due to sensitivity to perturbations, which may affect their reliability.
Hence, we consider the postprocessing steps in exact arithmetic in
Section~\ref{sec:postexact}, and discuss some of the numerical
challenges in floating-point arithmetic in
Section~\ref{sec:numchallenges}.

% (Subsection) Postprocessing in exact arithmetic. {{{2

\subsection{Postprocessing in exact arithmetic}\label{sec:postexact}

Suppose that $A$, $B$, and $C$ are as in \eqref{eq:nzStructureUp}, then
the first step of the preprocessing phase is to use the transformations
from the proof of Proposition~\ref{thm:betterNzStructure} to get the
structure from \eqref{eq:betterNzStructure}.  The next step is to
extract the nonzero restricted singular triplets $D_\alpha$, $D_\beta$,
and $D_\gamma$ with Algorithm~\ref{alg:extract}, and to let
\begin{equation*}
  \compressedmatrices
  \widetilde \Sigma_\alpha =
  \begin{bmatrix}
    D_\alpha \\
    & I \\
    && I \\
    &&& I
  \end{bmatrix},
  \qquad
  \widetilde \Sigma_\beta =
  \begin{bmatrix}
    D_\beta & 0 & 0 \\
    0       & I & 0 \\
    0       & 0 & 0 \\
    0       & 0 & 0
  \end{bmatrix},
  \quad\text{and}\quad
  \widetilde \Sigma_\gamma =
  \begin{bmatrix}
    D_\gamma & 0 & 0 & 0 \\
    0        & 0 & 0 & 0 \\
    % 0        & 0 & 0 & 0 \\
    0        & 0 & 0 & I
  \end{bmatrix}.
\end{equation*}
Using these $\Sigma$s, we can decompose our matrix triplet as
% $A = S \widetilde \Sigma_\alpha T$, $B = S \widetilde \Sigma_\beta$, and
% $C = \widetilde \Sigma_\gamma T$,
\begin{equation}\label{eq:postdecomp}
  A = S \widetilde \Sigma_\alpha T,
  \qquad
  B = S \widetilde \Sigma_\beta,
  \quad\text{and}\quad
  C = \widetilde \Sigma_\gamma T,
\end{equation}
where $S$ and $T$ are upper triangular.  In particular, let $S_{11} =
B_{11} D_\beta^{-1}$ and $T_{11} = D_\gamma^{-1} C_{11}$, so that
\begin{equation*}
  A_{11} C_{11}^{-1}
  = (S_{11} D_\alpha T_{11}) (T_{11}^{-1} D_\gamma^{-1})
  = S_{11} D_\alpha D_\gamma^{-1}
  = B_{11} D_\beta^{-1} D_\alpha D_\gamma^{-1},
\end{equation*}
and define the Schur complement $Z_{1j} = A_{1j} - A_{11} C_{11}^{-1}
C_{1j}$; then
\begin{equation}\label{eq:postST}
  \compressedmatrices
  S =
  \begin{bmatrix}
    S_{11} & B_{12} & Z_{13} & Z_{14} C_{44}^{-1} \\
           & B_{22} & A_{23} & A_{24} C_{44}^{-1} \\
           &        & A_{33} & A_{34} C_{44}^{-1} \\
           &        &        & A_{44} C_{44}^{-1}
  \end{bmatrix},
  \quad\text{and}\quad
  T =
  \begin{bmatrix}
    T_{11} & D_\gamma^{-1} C_{12} & D_\gamma^{-1} C_{13} & D_\gamma^{-1} C_{14} \\
    & B_{22}^{-1} A_{22} & 0 & 0 \\
    && I & 0 \\
    &&& C_{44}
  \end{bmatrix}.
\end{equation}
To see that these $S$ and $T$ are correct, consider the leading
principal $2\times 2$ blocks of $C A^{-1} B$, given by
\begin{equation*}
  \compressedmatrices
  \begin{split}
    \begin{bmatrix}
      C_{11} A_{11}^{-1} B_{11}
      & 0 \\
      & 0
    \end{bmatrix}
    &= \begin{bmatrix}
      C_{11} & C_{12} \\ & 0
    \end{bmatrix}
    \begin{bmatrix}
      A_{11}^{-1} & -A_{11}^{-1} A_{12} A_{22}^{-1} \\ & A_{22}^{-1}
    \end{bmatrix}
    \begin{bmatrix}
      B_{11} & B_{12} \\ & B_{22}
    \end{bmatrix}
    \\
    &= \begin{bmatrix}
      C_{11} A_{11}^{-1} B_{11}
      & C_{11} A_{11}^{-1} B_{12} - C_{11} A_{11}^{-1} A_{12} A_{22}^{-1}
      B_{22} + C_{12} A_{22}^{-1} B_{22} \\
      & 0
    \end{bmatrix}.
  \end{split}
\end{equation*}
Since the $(1,2)$ block must be zero, we have that
% $C_{11} A_{11}^{-1} B_{12} {}- C_{11} A_{11}^{-1} A_{12} A_{22}^{-1}
% B_{22} {}+ C_{12} A_{22}^{-1} B_{22} = 0$
$A_{12} = A_{11} C_{11}^{-1} C_{12} + B_{12} B_{22}^{-1} A_{22}$, which
we can use to verify that
\begin{equation*}
  \compressedmatrices
  \begin{bmatrix}
    S_{11} & B_{12} \\ & B_{22}
  \end{bmatrix}
  \begin{bmatrix} D_\alpha \\ & I \end{bmatrix}
  \begin{bmatrix}
    T_{11} & D_\gamma^{-1} C_{12} \\
    & B_{22}^{-1} A_{22}
  \end{bmatrix}
  % =
  % \begin{bmatrix}
  %   A_{11} & S_{11} D_\alpha D_\gamma^{-1} C_{12} + B_{12} B_{22}^{-1} A_{22} \\
  %   & A_{22}
  % \end{bmatrix}
  =
  \begin{bmatrix}
    A_{11} & A_{11} C_{11}^{-1} C_{12} + B_{12} B_{22}^{-1} A_{22} \\
    & A_{22}
  \end{bmatrix}
\end{equation*}
is equal to the leading principal $2\times 2$ blocks of $A$. The rest of the
proof that $S$ and $T$ are of the form in \eqref{eq:postST} is by direct
verification.

\newcommand*\Sa{\Sigma_\alpha}
\newcommand*\Sb{\Sigma_\beta}
\newcommand*\Sc{\Sigma_\gamma}

Now that we have the decomposition \eqref{eq:postdecomp}, we can plug it
back into the Schur-form RSVD from
\eqref{eq:schurforma}--\eqref{eq:schurformb} to get the triplet
$(\iter{A}{\ell}, \iter{B}{\ell}, \iter{C}{\ell})$. Here, we use a
similar notation as in the preprocessing phase, with a similar numbering
of the blocks. Define
\begin{equation*}
  \compressedmatrices
  (\iter{X}{\ell})^{-T} =
  \begin{bmatrix}
    I & & \frac{1}{2} A_{15} & B_{14} \\
    & S & A_{25} & B_{24} \\
    & & A_{35} & B_{34} \\
    & & & B_{44} \\
    & & & & I
  \end{bmatrix}
  \quad\text{and}\quad
  (\iter{Y}{\ell})^{-1} =
  \begin{bmatrix}
    I \\
    & C_{12} & C_{13} & C_{14} & C_{15} \\
    & & A_{13} & A_{14} & \frac{1}{2} A_{15} \\
    & & & T \\
    & & & & I
  \end{bmatrix}
\end{equation*}
cf.\ \eqref{eq:schursparseST}, where $S$ and $T$ are as in
\eqref{eq:postST}, and let
\begin{equation*}
  \iter{A}{\ell+1} = (\iter{X}{\ell})^T\! \iter{A}{\ell} \iter{Y}{\ell},
  \qquad
  \iter{B}{\ell+1} = (\iter{X}{\ell})^T\!  \iter{B}{\ell},
  \quad\text{and}\quad
  \iter{C}{\ell+1} = \iter{C}{\ell} \iter{Y}{\ell}.
\end{equation*}
Then the above three matrices are as in \eqref{eq:schursparse}, but with
$A_{24}$, $B_{23}$, and $C_{24}$ replaced by $\widetilde \Sigma_\alpha$,
$\widetilde \Sigma_\beta$, and $\widetilde \Sigma_\gamma$, respectively.
Hence, with the appropriate block permutations we get the matrices
\begingroup
\allowdisplaybreaks
\begin{align*}
  \compressedmatrices
  \iter{A}{\ell+2} &=
  \begin{bmatrix}
    0 & 0 & D_\alpha & 0 & 0      & 0 \\
    0 & 0 & 0        & I & 0      & 0 \\
    0 & 0 & 0        & 0 & \ddots & 0 \\
    0 & 0 & 0        & 0 & 0      & I \\
    0 & 0 & 0        & 0 & 0      & 0 \\
    0 & 0 & 0        & 0 & 0      & 0
  \end{bmatrix},
  \\
  \iter{B}{\ell+2} &=
  \begin{bmatrix}
    D_\beta & 0      & 0      & 0      & 0 & 0 \\
    0       & I      & 0      & 0      & 0 & 0 \\
    \iter{B_{31}}{\ell+2}  & \iter{B_{32}}{\ell+2} &
    \iter{B_{33}}{\ell+2} & \iter{B_{34}}{\ell+2} & 0 & 0 \\
    0       & 0      & 0      & 0      & 0 & 0 \\
    0       & 0      & 0      & 0      & 0 & 0 \\
    0       & 0      & 0      & 0      & 0 & 0 \\
    0       & 0      & 0      & 0      & 0 & I \\
    0       & 0      & 0      & 0      & 0 & 0
  \end{bmatrix},
  \\
  \iter{C}{\ell+2} &=
  \begin{bmatrix}
    0 & I & 0        & 0 & 0 & 0 & 0 & 0      \\
    0 & 0 & D_\gamma & 0 & 0 & 0 & 0 &
    \iter{C_{28}}{\ell+2} \\
    0 & 0 & 0        & 0 & 0 & 0 & I & \iter{C_{38}}{\ell+2} \\
    0 & 0 & 0        & 0 & 0 & 0 & 0 & \iter{C_{48}}{\ell+2} \\
    0 & 0 & 0        & 0 & 0 & 0 & 0 & \iter{C_{58}}{\ell+2} \\
    0 & 0 & 0        & 0 & 0 & 0 & 0 & 0
  \end{bmatrix}.
\end{align*}
\endgroup
Next we need to compress $[\iter{B_{33}}{\ell+2}\;
\iter{B_{34}}{\ell+2}]$ and $[\iter{C_{48}}{\ell+2};\;
\iter{C_{58}}{\ell+2}]$ (and transfer the transformations ``through''
$\iter{A}{\ell+2}$) to get $\iter{A}{\ell+3} = \iter{A}{\ell+2}$, and
$\iter{B}{\ell+3}$ and $\iter{C}{\ell+3}$ given by
\begin{equation*}
  \compressedmatrices
  \begin{bmatrix}
    D_\beta & 0      & 0   & 0 & 0 & 0 \\
    0       & I      & 0   & 0 & 0 & 0 \\
    \iter{B_{31}}{\ell+3}  & \iter{B_{32}}{\ell+3} & R_B & 0 & 0 & 0 \\
    \iter{B_{41}}{\ell+3}  & \iter{B_{42}}{\ell+3} & 0   & 0 & 0 & 0 \\
    0       & 0      & 0   & 0 & 0 & 0 \\
    \vdots  & \vdots & \vdots & \vdots & \vdots & \vdots \\
    0       & 0      & 0   & 0 & 0 & 0 \\
    0       & 0      & 0   & 0 & 0 & I \\
    0       & 0      & 0   & 0 & 0 & 0
  \end{bmatrix}
  \;\text{and}\;
  \begin{bmatrix}
    0 & I & 0        & 0 & \cdots & 0 & 0 & 0      & 0      \\
    0 & 0 & D_\gamma & 0 & \cdots & 0 & 0 &
    \iter{C_{29}}{\ell+3} & \iter{C_{2,10}}{\ell+3} \\
    0 & 0 & 0        & 0 & \cdots & 0 & I & \iter{C_{39}}{\ell+3} &
    \iter{C_{3,10}}{\ell+3} \\
    0 & 0 & 0        & 0 & \cdots & 0 & 0 & R_C    & 0      \\
    0 & 0 & 0        & 0 & \cdots & 0 & 0 & 0      & 0      \\
    0 & 0 & 0        & 0 & \cdots & 0 & 0 & 0      & 0
  \end{bmatrix},
\end{equation*}
respectively.  Next, take $(\iter{X}{\ell+3})^{-T}$ as
\begin{equation*}
  \compressedmatrices
  \begin{bmatrix}
    I & 0 & 0 & 0 & 0 & 0 & -D_\alpha D_\gamma^{-1} C_{29} R_C^{-1} &
    -D_\alpha D_\gamma^{-1} C_{2,10} & 0 & 0 \\
    0 & I & 0 & 0 & 0 & 0 & 0 & 0 & 0 & 0 \\
    B_{31} D_\beta^{-1} & B_{32} & R_{B} & 0 & 0 & 0 & 0 & 0 & 0 & 0 \\
    B_{41} D_\beta^{-1} & B_{42} & 0 & I & 0 & 0 &
    0 & 0 & 0 & 0\\
    % 0 & 0 & 0 & 0 & \ddots \\
    % 0 & 0 & 0 & 0 & 0 & I \\
    0 & 0 & 0 & 0 & I & 0 & 0 & 0 & 0 & 0 \\
    0 & 0 & 0 & 0 & 0 & I & -C_{39} R_C^{-1} & -C_{3,10} & 0 &
    0\\
    0 & 0 & 0 & 0 & 0 & 0 & R_C^{-1} & 0 & 0 & 0 \\
    0 & 0 & 0 & 0 & 0 & 0 & 0 & I & 0 & 0 \\
    0 & 0 & 0 & 0 & 0 & 0 & 0 & 0 & I & 0 \\
    0 & 0 & 0 & 0 & 0 & 0 & 0 & 0 & 0 & I
  \end{bmatrix}
\end{equation*}
and $(\iter{Y}{\ell+3})^{-1}$ as
\begin{equation*}
  \compressedmatrices
  \begin{bmatrix}
    I & 0 & 0 & 0 & 0 & 0 & 0 & 0 & 0 & 0 \\
    0 & I & 0 & 0 & 0 & 0 & 0 & 0 & 0 & 0 \\
    0 & 0 & I & 0 & 0 & 0 & 0 & 0 & D_\gamma^{-1} C_{29} & D_\gamma^{-1} C_{2,10} \\
    0 & 0 & 0 & I & 0 & 0 & 0 & 0 & 0 & 0 \\
    0 & 0 & -R_B^{-1} B_{31} D_\beta^{-1} D_\alpha & -R_B^{-1} B_{32} &
    R_B^{-1} & 0 & 0 & 0 & -R_B^{-1} B_{31} D_\sigma^{-1} C_{29} &
    -R_B^{-1} B_{31} D_\sigma^{-1} C_{2,10} \\
    0 & 0 & -B_{41} D_\beta^{-1} D_\alpha & -B_{42} &
    0 & I & 0 & 0 & -B_{41} D_\sigma^{-1} C_{29} & -B_{41} D_\sigma^{-1} C_{2,10} \\
    0 & 0 & 0 & 0 & 0 & 0 & I & 0 & 0 & 0 \\
    0 & 0 & 0 & 0 & 0 & 0 & 0 & I & C_{39} & C_{3,10} \\
    0 & 0 & 0 & 0 & 0 & 0 & 0 & 0 & R_C & 0  \\
    0 & 0 & 0 & 0 & 0 & 0 & 0 & 0 & 0 & I
  \end{bmatrix},
\end{equation*}
where we dropped the superscript indices to save horizontal whitespace
and $D_\sigma^{-1} = D_\beta^{-1} D_\alpha D_\gamma^{-1}$, and compute
$\iter{A}{\ell+4} = \Sigma_\alpha$, $\iter{B}{\ell+4} = \Sigma_\beta$,
and $\iter{C}{\ell+4} = \Sigma_\gamma$ with a transformation like in
\eqref{eq:postdecomp}. Here, $\iter{X}{\ell+3}$ is upper triangular if
$p_1 = 0$ in Theorem~\ref{thm:schurform}, and $\iter{Y}{\ell+3}$ is
lower triangular if $q_5 = 0$ in Theorem~\ref{thm:schurform}. We can
always assume that we have the former case if we wish, by transforming
the input triplet as discussed at the end of the preprocessing phase.

% }}}2

% (Subsection) Challenges in floating-point arithmetic. {{{2

\subsection{Challenges in floating-point
arithmetic}\label{sec:numchallenges}

If $B$ or $C$ is singular before the application of the implicit
Kogbetliantz iteration, then we may have $\overline m_{ii}$ after
convergence that should have been zero in exact arithmetic, but are
nonzero due to roundoff errors.  It follows that in floating-point
arithmetic $\overline M$ lacks the desired form of
Proposition~\ref{thm:nzStructure}, or at least, has more nonzeros than
it should. Consider a $3\times 3$ example, where $A = I$ and $C$, $B$,
and $M$ are
\begin{equation*}
  \compressedmatrices
  \begin{bmatrix}
    \iter{c_{11}}{1} & \iter{c_{12}}{1} & \iter{c_{13}}{1} \\
    & \iter{c_{22}}{1} & \iter{c_{23}}{1} \\
    && \iter{c_{33}}{1}
  \end{bmatrix}
  \begin{bmatrix}
    \iter{b_{11}}{1} & \iter{b_{12}}{1} & \iter{b_{13}}{1} \\
    & 0 & 0 \\
    && 0
  \end{bmatrix}
  =
  \begin{bmatrix}
    \iter{m_{11}}{1} & \iter{m_{12}}{1} & \iter{m_{13}}{1} \\
    & 0 & 0 \\
    && 0
  \end{bmatrix}.
\end{equation*}
After eliminating the $(1,2)$ element of $M$ in exact arithmetic we get
\begin{equation*}
  \compressedmatrices
  \begin{bmatrix}
    \iter{c_{11}}{2} & 0 & \iter{c_{13}}{2} \\
    \iter{c_{21}}{2} & \iter{c_{22}}{2} & \iter{c_{23}}{2} \\
    && \iter{c_{33}}{2}
  \end{bmatrix}
  \begin{bmatrix}
    \iter{b_{11}}{2} & 0 & \iter{b_{13}}{2} \\
    \iter{b_{21}}{2} & 0 & \iter{b_{23}}{2} \\
    && 0
  \end{bmatrix}
  =
  \begin{bmatrix}
    \iter{m_{11}}{2} & 0 & \iter{m_{13}}{2} \\
    & 0 & 0 \\
    && 0
  \end{bmatrix}.
\end{equation*}
Eliminating the $(1,3)$ element gives us
\begin{equation*}
  \compressedmatrices
  \begin{bmatrix}
    \iter{c_{11}}{3} & 0 & 0 \\
    \iter{c_{21}}{3} & \iter{c_{22}}{3} & \iter{c_{23}}{3} \\
    \iter{c_{31}}{3} && \iter{c_{33}}{3}
  \end{bmatrix}
  \begin{bmatrix}
    \iter{b_{11}}{3} & 0 & 0 \\
    \iter{b_{21}}{3} & 0 & 0 \\
    \iter{b_{31}}{3} && 0
  \end{bmatrix}
  =
  \begin{bmatrix}
    \iter{m_{11}}{3} & 0 & 0 \\
    & 0 & 0 \\
    && 0
  \end{bmatrix},
\end{equation*}
where $\iter{b_{23}}{2}$ is eliminated at the same time as
$\iter{b_{13}}{2}$ because the vectors $[\iter{b_{11}}{2}\;
\iter{b_{21}}{2}]$ and $[\iter{b_{13}}{2}\; \iter{b_{23}}{2}]$ are
parallel. But in floating-point arithmetic we suffer from roundoff
errors and can expect to end up with
\begin{equation*}
  \compressedmatrices
  \begin{bmatrix}
    \iter{c_{11}}{3} & 0 & 0 \\
    \iter{c_{21}}{3} & \iter{c_{22}}{3} & \iter{c_{23}}{3} \\
    \iter{c_{31}}{3} && \iter{c_{33}}{3}
  \end{bmatrix}
  \begin{bmatrix}
    \iter{b_{11}}{3} & 0 & 0 \\
    \iter{b_{21}}{3} & \epsilon & \epsilon \\
    \iter{b_{31}}{3} && \epsilon
  \end{bmatrix}
  =
  \begin{bmatrix}
    \iter{m_{11}}{3} & 0 & 0 \\
    & \epsilon & \epsilon \\
    % & 0 & 0 \\
    && \epsilon
  \end{bmatrix},
\end{equation*}
where the $(1,2)$ and $(1,3)$ elements are explicitly set to zero. Now,
we can ensure that $\iter{b_{22}}{3} = \iter{b_{33}}{3} = 0$ by ensuring
that Algorithm~\ref{alg:rsvd} produces the output of
Lemma~\ref{thm:algIO} and by copying the elements of the $2\times 2$
results back to the larger matrices. The necessary changes to
Algorithm~\ref{alg:rsvd} are to explicitly set $g_{22} = 0$ whenever
$c_{11} = 0$, and to set $l_{12} = 0$ whenever $b_{22} = 0$.  Still,
this does not take care of the nonzero element $\iter{b_{23}}{3}$, and
at the end of the cycle we end up with a second nonzero on the diagonal
of $B$ and $M$. Moreover, explicitly zeroing $g_{22}$ and $l_{12}$ is
not automatically better than not doing so if we consider cases with
underflow.

For $A$ we face the opposite problem. If $A$ is severely
ill-conditioned, then one of its $2\times 2$ submatrices may become
singular by applying the rotations. Both the order of evaluation and how
the rotations are applied may affect the outcome in these cases. For
more information on the latter, see~\cite[Sec.~3]{Drma97}. This again
shows that the condition numbers of the $2$-by-$2$ matrices $A_{ij}$ are
important, as we already know from Section~\ref{sec:bwdA}.

A naive way to get rid of the unwanted nonzeros is the following: sort
the diagonal entries of $\overline M$ by magnitude after convergence by
picking $U = V = I$ or $U = V = J$ in the $2$-by-$2$ RSVD algorithm.
Then set diagonal entries, and their corresponding rows, to zero if they
are below some threshold, and follow the steps at the end of
Section~\ref{sec:rsvdTwoExact} with similar thresholding.  This strategy
appears reasonable at first sight since the $|\overline m_{ii}|$ are
computed to approximate the singular values of $CA^{-1}B$. But
$\overline M$ is a product of matrices and a rank decision based on a
simple threshold is even less reliable than usual. For example, suppose
that $A = I$ and $B = C = \diag(1,10^{-10})$; then $M = CA^{-1}B =
\diag(1,10^{-20})$ is numerically singular (in IEEE 754 double
precision) for a typical threshold like $2\bm\epsilon$, even though $B$
and $C$ are numerically nonsingular. Another example is with $A = I$, $B
= \diag(1,10^{-20})$, and $C = \diag(1,10^{20})$; now $M = I$ looks
nonsingular, while $B$ and $C$ are both numerically singular. 

The latter of the two examples above is an example of ill-conditioned
restricted singular values, e.g., a relative perturbation of $10^{-15}$
in either $B$ or $C$ may result in a relative perturbation of $10^{5}$
in $M$. Still, declaring the small relative entries of $B$ and $C$ to be
zero is not automatically reasonable, despite the unreliable entries of
$M$.

Another issue is that the implicit Kogbetliantz iteration is not rank
revealing for $B$ and $C$ in general.  For example, if $\iter{C}{0} =
\minimat{1}{10^{10}}{0}{1}$, $\iter{A}{0} = I \in R^{2 \times 2}$, and
$\iter{B}{0} = 0 \in R^{2 \times 2}$. Then in the odd cycle
$\iter{Q}{0}$ zeros the $(1,2)$ entry of $\iter{C}{0}$.
% $Q = (1 + 10^{20})^{-1/2} \minimat{1}{-10^{10}}{10^{10}}{1}$.
Hence, $\iter{B}{1} = (\iter{C}{0} \iter{Q}{0})^T$
% $\approx \minimat{10^{10}}{1}{0}{10^{-10}}$
so that $(\iter{P}{1})^T = \iter{Q}{0}$, and thus $\iter{C}{2} =
((\iter{P}{1})^T \iter{B}{1})^T = \iter{C}{0}$.  This example also
demonstrates that it does not suffice to just look at the diagonal
entries. The diagonal entries of $\iter{C}{0}$ are both 1, but the
condition number of $\iter{C}{0}$ is approximately $10^{20}$.

Now suppose that we have decided that both $\overline m_{ii}$ and
$\overline m_{jj}$ should be zero. If we set diagonal elements of $B$ or
$C$ to zero without doing anything else, then we do not automatically
get $\overline m_{ij} = 0$. That is, we need to be careful not to
introduce new nonzeros while zeroing elements.

Although the examples above are not exhaustive, it should be clear by
now that determining which entries of $B$, $C$, and $M$ should be zero
is a nontrivial problem. Similar problems exist for the generalized
eigenvalue problem (see, for example, Stewart and
Sun~\cite[Ch.~6]{SS90}), and their solutions are outside the scope of
this work.  Although it may appear that Zha's algorithm and the
algorithm of Chu et al.\ do not suffer from these issues, similar
problems hide inside the rank decisions in their preprocessing phases.
The difference is that we move part of these rank-decision woes to the
postprocessing that comes after the Kogbetliantz phase, and which we can
omit if the Schur-form RSVD is sufficient for our needs.  Furthermore,
we are in a better position to spot sensitivity issues after the
Kogbetliantz phase and with the help of Algorithm~\ref{alg:extract}.

% }}}2

% }}}1

% (Section) Nonorthogonal transformations. {{{1

\section{Nonorthogonal transformations}\label{sec:nonorth}

In the first step of the preprocessing phase we compress $A$ using
orthonormal transformations that we compute with some URV decomposition.
We can sometimes do better, for example for graded matrices, if we allow
arbitrary nonsingular transformations. Then we can replace the URV
decomposition by a rank-reveal LU or LDU decomposition, or some other
rank-revealing decomposition. This is also what Drma\v{c}'s algorithm
for the RSVD~\cite{Drma00} uses. The algorithm below summarizes the
modified algorithm for a simplified input.

% (Algorithm) Nonrothogonal RSVD. {{{

\begin{myalgorithm}[Nonorthogonal RSVD]\label{alg:nonorthRSVD}
\strut\\*
\rm
  \textbf{Input:} Square and upper-triangular $k \times k$ matrices $A$,
    $B$, $C$, and $A$ nonsingular. \\
  \textbf{Output:} Nonsingular matrices $X$ and $Y$, and orthonormal
    matrices $U$ and $V$, such that $X^T\!AY$, $X^T\!BU$, and $V^T\!CY$
    are upper-triangular, and $V^T\! CA^{-1}B U$ is diagonal.  \\
  % \textbf{begin} \\ % Necessary for etna.cls.
  \tab[1.] Compute $D_B = \diag(\|\vec e_i^T B\|)$ and $D_C =
    \diag(\|C\vec e_i\|)$. \\
  \tab[2.] Set $A_1 = D_B^{-1} A D_C^{-1}$, $B_1 = D_B^{-1} B$, and $C_1
    = C D_C^{-1}$. \\
  \tab[3.] Compute the LDU decomposition $\Pi_r A_1 \Pi_c = L_A D_A U_A$
  with full pivoting. \\
  \tab[4.] Compute the RQ decomposition $L_A^{-1} \Pi_r B_1 = R_B Q_B^T$. \\
  \tab[5.] Compute the QR decomposition $C_1 \Pi_c U_A^{-1} = Q_C R_C$. \\
  % \tab[4.] Compute the RSVD of $(D_A, R_B, R_C)$.
  \tab[6.] Use Algorithm~\ref{alg:kog} to compute $U$, $V$, $P$, and
    $Q$ such that $P^T D_A Q$, $P^T R_B U$, \\
  \tab and $V^T R_C Q$ are upper triangular, and $V^T R_C D_A^{-1} R_B U$ is diagonal. \\
  % \tab and $(V^T R_C Q)(P^T D_A Q)^{-1}(P^T R_B U)$ is diagonal. \\
  \tab[7.] Accumulate $U = Q_B U$, $V = Q_C V$, $X^{T} = P^T L_A^{-1}
    \Pi_r D_B^{-1}$,  and $Y = D_C^{-1} \Pi_c U_A^{-1} Q$.
  % \tab[2.] Set $A = D$, $B = L^{-1} \Pi_r B$, and $C = C \Pi_c U^{-1}$.  \\
  % \tab[3.] Compute $B = R_B U^T$ and $C = VR_C$ with a QR and RQ
  % decomposition, repectively.
  % \\ \textbf{end} % Necessary for etna.cls.
\end{myalgorithm}

% }}}

The benefit of Algorithm~\ref{alg:nonorthRSVD} is that it does not just
produce orthonormal $U$ and $V$ such that $V^T\! CA^{-1}B U$ is diagonal.
It also produces nonsingular $X$ and $Y$ such that $X^T\!AY$, $X^TBU$,
and $V^T\!CY$ are upper triangular. Although Drma\v{c} does not discuss
it, we can compute such $X$ and $Y$ a posteriori when using his
algorithm.  The problem then is that the necessary computations are
nontrivial when $B$ and $C$ are nonsingular, and we run into some of the
challenges from Section~\ref{sec:numchallenges}.

% }}}1

% (Section) Numerical experiments. {{{

\section{Numerical experiments}\label{sec:num}

Our numerical testing consists of three parts. In the first part, we
test Algorithm~\ref{alg:rsvdeta} and plot the distribution of the
largest magnitudes of the computed $(1,2)$ entries, the largest relative
errors, and the values of the $\eta_{\max}$s.  We do this for both
$\tau_\eta = 1$ and $\tau_\eta = \infty$. That is, both when we always
change the columns of $U$ and $V$ if it improves $\eta_{\max}$, and when
we never change the columns. We compare the results with similar results
from Zha's method.
In the second part we consider nonsingular and upper-triangular $n
\times n$ matrix triplets, and again plot the distribution of the
largest magnitudes of $(1,2)$ elements and the $\eta_{\max}$s, but not
the relative errors. This time, we test more tolerances than just
$\tau_\eta = 1$ and $\tau_\eta = \infty$, and try to see how the value
of $\tau_\eta$ affects the accuracy and the rate of convergence of the
implicit Kogbetliantz iteration.
In the third and final part, we compare the difference in accuracy
between Algorithm~\ref{alg:rsvdeta}, Zha's method~\cite{Zha92},
Drma\v{c}'s method~\cite{Drma00}, and the method from Chu, De~Lathauwer,
and De~Moor~\cite{CLM00}. We do this just for a small class of matrices
for brevity.

For the standard linear algebra routines, such as matrix multiplication,
and matrix decompositions, such as QR with column pivoting and the
Jacobi SVD, we use Eigen~\cite{eigenweb}. For the high-precision
arithmetic we use Boost Multiprecision~\cite{mpreal}, which we can use
in combination with Eigen in a straightforward manner.

% (Subsection) {{{2

\subsection{Testing 2-by-2 RSVDs}\label{sec:numrsvdtwo}

We can test Algorithm~\ref{alg:rsvdeta} for a given matrix triplet by
computing its result both with double-precision and with high-precision
floating-point arithmetic. Then we compare the results and use the
high-precision result in place of the exact result. With this approach,
we need to be careful when dealing with $M$, because we need to ensure
that the double-precision and high-precision results approximate the
same quantities. Hence, we proceed as follows.
First, we generate $2\times 2$ upper-triangular matrices $A$, $B$, and
$C$ in double precision.  Each entry has the form $s\cdot 2^p$, where
the sign $s$ a Rademacher distributed random variable and the exponent
$p$ a uniform random variable in $[-333, 333)$. The range of $p$ is such
that the product $M = C \adj(A) B$, when computed in higher precision,
can still be represented in double precision without overflow or
underflow.  Next, we take $\overline M = \fl(M)$ and use $\overline M$
as the input for both the double and the high precision RSVD
computation. This ensures that we compute the SVD of the same matrix in
both cases, and that the high-precision product $C \adj(A) B$
approximates the double precision $\overline M$ as well as possible.
Since we generate matrices that may have extremely large or small
values, we need to ensure that no overflow, underflow, or other
numerical difficulties occur by rejecting samples that satisfy one or
more of the following conditions.
\begin{enumerate}
  \item The entry $\overline m_{12}$ is nonzero and $\min \{ |\overline
    u_{11}|, |\overline u_{12}| \} = 0$ or $\min \{ |\overline v_{11}|,
    |\overline v_{12}| \} = 0$. In this case the
    computation of the SVD underflows and the zero entries lack a
    high-relative precision, which means that Fact~1 in
    Theorem~\ref{thm:stablersvd} does not hold.
  \item The bound for the off-diagonal error
    \begin{equation*}
      \fl\left( \frac{|\overline \sigma_{12}| + |\overline
      \sigma_{21}|}{|\overline m_{11}| + |\overline m_{12}| + |\overline
      m_{22}|} \right)
      \le \frac{\sqrt{2} \|E\|_F}{\|\overline M\|_F} (1 + 4\bm\epsilon)
      \le \frac{2 \|E\|_2}{\|\overline M\|_2} (1 + 4\bm\epsilon)
      < 281 \bm\epsilon
    \end{equation*}
    does not hold, where
    \begin{equation*}
      E
      = \overline \Sigma - \Sigma
      = \fl(\overline V^T\! \overline M \overline U) - V^T\! \overline M U
      = \delta V^T\! \overline M U + V^T\! \overline M \delta U + F
      % \overline \Sigma
      % = \fl(\overline V^T\! \overline M \overline U)
      % = V^T\! \overline M U + \underbrace{\delta V^T\! \overline M U +
      % V^T\! \overline M \delta U + E}_{F},
    \end{equation*}
    for some $F$ containing roundoff errors. This is because we require
    $\overline V$ and $\overline U$ to diagonalize $\overline M$
    properly, and because we know the facts from the proof of
    Theorem~\ref{thm:stablersvd} imply that $\|E\| \le (2\cdot 66 +
    2\cdot 4) \bm\epsilon \|\overline M\|$ in the absence of underflow
    and overflow.  \footnote{The goal of this section is not to verify
    this claim. Moreover, the proof of this part of
    Theorem~\ref{thm:stablersvd} is straightforward and identical to the
    proof from Bai and Demmel~\cite[Thm.~3.1]{BD93}.}.
  \item The singular values of $\overline M$ are too close to each
    other, say within a relative distance of $10^{-14}$, in which case
    the columns of the double and high-precision $U$ and $V$ may be in a
    different order and the results hard to compare.
\end{enumerate}
Rejecting triplets makes the sampling nonuniform, but also allows us to
test a larger range of floating-point numbers as entries of $A$, $B$,
and $C$. 

% (Figure) Errors {{{

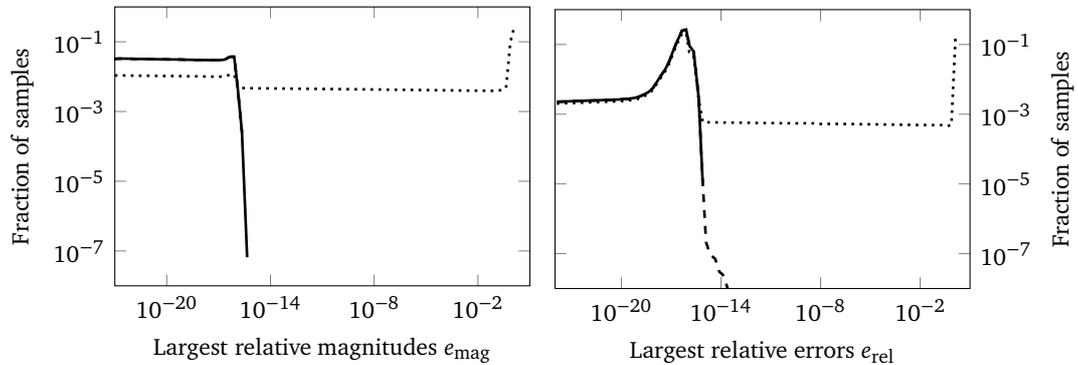
\begin{figure}[htbp!]
  \centerfloat
  % \tikzexternaldisable
  \begin{tikzpicture}
    \begin{loglogaxis}[
          height=0.33\textwidth
        , width=0.44\textwidth
        % , xlabel={Rel.\ mag.\ of $(1,2)$ entries}
        , xlabel={Largest relative magnitudes $e_{\text{mag}}$}
        , ylabel={Fraction of samples}
        , every axis legend/.append style = {
              anchor = south east
            , at = {(0.98, 0.02)}
          }
        %   height = 0.325\textwidth
        % , width = 0.475\textwidth
        % , enlargelimits = false
        , enlarge x limits = false
        , xmax = 10
        , xmin = 1e-23
        % , xtick = {1e-20, 1e-10, 1}
        % , xtick = {1e-19, 1e-11, 1e-3}
        , xtick = {1e-20, 1e-14, 1e-8, 1e-2}
        , ymax = 1
        , ymin = 1e-8
        ]

        % \addplot+ table [x expr=10^\thisrow{Up},y=Max01]
        % \addlegendentry{{\footnotesize RSVD22mod}}
        \addplot+ table [x expr=10^((\thisrow{Lo} + \thisrow{Up})/2),y=Max01]
        {\tableSwapOnErr};
        % {alpha_swapon_range_1e9_err.txt};

        % \addlegendentry{{\footnotesize RSVD22}}
        \addplot+ table [x expr=10^((\thisrow{Lo} + \thisrow{Up})/2),y=Max01]
        {\tableSwapOffErr};
        % {bravo_swapoff_range_1e9_err.txt};

        % \addlegendentry{{\footnotesize Zha}}
        \addplot+ table [x expr=10^((\thisrow{Lo} + \thisrow{Up})/2),y=Max01]
        {\tableZhaErr};
        % {alpha_zha_range_1e9_err.txt};
    \end{loglogaxis}
  \end{tikzpicture}
  ~
  \begin{tikzpicture}
    \begin{loglogaxis}[
          height=0.33\textwidth
        , width=0.44\textwidth
        , xlabel={Largest relative errors $e_{\text{rel}}$}
        , ylabel={Fraction of samples}
        , every axis legend/.append style = {
              anchor = south east
            , at = {(0.98, 0.02)}
          }
        %   height = 0.325\textwidth
        % , width = 0.475\textwidth
        % , enlargelimits = false
        % , enlarge x limits = false
        , xmax = 10
        , xmin = 1e-24
        , xtick = {1e-20, 1e-14, 1e-8, 1e-2}
        , ymax = 1
        , ymin = 1e-8
        , yticklabel pos=right
        ]

        % \addplot+ table [x expr=10^\thisrow{Up},y=Max01]
        % \addlegendentry{RSVD22mod}
        \addplot+ table [x expr=10^((\thisrow{Lo} + \thisrow{Up})/2),y=Maxerr]
        {\tableSwapOnErr};
        % {alpha_swapon_range_1e9_err.txt};

        % \addlegendentry{RSVD22}
        \addplot+ table [x expr=10^((\thisrow{Lo} + \thisrow{Up})/2),y=Maxerr]
        {\tableSwapOffErr};
        % {bravo_swapoff_range_1e9_err.txt};

        % \addlegendentry{Zha}
        \addplot+ table [x expr=10^((\thisrow{Lo} + \thisrow{Up})/2),y=Maxerr]
        {\tableZhaErr};
        % {alpha_zha_range_1e9_err.txt};
    \end{loglogaxis}
  \end{tikzpicture}
  % \tikzexternalenable
  \caption{Normalized histograms of the $\log_{10}(e_{\text{mag}})$
    (left) and $\log_{10}(e_{\text{rel}})$ (right) of the results from
    Algorithm~\ref{alg:rsvdeta} with $\tau_\eta = 1$ (solid) and
    $\tau_\eta = \infty$ (dashed), as well as Zha's method (dotted).
    The figures do not show the individual bars for the histograms to
    avoid clutter and because each figure shows the results for three
    different algorithms.  Moreover, the histograms only include samples
    $e_{\text{mag}}, e_{\text{rel}} \ge 10^{-24}$, which means they only
    show the tail ends of the true distribution of the
    samples.}\label{fig:errors}
\end{figure}

% }}}

Given the inputs $A$, $B$, $C$, and $\overline M$, we compute $\overline
A'$, $\overline B'$, $\overline C'$, $\overline H'$, and $\overline K'$
with Algorithm~\ref{alg:rsvdeta} in double precision, both with
$\tau_\eta = 1$ and with $\tau_\eta = \infty$. Then we compute $A'$,
$B'$, $C'$, $H'$, and $K'$ with 100 decimals of precision, while making
sure the high-precision computations take the same conditional branches
in the algorithm as the double-precision computations.  Given these
results, we can compute the maximum of the relative magnitudes as
\begin{equation*}
  e_{\text{mag}} = \max \left\{
    \frac{|\overline A'_{12}|}{\|A\|_F},
    \frac{|\overline B'_{12}|}{\|B\|_F},
    \frac{|\overline C'_{12}|}{\|C\|_F},
    \frac{|\overline H'_{12}|}{\|A\|_F \|B\|_F},
    \frac{|\overline K'_{12}|}{\|A\|_F \|C\|_F}
  \right\}.
\end{equation*}
We can also compute the maximum of the relative errors as
\begin{equation*}
  e_{\text{rel}} = \max \left\{
    \frac{\|\overline A' - A'\|_F}{\|A\|_F},
    \frac{\|\overline B' - B'\|_F}{\|B\|_F},
    \frac{\|\overline C' - C'\|_F}{\|C\|_F},
    \frac{\|\overline H' - H'\|_F}{\|A\|_F \|B\|_F},
    \frac{\|\overline K' - K'\|_F}{\|A\|_F \|C\|_F}
  \right\}.
\end{equation*}
% NOTE: $\|\overline A' - A'\|_F = \|\overline A' - P^T\! A Q\|_F$, so
% if $\overline A' = P^T (A + \delta A) Q$, then we measure $\|\delta
% A\|$, etc.
Next, we run Zha's algorithm --- which, for nonsingular input, can be
thought of as always computing $\overline P$ from $\overline G$ and
$\overline Q$ from $\overline L$ in Algorithm~\ref{alg:rsvd} --- for
comparison and compute the same quantities. For each set of inputs and
outputs we pick the corresponding maxima of the $e_{\text{mag}}$s and
the $e_{\text{rel}}$s, for a total of $10^9$ generated sets, and plot
their distributions in Figure~\ref{fig:errors}. We also keep track of
the $\overline\eta_{\max}$; see Figure~\ref{fig:etas}.

% (Figure) Etas {{{

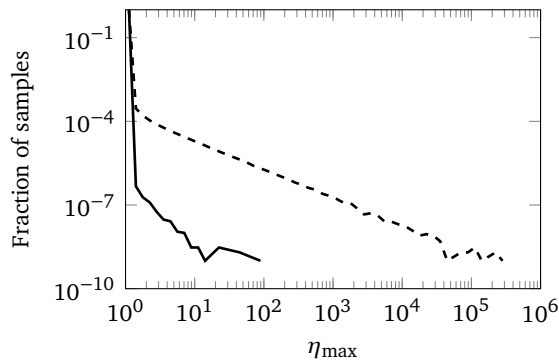
\begin{figure}[htbp!]
  \centerfloat
  % \tikzexternaldisable
  \begin{tikzpicture}
    \begin{loglogaxis}[
          height=0.33\textwidth
        , width=0.44\textwidth
        , xlabel={$\eta_{\max}$}
        , ylabel={Fraction of samples}
        % , every axis legend/.append style = {
        %       anchor = south east
        %     , at = {(0.98, 0.02)}
        %   }
        % , enlargelimits = false
        , enlarge x limits = false
        , xmin = 1
        , xmax = 1e6
        , ymax = 1
        , ymin = 1e-10
        ]

        % \addlegendentry{RSVD22-$\eta$}
        % \addplot+ table [x expr=((\thisrow{Lo} + \thisrow{Up})/2),y=best_max]
        \addplot+ table [x expr=10^((\thisrow{Lo} + \thisrow{Up})/2),y=best_max]
        {\tableSwapOnEta};
        % {alpha_swapon_range_1e9_eta.txt};

        % \addlegendentry{RSVD22}
        \addplot+ table [x expr=10^((\thisrow{Lo} + \thisrow{Up})/2),y=eta_max1]
        {\tableSwapOffEta};
        % {alpha_swapon_range_1e9_eta.txt};
    \end{loglogaxis}
  \end{tikzpicture}
  \caption{The distribution of $\overline\eta_{\max}$ in
  Algorithm~\ref{alg:rsvdeta} with $\tau_\eta = 1$ (solid) and
  $\tau_\eta = \infty$ (dashed).}\label{fig:etas}
\end{figure}

% }}}

Figure~\ref{fig:errors} shows that Algorithm~\ref{alg:rsvdeta} with
$\tau_\eta = \infty$ (which is identical to Algorithm~\ref{alg:rsvd})
zeroes the $(1,2)$ entries with high precision. In fact, even though
$\overline\eta_{\max}$ can become large, as shown in
Figure~\ref{fig:etas}, the largest $(1,2)$ entry is still $\mathcal
O(\bm\epsilon)$. For the relative errors we do see a difference between
$\tau_\eta = 1$ and $\tau_\eta = \infty$. In particular, the maximum
relative errors remain close to $\bm\epsilon$ when we always let the
algorithm change the columns of $U$ and $V$ when it improves
$\overline\eta_{\max}$, while the maximum relative error can become two
or three orders of magnitude larger when we never let the algorithm
improve $\overline\eta_{\max}$.  Still, these larger errors stay small
and they are also rare.

The numerical results are an upper bound for the errors that we see in
the $\overline A'$, which is encouraging since the error analysis for
$\overline A'$ depends on $\overline\eta_{\max}$, and none of our bounds
suggest that $\overline\eta_{\max}$ has to be small. All we know is that
$\eta_{\max} \lesssim 4\kappa(A)$ when $\tau_\eta = 1$, but $\kappa(A)$
may be over $10^{400}$ for the generated $A$s due to the numerical range
of their entries. Yet, Figure~\ref{fig:etas} shows that we can expect
$\overline\eta_{\max} \ll 4\kappa(A)$ in practice when $\tau_\eta = 1$.
Even when $\tau_\eta = \infty$, $\overline\eta_{\max}$ with values
larger than $\mathcal O(1)$ are rare.  Furthermore, even though the
largest $\overline\eta_{\max}$ we get exceeds $10^5$, the largest error
in Figure~\ref{fig:errors} is considerably smaller than the bound
$\sqrt{2} (44.5 + 342 \cdot 10^5) \bm\epsilon \approx 4.8 \cdot 10^7
\bm\epsilon$ from Theorem~\ref{thm:errA} (where the factor $\sqrt{2}$
comes from using the Frobenius norm instead of using the 2-norm).

Zha's algorithm does not take the $\overline\eta$s into account, and we
see that it may have large relative errors or fail to zero $(1,2)$
entries as a result. The reason for this is that by ignoring the
$\overline\eta$s, the algorithm effectively always computes $P$ from
$BU$ and $Q$ from $V^T\!C$. This in turn means that the rotations may be
computed from numerical noise, because the elements of the input
matrices may be extremely small or extremely large. Hence, ignoring the
$\overline\eta$s should be less of an issue if the input matrices are
well conditioned.  Moreover, Figure~\ref{fig:errors} just shows the tail
ends of the error distributions; hence, the results do not imply that
Zha's algorithm fails to find an accurate solution in, e.g., a third of
the cases.

% }}}2

% (Subsection) {{{2

\subsection{Testing the implicit Kogbetliantz iteration}

We can test the implicit Kogbetliantz iteration by computing the RSVs of
generated matrices with known restricted singular values and prescribed
condition numbers. Specifically, we wish to generate upper-triangular $n
\times n$ matrices
\begin{equation*}
  A = PS \Sigma_\alpha TQ^T,
  \qquad
  B = PS \Sigma_\beta U^T
  \quad\text{and}\quad
  C = V \Sigma_\gamma TQ^T;
\end{equation*}
where $P$, $Q$, $U$, and $V$ are orthonormal; $S$ and $T$ are upper
triangular, nonsingular, and such that $s_{ii} = t_{ii}$ for $i = 1$,
\dots, $n$; and $\Sigma_\alpha = \diag(\alpha_1, \dots, \alpha_n)$,
$\Sigma_\beta = \diag(\beta_1, \dots, \beta_n)$, and $\Sigma_\gamma =
\diag(\gamma_1, \dots, \gamma_n)$ with $\alpha_i, \beta_i, \gamma_i \ge
0$ and $\alpha_i^2 + \beta_i^2 \gamma_i^2 = 1$ for $i = 1$, \dots, $n$.
Furthermore, we wish to control the condition numbers of $S$ and $T$,
and control the ratios between the largest and smallest RSVs, $\alpha$s,
$\beta$s, and $\gamma$s. There exists no unique way to generate such
matrices, and we limit the discussion to matrices randomly generated by
the procedure described below.

The first step is to generate the desired RSVs $\sigma_i$ for $i = 1,
\dots, n$, and compute $\alpha_i^2 = \sigma_i^2 (\sigma_i^2 + 1)^{-1}$
and $\beta_i^2 \gamma_i^2 = (\sigma_i^2 + 1)^{-1}$.
% \begin{equation*}
%   \alpha_i^2 = \frac{\sigma_i^2}{\sigma_i^2 + 1}
%   \quad\text{and}\quad
%   \beta_i^2 \gamma_i^2 = \frac{1}{\sigma_i^2 + 1}.
% \end{equation*}
We need to be careful when picking the $\sigma_i$, because if all
$\sigma_i \ge 1$, then all $\alpha_i$ are $\mathcal O(1)$. Likewise, if
all $\sigma_i \le 1$, then all $\beta_i \gamma_i$ are $\mathcal O(1)$.
These situations are undesirable if we want a large variation in the
range of the $\alpha_i$ and $\beta_i \gamma_i$. We can avoid this
problem by using a scaled version of the diagonals generated by LAPACKs
\textsf{xLATME}. In particular, given a condition number $\kappa$, we
randomly pick one of the following sets of $\sigma_i$:
\begin{enumerate}
  \item $\sigma_1 = \sqrt{\kappa}$ and $\sigma_2 = \dots \sigma_n =
    1/\sqrt{\kappa}$;
  \item $\sigma_1 = \dots = \sigma_{n-1} = \sqrt{\kappa}$ and
    $\sigma_n = 1/\sqrt{\kappa}$;
  \item $\sigma_i = \kappa^{1/2 - (i - 1)/(n - 1)}$;
  \item $\sigma_i = \kappa^{1/2} (1 - (i - 1) / (n - 1) \cdot (1 - 1 /
    \kappa))$;
  \item set the $\sigma_i$ to random numbers in the interval
    $(\kappa^{-1/2}, \kappa^{1/2})$ such that the $\log(\sigma_i)$ are
    uniformly distributed in the interval $(-1/2 \log(\kappa), 1/2
    \log(\kappa))$.
\end{enumerate}
While these $\sigma_i$ determine our $\alpha_i$ and the products
$\beta_i \gamma_i$, we still need to select the individual values of
$\beta_i$ and $\gamma_i$. To do so, we generate random numbers
$\delta_i$ so that the $\log(\delta_i)$ are uniformly distributed in
$(-1/8 \log(\kappa), 1/8 \log(\kappa))$, and set $\beta_i =
\sqrt{\beta_i \gamma_i} / \delta_i$ and $\gamma_i = \sqrt{\beta_i
\gamma_i} \cdot \delta_i$. The result is that $\sigma_1 / \sigma_n =
\kappa$, $\alpha_1 / \alpha_n = \beta_n \gamma_n / (\beta_1 \gamma_1) =
\sqrt{\kappa}$, and the ratios between the largest and smallest largest
and smallest $\beta_i$s and $\gamma_i$s are bounded by $\sqrt{\kappa}$.

The next step is to generate suitable $S$ and $T$. Exploratory testing
showed that \textsf{xLATME} produced severely ill-conditioned $A$, even
for small $n$. Another idea is to generate $\widetilde\sigma_i$ in the
same way as above, generate random orthonormal~\cite{Mezz07} $\widetilde
U$ and $\widetilde V$, and take $\widetilde S$ as the upper-triangular
factor of the QR decomposition of $\widetilde U \diag(\widetilde
\sigma_i) \widetilde V$. We can generate $\widetilde T$ likewise, and
then compute $S = \widetilde S D$ and $T = D^{-1} \widetilde T$, where
$D = \diag((\widetilde t_{ii} / \widetilde s_{ii})^{1/2})$. The result
is that $\widetilde s_{ii} \widetilde t_{ii} = s_{ii}^2 = t_{ii}^2$,
although the condition numbers of $S$ and $T$ are no longer exactly
equal to the condition numbers of $\widetilde S$ and $\widetilde T$,
respectively.

Once we have $S$ and $T$, we generate random orthonormal $\widetilde P$,
$\widetilde Q$, $\widetilde U$, and $\widetilde V$, and compute
$\widetilde A = \widetilde P^T S \Sigma_\alpha T \widetilde Q$,
$\widetilde B = \widetilde P S\Sigma_\beta \widetilde U^T$, and
$\widetilde C = \widetilde V \Sigma_\gamma T \widetilde Q^T$. The final
step to get the triplet $(A,B,C)$, is to run the preprocessing from
Section~\ref{sec:pre} on the triplet $(\widetilde A, \widetilde B,
\widetilde C)$. Specifically, since $A$ is nonsingular, we can get $A$
from $\widetilde A$ with a QR decomposition, $B$ from $\widetilde B$
with an RQ decomposition, and $C$ from an appropriately transformed
$\widetilde C$ with another QR decomposition.

% (Table) Kogbetliantz. {{{

\begin{sidewaystable}[!htbp]
  \small
  \renewcommand\tabcolsep{2pt} % Default is 6pt?
  \centering
  \caption{The relation between the swap tolerance, the number of
    iterations until convergence, and corresponding errors. Each entry
    has the form: mean (maximum). The mean (maximum) condition numbers
    of the generated matrices can be found in Table~\ref{tab:cond}.
  }\label{tab:conv}
  \begin{tabular}{@{}rrrlrrrrrrrr@{}}
    \toprule
    &&&& \multicolumn{8}{c}{\textbf{Tolerance} $\bm{\tau_\eta}$} \\
    \cmidrule{5-12}
    % \cmidrule(lr){5-12}
    $\bm{n}$ & $\bm{\kappa_{ST}}$ & $\bm{\kappa_\sigma}$
    & \textbf{Measure}
    % \textbf{Measure \boldsymbol\textbackslash\ $\bm\tau$}
    & $\bm{1}$ & $\bm{1.01}$ & $\bm{4}$ & $\bm{10}$
    & $\bm{10^2}$ & $\bm{10^4}$ & $\bm{10^8}$ & $\bm{\infty}$ \\
    \midrule
    10 & $10^{\phantom{1}}$ & $10^{4}$ & pairs of cycles       &  $8.33$    ($29$) &  $5.66$    ($17$) &  $3.67$     ($9$) &  $3.67$     ($9$) &  $3.64$     ($9$) &  $3.68$     ($9$) &  $3.65$     ($9$) &  $3.65$     ($9$) \\
       &          &          & $\log_{10}(e_{PQUV})$        & $-14.9$ ($-14.3$) & $-14.9$ ($-14.4$) & $-15.0$ ($-14.4$) & $-15.0$ ($-14.5$) & $-15.0$ ($-14.5$) & $-15.0$ ($-14.5$) & $-15.0$ ($-14.5$) & $-15.0$ ($-14.5$) \\
       &          &          & $\log_{10}(e_{ABC})$         & $-14.4$ ($-13.6$) & $-14.6$ ($-14.1$) & $-14.8$ ($-14.3$) & $-14.8$ ($-14.3$) & $-14.8$ ($-14.2$) & $-14.8$ ($-14.3$) & $-14.8$ ($-14.2$) & $-14.8$ ($-14.3$) \\
       &          &          & $\log_{10}(e_{\text{tril}})$ & $-15.2$ ($-14.7$) & $-15.2$ ($-14.8$) & $-15.2$ ($-14.8$) & $-15.3$ ($-14.8$) & $-15.3$ ($-14.8$) & $-15.2$ ($-14.8$) & $-15.3$ ($-14.8$) & $-15.3$ ($-14.8$) \\
       &          &          & $\log_{10}(e_{\chi})$        & $-8.34$ ($-0.14$) & $-15.3$ ($-2.00$) & $-15.5$ ($-14.0$) & $-15.5$ ($-14.0$) & $-15.5$ ($-14.0$) & $-15.5$ ($-13.9$) & $-15.5$ ($-13.9$) & $-15.5$ ($-14.1$) \\
    \cmidrule{2-12}
       & $10^{5}$ & $10^{4}$ & pairs of cycles                 &  $8.27$    ($36$) &  $5.41$    ($23$) &  $3.72$    ($11$) &  $3.66$    ($10$) &  $3.59$     ($8$) &  $3.58$     ($9$) &  $3.58$     ($9$) &  $3.57$    ($10$) \\
       &          &          & $\log_{10}(e_{PQUV})$        & $-14.9$ ($-14.2$) & $-14.9$ ($-14.2$) & $-15.0$ ($-14.4$) & $-15.0$ ($-14.4$) & $-15.0$ ($-14.5$) & $-15.0$ ($-14.5$) & $-15.0$ ($-14.4$) & $-15.0$ ($-14.4$) \\
       &          &          & $\log_{10}(e_{ABC})$         & $-14.4$ ($-13.6$) & $-14.6$ ($-13.8$) & $-14.7$ ($-14.3$) & $-14.7$ ($-14.3$) & $-14.7$ ($-14.3$) & $-14.7$ ($-14.3$) & $-14.7$ ($-14.2$) & $-14.7$ ($-14.3$) \\
       &          &          & $\log_{10}(e_{\text{tril}})$ & $-15.3$ ($-14.6$) & $-15.4$ ($-14.5$) & $-15.7$ ($-14.9$) & $-15.8$ ($-14.8$) & $-15.9$ ($-14.8$) & $-15.9$ ($-14.8$) & $-15.9$ ($-14.9$) & $-15.9$ ($-14.8$) \\
       &          &          & $\log_{10}(e_{\chi})$        & $-7.53$ ($-0.11$) & $-12.8$ ($-7.16$) & $-12.8$ ($-7.71$) & $-12.8$ ($-7.45$) & $-12.7$ ($-7.80$) & $-12.7$ ($-7.37$) & $-12.7$ ($-7.19$) & $-12.8$ ($-7.84$) \\
    \midrule
    50 & $10^{\phantom{1}}$ & $10^{4}$ & pairs of cycles       &  $31.6$    ($50$) &  $26.6$    ($50$) &  $4.43$    ($11$) &  $4.42$    ($10$) &  $4.47$    ($10$) &  $4.44$    ($10$) &  $4.42$    ($12$) &  $4.42$    ($11$) \\
       &          &          & $\log_{10}(e_{PQUV})$        & $-14.3$ ($-13.8$) & $-14.3$ ($-13.8$) & $-14.6$ ($-14.2$) & $-14.6$ ($-14.2$) & $-14.6$ ($-14.1$) & $-14.6$ ($-14.2$) & $-14.6$ ($-14.2$) & $-14.6$ ($-14.1$) \\
       &          &          & $\log_{10}(e_{ABC})$         & $-13.4$ ($-12.7$) & $-13.5$ ($-12.6$) & $-14.2$ ($-13.6$) & $-14.2$ ($-13.6$) & $-14.2$ ($-13.6$) & $-14.2$ ($-13.6$) & $-14.2$ ($-13.6$) & $-14.2$ ($-13.5$) \\
       &          &          & $\log_{10}(e_{\text{tril}})$ & $-14.6$ ($-14.2$) & $-14.6$ ($-14.1$) & $-14.8$ ($-14.5$) & $-14.8$ ($-14.4$) & $-14.8$ ($-14.5$) & $-14.8$ ($-14.5$) & $-14.8$ ($-14.5$) & $-14.8$ ($-14.4$) \\
       &          &          & $\log_{10}(e_{\chi})$        & $-2.85$ ($-0.02$) & $-12.8$ ($-0.87$) & $-14.8$ ($-13.9$) & $-14.8$ ($-13.8$) & $-14.8$ ($-13.9$) & $-14.8$ ($-13.9$) & $-14.8$ ($-13.9$) & $-14.8$ ($-13.9$) \\
    \cmidrule{2-12}
       & $10^{5}$ & $10^{4}$ & pairs of cycles                 &  $31.3$    ($50$) &  $24.9$    ($50$) &  $5.00$    ($21$) &  $4.55$    ($13$) &  $4.31$    ($11$) &  $4.30$    ($10$) &  $4.29$    ($11$) &  $4.29$    ($10$) \\
       &          &          & $\log_{10}(e_{PQUV})$        & $-14.3$ ($-13.5$) & $-14.3$ ($-13.5$) & $-14.6$ ($-13.9$) & $-14.6$ ($-14.1$) & $-14.6$ ($-14.1$) & $-14.6$ ($-14.1$) & $-14.6$ ($-14.1$) & $-14.6$ ($-14.0$) \\
       &          &          & $\log_{10}(e_{ABC})$         & $-13.4$ ($-12.7$) & $-13.5$ ($-12.5$) & $-14.1$ ($-13.3$) & $-14.2$ ($-13.5$) & $-14.2$ ($-13.5$) & $-14.2$ ($-13.4$) & $-14.2$ ($-13.6$) & $-14.2$ ($-13.5$) \\
       &          &          & $\log_{10}(e_{\text{tril}})$ & $-14.7$ ($-14.0$) & $-14.7$ ($-14.0$) & $-15.1$ ($-14.4$) & $-15.2$ ($-14.5$) & $-15.3$ ($-14.6$) & $-15.3$ ($-14.5$) & $-15.3$ ($-14.5$) & $-15.3$ ($-14.5$) \\
       &          &          & $\log_{10}(e_{\chi})$        & $-3.22$ ($-0.02$) & $-10.7$ ($-0.10$) & $-12.5$ ($-7.19$) & $-12.5$ ($-8.22$) & $-12.5$ ($-6.93$) & $-12.5$ ($-7.67$) & $-12.5$ ($-7.95$) & $-12.5$ ($-7.78$) \\
    \bottomrule
  \end{tabular}
\end{sidewaystable}

% grep -e "Mean: [^|]*| [^|]*| [^|]*| [^|]*| [^|]*| [^|]*| [^|]*| [^|]*| [^|]*| [^|]*| [^|]*| [^|]*| [^|]*$" samplenn_* | grep -v nan
% grep -e "Max: [^|]*| [^|]*| [^|]*| [^|]*| [^|]*| [^|]*| [^|]*| [^|]*| [^|]*| [^|]*| [^|]*| [^|]*| [^|]*$" samplenn_* | grep -v "| +0.00e+00 | +0.00e+00 | +0.00e+00 | +0.00e+00 | +0.00e+00 | +0.00e+00"
% grep -e "Mean: [^|]*| [^|]*$" samplenn_*
% grep -e "Max: [^|]*| [^|]*$" samplenn_*
% grep -e "Mean: [^|]*| [^|]*| [^|]*| [^|]*$" samplenn_*

% }}}

We generate the input matrices $A$, $B$, and $C$ in high-precision
arithmetic, and again with 100 decimals of precision. We denote the
$\kappa$ used to generate the RSVs by $\kappa_\sigma$, and the $\kappa$
used to generate $\widetilde S$ and $\widetilde T$ by $\kappa_{ST}$.
Then, we run the implicit Kogbetliantz iteration from
Algorithm~\ref{alg:kog} in double precision. We stop the iterations
after at most 50 pairs of cycles, or earlier if we detect convergence
after an even cycle as described in Section~\ref{sec:kog}. In
particular, we stop earlier when $\rho = \max_{ij} \rho_{ij}$ satisfies
$0.99\rho_{\min} < \rho < 0.01$, where $\rho_{ij}$ is as
in~\eqref{eq:rhoij}.
For solving the $2\times 2$ RSVD we use Algorithm~\ref{alg:rsvdeta}, and
consider the tolerances $\tau_\eta = 1$, $1.01$, $4$, $10$, $100$,
$10^4$, $10^8$, $\infty$. The tolerance $\tau_\eta = 4$ is of interest
because of Lemma~\ref{thm:etaBigEtaSmall}, and $\tau_\eta = 10^8$
because of the connection between Theorem~\ref{thm:Astable} and
because $10^8 \approx \bm\epsilon^{-1/2}$.

For every input triplet we record the number of cycle pairs before
stopping, and also compute the following quantities. First, the maximum
errors in the computed orthogonal matrices given by
\begin{equation*}
  e_{PQUV} = \max \{
    \|\overline P^T \overline P - I\|_F,
    \|\overline Q^T \overline Q - I\|_F,
    \|\overline U^T \overline U - I\|_F,
    \|\overline V^T \overline V - I\|_F
  \} / \sqrt{n}.
\end{equation*}
Second, the maximum errors in the transformations given by
\begin{equation*}
  e_{ABC} = \max \left\{
    \frac{\|\overline P^T\! A \overline Q - \overline A'\|_F}{\|A\|_F},
    \frac{\|\overline P^T\! B \overline U - \overline B'\|_F}{\|B\|_F},
    \frac{\|\overline V^T\! C \overline Q - \overline C'\|_F}{\|C\|_F}
  \right\},
\end{equation*}
where $\overline A'$, $\overline B'$, and $\overline C'$ are the output
matrices of Algorithm~\ref{alg:kog}. Third, $e_{\text{tril}}$, the
largest of the Frobenius norms of the strictly lower-triangular parts of
$\overline P^T\! A \overline Q$, $\overline P^T\! B \overline U$, and
$\overline V^T\! C \overline Q$.
And fourth, $e_{\chi} = \max_{i \in \{ 1, \dots, n\}} \chi(\sigma_i,
\overline\sigma_i)$, where
\begin{equation*}
  \chi(\sigma, \overline\sigma)
  = |\alpha \overline{\beta \gamma} - \overline \alpha \beta \gamma|
  = \frac{|\sigma - \overline\sigma|}{%
    \sqrt{1 + \sigma^2} \sqrt{1 + \overline \sigma^2}}
  = \frac{|\sigma^{-1} - \overline\sigma^{-1}|}{%
    \sqrt{1 + \sigma^{-2}} \sqrt{1 + \overline \sigma^{-2}}}
\end{equation*}
is the chordal metric and measures the distance between the exact and
computed RSVs; see, e.g., Stewart and Sun~\cite[Ch.~6]{SS90}.  See
Table~\ref{tab:conv} for the results.

We see that convergence is slow when $\tau_\eta = 1$, and that a small
tolerance does not improve the errors. The slow convergence is expected,
since we increase the maximum angle of the rotations whenever we
multiply $U$ and $V$ by $J$ to improve $\overline\eta_{\max}$. As a
result, we may not have convergence before the cutoff point of 50
iterations and thus also have large $e_\chi$.  That low tolerances do
not improve the remaining errors is more interesting, but may be
explained by the following two observations. First, more roundoff errors
get accumulated when we performs more cycles; second, larger values of
$\overline\eta_{\max}$ do not affect the accuracy of the results. The
latter matches with the observations from the previous section; that is,
Algorithm~\ref{alg:rsvdeta} typically computes the $2\times 2$ RSVD with
high relative accuracy, even when $\tau_\eta = \infty$.

The table also shows us that we can dramatically improve the rate of
convergence with a small increase of $\tau_\eta$. For example, we see
substantial improvements for $\tau_\eta = 1.01$ and already achieve a
near optimal rate of convergence for $\tau_\eta = 4$. The observation
that we can get fast convergence for small tolerances (larger than 1) is
expected if we look at Figure~\ref{fig:etas}. In particular,
$\overline\eta_{\max}$ is close to 1 most of the time, and large
$\overline\eta_{\max}$ are so rare that any practical difference between
the larger values of $\tau_\eta$ is would be surprising.

One caveat here is that the results from this section depend on the way
we generate the test matrices, and on the condition numbers we choose.
We consider more variations of $\kappa_{ST}$ and $\kappa_\sigma$ in the
next section, of which we only consider the pairs resulting in the best
and worst conditioned matrix triplets in this section. In any case, we
make sure to pick the $\kappa$s such that $\kappa(A)$ is never more than
$10^{-12}$; see Table~\ref{tab:cond} for the condition numbers of the
matrices generated for the results in Table~\ref{tab:conv}.

% (Table) {{{

\begin{table}[!htbp]
  \small
  \centering
  \caption{The mean (maximum) condition numbers of the matrices
    generated for the tests/results in Table~\ref{tab:conv}.
  }\label{tab:cond}
  \begin{tabular}{@{}rrrrrr@{}}
    \toprule
    $\bm{n}$ & $\bm{\kappa_{ST}}$ & $\bm{\kappa_{\sigma}}$ & $\bm{{\log_{10}}(\kappa(A))}$ & $\bm{{\log_{10}}(\kappa(B))}$ & $\bm{{\log_{10}}(\kappa(C))}$ \\
    \midrule
    $10$ & $10^{\phantom{1}}$ & $10^4$ & $2.84$ ($4.00$) & $1.86$ ($3.03$) & $1.82$ ($3.20$) \\
    $10$ & $10^5$             & $10^4$ & $9.24$ ($12.0$) & $6.31$ ($9.02$) & $5.22$ ($8.69$) \\
    $50$ & $10^{\phantom{1}}$ & $10^4$ & $2.95$ ($3.99$) & $2.07$ ($2.92$) & $2.04$ ($3.05$) \\
    $50$ & $10^5$             & $10^4$ & $9.53$ ($12.0$) & $6.60$ ($8.91$) & $5.49$ ($8.49$) \\
    \bottomrule
  \end{tabular}
\end{table}

% }}}

% }}}2

% (Subsection) {{{2

\subsection{Comparison with other methods}

In the last part we preprocessed the triplet $(\widetilde A, \widetilde
B, \widetilde C)$ in high-precision arithmetic to get the triplet $(A,
B, C)$. In this part we generate the former triplet in the same way, but
we do the preprocessing in double-precision arithmetic instead. The
purpose of this change is to try and see how different rank-revealing
decompositions of $A$ affect the accuracy of the computed RSVs (even
though $A$ is full rank). Moreover, we would like to see how these
results compare to the existing methods.

We use three different methods for the preprocessing.  The first two
methods use the approach described in Section~\ref{sec:pre}, the first
with a QR decomposition with column pivoting for the compression of $A$,
and the second with a Jacobi based SVD for the compression of $A$.  The
third method uses an LDU decomposition as described in
Algorithm~\ref{alg:nonorthRSVD}. All three methods use the implicit
Kogbetliantz iteration and Algorithm~\ref{alg:rsvdeta} with $\tau_\eta =
\infty$.  For the existing algorithms we have Drma\v{c}'s
algorithm~\cite{Drma00} and the CSD stage from the algorithm by Chu,
De~Lathauwer, and De~Moor (CLM)~\cite[Sec.~3.2]{CLM00}.  To make the
latter as accurate as possible, we implement the required CS
decomposition with a Jacobi-type SVD instead of the QR based approach
implied by the authors. We omit the results of Zha's algorithm here,
because we already have the results from Section~\ref{sec:numrsvdtwo}.
See Table~\ref{tab:methods} for an overview of the results.

% (Table) {{{

\begin{table}[!htbp]
  \small
  \renewcommand\tabcolsep{2pt} % Default is 6pt?
  \centering
  \caption{The means (maxima) of $\log_{10}(e_{\chi})$ for different
  preprocessing approaches and RSVD algorithms.}\label{tab:methods}
  \begin{tabular}{@{}rrrrrrrr@{}}
    \toprule
    &&& \multicolumn{5}{c}{\textbf{Preprocessing method/RSVD algorithm}} \\
    \cmidrule{4-8}
    $\bm{n}$ & $\bm{\kappa_{ST}}$ & $\bm{\kappa_\sigma}$ & \textbf{ColPivQR} &
    \textbf{SVD} & \textbf{LDU} & \textbf{Drma\v{c}} & \textbf{CLM} \\
    \midrule
    10 & $10^{\phantom{1}}$ & $10^{4\phantom{0}}$ & $-15.4$ ($-13.7$) & $-15.2$ ($-13.3$) & $-15.4$ ($-14.1$) & $-15.5$ ($-14.2$) & $-15.2$ ($-13.7$) \\
       &                    & $10^{12}$           & $-15.0$ ($-12.1$) & $-14.8$ ($-11.5$) & $-15.0$ ($-12.5$) & $-14.1$ ($-11.8$) & $-14.5$ ($-10.8$) \\
       &                    & $10^{20}$           & $-14.3$ ($-10.4$) & $-14.0$ ($-9.24$) & $-14.2$ ($-10.6$) & $-9.93$ ($-5.34$) & $-13.3$ ($-8.07$) \\
    \cmidrule{2-8}
       & $10^{3}$           & $10^{4\phantom{0}}$ & $-13.0$ ($-9.50$) & $-12.7$ ($-8.94$) & $-13.1$ ($-9.81$) & $-13.1$ ($-9.81$) & $-12.7$ ($-9.32$) \\
       &                    & $10^{12}$           & $-13.0$ ($-8.06$) & $-12.8$ ($-6.64$) & $-13.1$ ($-8.24$) & $-12.7$ ($-8.24$) & $-12.5$ ($-7.06$) \\
    \cmidrule{2-8}
       & $10^{5}$           & $10^{4\phantom{0}}$ & $-9.44$ ($-4.38$) & $-9.14$ ($-3.83$) & $-9.54$ ($-4.71$) & $-9.54$ ($-4.71$) & $-9.35$ ($-4.41$) \\
    \midrule
    50 & $10^{\phantom{1}}$ & $10^{4\phantom{0}}$ & $-14.8$ ($-13.9$) & $-14.4$ ($-13.0$) & $-14.8$ ($-13.9$) & $-14.8$ ($-14.1$) & $-14.6$ ($-13.9$) \\
       &                    & $10^{12}$           & $-14.7$ ($-12.6$) & $-14.3$ ($-11.3$) & $-14.6$ ($-12.6$) & $-13.0$ ($-10.1$) & $-14.1$ ($-11.3$) \\
       &                    & $10^{20}$           & $-13.6$ ($-10.7$) & $-13.1$ ($-8.97$) & $-13.5$ ($-10.9$) & $-8.71$ ($-5.03$) & $-12.4$ ($-8.09$) \\
    \cmidrule{2-8}
       & $10^{3}$           & $10^{4\phantom{0}}$ & $-13.0$ ($-10.0$) & $-12.2$ ($-8.04$) & $-13.1$ ($-10.3$) & $-13.1$ ($-10.3$) & $-12.7$ ($-9.81$) \\
       &                    & $10^{12}$           & $-12.8$ ($-8.26$) & $-12.2$ ($-6.32$) & $-12.9$ ($-8.56$) & $-12.2$ ($-8.56$) & $-12.3$ ($-7.52$) \\
    \cmidrule{2-8}
       & $10^{5}$           & $10^{4\phantom{0}}$ & $-9.36$ ($-5.37$) & $-8.46$ ($-3.55$) & $-9.51$ ($-5.44$) & $-9.51$ ($-5.44$) & $-9.20$ ($-4.96$) \\
    \bottomrule
  \end{tabular}
\end{table}

% }}}

The table show that the nonorthogonal preprocessing from
Algorithm~\ref{alg:nonorthRSVD} yields the most accurate results, or
close to the most accurate results. QR with pivoting is slightly behind
the former in accuracy, and the new algorithm is the least accurate when
using the SVD in the preprocessing phase.  A possible explanation is
that the SVD introduces a larger error than QR with pivoting. If true,
it would suggest an interesting trade-off between a better rank decision
with the SVD, and better performance and more accurate RSVs with the QR
decomposition.  Drma\v{c}'s method does comparatively well as long as
$\kappa_\sigma$ is not too large, but is well behind in accuracy when
$\kappa_\sigma$ is large.  The results for the method from Chu,
De~Lathauwer, and De~Moor~\cite{CLM00} (with the modified CS
decomposition) are between the QR and SVD results in the best case, and
slightly worse than the SVD result in the worst case.

% }}}2

% }}}1

% (Section) Conclusions. {{{1

\section{Conclusions}\label{sec:con}

This work introduced a new method for computing the RSVD of a matrix
triplet with an implicit Kogbetliantz-type iteration. The main
contributions consist of a new generalized Schur-form RSVD that we can
compute with orthonormal transformations only, a new preprocessing phase
that requires fewer transformations and fewer rank decisions than
existing methods, and a new $2\times 2$ triangular RSVD algorithm with
favorable numerical properties. We found that the latter is numerically
stable in the sense of Theorem~\ref{thm:stablersvd} and
Theorem~\ref{thm:Astable}.  A further contribution is a new approach to
extract restricted singular triplets.  This approach has theoretical and
practical benefits, but requires atypical scaling of the triplets.

We found that the value of $\eta_{\max}$ is critical in assessing the
accuracy of the results computed by Algorithms~\ref{alg:rsvd}
and~\ref{alg:rsvdeta}. Specifically, we can both use $\eta_{\max}$ a
priori through bounds, and a posteriori as the amplification factor of
the errors in $\overline U$ and $\overline V$.  Numerical experiments
show that we can typically keep the values of the $\eta_{\max}$ small.
In the rare cases that $\eta_{\max}$ is large, the results show that we
can still expect the $2\times 2$ RSVDs to have small backward errors. In
fact, none of the results suggests that the bounds from
Section~\ref{sec:bwdA} are sharp, and that the bounds are pessimistic in
practice.  This means that the numerical results provide empirical
evidence that we can have a numerically stable RSVD without having to
increase the working precision for the $2\times 2$ RSVD.\@

Areas where further improvements are desirable or necessary, and
potential directions for future research include the following. Better
stopping conditions, a cache friendly and parallelized implementation of
the Kogbetliantz phase, and most of all, a numerically sound
postprocessing phase. The latter in particular represents a major
deficiency of the new algorithm, although the postprocessing phase is
only necessary to compute the full RSVD. In other words, we may skip the
postprocessing phase in applications where the Schur-form RSVD suffices.

We should also note that there are techniques for and aspects of
existing Jacobi methods (for other matrix decompositions) that we
ignored in this paper. These include, for example, the scaling of the
input matrices to avoid overflow or underflow, the effects of diagonal
scaling of the input matrices on the relative accuracy of the results,
efficient implementations using blocking for better cache usage,
quasi-cycles for faster convergence, adaptive pivot strategies, whether
preconditioning is possible and useful, the (relative) accuracy of the
algorithm as a whole for arbitrary or structured matrices, etc.  See,
e.g., \cite{DV08a,DV08b,Detal99} and references therein for more
information.

% }}}1

\ifx\preprintcls\undefined\else
% (Section) Acknowledgments. {{{%

\section{Acknowledgments}

I would like to thank Andreas Frommer and Michiel Hochstenbach for
helpful discussions and their comments.

%}}}

\bibliographystyle{siamplain}
\bibliography{rsvddense}
\fi

\ifx\preprintcls\undefined\else\clearpage\fi
\appendix

% (Section) Proof of Lemma. {{{1

\section{Proof of Lemma~\ref{thm:algIO}}\label{sec:proof}

% Observe that
% \begin{equation*}
%   \begin{split}
%     M = C \cdot \adj A \cdot B
%     % &= \begin{bmatrix} c_{11} & c_{12} \\ & c_{22} \end{bmatrix}
%     %   \begin{bmatrix} a_{11} & a_{12} \\ & a_{22} \end{bmatrix}
%     %   \begin{bmatrix} b_{11} & b_{12} \\ & b_{22} \end{bmatrix} \\
%     % &= \begin{bmatrix} c_{11} & c_{12} \\ & c_{22} \end{bmatrix}
%     %   \begin{bmatrix} a_{22} b_{11} & a_{22} b_{12} - a_{12} b_{22} \\
%     %   & a_{11} b_{22} \end{bmatrix} \\
%     &= \begin{bmatrix} c_{11} a_{22} b_{11} &
%       c_{11} a_{22} b_{12} - c_{11} a_{12} b_{22} + c_{12} a_{11} b_{22} \\
%     & c_{22} a_{11} b_{22} \end{bmatrix}.
%   \end{split}
% \end{equation*}

\begin{enumerate}
  \item % 1
    Let $A$, $B$, and $C$ be nonsingular; then the proof follows the
    proof of Proposition~\ref{thm:exactrsvdtwo}.
    % (\textbf{NOTE:} this case cannot have nontrivially intersecting
    % nullspaces.)

  \item % 2
    If $C = \minimat{0}{c_{12}}{0}{c_{22}}$
    and $B = \minimat{b_{11}}{b_{12}}{0}{0}$,
    then $M = \minimat{0}{0}{0}{0}$. Since $c_{11} = b_{22} = 0$, we
    take $P = Q = J$ and compute $U$ and $V$ are such that 
    ${(V^T\!C)}_{22} = (BU)_{11} = 0$. Hence
    \begin{equation*}
      V^T\!CQ = \minimat{c_{11}'}{0}{0}{0},
      \qquad
      P^T\!AQ =
      \minimat{\underline{a_{11}'}}{0}{a_{21}'}{\underline{a_{22}'}},
      \quad\text{and}\quad
      P^T\!BU = \minimat{0}{0}{0}{b_{22}'}.
    \end{equation*}

  \item % 3
    Let $C = \minimat{\underline{c_{11}}}{c_{12}}{0}{0}$ and $B =
    \minimat{0}{0}{0}{0}$; then $M = \minimat{0}{0}{0}{0}$ and
    \textsf{xLASV2} computes $U = V = I$. Now $1 = c_{\max} > s_{\max} =
    0$ and the algorithm does not swap the columns of $U$ and $V$.  See
    below for the computation of $P$ and $Q$, but note that $P^T\!BU$ is
    zero.

  \item % 4
    Let $C = \minimat{\underline{c_{11}}}{c_{12}}{0}{\underline{c_{22}}}$
    and $B = \minimat{0}{0}{0}{0}$; then $M = \minimat{0}{0}{0}{0}$ and
    \textsf{xLASV2} computes $U = V = I$. Now $1 = c_{\max} > s_{\max} =
    0$ and the algorithm does not swap the columns of $U$ and $V$.  See
    below for the computation of $P$ and $Q$, but note that $V^T\!CQ$ is
    nonsingular lower triangular.
    % (\textbf{NOTE:} this case cannot have nontrivially intersecting
    % nullspaces.)

  \item % 5
    Let $C = \minimat{\underline{c_{11}}}{c_{12}}{0}{0}$ and $B =
    \minimat{b_{11}}{b_{12}}{0}{0} \neq 0$; then $M = c_{11} a_{22} B
    \neq 0$ and \textsf{xLASV2} computes $|V| = I$ and $U$ such that
    ${(BU)}_{12} = 0$. Now $1 = c_{\max} \ge s_{\max}$ and the algorithm
    does not swap the columns of $U$ and $V$.  It follows that $|h_{12}|
    + |h_{22}| = |l_{12}| + |l_{22}| = 0$ ($\eta_h = \eta_l = \infty$),
    $\eta_g = 1$, and $1 \le \eta_k < \infty$. Hence, the algorithm
    computes $Q$ from $G = V^T\!C = C$ and $P$ from $K = V^T\!C\adj(A) =
    C\adj(A)$, which results in
    \begin{equation*}
      % C'
      V^T\!CQ
      = \minimat{\underline{c_{11}}'}{0}{0}{0},
      \qquad
      P^T\!AQ = \minimat{\underline{a_{11}'}}{0}{a_{21}'}{\underline{a_{22}'}},
      \quad\text{and}\quad
      P^T\!BU = \minimat{\underline{b_{11}'}}{0}{b_{21}'}{0}.
    \end{equation*}
    The product $P^T\!AQ$ is lower triangular since, by construction,
    \begin{equation*}
      0
      = |{(KP)}_{12}|
      = |\vec e_1^T (V^T\!CQ) \adj(P^T\!AQ) \vec e_2|
      = |c_{11}' {(P^T\!AQ)}_{12}|.
    \end{equation*}
    Likewise, $b_{11}'$ is nonzero because $|c_{11}' a_{22}' b_{11}'|$
    equals the largest singular value of $M$.
    % (\textbf{NOTE:} this case cannot have nontrivially intersecting
    % nullspaces.)

  \item % 6
    Let $C = \minimat{\underline{c_{11}}}{c_{12}}{0}{\underline{c_{22}}}$
    and $B = \minimat{b_{11}}{b_{12}}{0}{0} \neq 0$; then $M = c_{11}
    a_{22} B \neq 0$ and the computation of $U$, $V$, $P$, and $Q$
    proceeds as above, except that $V^T\!CQ$ is nonsingular lower
    triangular.
    % (\textbf{NOTE:} this case cannot have nontrivially intersecting
    % nullspaces.)

  \item % 7
    Let $C = \minimat{ \underline{c_{11}} }{c_{12}}{0}{0}$ and $B =
    \minimat{ \underline{b_{11}} }{b_{12}}{0}{ \underline{b_{22}} }$;
    then $M = \minimat{ \underline{m_{11}} }{m_{12}}{0}{0}$ and
    \textsf{xLASV2} computes $|V| = I$ and $U$ such that ${(MU)}_{12} =
    0$.  Now $1 = c_{\max} > s_{\max} = |u_{12}|$ since $m_{11} \neq 0$
    implies that $|U| \neq |J|$, and the algorithm does not swap the
    columns of $U$ and $V$.  It follows that $\eta_g = 1$ and $1 \le
    \eta_k, \eta_h, \eta_l < \infty$, so that the algorithm always
    computes $Q$ from $G = V^T\!C = C$, but may compute $P$ from either
    $K = V^T\!C\adj(A) = C\adj(A)$ or $L = BU$.  The result is
    \begin{equation*}
      V^T\!CQ = \minimat{\underline{c_{11}'}}{0}{0}{0},
      \qquad
      P^T\!AQ = \minimat{\underline{a_{11}'}}{0}{a_{21}'}{\underline{a_{22}'}},
      \quad\text{and}\quad
      P^T\!BU = \minimat{\underline{b_{11}'}}{0}{b_{21}'}{\underline{b_{22}'}}.
    \end{equation*}
    If $P$ was computed from $K$, then $P^T\!AQ$ is lower triangular for
    the same reason as in Item~\ref{it:CrowBrow}, and $P^T\!BU$ is lower
    triangular because $0 = |{(V^T\!MU)}_{12}| = |c_{11}' a_{22}'
    {(P^T\!BU)}_{12}|$. If $P$ was computed from $L$, then $P^T\!AQ$ is
    lower triangular because $0 = |{(V^T\!MU)}_{12} = |c_{11}' b_{22}'
    {(P^T\!AQ)}_{12}|$.
    % (\textbf{NOTE:} this case cannot have nontrivially intersecting
    % nullspaces.)

  \item % 8
    Let $C = \minimat{0}{0}{0}{0}$ and $B = \minimat{0}{b_{12}}{0}{
    \underline{b_{22}} }$; then $M = \minimat{0}{0}{0}{0}$ and
    \textsf{xLASV2} computes $U = V = I$. Now $1 = c_{\max} > s_{\max} =
    0$ and the algorithm does not swap the columns of $U$ and $V$.
    Computing $P$ and $Q$ is the same as below, but note that $V^T\!CQ$ is
    zero. 

  \item % 9
    Let $C = \minimat{0}{0}{0}{0}$ and $B =
    \minimat{ \underline{b_{11}} }{b_{12}}{0}{ \underline{b_{22}} }$;
    then $M = \minimat{0}{0}{0}{0}$ and \textsf{xLASV2} computes $U = V
    = I$. Now $1 = c_{\max} > s_{\max} = 0$ and the algorithm does not
    swap the columns of $U$ and $V$. Computing $P$ and $Q$ is the same
    as below, but note that $P^T\!BU$ is nonsingular lower triangular.

  \item % 10
    Let $C = \minimat{0}{c_{12}}{0}{c_{22}} \neq 0$
    and $B = \minimat{0}{b_{12}}{0}{\underline{b_{22}}}$;
    then $M = a_{11} b_{22} C \neq 0$ and \textsf{xLASV2} computes $V$
    such that ${(V^T\!C)}_{22} = 0$ and $|U| = |J|$. Now $|v_{11}| =
    c_{\max} \le s_{\max} = 1$, but the $M$ is singular and no swap
    takes place. It follows that $|h_{12}| + |h_{22}| = |l_{12}| +
    |l_{22}| = 0$ ($\eta_h = \eta_l = \infty$), and $1 \le \eta_g,
    \eta_k < \infty$. As a result, $P$ and $Q$ are computed from $G$ and
    $K$, respectively.  Furthermore $|Q| = |J|$, so that $|P| = |J|$ as
    well.  The result is
    \begin{equation*}
      V^T\!CQ = \minimat{\underline{c_{11}'}}{0}{0}{0},
      \qquad
      P^T\!AQ = \minimat{\underline{a_{11}'}}{0}{a_{21}'}{\underline{a_{22}'}},
      \quad\text{and}\quad
      P^T\!BU = \minimat{\underline{b_{11}'}}{0}{b_{21}'}{0}.
    \end{equation*}

  \item % 11
    Let $C = \minimat{0}{c_{12}}{0}{c_{22}} \neq 0$ and $B = \minimat{
    \underline{b_{11}} }{b_{12}}{0}{ \underline{b_{22}} }$; then $M =
    a_{11} b_{22} C \neq 0$ and the computation of $P$, $Q$, $U$, and
    $V$ proceeds as above, except that $P^T\!BU$ is nonsingular lower
    triangular.

  \item % 12
    Let $C = \minimat{ \underline{c_{11}} }{c_{12}}{0}{ \underline{c_{22}}
    }$ and $B = \minimat{0}{b_{12}}{0}{ \underline{b_{22}} }$; then
    $M = \minimat{0}{m_{12}}{0}{ \underline{m_{22}} }$ and
    \textsf{xLASV2} computes $|U| = |J|$ and $V$ such that
    ${(V^T\!M)}_{22} = {(V^T\!M)}_{22} = 0$. Now $|v_{11}| = c_{\max} <
    s_{\max} = 1$ since $m_{22} \neq 0$ implies that $|V| \neq I$.
    However, $M$ is singular and no swap takes place.
    It follows that $1 \le \eta_g, \eta_k < \infty$ and $\eta_h = \eta_l
    = 1$, so that the algorithm always computes $Q$ from $G = V^T\!C$, and
    $P$ from $K = V^T\!C\adj(A)$.  The result is
    \begin{equation*}
      V^T\!CQ = \minimat{\underline{c_{11}'}}{0}{c_{21}'}{\underline{c_{22}'}},
      \qquad
      P^T\!AQ = \minimat{\underline{a_{11}'}}{0}{a_{21}'}{\underline{a_{22}'}},
      \quad\text{and}\quad
      P^T\!BU = \minimat{\underline{b_{11}'}}{0}{b_{21}'}{0}.
    \end{equation*}

  \item % 13
    Let $C = \minimat{\underline{c_{11}}}{c_{12}}{0}{0}$ and
    $B = \minimat{0}{b_{12}}{0}{\underline{b_{22}}}$; then
    $M = \minimat{0}{m_{12}}{0}{0}$. When $m_{12} = 0$, \textsf{xLASV2}
    computes $U = V = I$ and no swaps are necessary.  Furthermore, in
    this case $\eta_g = \eta_l = 1$ so that $P$ and $Q$ are
    computed from $B$ and $C$, respectively, resulting in
    \begin{equation*}
      V^T\!CQ = \minimat{\underline{c_{11}'}}{0}{0}{0},
      \qquad
      P^T\!AQ =
      \minimat{\underline{a_{11}'}}{0}{a_{21}'}{\underline{a_{22}'}},
      \quad\text{and}\quad
      P^T\!BU = \minimat{0}{0}{0}{\underline{b_{22}'}}.
    \end{equation*}
    Again, $P^T\!AQ$ is lower triangular because $0 = |V^T\!MU| =
    |b_{22}' c_{11}' {(P^T\!AQ)}_{12}|$
    If $m_{12} \neq 0$, then \textsf{xLASV2} computes $|U| = |J|$ and
    $|V| = I$. Now $c_{\max} = s_{\max} = 1$ and the algorithm does not
    swap the columns of $U$ and $V$.
    Without the swap, $\eta_g = 1$, $|h_{11}| + |h_{12}| = 0$ ($\eta_h =
    \infty$), $1 \le \eta_k < \infty$, and $|l_{12}| + |l_{22}| = 0$
    ($\eta_l = \infty$). Hence, the algorithm computes $Q$ from $G =
    V^T\!C = C$ and $P$ from $K = V^T\!C\adj(A) = C\adj(A)$, resulting in
    \begin{equation*}
      V^T\!CQ = \minimat{\underline{c_{11}'}}{0}{0}{0},
      \qquad
      P^T\!AQ = \minimat{\underline{a_{11}'}}{0}{a_{21}'}{\underline{a_{22}'}},
      \quad\text{and}\quad
      P^T\!BU = \minimat{\underline{b_{11}}'}{0}{b_{21}'}{0}.
    \end{equation*}
    See Item~\ref{it:CrowBrow} to see why $P^T\!AQ$ is lower triangular,
    and why $b_{11}' \neq 0$.
\end{enumerate}
% Nontrivially intersecting nullspaces occur only when $M = 0$ and $B$
% and $C$ are singular?

% }}}1

\ifx\siamartcls\undefined\else
% (Section) Acknowledgments. {{{%

\section{Acknowledgments}

I would like to thank Andreas Frommer and Michiel Hochstenbach for
helpful discussions and their comments.

%}}}

\bibliographystyle{siamplain}
\bibliography{rsvddense}
\fi

\end{document}